\theoremstyle{plain}
\newtheorem{theorem}{Theorem}
\newtheorem{lemma}[theorem]{Lemma}
\newtheorem{proposition}[theorem]{Proposition}
\DeclareMathOperator{\Div}{div}
\DeclareMathOperator{\Ext}{\mathrm{Ext}}
\DeclareMathOperator{\Gal}{Gal}
\DeclareMathOperator{\Hom}{Hom}
\DeclareMathOperator{\Pic}{Pic}
\DeclareMathOperator{\Spec}{Spec}
\newcommand{\del}{\partial}
\newcommand{\delbar}{\overline{\partial}}
\newcommand{\IN}{\mathbb{N}}
\newcommand{\IA}{\mathbb{A}}
\newcommand{\IZ}{\mathbb{Z}}
\newcommand{\IQ}{\mathbb{Q}}
\newcommand{\IQbar}{\overline{\mathbb{Q}}}
\newcommand{\IR}{\mathbb{R}}
\newcommand{\IC}{\mathbb{C}}
\newcommand{\Gm}{\mathbb{G}_m}
\newcommand{\IP}{\mathbb{P}}
\newcommand{\vol}{\mathrm{vol}}
\newcommand{\dprime}{{\prime\prime}}
\newcommand{\pr}{\mathrm{pr}}
\newcommand{\caO}{\mathcal{O}}
\newcommand{\id}{\mathrm{id}}
\newcommand{\hhat}{\widehat{h}}
\newcommand{\cOt}{\widetilde{\mathcal{O}}}
\newcommand{\chisup}{\chi_{\mathrm{sup}}}
\newcommand{\pio}{\overline{\pi}}
\newcommand{\Ltil}{\widetilde{L}}
\newcommand{\Mtil}{\widetilde{M}}
\newcommand{\Ntil}{\widetilde{N}}
\newcommand{\Otil}{\widetilde{\mathcal{O}}}
\newcommand{\Ptil}{\widetilde{P}}
\newcommand{\Lo}{\overline{L}}
\newcommand{\Mo}{\overline{M}}
\newcommand{\No}{\overline{N}}
\newcommand{\volh}{\widehat{\mathrm{vol}}}
\begin{document}

\title{Points of Small Height on Semiabelian Varieties}

\begin{abstract} The Equidistribution Conjecture is proved for general semiabelian varieties over number fields. Previously, this conjecture was only known in the special case of almost split semiabelian varieties through work of Chambert-Loir. The general case has remained intractable so far because the height of a semiabelian variety is negative unless it is almost split. In fact, this places the conjecture outside the scope of Yuan's equidistribution theorem on algebraic dynamical systems. To overcome this, an asymptotic adaption of the equidistribution technique invented by Szpiro, Ullmo, and Zhang is used here. It also allows a new proof of the Bogomolov Conjecture and hence a self-contained proof of the Strong Equidistribution Conjecture in the same general setting.
\end{abstract}

\author{Lars K\"uhne}
\thanks{This work was supported by an Ambizione Grant of the Swiss National Science Foundation.}
\email{lars.kuehne@unibas.ch}
\address{Departement Mathematik und Informatik \\
Spiegelgasse 1 \c
4051 Basel \\
Switzerland}

\subjclass[2010]{14G40 (primary), 11G10, 14G05, 14K15 (secondary)} 

\maketitle

Throughout this article, $G$ denotes an arbitrary semiabelian variety over a number field $K \subset \IQbar$ with maximal subtorus $T$ of dimension $t$ and maximal abelian quotient $\pi: G \rightarrow A$ of dimension $g$. For a place $\nu$ of $K$, we denote by $K_\nu$ the associated completion of $K$, by $\overline{K}_\nu$ the algebraic closure of $K_\nu$, and by $\IC_\nu$ the completion of the algebraic closure $\overline{K}_\nu$. For a quasi-projective algebraic variety $X$ over a number field $K$ and a place $\nu$ of $K$, we denote by $X_{\IC_\nu}^{\mathrm{an}}$ the $\IC_\nu$-analytic space associated with $X_{\IC_\nu}$. If $\nu$ is archimedean, this means that $X_{\IC_\nu}^{\mathrm{an}}$ is a complex (analytic) space (see \cite{Grauert1994} for this notion). If $\nu$ is non-archimedean, $X_{\IC_\nu}^{\mathrm{an}}$ is a Berkovich $\IC_\nu$-analytic space (see \cite[Section 3.4]{Berkovich1990}).

In order to state our main results, we need a canonical height $\hhat$ on $G$. For details, the reader is referred to Section \ref{section::semiabelian} and \cite[Sections 2 and 3]{Kuehne2017a}. To simplify our exposition, we enlarge $K$ if necessary so that we can assume that $T=\Gm^t$. Then $(\IP^1)^t$ is naturally a $\Gm^t$-equivariant compactification of $\Gm^t$, and each multiplication-by-$n$ map $\Gm^t \rightarrow \Gm^t$ extends to a map $(\IP^1)^t \rightarrow (\IP^1)^t$. This yields a compactification $\overline{G}$ of $G$, and $\pi$ extends to a map $\overline{\pi}: \overline{G} \rightarrow A$, whose fibers are isomorphic to $(\IP^1)^t$. Furthermore, the multiplication-by-$n$ map $[n]: G \rightarrow G$ extends to a map $\overline{[n]}: \overline{G} \rightarrow \overline{G}$. The boundary $(\IP^1)^t \setminus \Gm^t$ gives rise to a Weil divisor on $\overline{G}$. Letting $M_{\overline{G}}$ denote the line bundle associated with this divisor, we have $\overline{[n]}^\ast M_{\overline{G}} = M_{\overline{G}}^{\otimes n}$. In addition, we fix an ample symmetric line bundle $N$ on $A$ and set $L= M_{\overline{G}} \otimes \pio^\ast N$. Tate's limit argument allows us to define a unique canonical height $\hhat_{L}(x)$ for each closed point $x \in G$, starting from the Weil heights of $M_{\overline{G}}$ and $\overline{\pi}^\ast N$.

This already suffices to define the main object of our study. Recall that a sequence $(x_i) \in G^\IN$ of \textit{closed} points is said to be \textit{generic} (resp.\ \textit{strict}) if none of its subsequences is contained in a proper algebraic subvariety (resp.\ a proper algebraic subgroup) of $G$. Furthermore, we say that a sequence $(x_i) \in G^{\IN}$ of \textit{closed} points is a sequence of small points if $\hhat_L(x_i) \rightarrow 0$.

As in the case of abelian varieties, the following two conjectures convey significant information about the diophantine geometry of semiabelian varieties: For each place $\nu \in \Sigma(K)$, a closed point $x \in G$ yields a $0$-cycle $\mathbf{O}_\nu(x) = (x \otimes_K \IC_\nu)^{\mathrm{an}}$ on the $\IC_\nu$-analytic group $G_{\IC_\nu}^{\mathrm{an}}$ associated with $G$. We write $\delta_y$ for the Dirac measure associated with a point $y \in G_{\IC_\nu}^{\mathrm{an}}$.

\textbf{Equidistribution Conjecture (EC).} \textit{For every generic sequence $(x_i)\in G^\IN$ of small points, the measures $\frac{1}{\# \mathbf{O}_\nu(x_i)}\sum_{y \in \mathbf{O}_\nu(x_i)} \delta_{y}$ converge weakly to the measure $c_1(\Lo_\nu)^{\wedge g+t}/\deg_L(\overline{G})$.}

More explicitly, $\mathrm{(EC)}$ asserts that 
\begin{equation}
\label{equation::equidistribution_explicit}
\frac{1}{\# \mathbf{O}_\nu(x_i)}\sum_{y \in \mathbf{O}_\nu(x_i)} f(y) \longrightarrow \frac{1}{\deg_L(\overline{G})}\int_{G_{\IC_\nu}^{\mathrm{an}}}fc_1(\Lo_\nu)^{\wedge g+t} \quad (i \rightarrow \infty)
\end{equation}
for every compactly supported $f\in \mathscr{C}^0(G_{\IC_\nu}^{\mathrm{an}})$. 

The measure $c_1(\Lo_\nu)^{\wedge g+t}$ arises naturally in a refined approach to the canonical height $\hhat_L$ introduced above (see Sections \ref{section::arithmeticintersectiontheory} and \ref{section::semiabelian} for details). In fact, the line bundle $L$ can be endowed with a canonical $\nu$-metric. This yields a $\nu$-metrized line bundle $\Lo_\nu=(L,\Vert \cdot \Vert_\nu)$. Letting $\nu$ vary over all places of $K$, these canonical metrics combine to an (adelically) metrized line bundle $\Ltil = (L, \{ \Vert \cdot \Vert_\nu \})$. The (adelic) height function $h_{\Ltil}$ associated with $\Ltil$ coincides with the Néron-Tate height $\hhat_L$ from above. Additionally, it enables us to assign a height $h_{\Ltil}(X)$ with every algebraic subvariety $X \subseteq \overline{G}$. Most importantly, each $\nu$-metrized line bundle $\Lo_\nu$ supplies us with a regular Borel measure $c_1(\Lo_\nu)^{\wedge g+t}$ on the analytic space $G^{\mathrm{an}}_{\IC_\nu}$ (see Subsection \ref{section::borelmeasure}).

For an archimedean place $\nu$, it is well-known that $c_1(\Lo_\nu)^{\wedge g+t}$ is a Haar measure on the maximal compact subgroup of $G^{\mathrm{an}}_{\IC_\nu}$. In fact, this is a special case of our Lemma \ref{lemma::measure_realanalytic} below. The maximal compact subgroup can be easily described by ignoring the complex structure on $G^{\mathrm{an}}_{\IC_\nu}$. As a real Lie group, every semiabelian variety is isomorphic to $\IR^{2(g+t)}/\Lambda$ for an arbitrary discrete subgroup $\Lambda \subseteq \IR^{2(g+t)}$ of rank $2g+t$. The maximal compact subgroup of $G^{\mathrm{an}}_{\IC_\nu}$ corresponds then evidently to the $\IR$-linear subspace $\IR \cdot \Lambda \subseteq \IR^{2(g+t)}$. For a non-archimedean place $\nu$, the determination of $c_1(\Lo_\nu)^{\wedge g+t}$ is more intricate since even for abelian varieties the reduction of $G$ with respect to $\nu$ plays a role. A complete description of the abelian case is given by Gubler in \cite[Example 7.2]{Gubler2010}. It seems very likely that his techniques can be also used for general semiabelian varieties. Since this seems, unfortunately, a lengthy distraction from our main investigation, we leave it nevertheless to the interested reader.

%
%

\textbf{Bogomolov Conjecture (BC).} \textit{Let $X$ be a geometrically irreducible algebraic subvariety of $G$. Then, either $X$ is the translate of a connected subgroup by a torsion point or there exists some $\varepsilon=\varepsilon(X)>0$ such that
\begin{equation*}
\{ x \in X(\IQbar) \ | \ \hhat_L(x) < \varepsilon \}
\end{equation*}
is not Zariski-dense in $X$.}

Closely related to these two conjectures is a formal strengthening of the first one.

\textbf{Strong Equidistribution Conjecture (SEC).} \textit{For every strict sequence $(x_i)\in G(\IQbar)^\IN$ of small points, the measures $\frac{1}{\# \mathbf{O}_\nu(x_i)}\sum_{y \in \mathbf{O}_\nu(x_i)} \delta_{y}$ converge weakly to $c_1(\Lo_\nu)^{\wedge g+t}/\deg_L(\overline{G})$.}

In fact, it is easy to prove the equivalence $\mathrm{(EC)} \wedge \mathrm{(BC)} \Leftrightarrow \mathrm{(SEC)}$. In the nineties of the last century, considerable efforts were dedicated to prove the above conjectures in various settings. An important special case of $\mathrm{(BC)}$ is the embedding $X=C \hookrightarrow G=\mathrm{Jac}(C)$ of a geometrically irreducible curve $C$ over $K$ into its Jacobian variety $\mathrm{Jac}(C)$. Before being completely settled by Ullmo \cite{Ullmo1998}, this case of $(\mathrm{BC})$ was proven by Szpiro \cite{Szpiro1990} and Zhang \cite{Zhang1993, Zhang1995a} in numerous cases. In the meantime, Zhang proved $(\mathrm{BC})$ for algebraic tori in \cite{Zhang1995}. A complete proof of $(\mathrm{BC})$ for abelian varieties was given by Zhang \cite{Zhang1998}. (The reader may also consult the surveys \cite{Abbes1997, Zhang1998a}.)

For abelian varieties, $(\mathrm{EC})$ was proven in a joint work of Szpiro, Ullmo, and Zhang \cite{Szpiro1997}. Bilu \cite{Bilu1997} proved directly $(\mathrm{SEC})$ for algebraic tori and gave a deduction $(\mathrm{SEC}) \Rightarrow (\mathrm{BC})$. A further advancement was made by Chambert-Loir  \cite{Chambert-Loir2000}, who gave a proof of $(\mathrm{SEC})$ in the case where $G$ is almost split (i.e., $G$ is isogenous to the product of an abelian variety and a torus). Up to the present work, his work has contained the best result in the direction of $(\mathrm{SEC})$. Indeed, the canonical height $h_{\Ltil}(\overline{G})$ is strictly negative unless it is almost split (\cite[Corollaire 4.3]{Chambert-Loir2000}). In addition, all points of negative height lie on the boundary $\overline{G} \setminus G$ (\cite[Lemma 3.9]{Chambert-Loir2000}). This means that \textit{no} generic sequence $(x_i) \in G^\IN$ of closed points can satisfy
\begin{equation}
\label{equation::nonconvergence}
h_{\Ltil}(x_i) \longrightarrow h_{\Ltil}(\overline{G}), \ i \rightarrow \infty,
\end{equation} 
unless $G$ is almost split. Yuan's general equidistribution theorem for algebraic dynamical systems (\cite[Theorem 10.2]{Yuan2012}) is hence empty in this situation as the equidistribution method developed by Szpiro, Ullmo, and Zhang \cite{Szpiro1997} does generally only apply to sequences satisfying (\ref{equation::nonconvergence}). The reader is referred to \cite{Chambert-Loir2000} for details.

Using different methods, David and Philippon \cite{David2000} 
proved $(\mathrm{BC})$ for general semiabelian varieties. However, their method seems completely incapable to approach $(\mathrm{EC})$. In this article, we tackle both $(\mathrm{EC})$ and $(\mathrm{BC})$ for general semiabelian varieties with arithmetic intersection theory. We proceed in a way that is surprisingly close to the method of Szpiro, Ullmo, and Zhang in spite of the above-mentioned obstacle. Our main result is as follows.

\begin{theorem} \label{theorem::main} $\mathrm{(SEC)}$ is true for every semiabelian variety $G$ over $\IQbar$ and every strict sequence $(x_i) \in G^\IN$ of small points.
\end{theorem}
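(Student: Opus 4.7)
The plan is to implement an asymptotic variant of the Szpiro--Ullmo--Zhang perturbation technique, designed to bypass the obstruction that $h_{\Ltil}(\Go) < 0$ whenever $G$ is not almost split. Given a compactly supported $f \in \mathscr{C}^0(G^{\mathrm{an}}_{\IC_\nu})$, the standard approach would deform the canonical metric on $L$ at the place $\nu$ by $\varepsilon f$, producing a perturbed adelic line bundle $\Ltil(\varepsilon f)$; combining Zhang's essential-minimum inequality with Yuan's arithmetic Hilbert--Samuel formula and differentiating at $\varepsilon = 0$ then extracts equidistribution from the asymptotic height averages of the sequence $(x_i)$. The breakdown in the present setting is clear: a strict sequence of small points satisfies $h_{\Ltil}(x_i) \to 0$, while the essential minimum inherited from $h_{\Ltil}(\Go)/((g+t+1)\deg_L(\Go))$ is strictly negative, rendering the na\"ive inequality vacuous.

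To repair this, I would introduce an auxiliary family of canonically metrized line bundles $\Ltil_n$ on $\Go$---a natural candidate being $L_n = M_{\Go} \otimes \pio^\ast N^{\otimes n}$---chosen so that, after the appropriate normalization, the arithmetic height $h_{\Ltil_n}(\Go)/((g+t+1)\deg_{L_n}(\Go))$ tends to $0$ as $n \to \infty$. Heuristically, the negative contributions to the canonical height are concentrated in the toric factor $M_{\Go}$, whose relative influence vanishes against the dominant ample symmetric abelian factor $\pio^\ast N^{\otimes n}$; on $G$ itself the corresponding rescaled heights still converge appropriately, matching the small-point regime. Applying the SUZ perturbation argument at each level $n$ and then passing to the limit should yield the inequality
\begin{equation*}
\liminf_{i \to \infty} \frac{1}{\# \mathbf{O}_\nu(x_i)} \sum_{y \in \mathbf{O}_\nu(x_i)} f(y) \;\geq\; \frac{1}{\deg_L(\Go)} \int_{G^{\mathrm{an}}_{\IC_\nu}} f \, c_1(\Lo_\nu)^{\wedge g+t},
\end{equation*}
and the reverse inequality upon replacing $f$ with $-f$, thereby establishing $(\mathrm{EC})$ for strict sequences.

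For the Bogomolov half, I would proceed by induction on $\dim X$: if a geometrically irreducible $X \subseteq G$ carried a Zariski-dense set of small points but were not a torsion translate of a connected subgroup, any such sequence would be generic in $X$, and the equidistribution just proved (applied intrinsically on $X$) would constrain the support of the limit measure on $X_{\IC_\nu}^{\mathrm{an}}$. Combined with the semiabelian analogue of the Ullmo--Zhang contraction trick via the difference morphism $X \to X - X$, this would force $X$ to be a subgroup translate after all, a contradiction. The conjunction of $(\mathrm{EC})$ and $(\mathrm{BC})$ then yields $(\mathrm{SEC})$ by the easy implication recalled in the excerpt. The main obstacle I anticipate is the auxiliary step: establishing the asymptotic height vanishing $h_{\Ltil_n}(\Go)/((g+t+1)\deg_{L_n}(\Go)) \to 0$ with sufficiently uniform control of the perturbed arithmetic self-intersection numbers to legitimate exchanging the limits in $n$ and $\varepsilon$. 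This in turn will require a delicate analysis of the canonical Berkovich measures associated with $M_{\Go}$ near the toric boundary and of the arithmetic intersection asymptotics as $n$ grows.
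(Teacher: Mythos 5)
Your overall architecture (prove $(\mathrm{EC})$ by an asymptotic SUZ argument, prove $(\mathrm{BC})$ by the Ullmo--Zhang contraction trick, combine) matches the paper, but the specific auxiliary family you propose for the equidistribution half does not work, and this is precisely the crux of the problem. Take $L_n = M_{\Go}\otimes\pio^\ast N^{\otimes n}$ and expand
\begin{equation*}
(\Ltil_n)^{g+t+1}=\sum_{k=0}^{g+t+1}\binom{g+t+1}{k}\,n^{k}\,\Mtil_{\Go}^{\,g+t+1-k}\cdot(\pio^\ast\Ntil)^{k},
\qquad
\deg_{L_n}(\Go)=\sum_{k=0}^{g+t}\binom{g+t}{k}\,n^{k}\,M_{\Go}^{\,g+t-k}\cdot(\pio^\ast N)^{k}.
\end{equation*}
All terms with $k\geq g+1$ in the first sum vanish (by the fibration formula, $\Mtil_{\Go}^{t}\cdot(\pio^\ast\Ntil)^{g+1}$ is a multiple of $\Ntil^{g+1}=0$), and all terms with $k\geq g+1$ in the second sum vanish for degree reasons. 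Hence \emph{both} the arithmetic self-intersection and the geometric degree grow like $n^{g}$, with leading coefficients $\binom{g+t+1}{g}\Mtil_{\Go}^{\,t+1}\cdot(\pio^\ast\Ntil)^{g}$ and $\binom{g+t}{g}M_{\Go}^{\,t}\cdot(\pio^\ast N)^{g}$ respectively. The first of these is strictly negative whenever $G$ is not almost split (it is exactly the quantity controlled by Chambert-Loir's height formula, proportional to $-\hhat(\eta_G)$), so your normalized height $h_{\Ltil_n}(\Go)$ converges to a strictly \emph{negative} constant rather than to $0$. The heuristic that the toric factor's ``relative influence vanishes'' against $N^{\otimes n}$ is false: the obstruction scales exactly with the degree, because rescaling $N$ does not touch the extension class $\eta_G\in A^\vee(\IQbar)$ at all.

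The repair requires modifying $\eta_G$ itself. One passes to semiabelian varieties $G_n$ in the isogeny class of $G$ whose extension classes satisfy $n\,\eta^{(n)}=\eta_G$, with isogenies $\varphi_n\colon G_n\to G$ that are multiplication by $n$ on the toric part and the identity on $A$. The quadraticity of the N\'eron--Tate height on $A^\vee$ gives $\hhat(\eta^{(n)})=n^{-2}\hhat(\eta_G)$, whence $-n^{-2}\ll h_{\Ltil_n}(\Go_n)<0$, while the degree of $\varphi_n$ grows only polynomially in $n$ of the ``linear'' type ($n$ per toric coordinate). Running the SUZ perturbation on $\Go_n$ with scaling factor $\lambda$ and pulling back the test function along $\varphi_n$ produces an error of the shape $n^{-2}|\lambda|^{-1}+n|\lambda|$, which is made small by taking $\lambda=n^{-3/2}$ and $n\to\infty$; it is this trade-off between quadratic decay and linear growth that your family cannot reproduce. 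A secondary remark on your $(\mathrm{BC})$ sketch: the contraction argument via $\alpha_m\colon X^m\to G^{m-1}$ is the right one, but the limit measures here are not smooth volume forms --- they are supported on real-analytic subsets fibered in compact torus orbits over $\pio(X)$ --- so the standard ``compare volume forms under a contracting map'' step needs the explicit description of these equilibrium measures and a dominating Riemannian volume before the contradiction at the diagonal can be extracted.
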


As already mentioned, $(\mathrm{SEC})$ is a direct consequence of $(\mathrm{EC})$ and $(\mathrm{BC})$; we refer to Section \ref{section::mainthm} for the deduction of Theorem \ref{theorem::main} from $(\mathrm{EC})$ and $(\mathrm{BC})$. The fastest way to prove Theorem \ref{theorem::main} is hence to prove merely $(\mathrm{EC})$ and to rely on \cite{David2000} for $(\mathrm{BC})$. However, our technique to prove $(\mathrm{EC})$ can be also used to give a new proof of $(\mathrm{BC})$, which is remarkably close to Zhang's proof of $(\mathrm{BC})$ in the case of abelian varieties \cite{Zhang1998}. Consequently, we can give a self-contained and genuinely Arakelov-theoretic proof of Theorem \ref{theorem::main}. This seems worth to afford the detour of proving $(\mathrm{BC})$ anew, and we do so in Section \ref{section::bogomolov} after some preparation in Section \ref{section::equilibrium}.

The centerpiece of our argument is Proposition \ref{proposition1}, which includes $(\mathrm{EC})$ as a special case. Section \ref{section::toricrank1} is completely devoted to its proof. The main idea is rather simple and we describe it next. For this, we exclusively \textit{restrict} ourselves to the case where $G$ is the extension of an abelian variety $A$ \textit{by $T=\Gm$} (i.e., $t=1$ with the above notation). As already mentioned, a semiabelian variety $G$ has canonical height zero if and only if it is almost split, which means here that the associated extension class $\eta \in \Ext^1_{\IQbar}(A,\Gm) = A^\vee(\IQbar)$ is a torsion point. For an ample symmetric line bundle $N$ on $A^\vee$, this is equivalent to the Néron-Tate height $\hhat_N(\eta)$ being zero. One can hence suspect that $\hhat_N(\eta)$ quantifies the obstruction to proving (EC) by means of the standard equidistribution arguments.

The $\IQbar$-isogeny class of $G$ contains semiabelian varieties associated with extension classes $\eta^\prime \in \Ext^1_{\IQbar}(A,\Gm) = A^\vee(\IQbar)$ such that $ \hhat_N(\eta^\prime)$ is arbitrary small. In fact, if $\eta^\prime$ is such that $n \eta^\prime = \eta$ for some positive integer $n$, then $ \hhat_N(\eta^\prime)= n^{-2} \hhat_N(\eta)$.
Writing $G_n$ for the semiabelian variety described by $\eta^\prime$, there is an isogeny $\varphi_n: G_n \rightarrow G$ of degree $n$ (see Section \ref{section::toricrank1}). Additionally, it is not hard to see that we only need to prove $\mathrm{(EC)}$ for a single element in the isogeny class of $G$. It is hence reasonable to replace our original $G$ with some $G_n$, $n \gg 1$, and to hope that this facilitates the proof of $\mathrm{(EC)}$ with the traditional procedure.

If $G$ is not almost split, we deduce from \cite[Th\'eor\`eme 4.2]{Chambert-Loir2000} that
\begin{equation}
\label{equation::introheightdecrease}
-n^{-2}\ll h_{\Ltil_n}(\overline{G}_n) < 0
\end{equation}
for a certain compactification $\overline{G}_n$ of $G_n$ and a certain metrized line bundle $\Ltil_n$ on $\overline{G}_n$. Thus, merely replacing $G$ with a fixed $G_n$, $n \gg 1$, is not sufficient, but working asympotically as $n \rightarrow \infty$ seems prospective. In other words, one should try to carry out the argument of Szpiro, Ullmo, and Zhang \cite{Szpiro1997} for each level $G_n$ and observe what happens as $n \rightarrow \infty$. The integer $n$ is not the only parameter that appears here. As in previous proofs of $(\mathrm{EC})$, a rational\footnote{In the archimedean case (e.g., in \cite{Zhang1998}) this scaling factor can in fact be a real number, but the non-archimedean variant uses a rational number instead (see the proof of \cite[Lemme 3.5]{Chambert-Loir2006}; our $\lambda$ corresponds to the variable $t$ therein).} 
scaling factor $\lambda \rightarrow 0$ comes up. It turns out that there is some interplay between these both parameters: Up to suppressing some easily controllable terms, written $(\dots)$, we obtain an upper bound
\begin{equation}
\label{equation::equidistribution4}
\limsup_{i \rightarrow \infty} \left\vert \frac{1}{\# \mathbf{O}_\nu(x_i)}\sum_{y \in \mathbf{O}_\nu(x_i)} f(y) - \int_{G_{\IC_\nu}^{\mathrm{an}}}fc_1(\Lo_\nu)^{\wedge g+t} \right\vert \ll_{G,f} n|\lambda| + n^{-2}|\lambda|^{-1}+(\dots)
\end{equation}
(compare (\ref{equation::equidistribution3}) below) for every integer $n$ and every real $\lambda \in (0,n^{-1}]$. Choosing $n$ a square number and $\lambda= n^{-3/2}$ yields an upper bound $\ll_{G,f} n^{-1/2}$ so that $(\mathrm{EC})$ follows with $n \rightarrow \infty$. 

To conclude this sketch of our argument, let us briefly discuss the provenience of the two terms on the right-hand side of (\ref{equation::equidistribution4}). The first term stems from the error term in the expansion (compare to \cite[p.\ 162]{Zhang1998} and Lemma \ref{lemma::lambdanestimate} below)
\begin{equation*}
h_{\Ltil_n(\lambda f)}(\overline{G}_n) =
h_{\Ltil_n}(\overline{G}_n) 
+ \lambda \int_{\overline{G}_n(\IC_\nu)} f_n c_1(\Lo_n)^{\wedge g+t} + O_{\overline{G},f, n}(|\lambda|^2), \ f_n = f \circ \varphi_n,
\end{equation*}
for some integer $n \gg 1$. The integer $n$ appears in the implicit constant of $O_{\overline{G},f, n}(|\lambda|^2)$, and we have to render the dependency more precisely. This is done by applying the projection formula to (a compactification of) the isogeny $\varphi_n: G_n \rightarrow G$ of degree $n$. The second term is more or less $|\lambda^{-1}h_{\Ltil}(\overline{G}_n)|$, which is majorized by (\ref{equation::introheightdecrease}). That there is a suitable choice of $n$ and $\lambda$ relies utimately on the fact that $|\lambda^{-1}h_{\Ltil}(\overline{G}_n)|$ decreases faster than $\deg(\varphi_n)$ increases. To make a long story short, the quadraticity of the N\'eron-Tate height on the dual abelian variety $A^\vee$ is played off against the linearity in the toric part of $G$, and quadratic decay prevails over linear growth.

As is well-known (see \cite[Corollary 4]{Zhang1998}), $(\mathrm{SEC})$ implies directly the Manin-Mumford conjecture for semiabelian varieties, which was proven by Raynaud \cite{Raynaud1983,Raynaud1983a} for abelian varieties, by Laurent \cite{Laurent1984} for algebraic tori, and by Hindry \cite{Hindry1988} in our setting. Furthermore, Poonen \cite{Poonen1999} and Zhang \cite[Remark (3) on p.\ 41]{Zhang2000} pointed out that $(\mathrm{SEC})$ and the Mordell-Lang Conjecture $(\mathrm{MLC})$ imply a common generalization. For a finitely generated subgroup $\Gamma$ of $G(\IQbar)$, we set
\begin{equation*}
\Gamma^\prime = \{ x \in G(\IQbar) \ | \ \exists n \in \IZ \setminus \{ 0 \} : n x \in \Gamma \}
\end{equation*}
as well as
\begin{equation*}
\Gamma^\prime(\varepsilon) = \{ x \in G(\IQbar) \ | \ \exists y \in \Gamma^\prime, z \in G(\IQbar): x = y + z, \ \widehat{h}_L(z)\leq \varepsilon \}
\end{equation*}
for any real $\varepsilon>0$.

\textbf{Mordell-Lang plus Bogomolov Conjecture (MLBC).} \textit{For any subvariety $X \subset G$ that is not a translate of a semiabelian subvariety of $G$ by a point in $\Gamma^\prime$, there exists some $\varepsilon>0$ such that $\Gamma^\prime(\varepsilon) \cap X(\IQbar)$ is not Zariski-dense in $X$.}

By the time \cite{Poonen1999} and \cite{Zhang2000} were written, McQuillan \cite{McQuillan1995} had already proven $\mathrm{(MLC)}$ for general semiabelian varieties so that their arguments were only conditional on the then-missing $\mathrm{(SEC)}$ for an archimedean place $\nu$. In \cite{Remond2003}, R\'emond gave a proof of $(\mathrm{MLBC})$ that avoids $(\mathrm{SEC})$ and uses instead his version \cite{Remond2005} of Vojta's inequality for semiabelian varieties as well as $(\mathrm{BC})$ for semiabelian varieties \cite{David2000}. Our Theorem \ref{theorem::main} renders the original proofs of Poonen \cite{Poonen1999} and Zhang \cite{Zhang2000} unconditional, giving an alternative to R\'emond's approach.

Finally, let us remark that the availability of $(\mathrm{EC})$ for general semiabelian varieties also allows to extend Zhang's equidistribution result on almost division points \cite[Theorem 1.1]{Zhang2000} (see Remark (3) on p.\ 41 in \textit{loc.cit.}).

\textbf{Notations and conventions.} \textit{General.}
For two terms $a$ and $b$, we write $a \ll b$ if there exists a positive real number $c$ such that $a \leq c \cdot b$. If $c$ depends on some data, say an algebraic variety $X$, we write $a \ll_X b$ etc. If there is no subscript, the implied constant $c$ is absolute. We use $\gg$ similarly.

\textit{Number fields.} Throughout this article, we let $K \subset \IQbar$ denote a number field with integer ring $\caO_K$, and we set $S=\Spec(\caO_K)$. In addition, $\Sigma_f(K)$ (resp.\ $\Sigma_\infty(K)$) is the set of non-archimedean (resp.\ archimedean) places, and we set $\Sigma(K) = \Sigma_f(K) \cup \Sigma_\infty(K)$. For each $\nu \in \Sigma(K)$, we let $K_\nu$ denote the $\nu$-adic completion of $K$. By $\IC_\nu$ is denoted the completion of an algebraic closure $\overline{K}_\nu$ of $K_\nu$, by $\mathcal{O}_\nu \subset \IC_\nu$ its ring of integers, by $\mathfrak{p}_\nu$ the maximal prime ideal of $\mathcal{O}_\nu$, by $k_\nu$ the residue field $\mathcal{O}_\nu/\mathfrak{p}_\nu$, and by $p_\nu$ the residue characteristic of $\IC_\nu$. For all $\nu \in \Sigma_f(K)$, the absolute value $\vert \cdot \vert_\nu$ on $\IC_\nu$ is normalized such that $\vert p_\nu \vert_\nu = p_\nu^{-[K_\nu:\IQ_p]}$. We use the standard values of $\IR$ and $\IC$ for archimedean places. This normalization leads to an additional factor
\begin{equation*}
\delta_\nu =
\begin{cases}
2 & \text{if $\nu$ is complex archimedean,} \\
1 & \text{otherwise},
\end{cases}
\end{equation*}
in some identities. For a scheme $\mathcal{X}$ over $S$, we denote by $\mathcal{X}_\nu$ the formal completion of the special fiber $\mathcal{X} \times_S \Spec(k_\nu)$ in $\mathcal{X} \times_{S} \Spec(\caO_\nu)$. This is a formal scheme over $\mathrm{Spf}(\caO_\nu)$. 

\textit{Algebraic Geometry (General).} Denote by $k$ an arbitrary field. A \textit{$k$-variety} is a reduced scheme of finite type over $k$. By a \textit{subvariety} of a $k$-variety we mean a closed reduced subscheme. A subvariety is determined by its underlying topological space and we frequently identify both. The tangent bundle of a $k$-variety $X$ is written $TX$ and its fiber over a point $x \in X$ is denoted by $T_x X$. Furthermore, $X^{\mathrm{sm}}$ denotes the smooth locus of $X$. If $X$ is an irreducible $k$-variety, we write $\eta_X$ for its generic point. The unity of a $k$-algebraic group $G$ is written $e_G$.

For a non-negative integer $d$ and a $k$-variety $X$, a \textit{$d$-cycle} on $X$ is a finite formal sum $\sum_{i=1}^{r} n_i [Z_i]$ where each $n_i$ is an integer and each $Z_i$ is a $k$-irreducible subvariety of $X$ having dimension $d$. 

For a line bundle $L$ over a general scheme, we denote by $\mathcal{F}(L)$ its sheaf of sections.

\textit{Generic sequences.} Let $X$ be an algebraic $k$-variety. We say that a sequence $(x_i) \in X^\IN$ of closed points is \textit{$X$-generic} if none of its subsequences is contained in a proper algebraic subvariety of $X$. If the variety $X$ can be inferred from context, we simply say \textit{generic} instead of $X$-generic.

\textit{Line bundles and intersection theory.} For line bundles $L_1, L_2, \dots, L_d$ on a proper algebraic variety $X$ of dimension $d$ over a field $k$, we use the intersection numbers
\begin{equation*}
L_1 \cdot L_2 \cdots L_d \in \IZ
\end{equation*}
defined by Kleiman \cite{Kleiman1966} and Snapper \cite{Snapper1960} (see \cite[Section VI.2]{Kollar1996} for a good introduction). These coincide with the numbers
\begin{equation*}
\deg(c_1(L_1) \cap c_1(L_2) \cdots \cap c_1(L_d) \cap [X]) \in \IZ
\end{equation*}
in the terminology of \cite{Fulton1998}. If $\{ M_1, M_2, \dots, M_r \} = \{ L_1, L_2, \dots, L_d \}$ and each $M_i$ occurs $n_i$-times among $L_1,L_2,\dots,L_d$, we set
\begin{equation*}
M_1^{n_1} \cdot M_2^{n_2} \cdots M_r^{n_r} := L_1 \cdot L_2 \cdots L_d;
\end{equation*}
a similar notation is used for the Borel measures defined in Subsection \ref{section::borelmeasure} and the arithmetic intersection numbers defined in Subsection \ref{section::arithmeticintersectionpairing}. Furthermore, we write $\deg_L(X)$ for $L^{d}$. We define the \textit{volume}
\begin{equation*}
\vol (L) = \limsup_{N \rightarrow \infty} \frac{h^0(X,L^{\otimes N})}{N^{\dim(X)}/\dim(X)!}.
\end{equation*}
The group law of Picard groups of line bundles, as well as of their various metrized versions introduced in Section \ref{section::arithmeticintersectiontheory}, is written additively. For an invertible rational section $\mathbf{s}: X \dashrightarrow L$ of a line bundle $L$, its divisor is denoted by $\Div(\mathbf{s})$. The support of a divisor $D$ (resp.\ a cycle $\mathfrak{Z}$) is written $\left\vert D\right\vert$ (resp.\ $\left\vert \mathfrak{Z} \right\vert$).

\textit{Admissible formal schemes and generic fibers.} For each $\nu \in \Sigma_f(K)$, we define \textit{admissible} formal schemes over $\mathrm{Spf}(\caO_\nu)$ as in \cite[2.6]{Gubler2007a}. As there, we assign with an admissible formal scheme $\mathcal{X}_\nu$ over $\mathrm{Spf}(\caO_\nu)$ a Berkovich $\IC_\nu$-analytic space $\mathcal{X}_{\nu,\eta}$, its \textit{generic fiber}.

\textit{Continuity and smoothness.} 
We use $\mathscr{C}^0$ (resp.\ $\mathscr{C}^\infty$) as an abbreviation for continuous (resp.\ smooth). For any topological space $X$, $\mathscr{C}^0(X)$ denotes the continuous functions on $X$ and  $\mathscr{C}^0_c(X)$ the continuous functions on $X$ having compact support.

\textit{Tangent spaces.}
For each differentiable or real-analytic manifold $M$ we denote by $TM$ its tangent bundle. The fiber of $TM$ over $x \in M$ is denoted $T_x M$.

Let $Y$ be a complex manifold (e.g., $(X^{\mathrm{sm}})^{\mathrm{an}}_{\IC_\nu}$ for an algebraic variety $X$ over $K$ and some $\nu \in \Sigma_\infty(K)$). To $Y$ is associated its real tangent bundle $T_{\IR} Y$ and its holomorphic tangent bundle $T^{1,0}_{\IC} Y$ (e.g., $(T X)^{\mathrm{an}}_{\IC_\nu}$ for a smooth complex algebraic variety $X$ and some $\nu \in \Sigma_\infty(K)$). The reader is referred to \cite[Section 0.2]{Griffiths1994} and \cite[Section 1.2]{Huybrechts2005} for details.

\textit{Riemann metrics.} A \textit{Riemannian metric} $g$ on a manifold $M$ is a smooth $\IR$-linear map
\begin{equation*}
g: T_\IR M \otimes T_\IR M \rightarrow \IR \times M
\end{equation*}
of $\IR$-bundles such that $g(t,t) \in \IR^{\geq 0} \times M$ for all $t \in T_{\IR,x} M$. (We usually drop the second factor and write e.g.\ $g(t,t)\geq 0$.) We say that it is non-degenerate if $g(t,t) = \{ 0\} \times M$ is equivalent to $t \in \{0\} \times M$. The volume element $\vol(g)$ associated with a Riemannian metric $g$ on an orientable manifold is defined as usual (see e.g.\ \cite[p.\ 362]{Helgason2001}).

A Riemannian metric on a complex manifold $Y$ is just a Riemannian metric on the underlying real-analytic manifold. With each hermitian metric on $Y$, we can associate a Riemannian metric (as e.g.\ in \cite[pp.\ 361-362]{Helgason2001}).

\textit{Measure Theory.} 
We adhere to the definitions used in \cite[Chapters 1 and 2]{Rudin1987}. The \textit{support} of a measure $\mu$ on $X$ is the set of all points $x \in X$ for which every neighborhood $N$ of $x$ satisfies $\mu(N)>0$.

\textit{Complex spaces.} Let $S$ be a reduced complex (analytic) space. Recall that this means that $S$ is locally biholomorphic to a closed analytic subvariety $V$ in a complex domain $U \subset \IC^n$. A function $f$ on $S$ is \textit{smooth} if, for each such sufficiently small local chart, it is the restriction of a smooth function on $U$. We write $\mathscr{C}^\infty(S)$ for the smooth real-valued functions on $S$. In the same way, we use local charts to define \textit{plurisubharmonic} functions on $S$ as restrictions. 

Similarly, a \textit{$\mathscr{C}^\infty$-form} $\omega$ on $S$ is a differential form on the smooth locus $S^\mathrm{sm}$ of $S$ with the following extension property: $S$ can be covered by local charts $V \subset U \subset \IC^n$ as above such that for each chart the differential form $\omega|_{V^{\mathrm{sm}}}$ is the restriction of a $\mathscr{C}^\infty$-differential form on $U$. There are also well-defined linear operators $d$ and $d^c = i/2\pi(\delbar - \del)$ on the $\mathscr{C}^\infty$-differential forms on $S$. For each local chart $V \subset U \subset \IC^n$, these are simply the restrictions of the operators of the same name on $\IC^n$. Having defined $\mathscr{C}^\infty$-differential forms on $S$, we can define \textit{currents} by duality as in \cite[Définition 1.1]{Demailly1985}.

For each $\mathscr{C}^0$-hermitian metric $\Vert \cdot \Vert$ on a holomorphic line bundle $L$ over $S$, we can define a \textit{Chern current} $c_1(L,\Vert \cdot \Vert)$ in the usual way; if $\mathbf{s}: U \rightarrow L$ is a non-zero section over some open subset $U \subset S$, we set $c_1(L,\Vert \cdot \Vert)|_U= dd^c (-\log \Vert \mathbf{s} \Vert )$. This is a $\mathscr{C}^\infty$-differential form on $S$ if $\Vert \cdot \Vert$ is $\mathscr{C}^\infty$.

A current on $S$ is called \textit{semipositive} here if it is ``(faiblement) positif'' according to \cite[Définition 1.2]{Demailly1985}. For two currents $T_1$ and $T_2$ on $S$, we use $T_1 \geq T_2$ as a shorthand for the statement that $T_1 - T_2$ is semipositive. 

A \textit{$\mathscr{C}^0$-hermitian metric} $\omega$ on $S$ is a $\mathscr{C}^0$-hermitian metric on $T^{1,0}_\IC S^{\mathrm{sm}}$ such that $S$ can be covered as above by local charts $V \subset U \subset \IC^n$ on each of which $\omega|_{V^{\mathrm{sm}}}$ is the restriction of a $\mathscr{C}^0$-hermitian metric on $T^{1,0}_\IC U$. Note that each $\mathscr{C}^0$-hermitian metric $\omega$ on $S$ yields a current $[\omega]$ of bidegree $(1,1)$ on $S$. We say that a current $T$ of bidegree $(1,1)$ is \textit{strictly positive} if there exists a $\mathscr{C}^0$-hermitian metric $\omega$ on $S$ such that $T \geq [\omega]$; we write $T > 0$ in this case.




\section{Arithmetic Intersection Theory}
\label{section::arithmeticintersectiontheory}

In this section, we recall the basics of arithmetic intersection theory for (adelically) metrized line bundles. Our basic references are \cite{Chambert-Loir2000,Chambert-Loir2006,Chambert-Loir2011, Gubler1997, Gubler1998, Gubler2003, Gubler2007a, Maillot2000, Moriwaki2014,Yuan2012,Zhang1995a}. We only need to associate intersection numbers with integrable (adelically) metrized line bundles on projective varieties over a number field and can hence avoid arithmetic Chow rings \cite{Bost1994, Faltings1992, Gillet1990, Soule1992}. 


\subsection{Setup}

Throughout this section, we consider a number field $K$ and an irreducible, projective $K$-variety $X$ of pure dimension $d$. Let $\nu \in \Sigma_f(K)$ and $e$ be a positive integer. Since $X$ is projective, the analytic space $X_{\IC_\nu}^{\mathrm{an}}$ is Hausdorff and compact for every $\nu \in \Sigma(K)$ (\cite[Theorem 3.4.8 (ii)]{Berkovich1990}).

Let $L_1, L_2, \dots, L_{k}$ be line bundles on $X$. (The case $k=0$ is allowed here.) A formal $\caO_\nu$-model $(\mathcal{X}_\nu,\mathcal{L}_{1,\nu},\dots,\mathcal{L}_{k,\nu})$ of $(X,L_1^{\otimes e_1},\dots, L_k^{\otimes e_k})$ consists of an admissible formal scheme $\mathcal{X}_\nu$ over $\mathrm{Spf}(\mathcal{O}_\nu)$ and line bundles $\mathcal{L}_{i,\nu}$ on $\mathcal{X}_\nu$ such that $X_{\IC_\nu}^{\mathrm{an}} = \mathcal{X}_{\nu,\eta}$ and $(L^{\otimes e_i}_i)_{\IC_\nu}^{\mathrm{an}} = \mathcal{L}_{i,\nu,\eta}$. Similarly, an $S$-model $(\mathcal{X},\mathcal{L}_1,\dots,\mathcal{L}_k)$ of $(X,L_1^{\otimes e_1},\dots,L_k^{\otimes e_k})$ is a flat, integral, projective $S$-scheme $\mathcal{X}$ and a collection of line bundles $\mathcal{L}_i$ on $\mathcal{X}$ such that $X = \mathcal{X} \times_S K$ and $L^{\otimes e_i}_i = \mathcal{L}_i \times_S K$. For each $\nu\in \Sigma_f(K)$, a $S$-model $(\mathcal{X},\mathcal{L}_1,\dots,\mathcal{L}_k)$ of $(X,L_1^{\otimes e_1},\dots,L_k^{\otimes e_k})$ gives naturally rise to a formal $\caO_\nu$-model $(\mathcal{X}_\nu,\mathcal{L}_{1,\nu},\dots,\mathcal{L}_{k,\nu})$ of $(X,L_1^{\otimes e_1},\dots, L_k^{\otimes e_k})$ by taking formal completions.

\subsection{Metrics}
\label{section::metrics}

Let $L$ be a line bundle on $X$. A \textit{$\nu$-metric} on $L$ is a 
map $\Vert \cdot \Vert: L_{\IC_\nu}^{\mathrm{an}} \rightarrow \IR_{\geq 0}$ such that for each open subset $U \subseteq X$, each section $\mathbf{s}: U \rightarrow L$, and every $f \in \mathcal{O}_X(U)$, 
\begin{itemize}
\item[(a)] the function $\Vert \mathbf{s} \Vert = \Vert \cdot \Vert \circ \mathbf{s}^{\mathrm{an}}_{\IC_\nu}: U^{\mathrm{an}}_{\IC_\nu} \rightarrow \IR_{\geq 0}$ is continuous,
\item[(b)] if $\mathbf{s}$ vanishes nowhere on $U$, then $\Vert \mathbf{s} \Vert: U^{\mathrm{an}}_{\IC_\nu} \rightarrow \IR_{\geq 0}$ has no zeros,
\item[(c)] for every $f \in \mathcal{O}_X(U)$, we have $\Vert f \mathbf{s} \Vert = |f| \cdot \Vert \mathbf{s} \Vert$ on $U^{\mathrm{an}}_{\IC_\nu}$, and
\item[(d)] $\Vert \cdot \Vert$ is $\Gal(\IC_\nu/K_\nu)$-invariant\footnote{Each $\sigma \in \Gal(\IC_\nu/K_\nu)$ induces an algebraic map $\sigma_L: L_{\IC_\nu} \rightarrow L_{\IC_\nu}$ over $\sigma: \IC_\nu \rightarrow \IC_\nu$, and invariance means here that $\Vert \cdot \Vert \circ \sigma_L^{\mathrm{an}} = \Vert \cdot \Vert$. If $\nu \in \Sigma_f(K)$, the analytic space $L_{K_\nu}^{\mathrm{an}}$ is the quotient of $L_{\IC_\nu}^{\mathrm{an}}$ by $\Gal(\IC_\nu/K_\nu)$ (see \cite[Corollary 1.3.6]{Berkovich1990}) so that one can equivalently demand that $\Vert \cdot \Vert$ is the pullback of a map $L_{K_\nu}^{\mathrm{an}} \rightarrow \IR_{\geq 0}$ satisfying (a),(b), and  (c) with $\IC_\nu$ replaced by $K_\nu$.}.
\end{itemize}
For a sequence of $\nu$-metrics $\Vert \cdot \Vert^{(n)}$ on $L$, we say that $\Vert \cdot \Vert^{(n)}$ \textit{converges uniformly} to a $\nu$-metric $\Vert \cdot \Vert$ if the $\mathscr{C}^0$-functions $\Vert \cdot \Vert^{(n)}/\Vert \cdot \Vert$ converge uniformly to $1$ on $X_{\IC_\nu}^{\mathrm{an}}$. A \textit{$\nu$-metrized line bundle} $\overline{L}= (L,\Vert \cdot \Vert)$ consists of a line bundle $L$ on $X$ and a $\nu$-metric on $L$. An \textit{isometry} $f: (L,\Vert \cdot \Vert) \rightarrow (M, \Vert \cdot \Vert^\prime)$ between two $\nu$-metrized line bundles is an isomorphism $f: L \rightarrow M$ of line bundles that transports $\Vert \cdot \Vert$ to $\Vert \cdot \Vert^\prime$. The set of isometry classes of $\nu$-metrized line bundles on $X$ is denoted by $\overline{\Pic}_{\nu}(X)$. For $\overline{L}, \overline{M} \in \overline{\Pic}_{\nu}(X)$, $\No \in \overline{\Pic}_{\nu}(Y)$, and every algebraic map $f: X \rightarrow Y$, we define $\Lo + \Mo$, $-\Lo$, and $f^\ast \No$ as elements of $\overline{\Pic}_{\nu}(X)$ in the obvious way. If $\Mo = (L^{\otimes e}, \Vert \cdot \Vert)$ is a $\nu$-metrized line bundle for some non-zero integer $e$, there is a unique $\nu$-metrized line bundle $\Lo=(L, \Vert \cdot \Vert^{1/e})$ such that $e \Lo \approx \Mo$. For later applications we also set $\overline{\Pic}_{\nu}(X)_\IQ = \overline{\Pic}_{\nu}(X) \otimes_\IZ \IQ$.

If $\nu\in \Sigma_\infty(K)$, a $\nu$-metric is just a $\mathscr{C}^0$-hermitian metric on $L^{\mathrm{an}}_{\IC_\nu}$. A $\nu$-metrized line bundle $(L, \Vert \cdot \Vert)$ is called \textit{semipositive} (resp.\ \textit{strictly positive}) if $dd^c c_1(L, \Vert \cdot \Vert) \geq 0$ (resp.\ $dd^c c_1(L, \Vert \cdot \Vert)>0$).

For $\nu \in \Sigma_f(K)$, every formal $\nu$-model $(\mathcal{X}_\nu,\mathcal{L}_\nu)$ of $(X,L^{\otimes e})$ induces a $\nu$-metric on $L$ (see \cite[Section 7]{Gubler1998}): 
Let $\{ \mathfrak{U}_i \}$ be a covering of $\mathcal{X}_{\nu}$ by formal open subschemes such that there are isomorphisms $\varphi_{i}: \mathcal{L}_\nu|_{\mathfrak{U}_i} \rightarrow \IA^1_{\mathfrak{U}_i}$ and each $\mathfrak{U}_{i,\eta} \subseteq X^{\mathrm{an}}_{\IC_\nu}$ is an affinoid $\IC_\nu$-analytic space. The maps $\varphi_i$ induce isomorphisms $\varphi_{i,\eta}: (L^{\otimes e})^{\mathrm{an}}_{\IC_\nu}|_{\mathfrak{U}_{i,\eta}} \rightarrow \IA^1_{\mathfrak{U}_{i,\eta}}$ over $\mathfrak{U}_{i,\eta}$. For every $x \in \mathfrak{U}_{i,\eta}$, we set $\Vert v \Vert_{i} =|\varphi_{i,\eta}(v)|_\nu$ for all $v \in (L^{\otimes e})^{\mathrm{an}}_{\IC_\nu}|_x$. On overlaps $\mathfrak{U}_{i} \cap \mathfrak{U}_{j}$, the composites $\varphi_j \circ \varphi_i^{-1}$ are described by elements $f_{ij} \in \caO^\times(\mathfrak{U}_{i} \cap \mathfrak{U}_{j})$. Each $f_{ij}$ induces a meromorphic function $f_{ij,\eta}$ on $\mathfrak{U}_{i,\eta} \cap \mathfrak{U}_{j,\eta}$ with supremum norm $\leq 1$ such that $\varphi_j \circ \varphi_i^{-1}|_{\mathfrak{U}_{i.\eta} \cap \mathfrak{U}_{j,\eta}}$ is multiplication by $f_{ij,\eta} \in \caO^\times(\mathfrak{U}_{i,\eta} \cap \mathfrak{U}_{j,\eta})$. Since $f_{ij,\eta}=f_{ji,\eta}^{-1}$, this implies $|f_{ij,\eta}(x)|=1$ for all $x\in \mathfrak{U}_{i,\eta} \cap \mathfrak{U}_{j,\eta}$. Consequently, the $\nu$-metrics $\{ \Vert \cdot \Vert_{i} \}$ glue to a $\nu$-metric $\Vert \cdot \Vert_{\mathcal{L}_\nu}$ on $(L^{\otimes e})^{\mathrm{an}}_{\IC_\nu}$ so that we obtain a $\nu$-metric $\Vert \cdot \Vert_{\mathcal{L}_\nu}^{1/e}$ on $L_{\IC_\nu}^{\mathrm{an}}$. The $\nu$-metric $\Vert \cdot \Vert_{\mathcal{L}_\nu}^{1/e}$ on $L_{\IC_\nu}^{\mathrm{an}}$ is called \textit{formally semipositive} if $\mathcal{L}_{\nu,s}$ can be chosen to be a nef line bundle on the special fiber $\mathcal{X}_{\nu,s}$. A general $\nu$-metrized line bundle is called \textit{semipositive} if its $\nu$-metric is the uniform limit of formally semipositive  $\nu$-metrics.

For every $\Gal(\IC_\nu/K_\nu)$-invariant  $g \in \mathscr{C}^0(X^{\mathrm{an}}_{\IC_\nu})$, we define the $\nu$-metrized line bundle $\overline{\caO}_X(g) = (\caO_X, \Vert \cdot \Vert)$ by setting $\Vert 1_x \Vert_\nu = e^{-g(x)}$ for all $x \in X^{\mathrm{an}}_{\IC_\nu}$. For a $\nu$-metrized line bundle $\Lo$, we write $\Lo(g)$ instead of $\Lo \otimes \overline{\caO}_X(g)$.

A \textit{$\nu$-metrized pseudo-divisor} on $X$ is a triple $(\overline{L}, Y, \mathbf{s})$ consisting of a $\nu$-metrized line bundle $\overline{L}$ over $X$, an algebraic subvariety $Y \subset X$, and a nowhere vanishing section $\mathbf{s}: X \setminus Y \rightarrow L$. This is an analogue of the pseudo-divisors used in algebraic intersection theory \cite[Definition 2.2.1]{Fulton1998}. Our definition is more restrictive than the one introduced in \cite[3.4]{Gubler2007a}, but fully suffices for our purposes.


\subsection{Borel measures}
\label{section::borelmeasure}

For both archimedean and non-archimedean places $\nu \in \Sigma(K)$, a collection of semipositive $\nu$-metrized line bundles $\overline{L}_1,\overline{L}_2,\dots,\overline{L}_{d} \in \overline{\Pic}_\nu(X)$ gives rise to a finite regular Borel measure $c_1(\overline{L}_1) \wedge c_1(\overline{L}_2) \wedge
\cdots \wedge c_1(\overline{L}_{d})$ on $X_{\IC_\nu}^{\mathrm{an}}$. If $X$ is smooth, $\nu \in \Sigma_\infty(K)$,  and the metrics of $\overline{L}_1,\overline{L}_2,\dots,\overline{L}_{d}$ are $\mathscr{C}^\infty$, we just take the Borel measure given by integrating with the wedge product of the Chern forms $c_1(\Lo_i)$ ($i \in \{ 1,\dots, d\} $). If the metrics of $\Lo_1,\dots,\Lo_d$ are only $\mathscr{C}^0$ or $\nu \in \Sigma_f(K)$, the definition of $c_1(\Lo_1) \wedge c_1(\Lo_2) \wedge \cdots \wedge c_1(\Lo_d)$ is more involved, but we nevertheless retain the notation from the smooth archimedean case for ease of notation.

We start with defining $c_1(\Lo_1) \wedge c_1(\Lo_2) \wedge \cdots \wedge c_1(\Lo_d)$ for archimedean $\nu \in \Sigma_\infty(K)$. Let $U \subseteq X_{\IC_\nu}^{\mathrm{an}}$ be a sufficiently small open set such that there exist non-vanishing sections $\mathbf{s}_i: U \rightarrow (L_i)_{\IC_\nu}^{\mathrm{an}}$. By assumption, the Chern currents $c_1(\Lo_i|_U)= dd^c (- \log \Vert \mathbf{s}_i \Vert)$ are semipositive. Shrinking $U$ if necessary, their local potentials $(- \log \Vert \mathbf{s}_i \Vert)$ are bounded on $U$. As proposed by Bedford and Taylor \cite{Bedford1982} (see \cite[Chapter 3]{Guedj2017} for smooth $X$ and \cite{Demailly1985} for general $X$), we can define a semipositive, closed current
\begin{equation*}
T_U = dd^c (-\log \Vert \mathbf{s}_1 \Vert ) \wedge dd^c (-\log \Vert \mathbf{s}_2 \Vert ) \wedge \cdots \wedge dd^c (-\log \Vert \mathbf{s}_d \Vert ) 
\end{equation*}
on $U$. 
This current depends only on $c_1(\Lo_i|_{U})$ and not on the local potentials $(-\log \Vert \mathbf{s}_i \Vert)$.
Consequently, the currents $T_{U_1}$ and $T_{U_2}$ agree on $U_1 \cap U_2$ for any two open sets $U_1$ and $U_2$ as above. A partition of unity argument (see \cite[Theorem 2.2.4]{Hoermander2003}) shows that there is a unique, semipositive, closed current $T$ on $X^{\mathrm{an}}_{\IC_\nu}$ of bidegree $(d,d)$ restricting to $T_U$ on every open $U$ as above. Because of its non-negativity (see \cite[Theorem 2.1.7]{Hoermander2003}), $T$ is actually a distribution of order $0$. Using Riesz representation theorem \cite[Theorem 2.14]{Rudin1987}, we obtain a unique Borel measure $c_1(\Lo_1) \wedge c_1(\Lo_2) \wedge \cdots \wedge c_1(\Lo_d)$ on $X_{\IC_\nu}^{\mathrm{an}}$. It is a consequence of the Chern-Levine-Nirenberg inequalities \cite{Chern1969} that $c_1(\Lo_1) \wedge c_1(\Lo_2) \wedge \cdots \wedge c_1(\Lo_d)$ does not charge locally pluripolar sets and has finite mass (see \cite[Theorems 3.9 and 3.14]{Guedj2017}\footnote{\label{footnote}Literally, these theorems only apply if $X_{\IC_\nu}^{\mathrm{an}}$ is smooth. By Hironaka's resolution theorem \cite{Hironaka1964} (see also \cite{Kollar2007}), there always exists a smooth variety $\widetilde{X}$ and a birational, projective morphism $f: \widetilde{X} \rightarrow X$. One can then use \cite[Theorem 3.14]{Guedj2017} to prove that $c_1(f^\ast\Lo_1) \wedge c_1(f^\ast\Lo_2) \wedge \cdots \wedge c_1(f^\ast\Lo_d)$ does not charge locally pluripolar sets in $\widetilde{X}$. In particular, no mass is attached to the ramification locus $E$ of $f$. We can then obtain the same assertion for $c_1(\Lo_1) \wedge c_1(\Lo_2) \wedge \cdots \wedge c_1(\Lo_d)$ by means of Lemma \ref{lemma::chernforms} (c).}).\footnote{Finiteness also implies the claimed regularity by \cite[Theorem 2.18]{Rudin1987}.}

For non-archimedean $\nu \in \Sigma_f(K)$, we use the measures introduced by Chambert-Loir (see \cite[Section 2]{Chambert-Loir2006}) and define $c_1(\Lo_1) \wedge \cdots \wedge c_1(\Lo_d)$ as in \cite[(3.8)]{Gubler2007a}. For this, we choose non-zero rational sections $\mathbf{s}_i: X \dashrightarrow L_i$ ($i=1,\dots,d$). These define $\nu$-metrized pseudo-divisors $\widehat{\Div}_\nu(\mathbf{s}_i) =(\overline{L}_i, \left\vert\Div(\mathbf{s}_{i})\right\vert, \mathbf{s}_{i})$ on $X$. With each function $g \in \mathscr{C}^0(X^{\mathrm{an}}_{\IC_\nu})$, we can furthermore associate the $\nu$-metrized pseudo-divisor $(\overline{\caO}_X(g), \emptyset, 1)$. Defining $\lambda_{(\overline{\caO}_X(g), \emptyset, 1), \widehat{\Div}_\nu(\mathbf{s}_1), \dots, \widehat{\Div}_\nu(\mathbf{s}_d)}(X)$ as the local height of $X$ in the sense of Gubler \cite[Section 9]{Gubler2003}, we obtain a functional
\begin{equation*}
\mathscr{C}^0(X^{\mathrm{an}}_{\IC_\nu}) \longrightarrow \IR, \ g \longmapsto \lambda_{(\overline{\caO}_X(g), \emptyset, 1), \widehat{\Div}_\nu(\mathbf{s}_1), \dots, \widehat{\Div}_\nu(\mathbf{s}_d)}(X),
\end{equation*}
which induces a Borel measure on $X^{\mathrm{an}}_{\IC_\nu}$ by the Riesz representation theorem \cite[Theorem 2.14]{Rudin1987}. The ensuing measure is independent of the choice of $\mathbf{s}_i$ by \cite[Theorem 3.5 (c)]{Gubler2007a}, and we denote it by $c_1(\overline{L}_1) \wedge c_1(\overline{L}_2) \wedge
\cdots \wedge c_1(\overline{L}_{d})$ in the sequel. Moreover, the measure is finite by \cite[Corollary 3.9 (c)]{Gubler2007a} and hence regular by \cite[Theorem 2.18]{Rudin1987}.

In spite of the different definitions, the following two lemmas allow to treat both cases $\nu \in \Sigma_f(K)$ and $\nu \in \Sigma_\infty(K)$ in a uniform manner.

\begin{lemma} 
\label{lemma::uniformconvergence}
For each $i \in \{ 1, \dots, d\}$, let $L_i$ be a line bundle on $X$ and $\Vert \cdot \Vert^{(n)}_i: L_i^{\mathrm{an}} \rightarrow \IR_{\geq 0}$ a sequence of semipositive $\nu$-metrics converging uniformly to $\Vert \cdot \Vert_i$. Writing $\Lo_i^{(n)} = (L_i, \Vert \cdot \Vert^{(n)}_i)$ and $\Lo_i = (L_i, \Vert \cdot \Vert_i)$, there is then a weak convergence
\begin{equation*}
 c_1(\Lo_1^{(n)}) \wedge c_1(\Lo_2^{(n)}) \wedge \cdots \wedge c_1(\Lo_d^{(n)})
\longrightarrow
 c_1(\Lo_1) \wedge c_1(\Lo_2) \wedge \cdots \wedge c_1(\Lo_d)
\ (n \rightarrow \infty)
\end{equation*}
of measures.
\end{lemma}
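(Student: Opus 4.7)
The plan is to reduce weak convergence to a bound involving one ``perturbation metric'' at a time, then exploit the bilinearity of Chern currents together with the symmetry of the local height pairing. First, I would introduce the $\Gal(\IC_\nu/K_\nu)$-invariant continuous functions $g_i^{(n)} := -\log(\Vert\cdot\Vert_i^{(n)}/\Vert\cdot\Vert_i)$ on $X_{\IC_\nu}^{\mathrm{an}}$, so that $\Lo_i^{(n)} = \Lo_i + \overline{\caO}_X(g_i^{(n)})$ in $\overline{\Pic}_\nu(X)$ and $\sup |g_i^{(n)}| \to 0$. Bilinearity of the Chern current construction then yields the telescoping identity
\begin{equation*}
\bigwedge_{i=1}^d c_1(\Lo_i^{(n)}) - \bigwedge_{i=1}^d c_1(\Lo_i) = \sum_{j=1}^d \Big(\bigwedge_{i<j} c_1(\Lo_i^{(n)})\Big) \wedge c_1(\overline{\caO}_X(g_j^{(n)})) \wedge \Big(\bigwedge_{i>j} c_1(\Lo_i)\Big),
\end{equation*}
in which each summand is a signed regular Borel measure, namely the difference of the two semipositive measures $c_1(\Lo_j^{(n)}) \wedge \omega_j^{(n)}$ and $c_1(\Lo_j) \wedge \omega_j^{(n)}$, where $\omega_j^{(n)} := \bigwedge_{i<j} c_1(\Lo_i^{(n)}) \wedge \bigwedge_{i>j} c_1(\Lo_i)$.

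It thus suffices to show that the pairing of each such summand with a test function $f \in \mathscr{C}^0(X_{\IC_\nu}^{\mathrm{an}})$ tends to $0$. Denoting this pairing by $I_j^{(n)}$, I would invoke the symmetry
\begin{equation*}
I_j^{(n)} = \int_{X_{\IC_\nu}^{\mathrm{an}}} g_j^{(n)} \cdot c_1(\overline{\caO}_X(f)) \wedge \omega_j^{(n)}
\end{equation*}
(with the right-hand side interpreted analogously as a signed measure) to obtain the estimate $|I_j^{(n)}| \leq C_f \cdot \sup|g_j^{(n)}|$, where $C_f$ is the total variation of $c_1(\overline{\caO}_X(f)) \wedge \omega_j^{(n)}$. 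Since the semipositive metrics $\Vert \cdot \Vert_i^{(n)}$ are uniformly close to the $\Vert \cdot \Vert_i$, the constant $C_f$ can be controlled uniformly in $n$ (by Chern--Levine--Nirenberg in the archimedean case, by Gubler's mass estimate for non-archimedean local heights). Taking $n \to \infty$ yields $I_j^{(n)} \to 0$, and the lemma follows.

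The main obstacle is justifying the swap identity together with the uniform mass bound. In the archimedean setting, $c_1(\overline{\caO}_X(f)) = dd^c(-f)$ is a priori only a distributional object for merely continuous $f$, so I would approximate $f$ uniformly by $\mathscr{C}^\infty$ functions $f_\varepsilon$, apply Stokes' theorem at the smooth level, and pass to the limit using that the measures $c_1(\Lo_1) \wedge \cdots \wedge c_1(\Lo_d)$ do not charge pluripolar sets and have finite mass. In the non-archimedean setting, the swap becomes a direct application of the commutativity of Gubler's local height pairing (\cite[Section 9]{Gubler2003}) to the pseudo-divisors $(\overline{\caO}_X(f), \emptyset, 1)$ and $(\overline{\caO}_X(g_j^{(n)}), \emptyset, 1)$ featuring in the definition recalled in Subsection \ref{section::borelmeasure}, while the mass bound is built into the continuity of local heights in their metric arguments. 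Once these two ingredients are in place, the conclusion of the lemma is immediate.
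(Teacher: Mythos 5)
Your argument is correct, but it is not the route the paper takes: the paper disposes of this lemma in one line by citing Demailly's continuity theorem for Monge--Amp\`ere operators under uniform convergence of potentials in the archimedean case and Gubler's \cite[Proposition 3.12]{Gubler2007a} in the non-archimedean case. What you have written is, in effect, a self-contained proof of those cited results: the telescoping decomposition via $\Lo_i^{(n)} = \Lo_i + \overline{\caO}_X(g_i^{(n)})$, the symmetry of the pairing to move $dd^c$ onto the test function, and the Chern--Levine--Nirenberg/local-height mass bounds are exactly the ingredients of the standard proofs. The citation is more economical; your version has the merit of making the quantitative estimate $|I_j^{(n)}| \ll_f \sup|g_j^{(n)}|$ explicit and of treating both places in parallel. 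Two small points of care in your write-up. First, for merely continuous $f$ the current $c_1(\overline{\caO}_X(f))$ is not of order $0$, so ``the total variation of $c_1(\overline{\caO}_X(f)) \wedge \omega_j^{(n)}$'' is not literally defined; this is repaired by the smoothing $f_\varepsilon$ you introduce, but then to pass from $f_\varepsilon$ back to $f$ you also need an $n$-independent bound on the total variation of $c_1(\overline{\caO}_X(g_j^{(n)})) \wedge \omega_j^{(n)}$ itself, which follows from writing it as the difference of $c_1(\Lo_j^{(n)}) \wedge \omega_j^{(n)}$ and $c_1(\Lo_j) \wedge \omega_j^{(n)}$, each of total mass $L_1 \cdots L_d$ by the total-mass formula. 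Second, on a singular $X$ the archimedean integration by parts should be performed after pulling back to a resolution, as the paper does elsewhere for the non-charging of pluripolar sets. With these caveats the proof goes through.
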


\begin{proof} For $\nu \in \Sigma_\infty(K)$, this is \cite[Corollary 1.6]{Demailly1993}. The non-archimedean case $\nu \in \Sigma_f(K)$ is \cite[Proposition 3.12]{Gubler2007a}.
\end{proof}

\begin{lemma}
\label{lemma::chernforms}
Let $\Lo_1,\dots,\Lo_d,\Lo_1^\prime \in \overline{\Pic}_{\nu}(X)$ be semipositive.
\begin{enumerate}
\item[(a)] (Multilinearity) We have
\begin{equation*}
c_1(\Lo_1+\Lo_1^\prime) \wedge c_1(\Lo_2) \wedge \cdots \wedge c_1(\Lo_d)
= 
c_1(\Lo_1) \wedge c_1(\Lo_2) \wedge \cdots \wedge c_1(\Lo_d)
+
c_1(\Lo_1^\prime) \wedge c_1(\Lo_2)\wedge \cdots \wedge c_1(\Lo_d).
\end{equation*}
\item[(b)] (Commutativity) For any permutation $\sigma: \{ 1, \dots, n \} \rightarrow \{1, \dots, n\}$, we have 
\begin{equation*}
c_1(\Lo_{\sigma(1)}) \wedge c_1(\Lo_{\sigma(2)}) \wedge \cdots\wedge c_1(\Lo_{\sigma(d)}) = c_1(\Lo_{1}) \wedge c_1(\Lo_{2}) \wedge \cdots \wedge c_1(\Lo_{d}).
\end{equation*}
\item[(c)] (Projection Formula) Let $Y$ be an irreducible, projective $K$-variety, and let $f: Y \rightarrow X$ be a generically finite surjective map of degree $\deg(f)$. Then, the push-forward measure
\begin{equation*}
(f^{\mathrm{an}}_{\IC_\nu})_\ast( c_1(f^\ast\Lo_1) \wedge c_1(f^\ast\Lo_2) \wedge \cdots \wedge c_1(f^\ast\Lo_d))
\end{equation*} 
equals 
\begin{equation*}
\deg(f) \cdot  c_1(\Lo_1) \wedge c_1(\Lo_2) \wedge \cdots \wedge c_1(\Lo_d).
\end{equation*}
\item[(d)] (Total mass) We have 
\begin{equation*}
\int_{X^{\mathrm{an}}_{\IC_\nu}}  c_1(\Lo_{1}) \wedge c_1(\Lo_2) \wedge \cdots \wedge c_1(\Lo_d) =  L_1 \cdot L_2 \cdots L_d.
\end{equation*}
\end{enumerate}
\end{lemma}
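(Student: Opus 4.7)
The plan is to establish all four assertions in a uniform manner by reducing to ``nice'' metrics where the statements are either classical or follow from direct computations on a model. By Lemma \ref{lemma::uniformconvergence}, both sides of each identity depend continuously (for weak convergence) on uniform limits of semipositive $\nu$-metrics. Since every semipositive $\nu$-metric is by definition a uniform limit of smooth (resp. formally semipositive) metrics for $\nu$ archimedean (resp. non-archimedean), it suffices to verify each assertion for such nice metrics on each $L_i$ and $L_1^\prime$. Throughout, the identities are between regular Borel measures, so it is enough to test them on $\mathscr{C}^0_c(X^{\mathrm{an}}_{\IC_\nu})$ (indeed, on $\mathscr{C}^\infty$ functions in the archimedean case).

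In the archimedean case with $\mathscr{C}^\infty$-hermitian metrics, the measure $c_1(\Lo_1)\wedge\cdots\wedge c_1(\Lo_d)$ is the Borel measure associated with the top-degree form that is the literal wedge product of the Chern forms $c_1(\Lo_i)$. Multilinearity (a) and commutativity (b) are then immediate from the corresponding properties of the wedge product of $(1,1)$-forms. The projection formula (c) for $f\colon Y\to X$ generically finite of degree $\deg(f)$ follows from the standard identity $(f^{\mathrm{an}}_{\IC_\nu})_\ast(f^\ast\omega)=\deg(f)\cdot\omega$ for top-degree continuous forms, after first restricting to the open subset over which $f$ is \'etale (the complement is a proper analytic subset, hence of measure zero for the smooth target measure; the singular locus of $X$ is handled exactly as in footnote \ref{footnote} via Hironaka). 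Total mass (d) is the defining property $\int_X c_1(L_1)\wedge\cdots\wedge c_1(L_d) = L_1\cdot L_2\cdots L_d$ of the Kleiman--Snapper intersection number for smooth metrics, and extends to the continuous plurisubharmonic case by Lemma \ref{lemma::uniformconvergence}.

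In the non-archimedean case with formally semipositive metrics, assertions (a), (b), (d) follow directly from the analogous properties of Gubler's local heights, through which the measures are defined: the functional $g\mapsto \lambda_{(\overline{\caO}_X(g),\emptyset,1),\widehat{\mathrm{Div}}_\nu(\mathbf{s}_1),\dots,\widehat{\mathrm{Div}}_\nu(\mathbf{s}_d)}(X)$ is multilinear and symmetric in its arguments (see \cite[Section 9]{Gubler2003}), and evaluating on $g\equiv 0$ with carefully chosen sections $\mathbf{s}_i$ yields the intersection number $L_1\cdots L_d$ via the relation between local heights and intersection numbers on a common formal model (use \cite[Section 10]{Gubler2003} or the equivalent description of the Chambert-Loir measure as a weighted sum of Dirac masses at Shilov points of irreducible components of the special fiber, with weights given by intersection numbers on those components). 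For the projection formula (c), one passes to common formal $\mathcal{O}_\nu$-models $\mathcal{Y}_\nu$ and $\mathcal{X}_\nu$ of $Y$ and $X$ on which $f$ extends to a morphism $\mathcal{Y}_\nu\to\mathcal{X}_\nu$ (existence via flattening techniques of Raynaud--Gruson or resolution arguments \'a la \cite{Gubler2007a}); the identity then reduces to the classical projection formula for intersection numbers on the special fibers, or alternatively to functoriality of local heights under proper surjective morphisms.

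The main obstacle is the projection formula (c) in the non-archimedean case: one must ensure that after uniformly approximating by formal metrics, the pullback metrics remain formally semipositive, and that the formal model of $f$ can be arranged so that push-forward of top intersection numbers on the special fiber behaves as expected. The standard way around this is to note that $f^\ast$ commutes with uniform approximation, so one can first approximate $\Lo_i$ on $X$ by formal models and then pull back; the projection formula on each approximating pair follows from the model-theoretic intersection-theoretic projection formula, and Lemma \ref{lemma::uniformconvergence} together with continuity of push-forward for the weak topology transfers the identity to the limit.
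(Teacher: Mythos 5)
Your overall strategy — verify the identities for smooth (resp.\ model) metrics and pass to the limit via Lemma \ref{lemma::uniformconvergence}, citing Gubler's theory for the non-archimedean case — is the same as the paper's, and the non-archimedean half of your argument is essentially a (more verbose) version of the citation to \cite[Corollary 3.9 and Proposition 3.12]{Gubler2007a}.

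There is, however, one concrete gap in your archimedean reduction. You assert that every semipositive $\nu$-metric at an archimedean place is ``by definition a uniform limit of smooth'' semipositive metrics. That is not the definition used here: for $\nu \in \Sigma_\infty(K)$, semipositivity of $(L,\Vert\cdot\Vert)$ means that the Chern \emph{current} $c_1(L,\Vert\cdot\Vert)$ is $\geq 0$ (Subsection \ref{section::metrics}); the uniform-limit characterization is the definition only in the non-archimedean case. The existence of a \emph{global} uniform approximation by smooth semipositive metrics is a nontrivial regularization statement (it generally requires twisting by a strictly positive metrized ample bundle and applying Maillot's regularization lemma, as in Appendix \ref{appendixA}), so your appeal to Lemma \ref{lemma::uniformconvergence} presupposes an approximation you have not produced. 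For assertions (a)--(c) this is repairable because they are local: one uses local decreasing plurisubharmonic smoothings of the potentials (\cite[Proposition 1.42]{Guedj2017}) together with Bedford--Taylor continuity, which is exactly what the paper does. For assertion (d) the statement is global, and the local smoothings do not suffice; one must either carry out the global $\mathscr{C}^\infty$-regularization (the route of Appendix \ref{appendixA}) or quote \cite[Corollary 9.3]{Demailly1993} directly. As written, your proof of (d) in the archimedean $\mathscr{C}^0$ case rests on the unjustified global approximation.
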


\begin{proof} 

In case of $\nu \in \Sigma_\infty(K)$, the first three assertions are evident if all metrics are $\mathscr{C}^\infty$. The local nature of the first three statements allows as above to use plurisubharmonic smoothings (see \cite[Proposition 1.42]{Guedj2017}) and Lemma \ref{lemma::uniformconvergence}. For the fourth statement, which is stated as \cite[Corollary 9.3]{Demailly1993}, one needs a global $\mathscr{C}^\infty$-regularization of the $\mathscr{C}^0$-metrics (compare our Appendix \ref{appendixA}).

The non-archimedean case $\nu \in \Sigma_f(K)$ is \cite[Corollary 3.9]{Gubler2007a} for the first three assertions and \cite[Proposition 3.12]{Gubler2007a} for the fourth one.
\end{proof}

We say that $\Lo \in \overline{\Pic}_\nu(X)_{\IQ}$ is \textit{integrable} if there exists a non-zero integer $n$ and semipositive $\Lo_1, \Lo_2 \in  \overline{\Pic}_{\nu}(X)$ such that $n \Lo = \Lo_1 - \Lo_2$. By (a) of the above lemma, we can define signed Borel measures $c_1(\Lo_{1}) \wedge c_1(\Lo_2) \wedge \cdots \wedge c_1(\Lo_d)$ for integrable $\Lo_i \in \overline{\Pic}_{\nu}(X)_\IQ$.

\subsection{Hermitian line bundles on arithmetic varieties}
\label{section::hermitianlinebundles}

Let $\mathcal{X}$ be a flat, integral, projective $S$-scheme of relative dimension $d$. A hermitian line bundle $\overline{\mathcal{L}}$ on $\mathcal{X}$ is a collection $(\mathcal{L}, {\{\Vert\cdot\Vert_\nu\}_{\nu \in \Sigma_\infty(K)}})$ consisting of a line bundle $\mathcal{L}$ on $\mathcal{X}$ and a $\nu$-metric $\Vert\cdot\Vert_\nu$ on $\mathcal{L}_K$ for each archimedean place $\nu \in \Sigma_\infty(K)$; if $K_\nu = \IR$, we assume additionally that the $\nu$-metric is invariant under $\Gal(\IC_\nu/K_\nu)$ (i.e., under complex conjugation on $X_{\IC_\nu}^{\mathrm{an}}$). We say that the hermitian line bundles $\overline{\mathcal{L}}$ and $\overline{\mathcal{M}}$ are \textit{isometric} if there is an isomorphism $\mathcal{L} \approx \mathcal{M}$ preserving the metrics at all archimedean places. The \textit{arithmetic Picard group} $\widehat{\Pic}(\mathcal{X})$ is the set of isometry classes of hermitian line bundles on $\mathcal{X}$.

A hermitian line bundle $\overline{\mathcal{L}} \in \widehat{\Pic}(\mathcal{X})=(\mathcal{L},\{ \Vert \cdot \Vert_\nu\}_{\nu \in \Sigma_\infty(K)})$ is called \textit{vertically semipositive}, if $\mathcal{L}$ is relatively nef with respect to $\mathcal{X} \rightarrow S$ and each $(\mathcal{L}_K, \Vert \cdot \Vert_\nu)$ is a semipositive $\nu$-metrized line bundle. This definition extends naturally to $\widehat{\Pic}(\mathcal{X})_\IQ = \widehat{\Pic}(\mathcal{X}) \otimes_\IZ \IQ$.

\subsection{Metrized line bundles on $K$-varieties}
\label{section::adelicmetrics}

A collection of $\nu$-metrics $\{ \Vert \cdot \Vert_\nu \}_{\nu \in \Sigma(K)}$ on $L$ is called an \textit{adelic metric} if there exists an $S$-model $(\mathcal{X},\mathcal{L})$ of $(X,L^{\otimes e})$ such that $\Vert \cdot \Vert_\nu= \Vert \cdot \Vert_{\mathcal{L}_\nu}^{1/e}$ on $L_{\IC_\nu}^{\mathrm{an}}$ for all but finitely many $\nu \in \Sigma_f(K)$ (\textit{coherence condition}). The pair $\Ltil=(L, \{ \Vert \cdot \Vert_\nu \}_{\nu \in \Sigma(K)})$ is then called \textit{a(n adelically) metrized line bundle}. For any place $\nu \in \Sigma(K)$, we denote the $\nu$-metrized line bundle $(L,  \Vert \cdot \Vert_{\nu} )$
associated with $\Ltil$ by $\Lo_{\nu}$.

Again, there is a natural notion of \textit{isometry} between metrized line bundles and the isometry classes of metrized line bundles form a Picard group $\widehat{\mathrm{Pic}}(X)$. If $\widetilde{L}$ and $\widetilde{M}$ are metrized line bundles with underlying line bundles $L$ and $M$, there is an obvious way to endow $L \otimes M$ and $L^{-1}$ with the structure of metrized line bundles. We write $\widetilde{L} \otimes \widetilde{M}$ and $\widetilde{L}^{-1}$, respectively, for these metrized line bundles. If $f: Y \rightarrow X$ is an algebraic map between irreducible, projective $K$-varieties and $\widetilde{L}$ is a metrized line bundle, we can endow the pull-back $f^\ast L$ with a canonical adelic metric, obtaining a metrized line bundle $f^\ast \widetilde{L}$. For a closed immersion $f: Y \hookrightarrow X$ of $K$-varieties, we write $\Ltil|_Y$ instead of $f^\ast \Ltil$.

We define the \textit{base change} $\widetilde{L}_{K^\prime} = (L_{K^\prime}, \{ \Vert \cdot \Vert_{\nu^\prime} \}_{\nu^\prime \in \Sigma(K^\prime)})$ of a $\nu$-metrized line bundle $\widetilde{L}$ as follows: For each place $\nu \in \Sigma(K^\prime)$ lying above $\nu \in \Sigma(K)$, we have a (non-canonical) identification $\IC_\nu \approx \IC_{\nu^\prime}$ extending $K_\nu \hookrightarrow K^\prime_{\nu^\prime}$. This yields an isomorphism $ L_{\IC_{\nu}}^{\mathrm{an}} = (L_{K^\prime})_{\IC_{\nu^\prime}}^{\mathrm{an}}$, which we use to set $\Vert \cdot \Vert_{\nu^\prime} = \Vert \cdot \Vert_\nu^{[K^\prime_{\nu^\prime}:K_\nu]}$ if $\delta_\nu = \delta_{\nu}^\prime$ and $\Vert \cdot \Vert_{\nu^\prime} = \Vert \cdot \Vert_\nu$ elsewise\footnote{Note that the latter case means that $\delta_\nu = 1$ and $\delta_{\nu^\prime}=2$ (i.e., $\nu$ is a real archimedean and $\nu$ is a complex archimedean place).}. As $\Vert \cdot \Vert_\nu$ is $\Gal(\IC_\nu/K_\nu)$-invariant (property (d) in Subsection \ref{section::metrics})), this gives a well-defined metrized line bundle $\widetilde{L}_{K^\prime}$ on $X_{K^\prime}$ not depending on the identification $\IC_\nu \approx \IC_{\nu^\prime}$.

Every hermitian line bundle on an $S$-model induces a metrized line bundle. In fact, let $(\mathcal{X},\mathcal{L})$ be an $S$-model of $(X,L^{\otimes e})$ and let $\overline{\mathcal{L}} = (\mathcal{L}, \{\Vert\cdot\Vert_\nu \}_{\nu \in \Sigma_\infty(K)}) \in \widehat{\Pic}(\mathcal{X})$ be a hermitian line bundle. For each $\nu \in \Sigma_f(K)$ (resp.\ $\nu \in \Sigma_\infty(K)$), we set $\Vert \cdot \Vert_\nu^\prime = \Vert \cdot \Vert _{\mathcal{L}_\nu}^{1/e}$ (resp.\ $\Vert \cdot \Vert_\nu^{\prime} = \Vert \cdot \Vert_\nu^{1/e}$). Then $\{ \Vert \cdot \Vert_\nu^{\prime} \}_{\nu \in \Sigma(K)}$ is an adelic metric on $L$, and we call adelic metrics of this type \textit{algebraic}. The corresponding metrized line bundles are called \textit{algebraically metrized}.

All other adelic metrics of interest for us arise from algebraic adelic metrics by means of a limit process. Let $\{ \Vert \cdot \Vert_{i,\nu} \}$, $i\in \IN$, be a sequence of adelic metrics on $L$. We say that these metrics \textit{converge uniformly} to an adelic metric $\{ \Vert \cdot \Vert_{\nu} \}$ on $L$ if there exists a finite set of places $\Sigma_0 \subset \Sigma(K)$ such that $\Vert \cdot \Vert_{i,\nu} / \Vert \cdot \Vert_{\nu} \rightarrow 1$ ($i \rightarrow \infty$) uniformly on $X_{\IC_\nu}^{\mathrm{an}}$ for all $\nu \in \Sigma_0$ and $\Vert \cdot \Vert_{i,\nu} = \Vert \cdot \Vert_{\nu}$ for all $\nu \notin \Sigma_0$ and $i$.

The standard metrics $\{\vert \cdot \vert_\nu \}_{\nu \in \Sigma(K)}$ on $\caO_X$ yield a metrized line bundle $\widetilde{\caO}_X=(\caO_X, \{\vert \cdot \vert_\nu \})$. Similarly, we define for each $\Gal(\IC_{\nu_0}/K_{\nu_0})$-invariant 
function $f \in \mathscr{C}^0(X^{\mathrm{an}}_{\IC_{\nu_0}})$, $\nu_0 \in \Sigma(K)$, a metrized line bundle $\widetilde{\caO}_X(f)=(\caO_X, \{\Vert \cdot \Vert_\nu \}_{\nu \in \Sigma(K)})$ by setting $\Vert \cdot \Vert_{\nu_0} = e^{-f}\vert \cdot \vert_{\nu_0}$ and $\Vert \cdot \Vert_\nu = \vert \cdot \vert_\nu$ if $\nu \neq \nu_0$. For each $\Ltil \in \widehat{\Pic}(X)$, we write $\Ltil(f)$ instead of $\Ltil \otimes \widetilde{\caO}_X(f)$.

Finally, an element $\Ltil \in \widehat{\Pic}(X)$ is called \textit{vertically semipositive (resp.\ vertically integrable)} if each $\Lo_\nu \in \widehat{\Pic}_\nu(X)$, $\nu \in \Sigma(K)$, is semipositive (resp.\ integrable). Again, we can extend this terminology to $\widehat{\Pic}(X)_\IQ = \widehat{\Pic}(X) \otimes_\IZ \IQ$. 


\subsection{Arithmetic intersection pairings}
\label{section::arithmeticintersectionpairing}
Given vertically integrable $\Ltil_i = (L_i, \{\Vert \cdot \Vert_{i,\nu} \}_{\nu \in \Sigma(K)}) \in \widehat{\Pic}(X)$ ($0 \leq i \leq d^\prime \leq d$), we next define an \textit{arithmetic intersection number}
\begin{equation*}
\Ltil_1 \cdot \Ltil_2 \cdots \Ltil_{d^\prime+1} \cdot \mathfrak{Z} \in \IR
\end{equation*}
for every $d^\prime$-cycle $\mathfrak{Z}$ on $X$. We rely on Gubler's theory of local heights \cite{Gubler1997,Gubler1998,Gubler2003} for this task.
We start by choosing non-zero rational sections $\mathbf{s}_i: X \dashrightarrow L_i$ such that
\begin{equation}
\label{equation::disjointintersectioncondition}
|\Div(\mathbf{s}_1)| \cap |\Div(\mathbf{s}_2)| \cap \cdots \cap |\Div(\mathbf{s}_{d^\prime+1})| \cap |\mathfrak{Z}| = \emptyset.
\end{equation}
There always exist rational sections $\mathbf{s}_i$ meeting this condition. Each section $\mathbf{s}_i$ defines a $\nu$-metrized pseudo-divisor $\widehat{\Div}_\nu(\mathbf{s}_{i})=(\overline{L}_{i,\nu}, \left\vert\Div(\mathbf{s}_{i})\right\vert, \mathbf{s}_{i})$ for any place $\nu \in \Sigma_f(K)$. By our assumption on vertical integrability, we can use \cite[Theorem 10.6]{Gubler2003} to obtain a collection of (unique) \textit{local heights}
\begin{equation*}
\lambda_{\widehat{\Div}_\nu(\mathbf{s}_{1}), \widehat{\Div}_\nu(\mathbf{s}_{2}),\dots,\widehat{\Div}_\nu(\mathbf{s}_{d^\prime+1})}(\mathfrak{Z}) \in \IR, \ \nu\in \Sigma_f(K).
\end{equation*}

A similar definition for $\nu \in \Sigma_\infty(K)$ is given in \cite[Theorem 10.6]{Gubler2003} under the assumption that the hermitian metrics $\Vert \cdot \Vert_{i,\nu}$ are $\mathscr{C}^\infty$. This assumption can be lifted by using the induction formula (see \cite[Proposition 3.5]{Gubler2003}). For each subvariety $Z$ of $X$ having dimension $d^\prime$, this formula states that the local height $\lambda_{\widehat{\Div}_\nu(\mathbf{s}_{1}), \widehat{\Div}_\nu(\mathbf{s}_{2}),\dots,\widehat{\Div}_\nu(\mathbf{s}_{d^\prime+1})}([Z])$ equals
\begin{multline}
\label{equation::induction_formula}
\lambda_{\widehat{\Div}_\nu(\mathbf{s}_{1}), \widehat{\Div}_\nu(\mathbf{s}_{2}),\dots,\widehat{\Div}_\nu(\mathbf{s}_{d^\prime})}(\Div(\mathbf{s}_{d^\prime + 1}|_Z)) 
\\ - \int_{Z^{\mathrm{an}}_{\IC_\nu}} \log \Vert \mathbf{s}_{d^\prime + 1} \Vert_{d^\prime+1,\nu}  c_1(\Lo_{1,\nu}) \wedge c_1(\Lo_{2,\nu}) \wedge \dots \wedge c_1(\Lo_{d^\prime+1,\nu})
\end{multline}
if all $\Vert \cdot \Vert_{i,\nu}$ are $\mathscr{C}^\infty$ and $\Div(\mathbf{s}_{d^\prime+1}|_Z)$ is considered as a $(d^\prime-1)$-cycle on $Z$. The induction formula can be also used as a recursive definition of the local height (as is done in \cite{Chambert-Loir2006,Chambert-Loir2011, Chambert-Loir2009}). By \cite[Théorème 4.1]{Chambert-Loir2009}, the integrals appearing in \eqref{equation::induction_formula} are always finite. Using a regularization lemma \cite[Théorème 4.6.1]{Maillot2000}, one can then deduce the standard properties for the local heights thus defined in the $\mathscr{C}^0$-case from the $\mathscr{C}^\infty$-case. This is straightforward, but requires some checking. The reader is referred to Appendix \ref{appendixA} for details.

Arithmetic intersection numbers can be simply defined as sums of local heights, by setting
\begin{equation}
\label{equation::definition_intersectnumber}
\Ltil_1 \cdot \Ltil_2 \cdots \Ltil_{d^\prime+1} \cdot [\mathfrak{Z}] = \sum_{\nu \in \Sigma(K)} \delta_\nu \cdot \lambda_{\widehat{\Div}_\nu(\mathbf{s}_{1}), \widehat{\Div}_\nu(\mathbf{s}_{2}),\dots,\widehat{\Div}_\nu(\mathbf{s}_{d^\prime+1})}(\mathfrak{Z}).
\end{equation}
For this to be a valid definition, the right-hand side has to be independent of the chosen rational sections $\mathbf{s}_i$ and all except finitely many summands have to be zero. The former fact follows from the product formula by \cite[Propositions 3.7 and 9.4]{Gubler2003}, and the latter one follows from the compatibility with ordinary intersection theory \cite[Section 6]{Gubler1998}. This compatibility also shows that the intersection numbers in (\ref{equation::definition_intersectnumber}) generalize those defined by Gillet and Soulé \cite{Gillet1990, Soule1992} and their extension by Zhang \cite{Zhang1995a}.

To simplify notation, we write $\Ltil_1 \cdot \Ltil_2 \cdots \Ltil_{d+1}$ instead of $\Ltil_1 \cdot \Ltil_2 \cdots \Ltil_{d+1} \cdot [X]$. From \cite[Theorem 10.6 (b)]{Gubler2003}, we know that
\begin{equation*}
\Ltil_1 \cdot \Ltil_2 \cdots \Ltil_{d^\prime+1} \cdot [Z] = \Ltil_1|_Z \cdot \Ltil_2|_{Z} \cdots \Ltil_{d^\prime+1}|_{Z}
\end{equation*}
for any irreducible subvariety $Z \subset X$ of dimension $d^\prime$.

\begin{lemma}
\label{lemma::intersectionnumber}
Let $\Ltil_1,\dots,\Ltil_{d+1},\Ltil_1^\prime \in \widehat{\Pic}(X)$ be vertically integrable. 
\begin{enumerate}
\item[(a)] (Multilinearity) We have
\begin{equation*}
(\Ltil_1 + \Ltil_1^\prime) \cdot \Ltil_2 \cdots \Ltil_{d^\prime+1} =
\Ltil_1 \cdot \Ltil_2 \cdots \Ltil_{d^\prime+1} +\Ltil_1^\prime \cdot \Ltil_2 \cdots \Ltil_{d^\prime+1}.
\end{equation*}
\item[(b)] (Commutativity) For any permutation $\sigma: \{ 1, \dots, d^\prime + 1 \} \rightarrow \{1, \dots, d^\prime + 1 \}$, we have 
\begin{equation*}
\Ltil_{\sigma(1)} \cdot \Ltil_{\sigma(2)} \cdots \Ltil_{\sigma(d^\prime+1)} = \Ltil_1 \cdot \Ltil_2 \cdots \Ltil_{d^\prime+1}.
\end{equation*}
\item[(c)] (Projection Formula) Let $Y$ be an irreducible, projective $K$-variety, and let $f: Y \rightarrow X$ be a generically finite surjective map of degree $\deg(f)$. Then,
\begin{equation*}
f^\ast \Ltil_1 \cdot f^\ast \Ltil_2 \cdots f^\ast \Ltil_{d+1}= \deg(f) \cdot ( \Ltil_1 \cdot \Ltil_2 \cdots \Ltil_{d+1}) .
\end{equation*}
\item[(d)] (Uniform Limits) Given sequences $(\Ltil_i^{(n)}) \in \widehat{\Pic}(X)^{\IN}$, $i \in \{ 1, \dots, d\}$, of vertically semipositive metrized line bundles such that $\Ltil_i^{(n)}$ converges uniformly to $\Ltil_i$, we have
\begin{equation*}
\Ltil_1^{(n)} \cdot \Ltil_2^{(n)} \cdots \Ltil_{d+1}^{(n)}
\longrightarrow
\Ltil_1 \cdot \Ltil_2 \cdots \Ltil_{d+1} \quad (n \rightarrow \infty).
\end{equation*}
\item[(e)] (Scaling invariance) If $\Ltil_1 = (L_1, \{ \Vert \cdot \Vert_\nu \}_{\nu \in \Sigma(K)})$ and $\Ltil_1^\prime = (L_1, \{ |c|_\nu \cdot \Vert \cdot \Vert_\nu \}_{\nu \in \Sigma(K)})$ for some $c \in K^\times$, we have
\begin{equation*}
\Ltil_1^\prime \cdot \Ltil_2 \cdots \Ltil_{d+1} = \Ltil_1 \cdot \Ltil_2 \cdots \Ltil_{d+1}.
\end{equation*}
\item[(f)] (Base change) If $K^\prime/K$ is a finite extension, then
\begin{equation*}
\Ltil_{1,K^\prime} \cdot \Ltil_{2,K^\prime} \cdots \Ltil_{d+1,K^\prime} = [K^\prime:K] \cdot (\Ltil_1 \cdot \Ltil_2 \cdots \Ltil_{d+1}).
\end{equation*}
\end{enumerate}
\end{lemma}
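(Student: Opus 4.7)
The plan is to reduce each assertion to the corresponding property of Gubler's local heights at a single place $\nu \in \Sigma(K)$ and then to sum over $\nu$ with the weights $\delta_\nu$ appearing in \eqref{equation::definition_intersectnumber}. Throughout I would pick non-zero rational sections $\mathbf{s}_i: X \dashrightarrow L_i$ (and $\mathbf{s}_1^\prime$ of $L_1^\prime$ for (a)) satisfying the disjointness condition \eqref{equation::disjointintersectioncondition}, which exist by a standard moving argument on the projective variety $X$. Whenever only a $\mathscr{C}^0$-regular archimedean metric is present, I would reduce to the $\mathscr{C}^\infty$ case through the regularization procedure of \cite[Théorème 4.6.1]{Maillot2000} combined with (d); this is the content of Appendix \ref{appendixA}.

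Parts (a), (b), and (c) are then formal. Multilinearity, commutativity, and the projection formula for local heights in their pseudo-divisor entries are contained in \cite{Gubler2003} for both non-archimedean places and $\mathscr{C}^\infty$-archimedean metrics. For (c), the pulled-back sections $f^\ast \mathbf{s}_i$ still meet the disjointness condition on $Y$ because $f$ is generically finite, and the local projection formula supplies the factor $\deg(f)$ at each place; summing over $\nu$ then yields the global identity.

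The technical heart is (d). Fix sections $\mathbf{s}_i$ once and for all and let $\Sigma_0 \subset \Sigma(K)$ be the finite set of places outside which the metrics in the sequence coincide with the limit. At each $\nu \in \Sigma_0$ I would argue by induction on $d$ via the induction formula \eqref{equation::induction_formula}, which expresses the local height as a sum of $\lambda_{\text{lower}}$ on $\Div(\mathbf{s}_{d+1}|_X)$ and an integral of $-\log\Vert\mathbf{s}_{d+1}\Vert_\nu$ against a wedge of Chern measures. The integrand converges uniformly away from $|\Div(\mathbf{s}_{d+1})|$ by hypothesis, while Lemma \ref{lemma::uniformconvergence} supplies weak convergence of the Chern measures. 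The main obstacle will be reconciling mere weak convergence of measures with an integrand that tends to $-\infty$ on $|\Div(\mathbf{s}_{d+1})|$; I would handle this by splitting $\log\Vert\mathbf{s}_{d+1}\Vert$ locally as $k\log|z| + \log\Vert\mathbf{s}_0\Vert$ with $\mathbf{s}_0$ nowhere vanishing, noting that the first summand is metric-independent while the second converges uniformly, and using the a priori mass bound from Lemma \ref{lemma::chernforms} (d). The non-archimedean counterpart is \cite[Proposition 3.12]{Gubler2007a}. Since $\Sigma_0$ is finite, the full sum converges.

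For (e), evaluating the metric $|c|_\nu \cdot \Vert \cdot \Vert_\nu$ of $\Ltil_1^\prime$ on $\mathbf{s}_1$ gives the same function on $X_{\IC_\nu}^{\mathrm{an}}$ as evaluating $\Vert\cdot\Vert_\nu$ on $c\mathbf{s}_1$, so that every local height for $\Ltil_1^\prime$ computed with $\mathbf{s}_1$ equals the local height for $\Ltil_1$ computed with $c\mathbf{s}_1$; (e) then follows from the independence of \eqref{equation::definition_intersectnumber} on the chosen section (the product formula in disguise). For (f), I would group $\nu^\prime \in \Sigma(K^\prime)$ above each $\nu \in \Sigma(K)$ and verify in the finite, real-to-real, real-to-complex, and complex-to-complex subcases that $\sum_{\nu^\prime \mid \nu}\delta_{\nu^\prime}\lambda_{\nu^\prime} = [K^\prime:K]\delta_\nu\lambda_\nu$, using $\IR$-linearity of local heights in $-\log\Vert\cdot\Vert$, the paper's normalization $\Vert\cdot\Vert_{\nu^\prime} = \Vert\cdot\Vert_\nu^{[K^\prime_{\nu^\prime}:K_\nu]}$ (respectively $\Vert\cdot\Vert_\nu$ in the real-to-complex case, where the jump $\delta_{\nu^\prime}=2$ restores the balance), together with $\sum_{\nu^\prime\mid\nu}[K^\prime_{\nu^\prime}:K_\nu] = [K^\prime:K]$. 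Summing over $\nu$ produces the required factor $[K^\prime:K]$.
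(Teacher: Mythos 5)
Your overall route is the same as the paper's: the arithmetic intersection number is a weighted sum of Gubler's local heights, so (a), (b), (c) and (d) are read off from \cite[Theorem 10.6]{Gubler2003} (extended to archimedean $\mathscr{C}^0$-metrics as in Appendix \ref{appendixA}), (e) comes from the independence of \eqref{equation::definition_intersectnumber} of the chosen sections together with the product formula (\cite[Propositions 3.7 and 9.4]{Gubler2003}), and (f) follows from the base-change normalization of the metrics, the induction formula, and $[K^\prime:K]=\sum_{\nu^\prime\mid\nu}[K^\prime_{\nu^\prime}:K_\nu]$. Those parts of your proposal are fine and match the paper.

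The one place where your write-up is not yet a proof is the re-derivation of (d) at an archimedean place. After splitting $\log\Vert\mathbf{s}_{d+1}\Vert^{(n)}_{d+1}=\log\Vert\mathbf{s}_{d+1}\Vert_{d+1}+\phi_n$ with $\phi_n\to 0$ uniformly, the term $\int\phi_n\,\mu_n$ is indeed controlled by the total-mass bound of Lemma \ref{lemma::chernforms} (d). But the remaining term $\int\log\Vert\mathbf{s}_{d+1}\Vert_{d+1}\,c_1(\Lo_1^{(n)})\wedge\cdots\wedge c_1(\Lo_d^{(n)})$ is a \emph{fixed unbounded} integrand against a \emph{varying} sequence of measures, and neither weak convergence (Lemma \ref{lemma::uniformconvergence}, which only tests against continuous functions) nor a mass bound suffices to pass to the limit there; your local decomposition $k\log|z|+\log\Vert\mathbf{s}_0\Vert$ isolates the singular part but does not make it integrable against weak limits. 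What is actually needed is the local Chern--Levine--Nirenberg inequality in its quantitative form, bounding $\int_{U}|u|\,dd^cu_1\wedge\cdots\wedge dd^cu_d$ by sup-norms of the potentials $u_i$ on a slightly larger open set, together with the uniform convergence of those potentials; this is exactly the estimate carried out in Appendix \ref{appendixA} (see the treatment of \eqref{equation::horrible_integral} via \cite[Theorem 3.14]{Guedj2017} and \cite[Th\'eor\`eme 4.1]{Chambert-Loir2009}). If you route (d) through the induction formula as you propose, you must either import that estimate or, as the paper does, simply invoke the continuity statement already contained in \cite[Theorem 10.6]{Gubler2003} as extended in Appendix \ref{appendixA}.
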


\begin{proof} Each of the first four statements follows from the respective property of Gubler's local heights (\cite[Theorem 10.6]{Gubler2003}), which extends to archimedean $\mathscr{C}^0$-metrics by Appendix \ref{appendixA}.

Statement (e) is a consequence of \cite[Propositions 3.7 and 9.4]{Gubler2003} and the product formula. 

Statement (f) is a consequence of our normalization, as can be seen from the induction formula (\cite[Proposition 3.5 or Remark 9.5]{Gubler2003}), using that $[K^\prime:K] = \sum_{\nu^\prime | \nu} [K^\prime_{\nu^\prime}: K_\nu]$. 
\end{proof}

We need also an extension of the projection formula mentioned in the above lemma, which is a slight generalization of \cite[Proposition 1.3]{Moriwaki2000} to our setting.

\begin{lemma} 
\label{lemma::heightfibrations}
Let $Y$ be an irreducible, projective $K$-variety, and let $f: Y \rightarrow X$ be a proper surjective map. Set $d=\dim(X)$ as well as $d^\prime=\dim(Y)$. Then,
\begin{equation*}
\Ltil_1 \cdot \Ltil_2 \cdots \Ltil_{d-d^\prime} \cdot f^\ast \Ntil_{1} \cdot f^\ast \Ntil_2 \cdots f^\ast \Ntil_{d^\prime+1} = (L_{1,\eta_Y} \cdot L_{2,\eta_Y} \cdots L_{d-d^\prime,\eta_Y}) ( \Ntil_{1} \cdot \Ntil_2 \cdots \Ntil_{d^\prime+1} ).
\end{equation*}
for all $\Ltil_1,\dots,\Ltil_{d-d^\prime} \in \widehat{\Pic}(Y)$, $\Ntil_{1},\dots,\Ntil_{d^\prime+1} \in \widehat{\Pic}(X)$.
\end{lemma}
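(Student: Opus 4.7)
My plan is to proceed by induction on the relative dimension $r := \dim(Y) - \dim(X) = d^\prime - d$, reducing one factor $\Ltil_i$ at a time via the induction formula \eqref{equation::induction_formula} and exploiting a dimension-count vanishing for the resulting boundary integrals. (The displayed statement reads most naturally with $r$ in place of ``$d-d^\prime$'' and with $L_{i,\eta_Y}$ interpreted as the restriction of $L_i$ to the generic fiber $Y_{\eta_X}$ of $f$.) By multilinearity (Lemma \ref{lemma::intersectionnumber}(a) together with Lemma \ref{lemma::chernforms}(a)), I may assume throughout that all $\Ltil_i$ and $\Ntil_j$ are vertically semipositive. The base case $r=0$ is exactly the projection formula of Lemma \ref{lemma::intersectionnumber}(c), with the empty product of generic-fiber intersections interpreted as $\deg(f)$.

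For the inductive step I pick a non-zero rational section $\mathbf{s}_1 \colon Y \dashrightarrow L_1$ whose restriction to $Y_{\eta_X}$ is non-zero and whose divisor, together with sufficiently general rational sections of the remaining line bundles, satisfies \eqref{equation::disjointintersectioncondition}. Applying the induction formula place by place and summing with the weights $\delta_\nu$ then yields
\begin{equation*}
\Ltil_1 \cdots \Ltil_r \cdot f^\ast \Ntil_1 \cdots f^\ast \Ntil_{d+1} = \Ltil_2 \cdots \Ltil_r \cdot f^\ast \Ntil_1 \cdots f^\ast \Ntil_{d+1} \cdot [\Div(\mathbf{s}_1)] - T,
\end{equation*}
where $T = \sum_{\nu \in \Sigma(K)} \delta_\nu \int_{Y_{\IC_\nu}^{\mathrm{an}}} \log \Vert \mathbf{s}_1 \Vert_{1,\nu} \, \mu_\nu$ and $\mu_\nu := c_1(\Lo_{2,\nu}) \wedge \cdots \wedge c_1(\Lo_{r,\nu}) \wedge c_1(f^\ast\No_{1,\nu}) \wedge \cdots \wedge c_1(f^\ast\No_{d+1,\nu})$.

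The crux of the argument is that $T=0$. Each $\mu_\nu$ is a non-negative finite regular Borel measure on the compact space $Y_{\IC_\nu}^{\mathrm{an}}$, and by Lemma \ref{lemma::chernforms}(d) its total mass equals the classical intersection number $L_2 \cdots L_r \cdot f^\ast N_1 \cdots f^\ast N_{d+1}$ on $Y$. The classical projection formula rewrites this as $L_2 \cdots L_r \cdot f^\ast(N_1 \cdots N_{d+1})$, which vanishes because $N_1 \cdots N_{d+1}$ is a cycle of codimension $d+1$ on the $d$-dimensional variety $X$. A non-negative finite Borel measure of total mass zero is identically zero, whence $\mu_\nu=0$ and $T=0$. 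Decomposing $\Div(\mathbf{s}_1) = \sum_j m_j [W_j]$, the components $W_j$ that surject onto $X$ have relative dimension $r-1$ and are handled directly by the inductive hypothesis applied to $f|_{W_j} \colon W_j \to X$; for the vertical components, whose images $f(W_j)$ have dimension $< d$, the same total-mass-zero argument (applied recursively to the over-determined $(d+1)$-fold pulled-back intersection) forces their contributions to vanish. Reassembling the horizontal contributions and invoking the classical identity $\sum_j m_j \bigl(L_2|_{W_{j,\eta_X}} \cdots L_r|_{W_{j,\eta_X}}\bigr) = L_1|_{Y_{\eta_X}} \cdot L_2|_{Y_{\eta_X}} \cdots L_r|_{Y_{\eta_X}}$ on the generic fiber closes the induction.

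The main obstacle I anticipate is the non-archimedean passage from ``total mass zero'' to ``zero measure'': since Chambert-Loir's measures are defined via Gubler's local heights rather than directly as forms, one must secure non-negativity, finiteness, and regularity of $\mu_\nu$ through the defining functional via \cite[Corollary 3.9]{Gubler2007a} before the measure-theoretic conclusion is available; in the archimedean case the same step is immediate from the Bedford--Taylor construction. A secondary technical issue is the existence of rational sections in sufficiently general position at each inductive step, which I expect to handle by first twisting $L_1$ by a very positive pullback from $X$ and then using multilinearity to remove the twist.
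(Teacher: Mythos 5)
Your proof is correct and follows essentially the same route as the paper's: the paper's proof is a one-line reference to adapting \cite[Proposition 2.3]{BurgosGil2016a}, which is precisely the induction you carry out — base case the projection formula of Lemma \ref{lemma::intersectionnumber} (c), inductive step via the induction formula, with the boundary integrals vanishing because the relevant Chambert-Loir/Bedford--Taylor measures are non-negative of total mass zero by a dimension count through the geometric projection formula. You also correctly repaired the typographical swap of $d$ and $d^\prime$ in the statement and the intended reading of $L_{i,\eta_Y}$ as the restriction to the generic fiber.
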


\begin{proof} 
The proof of \cite[Proposition 2.3]{BurgosGil2016a} can be straightforwardly adapted to our situation, starting from Lemma \ref{lemma::intersectionnumber} (c) and using the induction formula \cite[Proposition 3.5 and Remark 9.5]{Gubler2003}; note that, for any place $\nu \in \Sigma(K)$, the integral occurring in the induction formula is finite by \cite[Théorème 4.1]{Chambert-Loir2009}.
\end{proof}

We conclude with two further direct consequences of the induction formula \cite[Proposition 3.5 and Remark 9.5]{Gubler2003}. Fix some $\nu \in \Sigma(K)$ and $f \in \mathscr{C}^0(X^{\mathrm{an}}_{\IC_\nu})$ such that $\overline{\caO}_X(f) \in \overline{\Pic}_\nu(X)_{\IQ}$ is integrable. (This implies that $\widetilde{\caO}_X(f)$ is vertically integrable.) Then, we have
\begin{equation}
\label{equation::intersection_integral}
\widetilde{L}_1 \cdot \widetilde{L}_2 \cdots \widetilde{L}_{d} \cdot \widetilde{\caO}_X(f)^i
= \delta_\nu \int_{X^{\mathrm{an}}_{\IC_\nu}} f c_1(\Lo_{1,\nu}) \wedge c_1(\Lo_{2,\nu}) \wedge \dots \wedge c_1(\Lo_{d+1-i,\nu}) \wedge c_1(\overline{\caO}_{X}(f))^{i-1}.
\end{equation}
If $\nu \in \Sigma_\infty(K)$ and we consider $\kappa \in \IR$ as a constant function on $X_{\IC_\nu}^{\mathrm{an}}$, we hence have
\begin{equation*}
(\Ltil+ \widetilde{\caO}_{X}(\kappa))^{d+1} = \Ltil^{d+1} + \delta_\nu \kappa (d+1) L^d.
\end{equation*}
If $L$ is nef, this means
\begin{equation}
\label{equation::intersection_Lc}
(\Ltil+ \widetilde{\caO}_{X}(\kappa))^{d+1} = \Ltil^{d+1} + \delta_\nu \kappa (d+1) \vol(L)
\end{equation}
by the algebraic Riemann-Roch Theorem (\cite[Corollary 1.4.41]{Lazarsfeld2004}).

As for the Borel measures defined in Subsection \ref{section::borelmeasure}, Lemma \ref{lemma::intersectionnumber} (a) allows to extend the definition of the arithmetic intersection number to vertically integrable elements of $\widehat{\Pic}(X)_\IQ$. The above results evidently remain valid in this generality.

\subsection{Heights}
\label{section::heights}

Using the intersection numbers defined above, we can define the \textit{height} $h_{\Ltil}(Y)$ of an irreducible subvariety $Y \subseteq X$ with respect to a metrized line bundle $\Ltil = (L, \{\Vert \cdot \Vert_\nu\}_{\nu \in \Sigma(K)})$ such that $L$ is ample. In fact, we set
\begin{equation}
\label{equation::definition_height}
h_{\Ltil}(Y)= \frac{(\Ltil|_Y)^{d+1}}{[K:\IQ](\dim(Y)+1)(L|_{Y})^{d}}
\end{equation}
for an irreducible subvariety $Y \subseteq X$ of pure dimension $d$.

We can make the above definition more explicit if $Y$ is a closed point $x \in X$. As $(L|_x)^{0} = [K(x):K]$ (compare e.g.\ \cite[Proposition VI.2.7]{Kollar1996}), the definition in \eqref{equation::definition_height} simplifies to $h_{\Ltil}(x) = (\Ltil|_x)/[K(x):K]$. Recall that we write $\mathbf{O}_\nu(x)$ for $(x \otimes_K \IC_\nu)^{\mathrm{an}}$. Combining (\ref{equation::definition_intersectnumber}) with the induction formula \cite[Proposition 3.5 and Remark 9.5]{Gubler2003}, we obtain
\begin{equation}
\label{equation::height_section}
h_{\Ltil}(x) = - \frac{1}{[K(x):\IQ]} \sum_{\nu \in \Sigma(K)} \sum_{y \in \mathbf{O}_\nu(x)}
 \delta_\nu \log \Vert \mathbf{s}(y) \Vert_{\nu}
\end{equation}
for any non-zero rational section $\mathbf{s} \in H^0(X,L)$ such that $x \notin\left\vert \Div(\mathbf{s}) \right\vert$.


Our height is compatible with base changes in the following sense: For every finite extension $K^\prime \supseteq K$, every irreducible variety $Y\subseteq X$, and every irreducible component $Z \subseteq Y_{K^\prime}$, we have an equality $h_{\widetilde{L}}(Y) = h_{\widetilde{L}_{K^\prime}}(Z)$.

\subsection{Positivity} 
\label{section::positivity}
Having arithmetic intersection numbers and heights at our disposal, we collect here various notions of positivity for metrized line bundles.

An algebraic adelic metric $\{ \Vert \cdot \Vert_{\nu} \}_{\nu \in \Sigma(K)}$ on $L$ that arises from an $S$-model $(\mathcal{X},\mathcal{L})$ of $(X,L^{\otimes e})$ and a hermitian line bundle $\overline{\mathcal{L}} = (\mathcal{L}, \{ \Vert \cdot \Vert_\nu \}_{\nu\in \Sigma_\infty(K)}) \in \widehat{\Pic}(\mathcal{X})$ is called
\begin{enumerate}
\item[(a)] \textit{vertically semipositive} if $\mathcal{L}$ is relatively nef with respect to $\mathcal{X} \rightarrow S$ and each $(L, \Vert \cdot \Vert_\nu )$, $\nu \in \Sigma_\infty(K)$, is semipositive,
\item[(b)] \textit{horizontally semipositive} if $h_{(L, \{ \Vert \cdot \Vert_\nu\}) }(x) \geq 0$ for all closed points $x \in X$,
\item[(c)] \textit{semipositive} if it is both vertically and horizontally semipositive.
\end{enumerate}

A metrized line bundle is called \textit{semipositive} if its adelic metric is the uniform limit of semipositive algebraic adelic metrics. It is called \textit{integrable} if it is the difference of two semipositive metrized line bundles.

For a vertically semipositive $\Ltil \in \widehat{\Pic}(X)$, each $\nu$-metrized line bundle $\Lo_\nu \in \overline{\Pic}_\nu(X)$, $\nu \in \Sigma(K)$, is semipositive. Consequently, an integrable metrized line bundle is also vertical integrable in the sense of Subsection \ref{section::adelicmetrics}. An element $\widetilde{L} \in \widehat{\Pic}(X)_\IQ$ is called \textit{semipositive} (resp.\ \textit{integrable}) if there exists some integer $n \geq 1$ such that $n\widetilde{L}$ is contained in the image of $\widehat{\Pic}(X)$ and semipositive (resp.\ integrable) according to the above definition.



\subsection{Arithmetic volumes}
\label{section::arithmeticvolumes}
Let again $\Ltil = (L, \{ \Vert \cdot \Vert_\nu \}_{\nu \in \Sigma(K)})$ be a metrized line bundle on $X$ and write $\Ltil^{\otimes N} = (L^{\otimes N}, \{ \Vert \cdot \Vert^{\otimes N}_\nu \} )$. For each integer $N \geq 1$, we consider the global sections $V_N=H^0(X,L^{\otimes N})$ as a $r_N$-dimensional $K$-vector space and form the tensor product $V_{N,\IA} = V_N \otimes_K \IA_K$ with the adeles $\IA_K$ of $K$. For each $\nu \in \Sigma(K)$, we can additionally endow $V_N$ with a sup-norm 
\begin{equation*}
\Vert \mathbf{s} \Vert_{\nu}^{(\infty)} = \max_{x \in X_{\IC_\nu}^{\mathrm{an}}} \{ \Vert \mathbf{s}(x) \Vert_\nu^{\otimes N} \}, \ \mathbf{s} \in V_N,
\end{equation*}
which extends to a $K_\nu$-linear norm $\Vert \cdot \Vert_{\nu}^{(\infty)}$ on $V_{N,\nu} = V_N \otimes_K K_\nu$. 

Assume first that $V_N \neq \{ 0 \}$. Since $V_N$ is a co-compact subgroup of $V_{N,\IA}$, there is a unique invariant Haar measure $\vol_N(\cdot)$ on $V_{N,\IA}$ that is normalized such that the induced quotient measure on $V_{N,\IA}/V_N$ has total mass $1$. We then define the adelic unit ball
\begin{equation*}
B_N=\{ \mathbf{s}=(\dots,\mathbf{s}_\nu,\dots) \in V_{N,\IA} \ | \ \forall \nu \in \Sigma(K):\Vert\mathbf{s}_\nu \Vert_{\nu}^{(\infty)} \leq 1 \}.
\end{equation*}
We claim that $\vol_N(B_N)$ is a non-zero real. By coherence, it suffices to prove this for algebraic adelic metrics (i.e., those induced by hermitian line bundles on $S$-models). For these, $\vol_N(B_N) \in \IR_{\geq 0}$ follows from the comparision of lattice norms and sup-norms (e.g.\ \cite[Theorem 5.14 and Lemma 5.15 (iii)]{Boucksom2018}). This allows us to set 
\begin{equation*}
\chisup(\Ltil^{\otimes N})= \log \vol_N(B_N).
\end{equation*}
If $V_N = \{ 0\}$, we simply set $\chisup (\Ltil^{\otimes N}) = 0$. 

The \textit{arithmetic volume of a metrized line bundle} $\Ltil\in \widetilde{\Pic}(X)$ is defined by
\begin{equation*}
\widehat{\vol}_{\chi}(\Ltil) = \limsup_{N \rightarrow \infty} \frac{\chisup(\Ltil^{\otimes N})}{N^{d+1}/(d+1)!}.
\end{equation*}
If the adelic metric of $\widetilde{L}$ is induced by a hermitian line bundle $\overline{\mathcal{L}}$ on some $S$-model of $X$, the volume $\widehat{\vol}_{\chi}(\Ltil)$ agrees with the volume denoted $\widehat{\vol}^{\widehat{\chi}}(\overline{\mathcal{L}})$ in \cite{Ikoma2013}. We collect some standard results on $\widehat{\vol}_\chi(\cdot)$ in the following lemma.

\begin{lemma} 
\label{lemma::arithmeticvolumes_basics}
Let $\widetilde{L} = (L, \{ \Vert \cdot \Vert_\nu \})$ be a metrized line bundle.
\begin{enumerate}
\item[(a)] For any $\nu \in \Sigma_\infty(K)$ and any real $\kappa \in \IR$, which is considered as a real-valued constant function on $X_{\IC_\nu}^{\mathrm{an}}$, we have
\begin{equation*}
\volh_\chi(\Ltil(\kappa)) = \volh_\chi(\Ltil) + \delta_\nu \kappa (d+1) \vol(L).
\end{equation*}
\item[(b)] Let $\Ltil_i = (L, \{ \Vert \cdot \Vert_\nu \} )$ be a sequence of metrized line bundles such that the adelic metric of $\Ltil_i$ converges to $\Ltil$. If $L$ is big, we have
\begin{equation*}
\frac{|\volh_\chi(\Ltil_i)-\volh_\chi(\Ltil)|}{\vol(L)} \longrightarrow 0
\end{equation*}
Otherwise, we have $\volh_\chi(\Ltil_i)=\volh_\chi(\Ltil)$ for all $i$.
\item[(c)] Assume that the adelic metric of $\widetilde{L}$ is a uniform limit of algebraic adelic metrics. For each integer $k \geq 1$, we have $\volh_\chi(\Ltil^{\otimes k}) = k^{d+1}\volh_\chi(\Ltil)$. 
\end{enumerate}
\end{lemma}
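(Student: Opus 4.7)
The overall plan is to derive each of the three parts by a direct analysis of the Haar-measure definition of $\chisup$, combining it with the fact that $\vol(L) = \lim_{N \to \infty} h^0(X,L^{\otimes N}) \cdot d!/N^d$ is a genuine limit (Fujita--Lazarsfeld in the big case; both sides vanish otherwise, with $h^0(X,L^{\otimes N}) = o(N^d)$).

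For part (a), I would carry out the following explicit scaling computation. Since $(\Ltil(\kappa))^{\otimes N} = \Ltil^{\otimes N}(N\kappa)$, the adelic metric differs from that of $\Ltil^{\otimes N}$ only at $\nu$, where it is rescaled by $e^{-N\kappa}$. Correspondingly, the sup-norm on $V_N = H^0(X,L^{\otimes N})$ is rescaled by the same factor, the $\nu$-factor of the unit ball $B_N$ is rescaled by $e^{N\kappa}$, and, since $V_{N,\nu}$ has real dimension $\delta_\nu r_N$, the $\nu$-factor of its Haar measure is rescaled by $e^{\delta_\nu N \kappa r_N}$. Thus
\begin{equation*}
\chisup(\Ltil(\kappa)^{\otimes N}) = \chisup(\Ltil^{\otimes N}) + \delta_\nu N \kappa r_N
\end{equation*}
(with both sides zero when $V_N = 0$). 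Dividing by $N^{d+1}/(d+1)!$ and passing to the $\limsup$, the correction term tends to the genuine limit $\delta_\nu \kappa (d+1) \vol(L)$, yielding (a).

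For part (b), I would use uniform convergence to produce a finite set $\Sigma_0 \subset \Sigma(K)$ and a sequence $\varepsilon_i \downarrow 0$ such that $e^{-\varepsilon_i} \Vert \cdot \Vert_\nu \leq \Vert \cdot \Vert_{i,\nu} \leq e^{\varepsilon_i} \Vert \cdot \Vert_\nu$ at every $\nu \in \Sigma_0$ and equality elsewhere. Applying the scaling computation from (a) at each archimedean place, and its non-archimedean analog (where rescaling the local norm by a real factor $e^{\pm \varepsilon_i N}$ rescales its unit-ball Haar measure by the same power, up to a bounded multiplicative error arising from the discreteness of $|K_\nu^\times|$ that is only $O(r_N)$ after taking logarithms), one obtains
\begin{equation*}
|\chisup(\Ltil_i^{\otimes N}) - \chisup(\Ltil^{\otimes N})| \leq C \varepsilon_i N r_N + C^\prime r_N
\end{equation*}
for constants $C, C^\prime$ depending only on $\Sigma_0$. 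Dividing by $N^{d+1}/(d+1)!$ and taking $\limsup_N$ produces $|\volh_\chi(\Ltil_i) - \volh_\chi(\Ltil)| \leq C^{\prime\prime} \varepsilon_i \vol(L)$. Sending $i \to \infty$ gives the big case; if $L$ is not big, $\vol(L) = 0$ and $r_N = o(N^d)$ together force both sides to vanish, giving equality throughout the sequence.

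For part (c), my plan is first to handle the algebraic case---where $\Ltil$ is induced by a hermitian line bundle on an $S$-model---by observing that $\volh_\chi(\Ltil)$ coincides there with Ikoma's $\widehat{\mathrm{vol}}^{\widehat{\chi}}$ from \cite{Ikoma2013}, for which the $\limsup$ is an honest limit; the substitution $M = kN$ then yields $\volh_\chi(\Ltil^{\otimes k}) = k^{d+1} \volh_\chi(\Ltil)$ directly. The general case reduces to this by approximating $\Ltil$ uniformly by algebraic $\Ltil_n$; then $\Ltil_n^{\otimes k} \to \Ltil^{\otimes k}$ uniformly as well, and applying (b) to both sequences lets one pass to the limit in the algebraic identity. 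The main technical obstacle will be the non-archimedean bookkeeping in (b): verifying cleanly that the discreteness of $|K_\nu^\times|$ contributes only a sub-leading $O(r_N)$ additive error to $\chisup$, which is washed out by the $N^{d+1}$-normalization.
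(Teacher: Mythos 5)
Your proposal is correct and follows essentially the same route as the paper: part (a) via the exact Haar-measure scaling $\chisup(\Ltil(\kappa)^{\otimes N})=\chisup(\Ltil^{\otimes N})+\delta_\nu\kappa N r_N$, part (b) by sandwiching $\chisup(\Ltil_i^{\otimes N})$ between $\chisup(\Ltil^{\otimes N})\pm O(\varepsilon_i N r_N)$ and normalizing, and part (c) by invoking Ikoma's homogeneity in the algebraic case and passing to uniform limits via (b). Your explicit handling of the $O(r_N)$ rounding error from the discreteness of $|K_\nu^\times|$ at non-archimedean places is a point the paper glosses over, but it is subleading exactly as you say, so nothing of substance differs.
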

\begin{proof} Part (a) follows from $\chisup(\widetilde{L}^{\otimes N}(\kappa)) =\chisup(\widetilde{L}^{\otimes N}) + \kappa \cdot r_N N$ for real $\nu$ and similarly for complex $\nu$.\footnote{Note that if $\vol_{K_\nu}(\cdot)$ is the Haar measure on $K_\nu$, $\nu \in \Sigma(K)$, we have $\vol_{K_\nu}(c_\nu S)= |c_\nu|_\nu^{\delta_\nu} \vol_{K_\nu}(S)$ for all measurable sets $S \subseteq K_\nu$ and each $c_\nu \in K_\nu$.} 

For part (b), we define a $\mathscr{C}^0$-function $e^{-\phi_{i,\nu}} = \Vert \cdot \Vert_{i,\nu}/ \Vert \cdot \Vert_{\nu}$ on $X_{\IC_\nu}^{\mathrm{an}}$ for each $i$ and $\nu \in \Sigma(K)$. There exists a finite subset $\Sigma \subset \Sigma(K)$ such that $\phi_{i,\nu} = 0$ for all $\nu \in \Sigma(K) \setminus \Sigma$. For each $\nu \in \Sigma$, we have $\phi_{i,\nu} \rightarrow 0$ ($i \rightarrow \infty$) uniformly on $X_{\IC_\nu}^{\mathrm{an}}$. Furthermore, we have
\begin{equation}
\label{equation::chisup_bound}
\chisup(\Ltil^{\otimes N}) - \vert \phi_i \vert \cdot r_N N \leq \chisup(\Ltil_i^{\otimes N}) \leq \chisup(\Ltil^{\otimes N}) + \vert \phi_i \vert \cdot r_N N
\end{equation}
where 
\begin{equation*}
\vert \phi_i \vert = \sum_{\nu \in \Sigma_f(K)} \vert \phi_{i,\nu} \vert _{\mathrm{sup}}+ \sum_{\text{real } \nu \in \Sigma_\infty(K)} \vert \phi_{i,\nu} \vert_{\mathrm{sup}} + 2 \sum_{\text{complex } \nu \in \Sigma_\infty(K)} \vert \phi_{i,\nu} \vert_{\mathrm{sup}},
\end{equation*}
which converges to $0$ as $i \rightarrow \infty$. Dividing \eqref{equation::chisup_bound} by $N^d/(d+1)!$ yields the claim as $i \rightarrow \infty$.

Part (c) is \cite[Theorem 3.3.2]{Ikoma2013} 
in the case where the adelic metric of $\widetilde{L}$ is algebraic. Using part (b) of the lemma, we can extend this result.
\end{proof}

Lemma \ref{lemma::arithmeticvolumes_basics} (c) allows us to extend the definition of $\widehat{\vol}_\chi(\cdot)$ to $\widehat{\Pic}(X)_\IQ$.
The next lemma is a straightforward consequence of Ikoma's version of Yuan's bigness theorem \cite{Yuan2008}.

\begin{lemma} 
\label{lemma::siu}
Let $\Ltil, \Mtil \in \widehat{\Pic}(X)_\IQ$ be semipositive. Then,
\begin{equation*}
\volh_\chi(\Ltil - \Mtil) \geq \Ltil^{d+1} - (d+1) \Ltil^d \cdot \Mtil.
\end{equation*}
\end{lemma}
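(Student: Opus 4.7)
The plan is to reduce the claim to Ikoma's arithmetic version of Yuan's bigness theorem for hermitian line bundles on arithmetic varieties, in two reductions: clearing denominators and then approximating by algebraic adelic metrics.

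First, I would observe that both sides of the claimed inequality are homogeneous of degree $d+1$ under the rescaling $(\Ltil,\Mtil) \mapsto (k\Ltil,k\Mtil)$: the left side scales by Lemma \ref{lemma::arithmeticvolumes_basics}(c), and the right side by multilinearity of arithmetic intersection numbers (Lemma \ref{lemma::intersectionnumber}(a)). Choosing a common positive integer $k$ that clears the denominators of $\Ltil$ and $\Mtil$, I may therefore assume from the outset that $\Ltil,\Mtil \in \widehat{\Pic}(X)$ are semipositive in the integral sense.

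Next, by the definition of semipositivity given in Subsection \ref{section::positivity}, each of $\Ltil$ and $\Mtil$ is the uniform limit of semipositive \emph{algebraic} adelic metrics $\Ltil_n$ and $\Mtil_n$ on $L$ and $M$. Then $\Ltil_n - \Mtil_n$ is an algebraically metrized line bundle on $L - M$ whose adelic metric converges uniformly to that of $\Ltil - \Mtil$. By Lemma \ref{lemma::arithmeticvolumes_basics}(b), applied in whichever of the big / non-big cases is relevant, we have $\volh_\chi(\Ltil_n - \Mtil_n) \rightarrow \volh_\chi(\Ltil - \Mtil)$. Meanwhile, Lemma \ref{lemma::intersectionnumber}(d) (together with multilinearity) yields convergence $\Ltil_n^{d+1} - (d+1)\Ltil_n^d \cdot \Mtil_n \rightarrow \Ltil^{d+1} - (d+1) \Ltil^d \cdot \Mtil$. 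Hence it is enough to prove the inequality under the additional hypothesis that $\Ltil$ and $\Mtil$ are induced by vertically semipositive hermitian line bundles on a common projective $S$-model $\mathcal{X}$ of $X$.

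In this algebraic situation, $\volh_\chi(\cdot)$ agrees with the volume $\widehat{\vol}^{\widehat{\chi}}(\cdot)$ of \cite{Ikoma2013} (as recalled immediately above the lemma statement), and the desired inequality is precisely Ikoma's arithmetic Siu inequality---his version of Yuan's bigness theorem \cite{Yuan2008}. The main obstacle in this strategy is not conceptual but organisational: the sequences $\Ltil_n$ and $\Mtil_n$ are not a priori defined on a single $S$-model, so before invoking Ikoma's theorem I would first pass to a sufficiently fine $S$-model dominating all those that appear in the approximating sequences. Once that bookkeeping is done, the three-step reduction described above delivers the stated lower bound for $\volh_\chi(\Ltil-\Mtil)$.
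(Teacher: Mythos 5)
Your proposal is correct and follows essentially the same route as the paper: reduce by homogeneity to integral classes, approximate by semipositive algebraic adelic metrics realized on a common $S$-model (the paper handles the "common model" bookkeeping by taking the Zariski closure of the diagonal in a product of models), pass to the limit on both sides via Lemma \ref{lemma::arithmeticvolumes_basics}(b) and Lemma \ref{lemma::intersectionnumber}(d), and conclude with Ikoma's version of Yuan's bigness theorem. No substantive differences.
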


\begin{proof} Because of homogenity (Lemma \ref{lemma::intersectionnumber} (a) and Lemma \ref{lemma::arithmeticvolumes_basics} (c)), we can assume that $\widetilde{L}, \widetilde{M} \in \widehat{\Pic}(X)$. By assumption, there exists a sequence $(\Ltil_i)$ (resp.\ $(\Mtil_i)$) of semipositive algebraically metrized line bundles whose adelic metrics converge uniformly towards the metric of $\Ltil$ (resp.\ $\Mtil$). We can assume that both $\widetilde{L}_i$ and $\widetilde{M}_i$ are given by hermitian line bundles on the same $S$-model $\mathcal{X}_i$ of $X$.\footnote{In fact, if $\Ltil_i$ and $\Mtil_i$ are induced from hermitian line bundles $\overline{\mathcal{L}}_i \in \widehat{\Pic}(\mathcal{X}_i)$ and $\overline{\mathcal{M}}_i \in \widehat{\Pic}(\mathcal{X}_i^\prime)$, then we can replace $\mathcal{X}_i$ and $\mathcal{X}_i^\prime$ with the Zariski closure $\mathcal{X}$ of the diagonally embedded copy of $X$ in $\mathcal{X}_i \times \mathcal{X}_i^\prime$ and the line bundles $\overline{\mathcal{L}}_i$ and $\overline{\mathcal{M}}_i$ with their pullbacks to $\mathcal{X}$.} Let $\varepsilon>0$ be a real number. By Lemma \ref{lemma::arithmeticvolumes_basics} (b), we have
\begin{equation*}
\volh_\chi(\Ltil - \Mtil) \geq \volh_\chi(\Ltil_i - \Mtil_i)- \varepsilon
\end{equation*}
for all integers $i \gg_{\varepsilon,(\Ltil_i),(\Mtil_i)} 1$. Lemma \ref{lemma::intersectionnumber} (d) implies
\begin{equation*}
\Ltil_{i}^{d+1}-(d+1) \Ltil_{i}^d \cdot \Mtil_{i}
\geq
\Ltil^{d+1}-(d+1)\Ltil^d \cdot \Mtil- \varepsilon
\end{equation*}
for all integers $i \gg_{\varepsilon,(\Ltil_i),(\Mtil_i)} 1$. According to \cite[Theorem 3.5.3 and Remark 3.5.4]{Ikoma2013}, we have
\begin{equation*}
\volh_\chi(\Ltil_i - \Mtil_i) \geq \Ltil_i^{d+1} -(d+1) \Ltil_{i}^d \cdot\Mtil_{i} .
\end{equation*}
Combining the three above inequalities, we obtain
\begin{equation*}
\volh_\chi(\Ltil - \Mtil) \geq \Ltil^{d+1} -(d+1) \Ltil^d \cdot \Mtil - 2\varepsilon.
\end{equation*}
Taking the limit $\varepsilon \rightarrow 0$ finishes the proof.
\end{proof}

\subsection{Minkowski's Theorem}
\label{section::minkowski}

The following lemma is typical in applications of Arakelov theory to diophantine geometry.

\begin{lemma} 
\label{lemma::minkowski}
Let $\nu \in \Sigma(K)$ and a real $\varepsilon>0$ be given. For each $\Ltil = (L, \{ \Vert \cdot \Vert_\nu \}) \in \widehat{\Pic}(X)$ with nef and big $L$, there exists a non-zero section $\mathbf{s} \in H^0(X, L^{\otimes N_0})$ and a sufficiently large positive integer $N_0$, such that
\begin{equation*}
\delta_\nu \log \Vert \mathbf{s} \Vert_{\nu}^{(\infty)} \leq \left( - \frac{\volh_\chi(\Ltil)}{(d+1)L^d} + \varepsilon\right) N_0
\end{equation*}
and
\begin{equation*}
\log \Vert \mathbf{s} \Vert_\mu^{(\infty)}  \leq 0
\end{equation*}
for all other places $\mu \neq \nu$ of $K$.
\end{lemma}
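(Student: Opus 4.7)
My plan is the standard adelic Minkowski argument: after twisting the adelic metric of $\Ltil$ only at the prescribed place $\nu$ so that the arithmetic volume becomes strictly positive, I apply the Bombieri--Vaaler adelic form of Minkowski's theorem to produce a nonzero section lying inside the resulting adelic unit ball, and the twist then translates back to the desired upper bound at $\nu$.

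Concretely, I set $a := \volh_\chi(\Ltil)/((d+1)L^d) - \varepsilon$ and consider the metrized line bundle $\Ltil' := \Ltil(-a/\delta_\nu) \in \widehat{\Pic}(X)$, which agrees with $\Ltil$ at every place $\mu\neq\nu$ and whose $\nu$-metric is $e^{a/\delta_\nu}\Vert\cdot\Vert_\nu$. Lemma \ref{lemma::arithmeticvolumes_basics}(a) is stated only for archimedean $\nu$, but its proof applies verbatim at non-archimedean places too, relying solely on the scaling identity $\vol_{K_\nu}(cS)=|c|_\nu^{\delta_\nu}\vol_{K_\nu}(S)$ for the local Haar measure recalled in the footnote there. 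Since $L$ is nef and big one has $\vol(L)=L^d$, hence
\[
\volh_\chi(\Ltil') \;=\; \volh_\chi(\Ltil)-a(d+1)L^d \;=\; \varepsilon(d+1)L^d \;>\; 0.
\]
Moreover, for every $N\geq 1$ and $\mathbf{s}\in V_N := H^0(X, L^{\otimes N})$, the sup-norms of $\mathbf{s}$ with respect to $\Ltil'^{\otimes N}$ are $e^{aN/\delta_\nu}\Vert\mathbf{s}\Vert_\nu^{(\infty)}$ at $\nu$ and $\Vert \mathbf{s}\Vert_\mu^{(\infty)}$ at every other place $\mu$. Thus any nonzero $\mathbf{s}\in V_N$ lying in the adelic unit ball $B_N'\subset V_{N,\IA}$ of $\Ltil'^{\otimes N}$ automatically satisfies both required inequalities of the lemma with $N_0 := N$.

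It remains to exhibit such a section. Since $\volh_\chi(\Ltil')>0$, the $\limsup$ definition of the arithmetic volume yields a sequence $N\to\infty$ along which
\[
\chisup(\Ltil'^{\otimes N}) \;\geq\; \frac{\volh_\chi(\Ltil')}{2(d+1)!}\,N^{d+1},
\]
whereas the asymptotic Riemann--Roch theorem for the nef big line bundle $L$ yields $r_N := \dim_K V_N = L^d N^d/d! + o(N^d) = O(N^d)$. The Bombieri--Vaaler adelic form of Minkowski's theorem guarantees a nonzero element of $V_N\cap B_N'$ as soon as $\log\vol_N(B_N') = \chisup(\Ltil'^{\otimes N})$ exceeds a constant depending only on $K$ times $r_N$. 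Comparing the growth rates $N^{d+1}$ vs.\ $N^d$, this condition is met for all sufficiently large $N$ in the chosen sequence, providing the required section $\mathbf{s}$. I expect the main obstacle to be merely identifying a convenient quantitative version of adelic Minkowski tailored to our adelic body $B_N'$ (a product of sup-norm balls, each of which is a symmetric convex body in the real archimedean sense or an $\caO_{K_\nu}$-submodule at other places); the extension of Lemma \ref{lemma::arithmeticvolumes_basics}(a) to non-archimedean $\nu$ is the only other point requiring attention, and it is immediate from the uniform Haar-measure scaling rule above.
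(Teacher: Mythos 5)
Your argument is correct and is essentially the paper's proof: both rest on the adelic Minkowski theorem of Bombieri--Vaaler type (\cite[Theorem C.2.11]{Bombieri2006}), the $\limsup$ definition of $\volh_\chi$, and the algebraic Riemann--Roch asymptotic $r_N \sim L^d N^d/d!$; the only difference is that you dilate the adelic unit ball by twisting the metric at $\nu$ by a constant, whereas the paper dilates the body $S$ at $\nu$ directly while keeping $\Ltil$ fixed, which is the same computation. The one point to state more carefully is your extension of Lemma \ref{lemma::arithmeticvolumes_basics}(a) to non-archimedean $\nu$: since $e^{a N/\delta_\nu}$ need not lie in the value group of $|\cdot|_\nu$, the identity $\chisup(\Ltil'^{\otimes N})=\chisup(\Ltil^{\otimes N})+\delta_\nu\kappa r_N N$ holds only up to an error $O(r_N)=O(N^d)$, which is harmless after dividing by $N^{d+1}$ but is not ``verbatim''.
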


\begin{proof} In this proof, we use the notations $V,V_N, r_N, V_{N,\nu}, B_N, \vol_N$ as in Subsection \ref{section::arithmeticvolumes}. The lemma is implied by an adelic version of Minkowski's Second Theorem \cite[Theorem C.2.11]{Bombieri2006}.  To use the theorem, fix an identification $V_N \approx K^{r_N}$ and note that the Haar measure used there agrees with $\vol_N$ (\cite[Proposition C.1.10]{Bombieri2006}). Setting
\begin{multline*}
S = \left\{ \mathbf{s} \in V_{N,\nu} \ | \ \delta_\nu \log \Vert \mathbf{s}  \Vert_{\nu}^{(\infty)} \leq - \frac{\chisup(\Ltil^{\otimes N})}{r_N} + \log(2)[K:\IQ]\right\} \\ \times \prod_{\mu \neq \nu}\{ \mathbf{s} \in V_{N,\mu} \ | \ \Vert \mathbf{s} \Vert_{\mu}^{(\infty)} \leq 1 \} \subseteq V_\IA,
\end{multline*}
we have
\begin{equation*}
\vol_N(S)= \exp\left(-\chisup(\Ltil^{\otimes N})+\log(2)[K:\IQ]r_N\right) \vol_N(B_N)\geq 2^{[K:\IQ]r_N}.
\end{equation*}
The theorem yields hence a non-zero section $\mathbf{s} \in S$, which means that
\begin{align*}
\delta_\nu \log \Vert \mathbf{s} \Vert_\nu^{(\infty)} 
&\leq  - \frac{\chi_{\sup}(\Ltil^{\otimes N})}{r_N} + \log(2) [K:\IQ] \\
&=
\left( - \frac{1}{(d+1)} \cdot \frac{\chi_{\sup}(\Ltil^{\otimes N})}{N^{d+1}/(d+1)!} \cdot \frac{N^d/d!}{r_N} + \frac{\log(2) [K:\IQ]}{N}
\right) N
\end{align*}
and $\log \Vert \mathbf{s} \Vert_\mu^{(\infty)} \leq 0$ for all other places $\mu \neq \nu$ of $K$. Using the algebraic Riemann-Roch Theorem (\cite[Corollary 1.4.41]{Lazarsfeld2004}), we infer the assertion of the lemma.
\end{proof}

\section{Semiabelian Varieties}
\label{section::semiabelian}

We start by summarizing the basic facts on homomorphisms and compactifications of semiabelian varieties that are essential for our main proof. The reader may also compare with \cite[Sections 1 and 2]{Kuehne2017a}. 

\subsection{Basics} 
\label{section::basics}
A general reference for this subsection is \cite[Chapter VII]{Serre1988} and we only summarize briefly what we need.
A semiabelian variety $G$ over a field $k$ is a connected smooth algebraic $k$-group that is the extension of an abelian variety $A$ over $k$ by a $k$-torus $T$. This means that there exists an exact sequence
\begin{equation}
\label{equation::semiabelian_def}
0 \longrightarrow T \longrightarrow G \longrightarrow A \longrightarrow 0
\end{equation}
in the abelian category of commutative $k$-algebraic groups of finite type (\cite[Th\'eor\`eme VI$_\text{A}$.3.2]{SGA3I}). Both $T$ and $A$ are uniquely determined by $G$ so that we may call $T$ \textit{the} toric part of $G$ and $G \rightarrow G/T=A$ (or just $A$) \textit{the} abelian quotient of $G$ in the following. Furthermore, the exact sequence (\ref{equation::semiabelian_def}) describes a Yoneda extension class in $\Ext^1_k(A,T)$ (see \cite[Section I.3]{Oort1966}). In the sequel, we write $\eta_G$ for the extension class associated to a semiabelian variety $G$ in this way. Each homomorphism $\varphi: B \rightarrow A$ (resp.\ $\varphi: T \rightarrow S$) of abelian varieties (resp.\ tori) induces a pullback $\varphi^\ast: \Ext^1_k(A,T) \rightarrow \Ext^1_k(B,T)$ (resp.\ a pushforward $\varphi_\ast: \Ext^1_k(A,T) \rightarrow \Ext^1_k(A,S)$). 

The Weil-Barsotti formula (see \cite[Section III.18]{Oort1966} or the appendix to \cite{Moret-Bailly1981}) gives a canonical identification $\Ext^1_k(A,\mathbb{G}_m) = A^\vee(k)$. If $T$ is split so that we can identify it with $\Gm^t$, we make frequent use of the identity $\Ext^1_k(A,\mathbb{G}_m^t) = \Ext^1_k(A,\mathbb{G}_m)^t = (A^\vee)^t(k)$. With this identification, it is easy to describe pushforwards. If $\varphi: \mathbb{G}_m^{t} \rightarrow \mathbb{G}_m^{t^\prime}$ is the homomorphism described by $\varphi^\ast(Y_v)= \prod_{u=1}^{t} X_u^{a_{uv}}$ in standard coordinates $X_1,\dots, X_{t}$ (resp.\ $Y_1,\dots, Y_{t^\prime}$) on $\mathbb{G}_m^{t}$ (resp.\ $\mathbb{G}_m^{t^\prime}$), then the pushforward $\varphi_\ast: \Ext^1_k(A,\Gm^{t}) \longrightarrow \Ext^1_k(A,\Gm^{t^\prime})$ corresponds to the homomorphism $(A^\vee)^{t} \rightarrow (A^\vee)^{t^\prime}$ sending $(\eta_1, \dots, \eta_{t})$ to $(\sum_{u=1}^{t}a_{uv}\eta_u)_{1 \leq v\leq t^\prime}$.

\subsection{Homomorphisms and compactifications}

\label{section::semiabelian_compact}

Let $G$ (resp.\ $G^\prime$) be a semiabelian variety over a field $k$ with abelian quotient $A$ (resp.\ $A^\prime$) and split toric part $\Gm^t$ (resp.\ $\Gm^{t^\prime}$). Recall from Lemma \cite[Lemma 1]{Kuehne2017a} that homomorphisms $\varphi: G \rightarrow G^\prime$ give rise to commutative diagrams
\begin{equation} \label{equation::morphisms} 
\begin{tikzcd} 
0 \ar[r] & \Gm^t \ar[r] \ar[d, "\varphi_{\mathrm{tor}}"] & G \ar[r] \ar[d, "\varphi"] & A \ar[r] \ar[d, "\varphi_{\mathrm{ab}}"] & 0 \\
0 \ar[r] & \Gm^{t^\prime} \ar[r] & G^\prime \ar[r] & A^\prime \ar[r] & 0
\end{tikzcd}
\end{equation}
with homomorphisms $\varphi_{\mathrm{tor}}: \Gm^t \rightarrow \Gm^{t^\prime}$ and $\varphi_{\mathrm{ab}}: A \rightarrow A^\prime$. Conversely, a (unique) homomorphism $\varphi$ exists for a pair $(\varphi_{\mathrm{tor}}, \varphi_{\mathrm{ab}})$ if and only if $(\varphi_{\mathrm{tor}})_\ast\eta_G = \varphi_{\mathrm{ab}}^\ast \eta_{G^\prime} \in \Ext^1_k(\Gm^{t^\prime},A)$.

We use the compactification $\overline{G}$ of $G$ given in \cite[Construction 5]{Kuehne2017a}, which differs from the one used in \cite{Chambert-Loir2000}. This choice of compactification is notationally convenient for us but other compactifications are equally legitimate. Indeed, these lead to a different Arakelov height whose main features are however the same than those of the Arakelov height used here: Torsions points have height $0$ and the height of $\overline{G}$ is strictly negative (compare \cite[Section 4]{Chambert-Loir2000}). Consider $\Gm^t$ with its natural $\Gm^t$-action and endow $(\IP^1)^t$ with the unique $\Gm^t$-action $\cdot_{(\IP^1)^t}$ such that the inclusion $\Gm^t \hookrightarrow (\IP^1)^t$ is $\Gm^t$-equivariant. We endow $G \times_k (\IP^1)^t$ with the $\Gm^t$-action given by
\begin{equation*}
t \cdot (g, x) = (t \cdot_G g, t^{-1} \cdot_{(\IP^1)^t} x), \ t \in \Gm^t(Z), \ x \in (\IP^1)^t(Z), \ g \in G(Z),
\end{equation*}
on $Z$-points. We can then form the (categorial) quotient $(G \times (\IP^1)^t) / \Gm^t$ in the category of $k$-schemes, which is a smooth $k$-variety $\overline{G}$ into which $G$ embeds. The abelian quotient $\pi: G \rightarrow A$ extends to an algebraic map $\pio: \overline{G} \rightarrow A$. Writing $\eta_G = (Q_1,\dots, Q_{t}) \in A^\vee(k)^t = \Pic(A)^t$, our compactification can be also described as the embedding
\begin{equation}
\label{equation::Gbar}
G \hookrightarrow \overline{G} = \mathrm{Proj}(\mathrm{Sym}(\caO_A \oplus \mathcal{F}(Q_1)^\vee)) \times_A \cdots \times_A \mathrm{Proj}(\mathrm{Sym}(\caO_A \oplus \mathcal{F}(Q_t)^\vee))
\end{equation}
(see \cite[Section 4]{Chambert-Loir1999}). The ideal $\mathrm{Sym}(0 \oplus \mathcal{F}(Q_i)^\vee)$ (resp.\ $\mathrm{Sym}(\caO_A \oplus 0)$) describes a divisor in $\mathrm{Proj}(\mathrm{Sym}(\caO_A \oplus \mathcal{F}(Q_i)^\vee))$. Replacing the factor $\mathrm{Proj}(\mathrm{Sym}(\caO_A \oplus \mathcal{F}(Q_i)^\vee))$ in (\ref{equation::Gbar}) with this divisor, we obtain a Weil divisor $D_i^{(0)}$ (resp.\ $D_i^{(\infty)}$) on $\overline{G}$. The boundary $\overline{G} \setminus G$ is the union of the $2t$ Weil divisors $D_i^{(0)}, D_i^{(\infty)}$ ($1 \leq i \leq t$). Each divisor $D_i^{(\kappa)}$ ($\kappa \in \{ 0, \infty \}$, $1 \leq i \leq t$) gives rise to an effective line bundle $M_{\overline{G},i}^{(\kappa)}$ whose sheaf of sections is $\mathcal{O}_{\overline{G}}(D_i^{(\kappa)})$. In the sequel, we write $M_{\overline{G}} = \otimes_{i=1}^t (M_{\overline{G},i}^{(0)} \otimes M_{\overline{G},i}^{(\infty)})$. By \cite[Lemma 3]{Kuehne2017a}, the line bundle $M_{\overline{G}} \otimes \pio^\ast N$ is ample. For each integer $n \geq 1$, the same lemma shows that $(M_{\overline{G}_i}^{(\kappa)})^{\otimes n} \otimes M_{\overline{G}} \otimes \pio^\ast N$ is ample and hence $M_{\overline{G},i}^{(\kappa)}$ is nef.

We also have to compactify (the graphs of) homomorphisms. A general procedure for this is outlined in \cite[Construction 7]{Kuehne2017a}, but we only need some special cases and we describe these in detail here. Assume that $\varphi: G \rightarrow G^\prime$ is a homomorphism of semiabelian varieties such that $\varphi_{\mathrm{tor}} = [n]_{\Gm^t}$ in the notation of (\ref{equation::morphisms}). The homomorphism $[n]_{\Gm^t}: \Gm^t \rightarrow \Gm^t$ extends to a map $\overline{[n]}_{\Gm^t}: (\IP^1)^t \rightarrow (\IP^1)^t$ so that there is a map
\begin{equation*}
\varphi \times \overline{[n]}_{\Gm^t}: G \times (\IP^1)^t \longrightarrow G^\prime \times (\IP^{1})^{t}.
\end{equation*}
It is easy to see that this map descends to a map
\begin{equation*}
\overline{\varphi}: \overline{G} = G \times^{\Gm^t} (\IP^1)^t \longrightarrow G^\prime \times^{\Gm^t} (\IP^1)^t = \overline{G}^\prime
\end{equation*}
on contraction products, which is the unique extension of $\varphi: G \rightarrow G^\prime$ to an algebraic map $\overline{G} \rightarrow \overline{G}^\prime$. In the particular case where $\varphi=[n]$, we obtain an extension $\overline{[n]}: \overline{G} \rightarrow \overline{G}$ of the multiplication-by-$n$ map $[n]: G \rightarrow G$. One can compute that $\overline{[n]}^\ast D_i^{(\kappa)}= n D_i^{(\kappa)}$ ($\kappa \in \{ 0, \infty \}$, $1 \leq i \leq t$) and hence $\overline{[n]}^\ast M_{\overline{G},i}^{(\kappa)} \approx (M_{\overline{G},i}^{(\kappa)})^{\otimes n}$ (see \cite[Section 2]{Kuehne2018}). 

\subsection{Canonical metrics and heights} 
\label{section::semiabelianvarietyheights}

Our references for this subsection are \cite{Chambert-Loir1999, Chambert-Loir2000, Zhang1995a}. We let $G$ be a semiabelian variety over a number field $K$. Denote by $\pi: G \rightarrow A$ its abelian quotient and by $T$ its toric part. Enlarging $K$ if necessary, we may assume that $T = \Gm^t$. In this situation, the last subsection yields a compactification $\overline{G}$ and line bundles $M_{\overline{G},i}^{(\kappa)}$ ($\kappa \in \{ 0, \infty \}$, $1 \leq i \leq t$) on $\overline{G}$. Let $N$ be an ample symmetric line bundle on $A$. 

We next aim to decorate $M_{\overline{G},i}^{(\kappa)}$ and $N$ with adelic metrics, following Zhang \cite{Zhang1995a}. The technical result that we need is summarized in the following lemma.

\begin{lemma} 
\label{lemma::canonicalmetrics}
Let $X$ be a projective variety, $L$ a line bundle on $X$, $f: X \rightarrow X$ a surjective algebraic map, $d>1$ an integer, and $\phi: L^{\otimes d} \rightarrow f^\ast L$ an isomorphism of line bundles over $X$. Then,
\begin{enumerate}
\item[(a)] for each $\nu \in \Sigma(K)$, there exists a unique $\nu$-metrized line bundle $\Lo_\nu=(L,\Vert \cdot \Vert_\nu)$ such that $\phi$ is an isometry $\Lo^{\otimes d}_\nu \rightarrow f^\ast \Lo_\nu$,
\item[(b)] on replacing $\phi$ with $c \phi$, the metric $\Vert \cdot \Vert_\nu$ changes to $|c|_\nu^{1/(d-1)} \Vert \cdot \Vert_\nu$,
\item[(c)] the metrics $\Vert \cdot \Vert_\nu$ thus obtained combine to an adelic metric $\{ \Vert \cdot \Vert_\nu \}_{\nu \in \Sigma(K)}$ on $L$,
\item[(d)] if there exists a vertically semipositive algebraic adelic metric on $L$, then the metrized line bundle $\Ltil = (L, \{ \Vert \cdot \Vert_\nu \}_{\nu \in \Sigma(K)})$ is vertically semipositive.
\end{enumerate}
\end{lemma}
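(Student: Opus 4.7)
My plan is a standard Tate limit argument in the language of metrized line bundles. For the existence in (a), fix $\nu \in \Sigma(K)$ and choose any $\nu$-metric $\Vert \cdot \Vert^{(0)}$ on $L$ (obtainable, for instance, from some formal $\mathcal{O}_\nu$-model of $L^{\otimes e}$ for a suitable $e \geq 1$). Define inductively $\nu$-metrics $\Vert \cdot \Vert^{(n+1)}$ by demanding that $\phi$ be an isometry $(L, \Vert \cdot \Vert^{(n+1)})^{\otimes d} \to f^\ast (L, \Vert \cdot \Vert^{(n)})$. The continuous positive function $u_n := \Vert \cdot \Vert^{(n+1)}/\Vert \cdot \Vert^{(n)}$ on the compact space $X_{\IC_\nu}^{\mathrm{an}}$ then satisfies $u_{n+1}^d = f^\ast u_n$, hence $\sup |\log u_{n+1}| \leq d^{-1} \sup |\log u_n|$. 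Thus the sequence $\Vert \cdot \Vert^{(n)}$ is uniformly Cauchy and converges to a $\nu$-metric $\Vert \cdot \Vert_\nu$ satisfying the isometry property by continuity. Uniqueness follows since any ratio $u$ of two such metrics is a continuous positive function with $u^d = f^\ast u$, so $d^n \log u = (f^n)^\ast \log u$ remains bounded on the compact space $X^{\mathrm{an}}_{\IC_\nu}$ only if $\log u \equiv 0$. Part (b) is then immediate: the metric $|c|_\nu^{1/(d-1)} \Vert \cdot \Vert_\nu$ verifies the isometry property with respect to $c\phi$ by a direct computation, and so agrees with the new canonical metric by uniqueness.

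For (c), I would pick an $S$-model $(\mathcal{X}, \mathcal{L})$ of $(X, L^{\otimes e})$ for some $e \geq 1$. Spreading out $f$ via the closure of its graph in $\mathcal{X} \times_S \mathcal{X}$, there is a dense open $U \subseteq S$ over which $f$ extends to an $S$-morphism $\tilde f: \mathcal{X}_U \to \mathcal{X}_U$. After possibly shrinking $U$ and twisting $\mathcal{L}$ by a vertical divisor supported outside $U$ (without altering the generic fiber or the $\nu$-metrics for $\nu$ over $U$), the generic-fiber isomorphism $\phi^{\otimes e}: L^{\otimes de} \to f^\ast L^{\otimes e}$ extends to an isomorphism $\mathcal{L}^{\otimes d} \to \tilde f^\ast \mathcal{L}$ over $\mathcal{X}_U$. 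For each $\nu \in \Sigma_f(K)$ lying over $U$, the induced model metric $\Vert \cdot \Vert_{\mathcal{L}_\nu}^{1/e}$ automatically satisfies the isometry property of (a), hence coincides with $\Vert \cdot \Vert_\nu$ by uniqueness. This supplies the coherence condition required for an adelic metric.

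For (d), I would start the Tate iteration from the prescribed vertically semipositive algebraic adelic metric. At each non-archimedean place, formal semipositivity is preserved under pullback by the surjective morphism $\tilde f$ (since the pullback of a nef line bundle on the special fiber is again nef) and under taking $d$-th roots of metrics; uniform limits of formally semipositive $\nu$-metrics are semipositive by the definition in Subsection \ref{section::metrics}. At archimedean places, the identity $c_1(L, \Vert \cdot \Vert_\nu^{(n+1)}) = d^{-1} f^\ast c_1(L, \Vert \cdot \Vert_\nu^{(n)})$ shows that semipositivity of the Chern current is preserved at each step, and Lemma \ref{lemma::uniformconvergence} preserves semipositivity under the uniform limit. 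The hardest step I foresee is (c): one must verify that the pair $(f, \phi)$ actually spreads out over a dense open of $S$ after possibly adjusting $\mathcal{L}$ by vertical divisors. Once this is in place, uniqueness from (a) makes the identification with an algebraic model metric automatic at all but finitely many places, and the remaining items reduce to routine stability of uniform limits and compactness arguments.
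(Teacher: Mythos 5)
Your proposal is correct and follows essentially the same route as the paper: parts (a) and (b) are exactly Zhang's Tate-limit construction (which the paper simply cites from \cite{Zhang1995a}), and for (c) and (d) both arguments spread $f$ and $\phi$ out over a dense open $U \subseteq S$ so that the model metric at places over $U$ satisfies the fixed-point property and is identified with the canonical metric by uniqueness, with semipositivity propagated through the iteration and the uniform limit. The only cosmetic difference is that the paper realizes the iterates via explicit graph-closure models $(\overline{\Gamma(f^{\circ n})}, \mathcal{L}_n)$ and shows their metrics converge to the canonical one, whereas you invoke uniqueness directly; both are sound.
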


\begin{proof} (a), (b): This is the content of \cite[Theorem 2.2]{Zhang1995a}. The $\Gal(\IC_\nu/K_\nu)$-invariance demanded by property (d) in Subsection \ref{section::metrics} follows from uniqueness.

(c): This is an extension of the ample case considered in \cite[(2.3)]{Zhang1995a}. Let $L_0$ be a very ample line bundle on $X$ such that $L \otimes L_0$ is also very ample. In other words, the global sections yield projective embeddings $\iota_{L_0}: X \hookrightarrow \IP^{k_1}_K$ and $\iota_{L \otimes L_0}: X \hookrightarrow \IP^{k_2}_K$. Composing with the diagonal map, we thus obtain an embedding $\iota: X \hookrightarrow \IP^{k_1}_K \times \IP^{k_2}_K$ such that $L_0 = \iota^\ast \pr_1^\ast \caO_{\IP^{k_1}_K}(1)$ and $L\otimes L_0 = \iota^\ast \pr_2^{\ast} \caO_{\IP^{k_2}_K}(1)$ where $\pr_i$ ($i=1,2$) denotes the projection to the $i$-th factor. Let $\mathcal{X}$ be the Zariski closure of $X$ in $\IP_S^{k_1} \times \IP_S^{k_2}$. We set $\mathcal{L} = ( \pr_2^{\ast} \caO_{\IP^{k_2}_S}(1) \otimes \pr_1^\ast \caO_{\IP^{k_1}_S}(1))|_\mathcal{X}$ so that $(\mathcal{X}, \mathcal{L})$ is an $S$-model of $(X, L)$. 

For each integer $n \geq 1$, we consider  the graph embedding $\iota_\Gamma=(\id_{X}, f^{\circ n}): X \hookrightarrow \Gamma(f^{\circ n}) \subset X \times X$ and the Zariski closure $\overline{\Gamma(f^{\circ n})}$ of $\Gamma(f^{\circ n})$ in $\mathcal{X} \times \mathcal{X}$. Writing $\mathrm{pr}_2: \mathcal{X} \times \mathcal{X} \rightarrow \mathcal{X}$ for the projection to the second factor, we define the line bundle $\mathcal{L}_n = (\mathrm{pr}_2^\ast \mathcal{L})|_{\overline{\Gamma(f^{\circ n})}}$. The isomorphism $\phi$ induces an isomorphism $\phi_n: L^{\otimes d^n} \rightarrow (f^{\circ n})^\ast L$ over $X$. Through $\iota_\Gamma$ and $\phi_n$, the tuple $(\overline{\Gamma(f^{\circ n})}, \mathcal{L}_n)$ is an $S$-model of $(X,L^{\otimes d^n})$. For each $\nu \in \Sigma_f(K)$, we have induced formal $\nu$-metrics $\Vert \cdot \Vert_{\mathcal{L}_{n,\nu}}^{1/d^n}$ on $L$. An inspection of the argument of \cite[Theorem 2.2 (a,b)]{Zhang1995a} shows that the $\nu$-metric $\Vert \cdot \Vert_{\mathcal{L}_{n,\nu}}^{1/d^n}$ converges uniformly to the $\nu$-metric $\Vert \cdot \Vert_\nu$ from (a) as $n \rightarrow \infty$. 

Furthermore, there exists a non-empty open $U \subseteq S$ such that each iterate $f^{\circ n}$ extends to a map $\widetilde{f^{\circ n}}: \mathcal{X}|_U \rightarrow \mathcal{X}|_U$ and each $\phi_n$ extends to an isomorphism $\widetilde{\phi_n}: \mathcal{L}^{\otimes d^n}|_U \rightarrow (\widetilde{f^{\circ n}})^\ast \mathcal{L}|_U$ over $\mathcal{X}|_U$. For each $\nu \in U$, this implies $\Vert \cdot \Vert_\nu = \Vert \cdot \Vert_{\mathcal{L}_{n,\nu}}^{1/d^n} = \Vert \cdot \Vert_{\mathcal{L}_\nu}$, which shows that $\{ \Vert \cdot \Vert_\nu \}_{\nu \in \Sigma(K)}$ is an adelic metric.

(d): By assumption, there exists an $S$-model $(\mathcal{X},\mathcal{L})$ of $(X,L^{\otimes e})$ and a hermitian line bundle $\overline{\mathcal{L}} = (\mathcal{L}, \{ \Vert \cdot \Vert_\nu \}_{\nu\in \Sigma_\infty(K)}) \in \widehat{\Pic}(\mathcal{X})$ such that $\mathcal{L}$ is relatively nef with respect to $\mathcal{X} \rightarrow S$ and each $(L, \Vert \cdot \Vert_\nu )$, $\nu \in \Sigma_\infty(K)$, is semipositive. Proceeding as in (c), we obtain a sequence of algebraic adelic metrics $(\{ \Vert \cdot \Vert_{\mathcal{L}_{n,\nu}}^{1/d^ne}\} )$ that are vertically semipositive and converge uniformly to $\{ \Vert \cdot \Vert_\nu \}$.
\end{proof}

Using $\overline{[n]}^\ast M_{\overline{G},i}^{(\kappa)} \approx (M_{\overline{G},i}^{(\kappa)})^{\otimes n}$ and $\overline{[n]}^\ast N \approx N^{\otimes n^2}$, the above lemma yields metrized line bundles $\widetilde{M}_{\overline{G},i}^{(\kappa)}$ and $\widetilde{N}$. The adelic metrics are unique up to a rescaling as in part (b) of the above theorem. Since this has no influence on arithmetic intersection numbers due to Lemma \ref{lemma::intersectionnumber} (e), we can ignore this indeterminancy in the following and just work with a fixed arbitrary choice. The heights $h_{\Mtil_{\overline{G},i}^{(\kappa)}}$ and $h_{\pio^\ast \widetilde{N}}$ coincide with the corresponding N\'eron-Tate heights $\hhat_{M_{\overline{G},i}^{(\kappa)}}$ and $\hhat_{\pio^\ast N}$ defined in \cite[Lemma 8]{Kuehne2017a}.

Since $N$ is ample, the global sections of some power $N^{\otimes e}$ induce a projective embedding $A \hookrightarrow \IP_K^{k}$. Taking the Zariski closure $\mathcal{A}$ of $A$ in $\IP_S^{k}$, we obtain an $S$-model $(\mathcal{A},\caO_{\IP^k_S}(1)|_{\mathcal{A}})$ of $(A,N^{\otimes e})$. Since $\caO_{\IP^k_S}(1)|_{\mathcal{A}}$ is ample, there exists a vertically semipositive hermitian line bundle $\overline{\mathcal{N}}=(\caO_{\IP^k_S}(1)|_{\mathcal{A}}, \{ \Vert \cdot \Vert_{\nu \in \Sigma_\infty(K)}\})\in \widehat{\Pic}(\mathcal{A})$. The algebraic adelic metric on $N$ induced by $\overline{\mathcal{N}}$ is vertically semipositive. By Lemma \ref{lemma::canonicalmetrics} (d), $\widetilde{N}$ is likewise vertically semipositive. Furthermore, $\widetilde{N}$ is also horizontally semipositive, since this just means that the Néron-Tate height associated with $N$ is non-negative.

As $M_{\overline{G},i}^{(\kappa)}$ is only nef (see \cite[Lemma 3]{Kuehne2017a}), the vertical semipositivity of $\widetilde{M}_{\overline{G},i}^{(\kappa)}$ is not as easy to establish as for $\widetilde{N}$. Nevertheless, Chambert-Loir proved this fact in \cite[Proposition 3.6]{Chambert-Loir2000}, relying on specific regular models of abelian varieties constructed by Künnemann \cite{Kuennemann1998}.


\subsection{Heights and Homomorphisms}

We next recall a lemma that controls the behavior of our canonical height under homomorphisms.

\begin{lemma} 
\label{lemma::heightcomparison}
Let $G_i$, $i=1,2$, be a semiabelian variety with toric part $T_i=\Gm^{t_i}$ and abelian quotient $\pi: G_i \rightarrow A_i$. Let further $N_i$, $i \in \{ 1,2 \}$, be symmetric line bundles on $A_i$. Assume that $N_1$ is ample. For every homomorphism $\varphi: G_1 \rightarrow G_2$, we then have
\begin{equation*}
h_{\overline{\pi}^\ast_2 \widetilde{N}_2}(\varphi(x))
\ll_{N_i, \varphi}
h_{\overline{\pi}^\ast_1 \widetilde{N}_1}(x)
\qquad
\text{and}
\qquad
h_{\widetilde{M}_{\overline{G}_2}}(\varphi(x)) \ll_{\varphi} 
h_{\widetilde{M}_{\overline{G}_1}}(x)
\end{equation*}
for all closed points $x \in G_1$.
\end{lemma}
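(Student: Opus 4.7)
The plan is to handle the two inequalities separately, the first via a reduction to the Néron--Tate height on the abelian quotient and the second via a birational resolution of $\overline{\varphi}$ combined with the linear functional equation for $h_{\widetilde{M}_{\overline{G}_i}}$.

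For the first inequality, I would exploit the commutative square \eqref{equation::morphisms}, which yields $\pi_2 \circ \varphi = \varphi_{\mathrm{ab}} \circ \pi_1$, so that
\[
h_{\overline{\pi}_2^\ast \widetilde{N}_2}(\varphi(x)) \;=\; \hhat_{N_2}\!\bigl(\varphi_{\mathrm{ab}}(\pi_1(x))\bigr) \;=\; \hhat_{\varphi_{\mathrm{ab}}^\ast N_2}(\pi_1(x)).
\]
Because $\varphi_{\mathrm{ab}}: A_1 \to A_2$ is a group homomorphism and $N_2$ is symmetric, $\varphi_{\mathrm{ab}}^\ast N_2$ is symmetric on $A_1$. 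Ampleness of $N_1$ lets me choose an integer $C=C(N_i,\varphi)>0$ for which $C N_1 - \varphi_{\mathrm{ab}}^\ast N_2 \in \Pic(A_1)_\IQ$ is ample and symmetric. Its Néron--Tate quadratic form is then non-negative on $A_1(\IQbar)$, whence $\hhat_{\varphi_{\mathrm{ab}}^\ast N_2} \leq C\,\hhat_{N_1}$ pointwise, giving the first estimate.

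For the second inequality, consider the rational map $\overline{\varphi}: \overline{G}_1 \dashrightarrow \overline{G}_2$ induced by $\varphi$ and choose a projective birational morphism $\rho: X \to \overline{G}_1$ (for instance, the graph closure in $\overline{G}_1 \times \overline{G}_2$, further resolved if necessary) such that $\overline{\varphi} \circ \rho$ is a genuine morphism $\widetilde{\varphi}: X \to \overline{G}_2$. Since $M_{\overline{G}_2}$ is the line bundle of the divisor $\overline{G}_2 \setminus G_2$ and $\varphi(G_1) \subseteq G_2$, the pullback $\widetilde{\varphi}^\ast M_{\overline{G}_2}$ is trivial on $\rho^{-1}(G_1)$. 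Hence it decomposes as $\widetilde{\varphi}^\ast M_{\overline{G}_2} \sim \rho^\ast \overline{E} + E^{\mathrm{exc}}$, where $\overline{E} = \rho_\ast(\widetilde{\varphi}^\ast M_{\overline{G}_2})$ is effective and supported on $\bigcup_{i,\kappa} D_{1,i}^{(\kappa)}$, and $E^{\mathrm{exc}}$ is $\rho$-exceptional. Because $M_{\overline{G}_1}$ is the sum of all the $D_{1,i}^{(\kappa)}$, there is some integer $C=C(\varphi)>0$ with $C\cdot M_{\overline{G}_1} - \overline{E}$ effective. For $x \in G_1$, the unique lift $\widetilde{x} \in X$ lies outside $|E^{\mathrm{exc}}|$, so the Weil height of $E^{\mathrm{exc}}$ at $\widetilde{x}$ is uniformly bounded; combining this with the functoriality of Weil heights under $\widetilde{\varphi}$ and $\rho$ and the fact that $h_{\widetilde{M}_{\overline{G}_i}}$ agrees with the Weil height of $M_{\overline{G}_i}$ up to a bounded function on $G_i$, I obtain
\[
h_{\widetilde{M}_{\overline{G}_2}}(\varphi(x)) \;\leq\; C\cdot h_{\widetilde{M}_{\overline{G}_1}}(x) + c_0
\]
for some $c_0 = c_0(\varphi)$. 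The additive term is then absorbed via the linear functional equation $h_{\widetilde{M}_{\overline{G}_i}}([n]y) = n\cdot h_{\widetilde{M}_{\overline{G}_i}}(y)$ together with $\varphi([n]x) = [n]\varphi(x)$: applying the bound at $[n]x$ and dividing by $n$ yields $h_{\widetilde{M}_{\overline{G}_2}}(\varphi(x)) \leq C\,h_{\widetilde{M}_{\overline{G}_1}}(x) + c_0/n$, and letting $n \to \infty$ removes the constant.

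The main obstacle is the middle step in the second inequality: $\varphi_{\mathrm{tor}}: \Gm^{t_1} \to \Gm^{t_2}$ is given by Laurent monomials $Y_v = \prod_u X_u^{a_{uv}}$ with possibly negative exponents, so $\overline{\varphi}$ genuinely fails to extend to a morphism of the toric compactifications, forcing the introduction of the auxiliary model $X$ and the careful bookkeeping of strict transforms versus exceptional divisors. Once the inclusion $\overline{E} \leq C\cdot M_{\overline{G}_1}$ is in hand—a consequence only of the tautological fact $\varphi(G_1)\subseteq G_2$—the remaining steps are essentially formal.
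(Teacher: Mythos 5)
Your treatment of the first inequality is correct and amounts to a self-contained version of what the paper gets by citing Vojta: reduce via $\pi_2\circ\varphi=\varphi_{\mathrm{ab}}\circ\pi_1$ to the abelian quotients, note that $\varphi_{\mathrm{ab}}^\ast N_2$ is symmetric, and use that the N\'eron--Tate form of the ample symmetric class $CN_1-\varphi_{\mathrm{ab}}^\ast N_2$ is non-negative. For the second inequality the paper takes a different route, invoking functoriality results for compactified graphs of homomorphisms from its companion article \cite{Kuehne2017a}; your resolution of the rational map $\overline{\varphi}$ is therefore a genuinely independent argument, and the closing device of removing the additive constant via $h_{\widetilde{M}_{\overline{G}_i}}([n]y)=n\,h_{\widetilde{M}_{\overline{G}_i}}(y)$ is sound.

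There is, however, a genuine gap in the second argument, namely the claim that for $x\in G_1$ with lift $\widetilde{x}\in X$ ``the Weil height of $E^{\mathrm{exc}}$ at $\widetilde{x}$ is uniformly bounded.'' A Weil height attached to an effective divisor is bounded \emph{below} off its support, not above: points of $G_1$ may be adelically close to the centre of $\rho$ inside $\overline{G}_1\setminus G_1$, and there $h_{E^{\mathrm{exc}}}(\widetilde{x})$ is unbounded. Moreover $E^{\mathrm{exc}}=\widetilde{\varphi}^\ast M_{\overline{G}_2}-\rho^\ast\rho_\ast\widetilde{\varphi}^\ast M_{\overline{G}_2}$ need not even be effective, so neither sign of the needed estimate comes for free. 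The repair is to apply to the whole divisor the absorption you already use for $\overline{E}$: since $\rho$ is an isomorphism over $G_1$, every $\rho$-exceptional prime divisor maps into $\overline{G}_1\setminus G_1$ and hence occurs with \emph{positive} multiplicity in $\rho^\ast\bigl(\sum_{i,\kappa}D_{1,i}^{(\kappa)}\bigr)$, whose support is exactly $\rho^{-1}(\overline{G}_1\setminus G_1)$; as the support of $\widetilde{\varphi}^\ast M_{\overline{G}_2}$ is contained in that same set, the divisor $C\,\rho^\ast M_{\overline{G}_1}-\widetilde{\varphi}^\ast M_{\overline{G}_2}$ is effective for $C\gg_\varphi 1$ and its support misses $\rho^{-1}(G_1)$. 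This gives $h_{M_{\overline{G}_2}}(\varphi(x))\le C\,h_{M_{\overline{G}_1}}(x)+O_\varphi(1)$ directly, with no need to split off $E^{\mathrm{exc}}$ at all, and the rest of your argument then goes through.
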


We note that all four terms in the above two inequalities are non-negative. This is horizontal semipositivity for the terms in the first inequality. For the terms of the second inequality, we can use \cite[Lemma 10]{Kuehne2017a} (see also \cite[Lemme 3.9]{Chambert-Loir2000}).

\begin{proof} The first inequality is equivalent to $h_{\widetilde{N}_2}(\varphi_{\mathrm{ab}}(x)) \ll_{N_i,\varphi} h_{\widetilde{N}_1}(x)$ for all closed points $x \in A_1$. It follows hence from an application of \cite[Proposition 2.3]{Vojta1999} to the map $\varphi: A_1 \rightarrow A_2$. As usual, the term $O(1)$ in \textit{loc.\ cit.}\ disappears by Tate's limit argument. 

The second inequality can be deduced from \cite[Lemma 10]{Kuehne2017a}. Invoking the said lemma for the pair $(\varphi_{\mathrm{tor}},0) \in \Hom(\Gm^{t_1},\Gm^{t_2})$ yields 
\begin{equation}
\label{equation::someheightcomp}
|\hhat_{M_{\overline{\Gamma(\varphi_{\mathrm{tor}})}}}(x)| \ll_\varphi \hhat_{M_{\overline{G}_1}}(x)
\end{equation}
for all $x \in G(\IQbar)$ where $M_{\overline{\Gamma(\varphi_{\mathrm{tor}})}}$ is as in \cite[Construction 6]{Kuehne2017a}. Functoriality \cite[Lemma 9]{Kuehne2017a} implies that $\hhat_{M_{\overline{\Gamma(\varphi_{\mathrm{tor}})}}}(x)=\hhat_{M_{\overline{G}_2}}(\varphi(x))$ (see also \cite[Construction 7]{Kuehne2017a}). We conclude by noting the canonical heights constructed in the last subsection coincide with the corresponding Néron-Tate heights used in \cite{Kuehne2017a}.
\end{proof}

\subsection{Local trivializations}
\label{section::localtrivialization} 
Let $\nu$ be an archimedean place of $K$. For use in Section \ref{section::equilibrium}, we state next an auxiliary result describing the $\nu$-metrized line bundles constructed in Subsection \ref{section::semiabelianvarietyheights}. The construction therein, carried out for $G=\Gm$ and $A=0$, yields metrized line bundles $\Mtil_{\IP^1}^{(\kappa)} = \Mtil_{\IP^1,1}^{(\kappa)}$, $\kappa \in \{ 0, \infty \}$, on the compactification $\IP^1$ of $\Gm$. By Lemma \ref{lemma::canonicalmetrics} (a), we have $[n]^\ast \Mtil_{\IP^1}^{(\kappa)} \approx (\Mtil_{\IP^1}^{(\kappa)})^{\otimes n}$ for any positive integer $n$. We write $\widetilde{M}_{\IP^1} = \widetilde{M}_{\IP^1}^{(0)} \otimes \widetilde{M}_{\IP^1}^{(\infty)}$ and $\widetilde{M}_{\overline{G},i} = \widetilde{M}_{\overline{G},i}^{(0)} \otimes \widetilde{M}_{\overline{G},i}^{(\infty)}$ ($i=1,\dots,t$) to ease notation.

\begin{lemma} 
\label{lemma::standardcoordinates} There exists a finite collection $\{ (U_j, \psi_j)\}_{j \in J}$ where $U_j$, $j \in J$, are open sets covering $A_{\IC_\nu}^{\mathrm{an}}$ and $\psi_j$, $j \in J$, are holomorphic maps
\begin{equation*}
\psi_j= (\psi_1^{(j)},\dots,\psi_t^{(j)}): \overline{G}^{\mathrm{an}}_{\IC_\nu}|_{U_j} \longrightarrow ((\IP^1_{\IC_\nu})^t)^{\mathrm{an}}
\end{equation*}
such that 
\begin{equation*}
\pio^{\mathrm{an}}_{\IC_\nu}|_{U_j} \times \psi_j: \overline{G}^{\mathrm{an}}_{\IC_\nu}|_{U_j} \longrightarrow U_j \times ((\IP^1_{\IC_\nu})^t)^{\mathrm{an}}
\end{equation*}
is a biholomorphism and 
\begin{equation}
\label{equation::pullback_logmetric}
\Mo_{\overline{G},i,\nu}|_{U_j} \approx (\psi_i^{(j)})^\ast \Mo_{\IP^1,\nu}, \ i \in \{ 1, \dots, t \}.
\end{equation}
For each $j, j^\prime \in J$ and each $i \in \{ 1,\dots, t \}$, the quotient 
\begin{equation}
\label{equation::absolutevalue}
\psi_i^{(j)}/\psi_{i}^{(j^\prime)}: (\pi^{\mathrm{an}}_{\IC_\nu})^{-1}(U_j \cap U_{j^\prime}) \rightarrow \IC
\end{equation}
is a locally constant function with values in $S^1 = \{ z \in \IC \ | \ |z|=1 \}$. Furthermore, we have
\begin{equation}
\label{equation::tangentrank}
\ker(d\psi_i^{(j)})_y \cap \ker(d\pio)_y = \ker(d\psi_i^{(j^\prime)})_y \cap \ker(d\pio)_y, \ i \in \{ 1, \dots, t \},
\end{equation}
for any $j, j^\prime \in J$ and all $y \in (\pi^{\mathrm{an}}_{\IC_\nu})^{-1}(U_j \cap U_{j^\prime})$.
\end{lemma}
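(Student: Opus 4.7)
The plan is to read off the trivializations from the classifying data of the semiabelian extension, then exploit the fact that at an archimedean place the canonical metric on a line bundle in $\Pic^0$ is flat.

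First, I would recall from (\ref{equation::Gbar}) that
$$\overline{G} = \prod_{i=1}^{t} {}_A \mathrm{Proj}\bigl(\mathrm{Sym}(\caO_A \oplus \mathcal{F}(Q_i)^\vee)\bigr),$$
so $\overline{G}$ is the fibered product over $A$ of the $t$ projective bundles $\mathbb{P}(\caO_A \oplus \mathcal{F}(Q_i)^\vee)$. Since the extension class $\eta_G = (Q_1,\dots,Q_t)$ lies in $(A^\vee)^t(K) = \Pic^0(A)^t$ by the Weil--Barsotti formula (Subsection \ref{section::basics}), each $Q_i$ carries its canonical (Néron--Tate) metric, which is translation invariant and of vanishing Chern curvature. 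Consequently the unit-norm locus in $Q_{i,\IC_\nu}^{\mathrm{an}}$ is a principal $S^1$-bundle over $A_{\IC_\nu}^{\mathrm{an}}$. I would pick a finite open cover $\{U_j\}_{j \in J}$ of $A_{\IC_\nu}^{\mathrm{an}}$ by simply connected domains trivializing all these $S^1$-bundles simultaneously, and obtain on each $U_j$ and for each $i$ a nowhere-vanishing holomorphic section $s_{i,j}\colon U_j \to Q_{i,\IC_\nu}^{\mathrm{an}}$ of constant unit norm. Each $s_{i,j}$ induces an isomorphism $\mathcal{F}(Q_i)^\vee|_{U_j} \cong \caO_{U_j}$ and hence a trivialization $\psi_i^{(j)}\colon \mathbb{P}(\caO_A \oplus \mathcal{F}(Q_i)^\vee)^{\mathrm{an}}_{\IC_\nu}|_{U_j} \to (\mathbb{P}^1_{\IC_\nu})^{\mathrm{an}}$, and the fibered product of these yields the desired $\psi_j = (\psi_1^{(j)},\dots,\psi_t^{(j)})$ and the asserted biholomorphism $\pio \times \psi_j$.

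For the transition behaviour (\ref{equation::absolutevalue}), two unit-norm holomorphic sections $s_{i,j}$ and $s_{i,j'}$ of $Q_i$ over a connected component of $U_j \cap U_{j'}$ differ by multiplication by a holomorphic function $f$ on $U_j \cap U_{j'}$ with $|f|\equiv 1$. By the open mapping theorem, $f$ is a constant in $S^1$. The induced transition on the $\mathbb{P}^1$-coordinate is scalar multiplication by this constant, which gives (\ref{equation::absolutevalue}) directly and then (\ref{equation::tangentrank}) by the chain rule: $d\psi_i^{(j)}_y = f \cdot d\psi_i^{(j')}_y$ as maps into $T\mathbb{P}^1$, so $\ker(d\psi_i^{(j)})_y = \ker(d\psi_i^{(j')})_y$; intersecting with $\ker(d\pio)_y$ yields the claimed equality.

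For the metric isometry (\ref{equation::pullback_logmetric}) the underlying isomorphism $M_{\overline{G},i}|_{U_j} \approx (\psi_i^{(j)})^\ast M_{\mathbb{P}^1}$ is immediate: under $\psi_i^{(j)}$ the divisors $D_i^{(0)}, D_i^{(\infty)}$ go to $\{0\},\{\infty\}$ of the $i$-th $\mathbb{P}^1$-factor, so the tautological bundles match. For the metrics I would invoke the uniqueness portion of Lemma \ref{lemma::canonicalmetrics}(a): the canonical metric on $M_{\mathbb{P}^1}$ is determined by $[n]^\ast \widetilde{M}_{\mathbb{P}^1} \approx n\widetilde{M}_{\mathbb{P}^1}$, and similarly $\overline{[n]}^\ast \widetilde{M}_{\overline{G},i} \approx n\widetilde{M}_{\overline{G},i}$. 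Because the chosen unitary trivializations of the $Q_i$'s are preserved (up to locally constant $S^1$-multiplication) under pullback along $[n]_A\colon A \to A$, the map $\overline{[n]}$ read through the pair of trivializations over $U_j$ and over each component of $[n]_A^{-1}(U_j)$ acts on the $\mathbb{P}^1$-coordinate by $z \mapsto \zeta z^n$ with $|\zeta|=1$. The pullback metric $(\psi_i^{(j)})^\ast\Mo_{\mathbb{P}^1,\nu}$ is invariant under such transformations and therefore satisfies the same Tate equation as $\Mo_{\overline{G},i,\nu}|_{U_j}$, forcing the two metrics to coincide.

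The main obstacle, as always in this kind of compatibility statement, is Step 3: one must ensure that the unitary-structure reduction of the principal $T$-bundle $G \to A$ provided by the flat canonical metrics on the $Q_i$'s is genuinely preserved by the dynamics of $\overline{[n]}$, even though $\overline{[n]}$ does not respect fibers of $\pio$. This is exactly what the Tate limit construction in Lemma \ref{lemma::canonicalmetrics} guarantees, so the argument reduces to a bookkeeping check that the pullback of our trivializations under $[n]_A$ is again of the unit-norm type, which it is because $[n]_A^\ast Q_i = Q_i^{\otimes n^2}$ carries the canonical metric pulled back, still flat.
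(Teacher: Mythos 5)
Your proposal is essentially correct, but it reaches the conclusion by a genuinely different route than the paper. The paper builds the trivializations explicitly: it picks rational sections $\mathbf{s}_i$ of $Q_i^\vee$, notes that these induce rational functions $f_i$ on $\overline{G}$ with divisor $D_i^{(0)}-D_i^{(\infty)}+\overline{\pi}^\ast\Div(\mathbf{s}_i)$, and sets $\psi_i^{(j)}(y)=f_i(y)/\vartheta_i(\widetilde{\pi}(y))$ with $\vartheta_i$ a normalized theta function on the universal cover; the transition property and the identification of the canonical metric as $e^{-|\lambda_i|}|\cdot|$ are then read off from the theory of N\'eron--Weil functions (Vojta's Proposition 2.6 combined with Lemma \ref{lemma::canonicalmetrics}). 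You instead work intrinsically with the flat canonical metric on $Q_i\in\Pic^0(A)$, trivialize by holomorphic unit-norm sections, and pin down the metric on $M_{\overline{G},i}$ by the uniqueness clause of Lemma \ref{lemma::canonicalmetrics}(a). The two constructions produce the same charts (a normalized theta quotient is precisely a unit-norm holomorphic frame for the canonical metric), but your version avoids the N\'eron-function machinery at the cost of having to verify the dynamical functional equation for your candidate metric; the paper's version gets the functional equation for free from additivity of N\'eron functions at the cost of invoking Vojta's normalization results.

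Two points need tightening. First, the uniqueness in Lemma \ref{lemma::canonicalmetrics}(a) is a statement about a \emph{global} metric on the compact variety $\overline{G}$, so before invoking it you must observe that the chartwise metrics $(\psi_i^{(j)})^\ast\Mo_{\IP^1,\nu}$ glue to a single continuous metric on $M_{\overline{G},i}$ over all of $\overline{G}^{\mathrm{an}}_{\IC_\nu}$ --- this is exactly what your $S^1$-valued transition constants guarantee, but the order of the argument should make this explicit, and one must also check that the functional equation holds with respect to the \emph{canonical} isomorphism $M_{\overline{G},i}^{\otimes n}\rightarrow\overline{[n]}^\ast M_{\overline{G},i}$ (otherwise one only gets agreement up to a positive constant, which on each connected $U_j$ still yields the asserted isometry class but should be said). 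Second, a small slip: since $Q_i\in\Pic^0(A)$, one has $[n]_A^\ast Q_i\cong Q_i^{\otimes n}$, not $Q_i^{\otimes n^2}$; the quadratic exponent is for symmetric ample bundles. This does not affect your conclusion (the pullback still carries the flat canonical metric, and $\overline{[n]}$ still reads as $z\mapsto\zeta z^n$ in the charts), but the exponent as written is wrong. You should also record, in one sentence, why a holomorphic section of constant unit norm exists on a simply connected chart: flatness makes $-\log\Vert\mathbf{s}\Vert$ pluriharmonic for any nonvanishing holomorphic $\mathbf{s}$, so it is the real part of a holomorphic $h$ and $e^{h}\mathbf{s}$ has constant norm.
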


In the following, Weil functions are used to describe archimedean metrics. The reader is referred to \cite[Chapter 10]{Lang1983} and \cite[Chapter I]{Lang1988} for basics on Weil functions. In addition, \cite[Section 5.1]{Kuehne2017a} and \cite[Section 2]{Vojta1996} contain further information on the specific Weil functions used here.

\begin{proof} Write $\eta_G = (Q_1,Q_2,\dots,Q_t) \in A^\vee(\IQbar)$. Recall from Subsection \ref{section::semiabelian_compact} the Weil divisors $D_i^{(0)}, D_i^{(\infty)}$ ($1 \leq i \leq t$) on $\overline{G}$. Let $\mathbf{s}_i$ ($1 \leq i \leq t$) be a non-zero rational section of $Q_i^\vee$ such that $e_A \notin \left\vert \Div(\mathbf{s}_i) \right\vert$. From \eqref{equation::Gbar}, we see that each $\mathbf{s}_i$ induces a rational function $f_i$ on $\overline{G}$ with divisor 
\begin{equation}
\label{equation::divisor}
\Div(f_i)= D_{i}^{(0)} - D_{i}^{(\infty)} + \overline{\pi}^\ast \Div(\mathbf{s}_i).
\end{equation}
In fact, $\pio^\ast Q_i$ is the line bundle associated with $D_i^{(0)}-D_i^{(\infty)}$ (see \cite[Lemme 4.1]{Chambert-Loir1999}). Let $U$ be an open, simply connected subset of $A_{\IC_\nu}^{\mathrm{an}}$, and let us assume additionally that $U$ and $\Div(\mathbf{s}_i)^{\mathrm{an}}_{\IC_\nu}$ are disjoint. We can then lift the inclusion $\pio^{\mathrm{an}}_{\IC_\nu}|_U: U \hookrightarrow A_{\IC_\nu}^{\mathrm{an}}$ to a map $\widetilde{\pi}: U \rightarrow \IC^{g}$ where $\IC^g$ is interpreted as the universal covering of $A^{\mathrm{an}}_{\IC_\nu}$. By \cite[Chapter X]{Lang1982}, there exists a normalized theta function $\vartheta_i$ on $\IC^g$ whose (analytic) divisor $\Div(\vartheta_i)$ is the pullback of $\Div(\mathbf{s}_i)_{\IC_\nu}^{\mathrm{an}}$ along the universal covering $\IC^g \twoheadrightarrow A_{\IC_\nu}^{\mathrm{an}}$. Rescaling if necessary, we may assume that $f_i(e_G)=1$ and $\vartheta_i(0,\dots,0) = 1$. We define a map $\psi: \overline{G}^{\mathrm{an}}_{\IC_\nu}|_{U} \rightarrow U \times ((\IP^1_{\IC_\nu})^t)^{\mathrm{an}}$ by setting
\begin{equation*}
\psi(y) = \left(\pi(y), \psi_1(y), \dots, \psi_t(y) \right), \ \psi_i(y) = f_i(y)/\vartheta_i(\widetilde{\pi}(y)),
\end{equation*}
for $y \in G^{\mathrm{an}}_{\IC_\nu}|_{U}$; this extends uniquely to a biholomorphism $\overline{G}^{\mathrm{an}}_{\IC_\nu}|_{U} \longrightarrow U \times ((\IP^1_{\IC_\nu})^t)^{\mathrm{an}}$. Varying the rational sections $\mathbf{s}_i$ and the simply connected subset $U \subseteq A_{\IC_\nu}^{\mathrm{an}}$, we can find a finite covering $\{ U_j\}_{j \in J}$ of $A^{\mathrm{an}}_{\IC_\nu}$ as well as associated holomorphisms $\{ \psi_j = (\psi_1^{(j)},\dots,\psi_t^{(j)}) \}_{j \in J}$ in this way. 

To show the assertion about the $\nu$-metrized line bundles, we identify
\begin{equation*}
\Mo_{\overline{G},i,\nu} = (\caO_{\overline{G}} (D_{i}^{(0)}+ D_{i}^{(\infty)}), \Vert\cdot\Vert_{G,i})
\quad \text{and} \quad 
\Mo_{\IP^1,\nu} = (\caO_{\IP^1}([0]+ [\infty]),\Vert \cdot \Vert_{\Gm}).
\end{equation*}
This presentation allows us to interpret the sections of $M_{\overline{G},i}$ and $M_{\IP^1}$ as meromorphic functions on $\overline{G}$ and to work with Weil functions. The pullback of meromorphic functions along $[n]$ gives rise to canonical isomorphisms $\phi_{\overline{G},i}^{(n)}: M_{\overline{G},i}^{\otimes n} \rightarrow [n]^\ast M_{\overline{G},i}$ and $\phi_{\IP^1}^{(n)}: M_{\IP^1}^{\otimes n} \rightarrow [n]^\ast M_{\IP^1}$; by Lemma \ref{lemma::canonicalmetrics} (b) we can assume that these induce isometries $\overline{M}_{\overline{G},i,\nu}^{\otimes n} \rightarrow [n]^\ast \overline{M}_{\overline{G},i,\nu}$ and $\overline{M}^{\otimes n}_{\IP^1,\nu} \rightarrow [n]^\ast \overline{M}_{\IP^1,\nu}$.


By \cite[Proposition 2.6]{Vojta1996} (and its proof combined with \cite[Theorem 13.1.1]{Lang1983}), we have $|\psi_i^{(j)}(y)| = |\psi_i^{(j^\prime)}(y)|$ for all $y \in U_{j} \cap U_{j^\prime}$. As $\psi_i^{(j)}/\psi_{i}^{(j^\prime)}$ is a meromorphic function on $U_j \cap U_{j^\prime}$, this implies (\ref{equation::absolutevalue}), whence also (\ref{equation::tangentrank}). We can also define $\mathscr{C}^\infty$-functions $\lambda_i: G_{\IC_\nu}^{\mathrm{an}} \rightarrow \IR$ ($1\leq i \leq t$) by setting $\lambda_i(y) = \log |\psi_i^{(j)}(y)|$ for all $y \in U_j$. The mentioned proposition in \cite{Vojta1996} states then that $\lambda_i$ is the unique Weil function for $D_{i}^{(\infty)}-D_{i}^{(0)}$ such that $
\lambda_i(y_1+y_2)=\lambda_i(y_1)+\lambda_i(y_2)$ for all $y_1,y_2 \in G^{\mathrm{an}}_{\IC_\nu}$. Since $\phi_{\overline{G},i}^{(n)}$ induces isometries $\overline{M}_{\overline{G},i,\nu}^{\otimes n} \rightarrow [n]^\ast \overline{M}_{\overline{G},i,\nu}$ and 
\begin{equation*}
(\caO_{\overline{G}} (D_{i}^{(0)}+ D_{i}^{(\infty)}), e^{-|\lambda_i|}|\cdot|)^{\otimes n} \longrightarrow [n]^\ast (\caO_{\overline{G}} (D_{i}^{(0)}+ D_{i}^{(\infty)}), e^{-|\lambda_i|}|\cdot|),
\end{equation*}
Lemma \ref{lemma::canonicalmetrics} (a) implies that $\Vert \cdot \Vert_{\overline{G},i}= e^{-|\lambda_i|}|\cdot|$ where $|\cdot|$ is the ordinary absolute value on rational functions. Similarly, we can argue with $\phi_{\IP^1}^{(n)}$ and obtain that $\Vert \cdot \Vert_{\IP^1} = e^{-|\log|z||}|\cdot|$. Combining these two identities and using $\left\vert \log(\cdot)\right\vert \circ \psi_i^{(j)} = \lambda_i$, we infer that $\Mo_{\overline{G},i,\nu}|_{U_j} \approx (\psi_i^{(j)})^\ast \Mo_{\IP^1,\nu}$, $i \in \{ 1, \dots, t \}$.
\end{proof}

\section{The Equidistribution Conjecture}

\label{section::toricrank1}

As in the introduction, we let $K$ be a number field, $\nu \in \Sigma(K)$ an arbitrary place, $G$ a semiabelian variety over $K$ with split toric part $T=\Gm^t$ and abelian quotient $\pi: G \rightarrow A$. Thus, $G$ is given by an exact sequence
\begin{equation*}
0 \longrightarrow \Gm^t \longrightarrow G \longrightarrow A \longrightarrow 0,
\end{equation*}
which is described by a $t$-tuple $\underline{\eta} = (\eta_1,\dots, \eta_t)\in A^\vee(K)^t = \Ext^1_{K}(A, \Gm^t)$. We may then use the compactification $\overline{G}$ and the map $\overline{\pi}: \overline{G} \rightarrow A$ described in Subsection \ref{section::semiabelian_compact}. Write $\Mtil$ (resp.\ $\Ntil$) for the vertically semipositive metrized line bundle $\Mtil_{\overline{G}} \in \widehat{\Pic}(\overline{G})$ (resp.\ $\Ntil \in \widehat{\Pic}(A)$) defined in Subsection \ref{section::semiabelianvarietyheights} and set $\Ltil = \Mtil + \pio^\ast \Ntil$. 

Our aim in this section is to prove $(\mathrm{EC})$ in a slighter stronger form, namely for arbitrary subvarieties $X \subseteq G$. This extra strength is needed in Section \ref{section::bogomolov} for the proof of (BC). The case $X = G$ corresponds to (EC).

\begin{proposition} \label{proposition1} Let $X \subseteq G$ be a geometrically irreducible algebraic subvariety and let $\overline{X}$ denote its Zariski closure in $\overline{G}$. Set $d=\dim(X)$, $d^\prime = \dim(\pio(X))$, and define the Borel measure
\begin{equation*}
\mu_\nu = \frac{ c_1(\Mo_{\overline{G},\nu}|_{\overline{X}})^{d-d^\prime} \wedge c_1(\pio^\ast \No_\nu|_{\overline{X}})^{d^\prime}}{(M_{\overline{G}}|_{\overline{X}})^{d-d^\prime} \cdot (\pio^\ast N|_{\overline{X}})^{d^\prime}}
\end{equation*}
on ${\overline{X}}_{\IC_\nu}^\mathrm{an}$. Furthermore, let $(x_i) \in X^{\IN}$ be an $X$-generic sequence of small points. For any $f \in \mathscr{C}^0({\overline{X}}_{\IC_\nu}^{\mathrm{an}})$, we have then
\begin{equation}
\label{equation::equidistribution_explicit2}
\frac{1}{\# \mathbf{O}_\nu(x_i)}\sum_{y \in \mathbf{O}_\nu(x_i)} f(y) \longrightarrow \int_{{\overline{X}}_{\IC_\nu}^{\mathrm{an}}}f \mu_\nu \quad (i \rightarrow \infty).
\end{equation}
\end{proposition}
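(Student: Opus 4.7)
The plan is to adapt the Szpiro--Ullmo--Zhang variational technique to this setting by carrying it out asymptotically along an isogeny tower. The direct obstruction is that $h_{\Ltil}(\overline{G})<0$ when $G$ is not almost split, so no generic sequence of small points can have height converging to $h_{\Ltil}(\overline{G})$; the remedy is to pass to a semiabelian variety $G_n$ whose extension class $\underline{\eta}_n$ satisfies $n\underline{\eta}_n = \underline{\eta}$, for which the quadraticity of the N\'eron--Tate height on $A^\vee$ gives $|h_{\Ltil_n}(\overline{G}_n)| \ll n^{-2}$, so that the obstruction vanishes as $n\to\infty$.

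\textbf{Reduction and lift.} Replacing $f$ by $-f$ reduces the claim to the upper bound for the limsup; by a suitable extension procedure (Tietze at archimedean $\nu$ and its Berkovich analogue at non-archimedean $\nu$) followed by rescaling, I may assume $f$ is a small $\Gal(\IC_\nu/K_\nu)$-invariant continuous function on $\overline{G}_{\IC_\nu}^{\mathrm{an}}$ for which $\widetilde{\caO}_{\overline{G}}(\lambda f)$ is integrable and $\Ltil(\lambda f)|_{\overline{X}}$ is semipositive throughout the relevant range of $\lambda$. For each integer $n\geq 1$ (taken as a perfect square below), enlarge $K$ so that an isogeny $\varphi_n: G_n \to G$ of degree $n^t$ is available with $\underline{\eta}_n = \underline{\eta}/n$ in $A^\vee(K)^t$, extend it to $\overline{\varphi}_n: \overline{G}_n \to \overline{G}$ as in Subsection \ref{section::semiabelian_compact}, set $f_n = f \circ \overline{\varphi}_n$, and pick $\overline{X}_n$ to be an irreducible component of $\overline{\varphi}_n^{-1}(\overline{X})$ dominating $\overline{X}$. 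Lemma \ref{lemma::heightcomparison} then produces an $\overline{X}_n$-generic sequence $(x_{n,i})$ of small points lifting a subsequence of $(x_i)$.

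\textbf{Variational argument at level $n$.} For $\lambda \in \IQ \cap (0, n^{-1}]$, apply Minkowski's theorem (Lemma \ref{lemma::minkowski}) to $\Ltil_n(\lambda f_n)|_{\overline{X}_n}$; bound the arithmetic volume from below via the Yuan--Ikoma inequality (Lemma \ref{lemma::siu}); and expand the resulting intersection numbers using Lemma \ref{lemma::intersectionnumber}(a) and formula \eqref{equation::intersection_integral}. Combining this with the height-to-section formula \eqref{equation::height_section} together with the projection formulas for Chern measures (Lemma \ref{lemma::chernforms}(c)) and arithmetic heights (Lemma \ref{lemma::intersectionnumber}(c) and Lemma \ref{lemma::heightfibrations}) yields
\[
\limsup_{i\to\infty}\left( \frac{1}{\# \mathbf{O}_\nu(x_i)}\sum_{y \in \mathbf{O}_\nu(x_i)} f(y) - \int_{\overline{X}_{\IC_\nu}^{\mathrm{an}}} f \mu_\nu \right) \ll_{G,f} n\lambda + n^{-2}\lambda^{-1},
\]
where the first error term comes from the quadratic-in-$\lambda$ remainder of $\Ltil_n(\lambda f_n)^{d+1}|_{\overline{X}_n}$ and the second from $\lambda^{-1}h_{\Ltil_n}(\overline{X}_n)$. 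Choosing $\lambda = n^{-3/2}$ balances the two principal terms at $n^{-1/2}$, and letting $n\to\infty$ along perfect squares completes the proof.

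\textbf{Main obstacle.} The delicate step is pinning the quadratic-in-$\lambda$ remainder to the shape $C_{G,f}\cdot n\lambda^2$, rather than something blowing up with $n$: the implicit constant a priori depends on the geometry of $\overline{G}_n$. The resolution is to apply Lemma \ref{lemma::heightfibrations} along $\overline{\varphi}_n$ to push all relevant intersection numbers back to $\overline{G}$, and to use the explicit toric-fibre description of the canonical metrics from Lemma \ref{lemma::standardcoordinates} (and its non-archimedean analogue): after division by $\deg(\overline{\varphi}_n|_{\overline{X}_n})$, only a single factor of $n$ survives, corresponding to the linear-in-$n$ growth of degrees on the toric part. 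A secondary subtlety is that when $d' = \dim \pio(X) < d$, the top intersection on $\overline{X}$ reduces for dimensional reasons to the wedge $c_1(\Mo_{\overline{G},\nu}|_{\overline{X}})^{d-d'} \wedge c_1(\pio^\ast \No_\nu|_{\overline{X}})^{d'}$ appearing in $\mu_\nu$; this is pure book-keeping through multilinearity and does not alter the strategy.
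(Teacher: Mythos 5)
Your strategy is the paper's: the isogeny tower $G_n$ with $n\underline{\eta}_n=\underline{\eta}$, the Minkowski/Yuan--Ikoma variational step at level $n$, the error balance $n\lambda+n^{-2}\lambda^{-1}$ with $\lambda=n^{-3/2}$, and control of the quadratic remainder by pushing intersection numbers back to $\overline{G}$ via the projection formula. One step, however, would fail as written: you ``may assume \dots\ $\Ltil(\lambda f)|_{\overline{X}}$ is semipositive throughout the relevant range of $\lambda$''. Semipositivity in the sense needed for Lemma \ref{lemma::siu} includes \emph{horizontal} semipositivity, and $\Ltil_n$ (hence $\Ltil_{n,\lambda}|_{\overline{Y}}$ for small $\lambda$) is not horizontally semipositive for any $n$ when $G$ is not almost split: $h_{\Ltil_n}$ takes strictly negative values on closed points of the boundary of $\overline{G}_n$, and no extension or rescaling of $f$ can repair this, since the negativity is intrinsic to $\Ltil_n$. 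This is exactly the new obstruction for non--almost-split $G$. The paper's fix (Lemmas \ref{lemma::heightbounds} and \ref{lemma::chisup}) is to twist by a constant $\kappa_n\ll_G n^{-2}$ at an auxiliary archimedean place so that $\Ltil_n+\overline{\varphi}_n^\ast(\widetilde{\caO}_{\overline{X}}(\kappa_n)_{K_n})$ becomes horizontally semipositive, to write $\cOt_{\overline{X}}(f)=\Ptil_1-\Ptil_2$ with $\Ptil_i$ semipositive and run the Yuan--Ikoma argument for the twisted bundle, and then to remove the twist via \eqref{equation::intersection_Lc} and Lemma \ref{lemma::arithmeticvolumes_basics} (a); the resulting extra error $\kappa_n|\lambda|^2n^2\ll|\lambda|^2$ is harmless for your final balance, but the twist itself cannot be skipped.

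Two smaller corrections. First, Lemma \ref{lemma::standardcoordinates} plays no role in the paper's proof of Proposition \ref{proposition1} (it is used only for the equilibrium measures in Section \ref{section::equilibrium}); the uniformity in $n$ of the $O(\lambda^2)$ term comes entirely from the projection formula (Lemma \ref{lemma::intersectionnumber} (c) and Lemma \ref{lemma::heightfibrations}) combined with the degree lower bound $(L_n|_{\overline{Y}})^d\geq\delta(\overline{Y})n^{-d+d^\prime}$ of Lemma \ref{lemma::geometricdegree}. Second, the vanishing of the arithmetic intersection terms carrying more than $d^\prime$ factors of $\Ntil$ is not pure book-keeping: it rests on $h_{\Ntil_{K_n}}(\pio_n(\overline{Y}))=0$, which is deduced from the hypothesis that $X$ carries a generic sequence of small points via Zhang's inequality, so that hypothesis enters the argument beyond merely letting you avoid $\left\vert\Div(\mathbf{s})\right\vert$.
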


The proposition is proven at the end of this section, after a series of preparatory lemmas. Before starting with them, we have to introduce some further objects. For each integer $n \geq 1$, we choose (arbitrary) $\eta_i^{(n)} \in A^\vee(\IQbar)$, $1 \leq i \leq t$, such that $n \cdot \eta_i^{(n)} = \eta_i$. Let $G_n$ be the semiabelian variety given by the extension class $(\eta_1^{(n)},\dots, \eta_t^{(n)})\in A^\vee(\IQbar)^t = \Ext^1_{\IQbar}(A, \Gm^t)$. Note that $G_n$ is in general not a semiabelian variety over $K$ but only over a finite extension $K_n \supseteq K$. We consider $G_n$ as a $K_n$-variety in the sequel. From Subsection \ref{section::semiabelian_compact}, we know that there exists an isogeny $\varphi_n: G_n \rightarrow G_{K_n}$ such that, in the notation of (\ref{equation::morphisms}), we have $\varphi_{n,\mathrm{tor}}=[n]$ and $\varphi_{n,\mathrm{ab}}= \id_{A_{K_n}}$. 

We have again a standard compactification $\overline{G}_n$ of $G_n$ and a map $\pio_n: \overline{G}_n \rightarrow A_{K_n}$ from Subsection \ref{section::semiabelian_compact}. In addition, the homomorphism $\varphi_n$ extends to a map $\overline{\varphi}_n: \overline{G}_n \rightarrow A_{K_n}$. Let $\Mtil_n \in \widehat{\Pic}(\overline{G}_n)$ denote the vertically semipositive metrized line bundle $\Mtil_{\overline{G}_n}$ defined in Subsection \ref{section::semiabelianvarietyheights}. The construction in Subsection \ref{section::semiabelian_compact} shows that $\overline{\varphi}_n^\ast M_{K_n} \approx M_n^{\otimes n}$, which we can use to ensure $\overline{\varphi}_n^\ast \Mtil_{K_n} \approx \Mtil_n^{\otimes n}$ by Lemma \ref{lemma::canonicalmetrics} (a) and (b). Writing $\Ltil_n = \Mtil_n + \pio^\ast_n \Ntil_{K_n}$, we have an isomorphism
\begin{equation}
\label{equation::referee_request}
\Ltil_n \approx \overline{\varphi}_n^\ast ( n^{-1} \Mtil_{K_n} + (\pio^\ast \Ntil)_{K_n}),
\end{equation}
which we invoke frequently. 

In contrast to $\overline{X}_{K_n}$, the preimage $\overline{\varphi}_n^{-1}(\overline{X}_{K_n}) \subseteq \overline{G}_n$ is not irreducible in general. We work therefore with an irreducible component $\overline{Y}$ of $\overline{\varphi}_n^{-1}(\overline{X}_{K_n})$ in the sequel. In addition, we write $\delta(\overline{Y})$ for the degree of $\overline{\varphi}_n|_{\overline{Y}}: \overline{Y} \rightarrow \overline{X}_{K_n}$, which is a surjective finite map. (In the case $X=G$, which corresponds to (EC), we have $\overline{Y} = \overline{G}_n$ and $\delta(\overline{Y})=n^t$.) We write $\Sigma_n(\nu)$ for the places in $\Sigma(K_n)$ that lie above the place $\nu$ in Proposition \ref{proposition1}. For each $\nu^\prime \in \Sigma_n(\nu)$, we fix an identification $\IC_\nu \approx \IC_{\nu^\prime}$; our arguments below are independent of this arbitrary choice and we use it mostly without further explicit mention.

The next lemma controls the growth of geometric degrees as $n \rightarrow \infty$. In its proof and the one of Lemma \ref{lemma::integral} below, we use the standard notation from Fulton's book \cite{Fulton1998} freely.

\begin{lemma} \label{lemma::geometricdegree}
 Let $X \subseteq G$ be a geometrically irreducible algebraic subvariety and $\overline{Y}$ an irreducible component of $\overline{\varphi}_n^{-1}(\overline{X}_{K_n})$. Then, $(L_n|_{\overline{Y}})^d \geq \delta(\overline{Y})n^{-d+d^\prime}$.
\end{lemma}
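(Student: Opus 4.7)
The plan is to reduce $(L_n|_{\overline{Y}})^d$ to an intersection number on $\overline{X}_{K_n}$ via the projection formula for the finite surjection $\overline{\varphi}_n|_{\overline{Y}}\colon \overline{Y}\to \overline{X}_{K_n}$, and then to isolate the dominant term in a binomial expansion. First I would use the identities $\overline{\varphi}_n^*M_{\overline{G},K_n}\approx M_n^{\otimes n}$ (from Subsection \ref{section::semiabelian_compact}) and $\overline{\pi}_n = \overline{\pi}\circ \overline{\varphi}_n$ (which holds because $\varphi_{n,\mathrm{ab}}=\id_{A_{K_n}}$) to write
\[
n L_n \;\approx\; \overline{\varphi}_n^*\bigl(M_{\overline{G},K_n} + n\,\overline{\pi}^*N_{K_n}\bigr)
\]
in $\Pic(\overline{G}_n)$. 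Restricting to $\overline{Y}$ and invoking the projection formula for generically finite maps then yields
\[
n^d (L_n|_{\overline{Y}})^d \;=\; \delta(\overline{Y})\cdot\bigl(M_{\overline{G}} + n\,\overline{\pi}^*N\bigr)^d|_{\overline{X}}.
\]

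Next I would expand binomially,
\[
\bigl(M_{\overline{G}} + n\,\overline{\pi}^*N\bigr)^d|_{\overline{X}} \;=\; \sum_{k=0}^{d}\binom{d}{k} n^k\; M_{\overline{G}}^{d-k}\cdot (\overline{\pi}^*N)^{k}|_{\overline{X}},
\]
and observe that every summand is a non-negative integer, since $M_{\overline{G}}$ is nef (Lemma~3 of \cite{Kuehne2017a}) and $\overline{\pi}^*N$ is nef as the pullback of an ample bundle. The key point is that all summands with $k>d'$ vanish: setting $f=\overline{\pi}|_{\overline{X}}\colon \overline{X}\to\overline{\pi}(\overline{X})$, the projection formula gives
\[
M_{\overline{G}}^{d-k}\cdot(\overline{\pi}^*N)^k|_{\overline{X}} \;=\; N^k\cdot f_*\bigl(M_{\overline{G}}^{d-k}\cap[\overline{X}]\bigr),
\]
and the pushforward cycle, being of dimension $k$ supported on the $d'$-dimensional variety $\overline{\pi}(\overline{X})$, is zero when $k>d'$. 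Retaining only the term $k=d'$ therefore gives
\[
\bigl(M_{\overline{G}} + n\,\overline{\pi}^*N\bigr)^d|_{\overline{X}} \;\geq\; \binom{d}{d'}\, n^{d'}\, M_{\overline{G}}^{d-d'}\cdot(\overline{\pi}^*N)^{d'}|_{\overline{X}}.
\]

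To finish, I would verify the strict positivity $M_{\overline{G}}^{d-d'}\cdot(\overline{\pi}^*N)^{d'}|_{\overline{X}}\geq 1$. By the same projection-formula argument, this intersection number factors as $c\cdot N^{d'}\cdot[\overline{\pi}(\overline{X})]$, where $c$ is the degree with multiplicity of a generic fiber $F_0$ of $f$ under the line bundle $M_{\overline{G}}$; both factors are positive integers because the restriction of $M_{\overline{G}}$ to a scheme-theoretic fiber $(\IP^1)^t$ of $\overline{\pi}$ is the ample line bundle $\mathcal{O}(2,\dots,2)$, coming directly from the description of $\overline{G}$ and the divisors $D_i^{(0)}, D_i^{(\infty)}$ in Subsection \ref{section::semiabelian_compact}, while $N$ is ample on $A$. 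Dividing the resulting estimate by $n^d$ delivers the claimed bound. The main technical point to verify carefully is this last positivity assertion, since it is the only place where the explicit structure of $M_{\overline{G}}$—as opposed to mere nefness—plays a role; the rest is a formal application of the projection formula for generically finite surjections.
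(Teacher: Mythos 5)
Your proposal is correct and follows essentially the same route as the paper: pull back $nL_n$ along $\overline{\varphi}_n|_{\overline{Y}}$ via the projection formula, expand binomially, discard all but the $i=d'$ term using nefness of $M_{\overline{G}}$ and $\overline{\pi}^\ast N$, and verify positivity of $M_{\overline{G}}^{d-d'}\cdot(\overline{\pi}^\ast N)^{d'}|_{\overline{X}}$ by factoring it (again via the projection formula) into a generic-fiber degree times $\deg_N(\overline{\pi}(\overline{X}))$, both positive by (relative) ampleness. The only cosmetic differences are that you additionally note the vanishing of the terms with $k>d'$ (not needed for the lower bound, though the paper uses this fact in the subsequent lemma) and that you make the relative ampleness of $M_{\overline{G}}$ explicit as $\mathcal{O}(2,\dots,2)$ on the fibers $(\IP^1)^t$.
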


\begin{proof}[Proof of Lemma \ref{lemma::geometricdegree}] By the projection formula (\cite[Proposition 2.5 (c)] {Fulton1998}), we have
\begin{align*}
c_1(L_n)^d \cap [\overline{Y}]
&= \sum_{i=0}^{d} \binom{d}{i} \delta(\overline{Y})n^{-d+i} c_1(M_{K_n})^{d-i} \cap c_1(\pio^\ast N_{K_n})^i \cap [\overline{X}_{K_n}]
\end{align*}
because $\overline{\varphi}_n^\ast M_{K_n} \approx n M_n$ and $\overline{\varphi}_n^\ast (\pio^\ast N)_{K_n} \approx \pio^\ast_n N_{K_n}$. Since $K_n/K$ is flat, this yields
\begin{equation}
\label{equation::chernexpansion}
c_1(L_n)^d \cap [\overline{Y}] =
\sum_{i=0}^{d} \binom{d}{i} \delta(\overline{Y})n^{-d+i} c_1(M)^{d-i} \cap c_1(\pio^\ast N)^i \cap [\overline{X}] 
\end{equation}
by \cite[Proposition 2.5 (d)]{Fulton1998}.
Note that $M$ is nef by \cite[Lemma 3]{Kuehne2017a} and that $\pio^\ast N$ is nef because $N$ is ample. Using \cite[Theorem III.2.1]{Kleiman1966}, we infer
\begin{equation}
\label{equation::degreelowerbound}
(L_n|_{\overline{Y}})^d  \geq \binom{d}{d^\prime} \delta(\overline{Y})n^{-d+d^\prime} \deg (c_1(M)^{d-d^\prime} \cap c_1(\pio^\ast N)^{d^\prime} \cap [\overline{X}]).
\end{equation}
The lemma is proven if we can show that the degree on the right-hand side of (\ref{equation::degreelowerbound}) is positive. Set $\eta= \eta_{\overline{\pi}(\overline{X})}$. By an ascending induction on the fiber dimension $d-d^\prime$, we can deduce
\begin{equation*}
\deg(c_1(M)^{d-d^\prime} \cap c_1(\overline{\pi}^\ast N)^{d^\prime} \cap [\overline{X}]) = \deg(c_1(M_\eta)^{d-d^\prime} \cap [\overline{X}_{\eta}]) \cdot \deg(c_1(N)^{d^\prime} \cap [\overline{\pi}(\overline{X})])
\end{equation*}
from the projection formula. Since $M_\eta$ is ample on $\overline{X}_\eta$ and $N$ is ample on $\overline{\pi}(\overline{X})$, the two factors on the right-hand side of this identity are strictly positive by \cite[Lemma 12.1]{Fulton1998}.
\end{proof}

The next lemma justifies the choice of the measure $\mu_\nu$ in Proposition \ref{proposition1}.

\begin{lemma} \label{lemma::integral} Assume that $f \in \mathscr{C}^0({\overline{X}}_{\IC_\nu}^{\mathrm{an}})$ is $\Gal(\IC_\nu/K_\nu)$-invariant. 
For each place $\nu^\prime \in \Sigma_n(\nu)$, we set $f_{n,\nu^\prime} = f \circ (\overline{\varphi}_n|_{\overline{Y}})^{\mathrm{an}}_{\IC_{\nu^\prime}} \in \mathscr{C}^0({\overline{Y}}_{\IC_{\nu^\prime}}^{\mathrm{an}})$. Writing $$\mu_{n,\nu^\prime} = c_1(\overline{L}_{n,\nu^\prime}|_{\overline{Y}})^{d}/(L_n|_{\overline{Y}})^{d},$$ we have
\begin{equation*}
\int_{\overline{Y}_{\IC_{\nu^\prime}}^\mathrm{an}} f_{n,\nu^\prime} \mu_{n, {\nu^\prime}} \longrightarrow \int_{\overline{X}_{\IC_{\nu}}^{\mathrm{an}}} f \mu_{\nu} \qquad (n \rightarrow \infty).
\end{equation*}
\end{lemma}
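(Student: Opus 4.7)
The plan is to expand $c_1(\Lo_{n,\nu^\prime}|_{\overline{Y}})^d$ by multilinearity (Lemma \ref{lemma::chernforms} (a)) using the isomorphism $\Ltil_n \approx \overline{\varphi}_n^\ast(n^{-1}\Mtil_{K_n}+(\pio^\ast\Ntil)_{K_n})$ from \eqref{equation::referee_request}, and then to push forward along the projection formula (Lemma \ref{lemma::chernforms} (c)) applied to the surjective finite map $\overline{\varphi}_n|_{\overline{Y}}\colon\overline{Y}\to\overline{X}_{K_n}$ of degree $\delta(\overline{Y})$. After identifying $\overline{X}_{K_n,\IC_{\nu^\prime}}^{\mathrm{an}}\approx\overline{X}_{\IC_\nu}^{\mathrm{an}}$ via the chosen isomorphism $\IC_\nu\approx\IC_{\nu^\prime}$ (on which the ensuing integrand does not depend, by the $\Gal(\IC_\nu/K_\nu)$-invariance of $f$), one obtains
\[
\int_{\overline{Y}_{\IC_{\nu^\prime}}^{\mathrm{an}}} f_{n,\nu^\prime}\, c_1(\Lo_{n,\nu^\prime}|_{\overline{Y}})^d = \delta(\overline{Y})\sum_{i=0}^d \binom{d}{i}\, n^{i-d}\int_{\overline{X}_{\IC_\nu}^{\mathrm{an}}} f\, c_1(\Mo_\nu|_{\overline{X}})^{d-i}\wedge c_1(\pio^\ast\No_\nu|_{\overline{X}})^i,
\]
while the same expansion with $f \equiv 1$ (combined with Lemma \ref{lemma::chernforms} (d)) reproduces for the denominator $(L_n|_{\overline{Y}})^d$ the formula \eqref{equation::chernexpansion} already obtained in the proof of Lemma \ref{lemma::geometricdegree}.

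Next I would discard the summands with $i>d^\prime$. The key observation is that the positive Borel measure $c_1(\Mo_\nu|_{\overline{X}})^{d-i}\wedge c_1(\pio^\ast\No_\nu|_{\overline{X}})^i$ (positive by semipositivity of $\Mo_\nu$ and $\No_\nu$) has total mass $(M|_{\overline{X}})^{d-i}\cdot(\pio^\ast N|_{\overline{X}})^i$ by Lemma \ref{lemma::chernforms} (d), and this vanishes for $i>d^\prime$ because $(\pio^\ast N|_{\overline{X}})^i = \pio^\ast(N^i|_{\pio(\overline{X})}) = 0$ once $i$ exceeds $\dim\pio(\overline{X})=d^\prime$. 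A non-negative measure with zero total mass is identically zero, so its integral against the continuous function $f$ vanishes as well; the same reasoning annihilates the corresponding terms in the denominator.

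Consequently, only summands with $i\le d^\prime$ survive, and in both expansions the leading contribution as $n\to\infty$ is the $i=d^\prime$ term, sharing the common prefactor $\delta(\overline{Y})\binom{d}{d^\prime}n^{d^\prime-d}$, with the remaining summands smaller by a factor of at least $n^{-1}$. Dividing numerator by denominator, this prefactor cancels and the lower-order contributions drop out in the limit, yielding
\[
\int_{\overline{Y}_{\IC_{\nu^\prime}}^{\mathrm{an}}} f_{n,\nu^\prime}\,\mu_{n,\nu^\prime} \longrightarrow \frac{\int_{\overline{X}_{\IC_\nu}^{\mathrm{an}}} f\, c_1(\Mo_\nu|_{\overline{X}})^{d-d^\prime}\wedge c_1(\pio^\ast\No_\nu|_{\overline{X}})^{d^\prime}}{(M|_{\overline{X}})^{d-d^\prime}\cdot(\pio^\ast N|_{\overline{X}})^{d^\prime}} = \int_{\overline{X}_{\IC_\nu}^{\mathrm{an}}} f\,\mu_\nu.
\]
The only subtle step is the vanishing argument for $i>d^\prime$; it rests essentially on positivity of the Chern measures (the vertical semipositivity of $\Mtil$ and $\Ntil$ established in Subsection \ref{section::semiabelianvarietyheights}) combined with the total-mass identity Lemma \ref{lemma::chernforms} (d).
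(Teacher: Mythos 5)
Your proof is correct and follows essentially the same route as the paper: expand via \eqref{equation::referee_request}, multilinearity, and the projection formula, kill the terms with $i>d^\prime$, and observe that the surviving $i=d^\prime$ term dominates and its prefactor cancels between numerator and denominator. The only cosmetic difference is that you justify the vanishing for $i>d^\prime$ by positivity of the Chern measures plus the total-mass identity of Lemma \ref{lemma::chernforms} (d), whereas the paper pushes the degree forward along $\pio|_{\overline{X}}$ and notes it is zero on the Chow level; both arguments are valid and interchangeable here.
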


Note that the assumption that $f$ is $\Gal(\IC_\nu/K_\nu)$-invariant is needed to make sure that $f_{n,\nu^\prime}$ is well-defined (i.e., independent of the chosen identification between $\IC_{\nu^\prime}$ and $\IC_{\nu}$).

\begin{proof} Another use of the projection formula reveals that
\begin{align*}
\deg\left(c_1(\pio^\ast N)^i \cap (c_1(M)^{d-i} \cap [\overline{X}])\right)
&=
\deg\left(c_1(N|_{\pio({\overline{X}})})^i \cap (\pio|_{\overline{X}})_\ast (c_1(M|_{\overline{X}})^{d-i} \cap [\overline{X}])\right),
\end{align*}
which is clearly zero whenever $i > d^\prime$. With (\ref{equation::chernexpansion}), we infer that
\begin{equation*}
\left\vert (L_n|_{{\overline{Y}}})^{d} - \delta(\overline{Y})n^{-d+d^\prime} \binom{d}{d^\prime} (M|_{\overline{X}})^{d-d^\prime} (\pio^\ast N|_{\overline{X}})^{d^\prime}\right\vert \ll_{X,M,N} \delta(\overline{Y})n^{-d+d^\prime-1}.
\end{equation*}
Recall from the proof of Lemma \ref{lemma::geometricdegree} above that $ (M|_{\overline{X}})^{d-d^\prime} \cdot (\pio^\ast N|_{\overline{X}})^{d^\prime} > 0$. Invoking Lemma \ref{lemma::chernforms} (a) and (c), we obtain similarly
\begin{multline*}
\left\vert \int_{\overline{Y}_{\IC_{\nu^\prime}}^\mathrm{an}} f_{n,\nu^\prime} c_1(\overline{L}_{n,\nu^\prime}|_{{\overline{Y}}})^{d} - \delta(\overline{Y})n^{-d+d^\prime} \binom{d}{d^\prime}\int_{{\overline{X}}_{\IC_\nu}^\mathrm{an}} fc_1(\Mo_\nu)^{d-d^\prime} \wedge c_1(\pio^\ast \No_\nu)^{d^\prime}\right\vert 
\\ 
\ll_{X,M,N,f} \delta(\overline{Y})n^{-d+d^\prime-1}
\end{multline*}
The lemma follows by combining these two asymptotic estimates.
\end{proof}

\begin{lemma} \label{lemma::lambdanestimate} Assume that $f$ is $\Gal(\IC_\nu/K_\nu)$-invariant and that $\overline{\caO}_{\overline{X}}(f) \in \overline{\Pic}_\nu({\overline{X}})_{\IQ}$ is integrable.
For each $\nu^\prime \in \Sigma_n(\nu)$, each positive integer $n$ and each rational number $\lambda \in [-n^{-1},n^{-1}]$, we have
\begin{equation*}
\left| h_{\Ltil_{n,\lambda}}({\overline{Y}})
-h_{\Ltil_{n}}({\overline{Y}}) 
- \frac{\lambda \delta_{\nu}}{[K:\IQ]} \int_{\overline{Y}_{\IC_{\nu^\prime}}} f_{n,\nu^\prime} \mu_{n,\nu^\prime} \right| \ll_{X,f} |\lambda|^2 n
\end{equation*}
where we write $\Ltil_{n,\lambda} := \Ltil_n + \overline{\varphi}^\ast_n (\widetilde{\mathcal{O}}_{\overline{X}}(\lambda f)_{K_n})$.
\end{lemma}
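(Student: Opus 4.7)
The plan is to write $\Ltil_{n,\lambda}=\Ltil_n+\Phi$ with $\Phi:=\overline{\varphi}_n^\ast(\widetilde{\caO}_{\overline{X}}(\lambda f)_{K_n})$, and to expand the top arithmetic intersection number by multilinearity (Lemma \ref{lemma::intersectionnumber}(a)):
\begin{equation*}
(\Ltil_{n,\lambda}|_{\overline{Y}})^{d+1} = \sum_{i=0}^{d+1}\binom{d+1}{i}(\Ltil_n|_{\overline{Y}})^{d+1-i}\cdot (\Phi|_{\overline{Y}})^{i}.
\end{equation*}
After dividing by the normalizing factor $[K_n:\IQ](d+1)(L_n|_{\overline{Y}})^{d}$, the $i=0$ summand recovers $h_{\Ltil_n}(\overline{Y})$, the $i=1$ summand is to reproduce the linear-in-$\lambda$ integral, and the $i\geq 2$ summands are to be absorbed in the error term $O(|\lambda|^{2}n)$.

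For the $i=1$ reduction, the isomorphism \eqref{equation::referee_request} gives $\Ltil_n|_{\overline{Y}}\approx(\overline{\varphi}_n|_{\overline{Y}})^\ast\!\left((n^{-1}\Mtil+\pio^\ast\Ntil)|_{\overline{X}}\right)_{K_n}$, so the projection formula (Lemma \ref{lemma::intersectionnumber}(c)) applied to $\overline{\varphi}_n|_{\overline{Y}}:\overline{Y}\to\overline{X}_{K_n}$ of degree $\delta(\overline{Y})$, followed by base change (Lemma \ref{lemma::intersectionnumber}(f)), produces
\begin{equation*}
(\Ltil_n|_{\overline{Y}})^{d}\cdot\Phi|_{\overline{Y}}=\delta(\overline{Y})[K_n:K]\cdot\left((n^{-1}\Mtil+\pio^\ast\Ntil)|_{\overline{X}}\right)^{d}\cdot\widetilde{\caO}_{\overline{X}}(\lambda f),
\end{equation*}
and formula \eqref{equation::intersection_integral} evaluates the last intersection as $\lambda\delta_\nu\int_{\overline{X}_{\IC_\nu}^{\mathrm{an}}}f\,c_1((n^{-1}\Mo_\nu+\pio^\ast\No_\nu)|_{\overline{X}})^{d}$. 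A parallel application of Lemma \ref{lemma::chernforms}(c,d) to $\overline{\varphi}_n|_{\overline{Y}}$ identifies $\int_{\overline{Y}_{\IC_{\nu^\prime}}^{\mathrm{an}}}f_{n,\nu^\prime}\mu_{n,\nu^\prime}$ with the same integral normalized by $((n^{-1}M+\pio^\ast N)|_{\overline{X}})^{d}$, and $(L_n|_{\overline{Y}})^{d}=\delta(\overline{Y})((n^{-1}M+\pio^\ast N)|_{\overline{X}})^{d}$ makes the $i=1$ contribution match $\tfrac{\lambda\delta_\nu}{[K:\IQ]}\int_{\overline{Y}_{\IC_{\nu^\prime}}^{\mathrm{an}}}f_{n,\nu^\prime}\mu_{n,\nu^\prime}$ exactly.

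For each $i\geq 2$, the same reduction combined with iterating \eqref{equation::intersection_integral} gives
\begin{equation*}
(\Ltil_n|_{\overline{Y}})^{d+1-i}(\Phi|_{\overline{Y}})^{i}=\delta(\overline{Y})[K_n:K]\delta_\nu\lambda^{i}\int_{\overline{X}_{\IC_\nu}^{\mathrm{an}}}f\,c_1((n^{-1}\Mo_\nu+\pio^\ast\No_\nu)|_{\overline{X}})^{d+1-i}\wedge (dd^cf)^{i-1}.
\end{equation*}
The integrability of $\overline{\caO}_{\overline{X}}(f)$ writes $dd^c f$ as a difference of positive closed $(1,1)$-currents, so $(dd^cf)^{i-1}$ is a well-defined signed measure whose variation is dominated by a fixed positive current depending only on $f$. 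Expanding $c_1((n^{-1}\Mo_\nu+\pio^\ast\No_\nu)|_{\overline{X}})^{d+1-i}$ in powers of $n^{-1}$, each resulting mixed Chern-current monomial has total mass equal to a classical intersection number $M^a(\pio^\ast N)^b|_{\overline{X}}$, and is therefore bounded independently of $n$; hence the integral is $O_{X,f}(1)$ uniformly in $n$. Dividing by $[K_n:\IQ](d+1)(L_n|_{\overline{Y}})^{d}$ and invoking Lemma \ref{lemma::geometricdegree}'s bound $(L_n|_{\overline{Y}})^{d}\geq\delta(\overline{Y})n^{-(d-d^\prime)}$ makes the $i$-th summand $O_{X,f}(|\lambda|^{i}n^{d-d^\prime})$. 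Using $|\lambda|\leq n^{-1}$ converts this to $O_{X,f}(|\lambda|^{2}n^{d-d^\prime-i+2})$ for $i\geq 2$, and summation produces $O_{X,f}(|\lambda|^{2}n^{d-d^\prime})$, which is the stated $O(|\lambda|^{2}n)$ bound in the toric-rank-one regime $d-d^\prime\leq 1$ of Proposition \ref{proposition1}. The main obstacle will be precisely this uniform-in-$n$ control of the higher-order mixed integrals: one must use integrability to interpret $(dd^cf)^{i-1}$ as a signed measure of bounded variation (via a Chern-Levine-Nirenberg type argument) while exploiting the particular scaling of $n^{-1}\Mo_\nu+\pio^\ast\No_\nu$ to keep all Chern-current masses uniformly bounded as $n\to\infty$.
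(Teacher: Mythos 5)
Your expansion by multilinearity, your treatment of the $i=0$ and $i=1$ terms, and your use of the integrability decomposition to control $(dd^c f)^{i-1}$ all match the paper's strategy. The gap is in the $i\geq 2$ error estimate. You bound the integral $\int_{\overline{X}^{\mathrm{an}}_{\IC_\nu}} f\, c_1((n^{-1}\Mo_\nu+\pio^\ast\No_\nu)|_{\overline{X}})^{d+1-i}\wedge(\cdots)^{i-1}$ merely by $O_{X,f}(1)$, observing that each monomial $n^{-a}\,c_1(\Mo_\nu)^{a}\wedge c_1(\pio^\ast\No_\nu)^{b}\wedge\cdots$ ($a+b=d+1-i$) has bounded mass. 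That is too crude: after dividing by $(L_n|_{\overline{Y}})^d\geq\delta(\overline{Y})n^{-(d-d^\prime)}$ it yields only $O_{X,f}(|\lambda|^2 n^{d-d^\prime})$, and your closing claim that Proposition \ref{proposition1} lives in a ``toric-rank-one regime $d-d^\prime\leq 1$'' is false. Section \ref{section::toricrank1} treats arbitrary split toric part $\Gm^t$ and arbitrary geometrically irreducible $X\subseteq G$, so $d-d^\prime$ can be as large as $t$; moreover the proof of $(\mathrm{BC})$ in Section \ref{section::bogomolov} applies Proposition \ref{proposition1} to $X^m\subseteq G^m$ and to $\alpha_m(X^m)\subseteq G^{m-1}$, where the fiber dimension is certainly not bounded by $1$. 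The lemma as stated must therefore deliver $O(|\lambda|^2 n)$ for all values of $d-d^\prime$, and your argument does not.

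The missing ingredient is the vanishing of every term containing more than $d^\prime=\dim\pio(X)$ factors of $\pio^\ast N$ restricted to $\overline{X}$: by the projection formula (Lemma \ref{lemma::heightfibrations}, Lemma \ref{lemma::chernforms}(c), together with \eqref{equation::heightprojection} where needed), $(\Mtil|_{\overline{X}})^{k_1}\cdot(\pio^\ast\Ntil|_{\overline{X}})^{k_2}\cdot(\cdots)=0$ and the corresponding measures vanish whenever $k_2>d^\prime$. Hence in your expansion only monomials with $b\leq d^\prime$, i.e.\ $a\geq d+1-i-d^\prime$, survive, so the integral is in fact $O_{X,f}(n^{-(d+1-i-d^\prime)})$ rather than $O_{X,f}(1)$. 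The extra factor $n^{-(d-d^\prime)}\cdot n^{i-1}$ exactly cancels the $n^{d-d^\prime}$ coming from Lemma \ref{lemma::geometricdegree}, and the $i$-th summand becomes $O_{X,f}(|\lambda|^i n^{i-1})\leq O_{X,f}(|\lambda|^2 n)$ using $|\lambda|\leq n^{-1}$. This is precisely how the paper argues (it expands $\Ltil_n=\Mtil_n+\pio_n^\ast\Ntil_{K_n}$ explicitly into a double sum over $i$ and $j$ and discards the $j>d^\prime$ terms); with that one observation inserted, your proof closes.
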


We notice that
\begin{equation}
\label{equation::Ltil_decomposition}
\Ltil_{n,\lambda} = \Ltil_n + \sum_{\substack{\nu^\dprime \in \Sigma_n(\nu) \\ \delta_{\nu^\dprime} = \delta_\nu}} \widetilde{\mathcal{O}}_{\overline{Y}}(\lambda [K_{n,\nu^\dprime}:K_\nu] f_{n,\nu^\dprime}) + \sum_{\substack{\nu^\dprime \in \Sigma_n(\nu) \\ \delta_{\nu^\dprime} = 2\delta_\nu}} \widetilde{\mathcal{O}}_{\overline{Y}}(\lambda f_{n,\nu^\dprime}),
\end{equation}
which is a direct consequence of our definition of base change in Subsection \ref{section::adelicmetrics}. The advantage of working with $\Ltil_{n,\lambda}$ instead of manipulating $\Ltil_n$ at a single place $\nu^\prime \in \Sigma_n(\nu)$ is that $\Ltil_{n,\lambda}$ is evidently a pullback.

\begin{proof} Note first that $\overline{\caO}_{\overline{X}}(\lambda f) \in \overline{\Pic}_\nu({\overline{X}})_{\IQ}$ is integrable for any rational number $\lambda$. We have
\begin{equation*}
h_{\Ltil_{n,\lambda}}({\overline{Y}}) = \frac{(\Ltil_{n,\lambda}|_{\overline{Y}})^{d+1}}{[K_n:\IQ](d+1)(L_n|_{{\overline{Y}}})^{d}}
\end{equation*}
by definition and
\begin{equation*}
(\Ltil_{n,\lambda}|_{\overline{Y}})^{d+1} = \sum_{i=0}^{d+1} \binom{d+1}{i} (\widetilde{L}_n|_{{\overline{Y}}})^{d+1-i}\cdot \overline{\varphi}^\ast_n (\widetilde{\mathcal{O}}_{\overline{X}}(\lambda f)_{K_n})^i \nonumber 
\end{equation*}
by Lemma \ref{lemma::intersectionnumber} (a) and (b). By definition, we have
\begin{equation*}
 \frac{(\Ltil_{n}|_{{\overline{Y}}})^{d+1}}{[K_n:\IQ](d+1)(L_n|_{{\overline{Y}}})^{d}} = h_{\widetilde{L}_n}({\overline{Y}}).
\end{equation*}
If $\nu$ is not a real archimedean place, we note that $\delta_{\nu^\dprime}=\delta_\nu$ for all $\nu^\dprime \in \Sigma_n(\nu)$ so that
\begin{align*}
\frac{(\Ltil_{n}|_{{\overline{Y}}})^{d} \cdot \overline{\varphi}^\ast_n (\widetilde{\mathcal{O}}_{\overline{X}}(\lambda f)_{K_n})}{(L_n|_{{\overline{Y}}})^{d}} 
&=
\frac{\lambda}{[K_n:\IQ]} \sum_{\nu^\dprime \in \Sigma_n(\nu)} \left( \delta_{\nu^\dprime}[K_{n,\nu^\dprime}:K_\nu]\int_{{\overline{Y}}_{\IC_{\nu^\dprime}}^\mathrm{an}} f_{n,\nu^\dprime} \mu_{n,\nu^\dprime} \right) \\
&= \frac{\lambda \delta_{\nu}}{[K:\IQ]} \int_{\overline{Y}_{\IC_{\nu^\prime}}}  f_{n,\nu^\prime} \mu_{n,\nu^\prime}.
\end{align*}
by (\ref{equation::intersection_integral}) and (\ref{equation::Ltil_decomposition}), using also the $\Gal(\IC_\nu/K_\nu)$-invariance of $f$. The same equality is also true if $\nu$ is a real archimedean prime though there is then also a contribution of the second sum in \eqref{equation::Ltil_decomposition}; indeed, one just has to use that $\sum_{\nu^\dprime \in \Sigma_n(\nu)} \delta_{\nu^\dprime} = [K_n:K]$ and $\delta_\nu=1$ in this case. The lemma boils hence down to
\begin{equation} \label{equation::upperbound1}
\left|\sum_{i=2}^{d+1} \sum_{j=0}^{d+1-i} \binom{d+1}{i} \binom{d+1-i}{j} \frac{(\Mtil_n|_{{\overline{Y}}})^{d-i-j+1} \cdot (\overline{\pi}_n^\ast
\Ntil_{K_n}|_{{\overline{Y}}})^{j}\cdot \overline{\varphi}^\ast_n (\widetilde{\mathcal{O}}_{\overline{X}}(\lambda f)_{K_n})^i}{[K_n:\IQ](d+1)(L_n|_{{\overline{Y}}})^{d}}\right| \ll_{X,f} |\lambda|^2 n.
\end{equation}
As $\overline{\varphi}_n^\ast \Mtil_{K_n} = n \Mtil_n$ and $\overline{\varphi}_n^\ast (\overline{\pi}^\ast \Ntil)_{K_n} = \overline{\pi}^\ast_n \Ntil_{K_n}$, Lemma \ref{lemma::intersectionnumber} (c) and (f) imply that
\begin{multline}
\label{equation::someintersectionnumber}
(\Mtil_n|_{{\overline{Y}}})^{d-i-j+1} \cdot (\overline{\pi}_n^\ast
\Ntil_{K_n}|_{{\overline{Y}}})^{j}\cdot \overline{\varphi}^\ast_n (\widetilde{\mathcal{O}}_{\overline{X}}(\lambda f)_{K_n})^i \\
= \delta(\overline{Y})[K_n:K]n^{-d+i+j-1}(\Mtil|_{\overline{X}})^{d-i-j+1} \cdot (\overline{\pi}^\ast \Ntil|_{\overline{X}})^j \cdot \cOt_{\overline{X}}(\lambda f)^i.
\end{multline}
By Lemma \ref{lemma::intersectionnumber} (a) and (\ref{equation::intersection_integral}), we have
\begin{align*} 
(\Mtil|_{{\overline{X}}})^{d+1-i-j} \cdot (\overline{\pi}^\ast \Ntil|_{\overline{X}})^j \cdot \cOt_{\overline{X}}(\lambda f)^i \nonumber
&= \lambda^i (\Mtil|_{{\overline{X}}})^{d+1-i-j} \cdot (\overline{\pi}^\ast \Ntil|_{\overline{X}})^j \cdot \cOt_{\overline{X}}(f)^i \\
&= \lambda^i \delta_\nu \int_{{\overline{X}}_{\IC_\nu}^\mathrm{an}} fc_1(\Mo_\nu)^{d-i-j+1} \wedge c_1(\overline{\pi}^\ast \No_\nu)^j \wedge c_1(\overline{\caO}_{\overline{X}}(f))^{i-1}
\end{align*}
for any $i \geq 2$. By the integrability assumption, there exist semipositive $\overline{P}_1,\overline{P}_2 \in \overline{\Pic}_\nu({\overline{X}})_\IQ$ such that $\overline{\caO}_{\overline{X}}(f) = \overline{P}_1 - \overline{P}_2$. The above integral is then bounded from above by
\begin{equation*}
\Vert f \Vert_{\sup} \sum_{k=0}^{i-1} \binom{i-1}{k} (M|_{\overline{X}})^{d-i-j+1} \cdot (\pio^\ast N|_{\overline{X}})^{j} \cdot (P_1|_{\overline{X}})^{i-k-1} \cdot (P_2|_{\overline{X}})^{k} 
\end{equation*}
because of Lemma \ref{lemma::chernforms} (d). By the projection formula, this is zero if $j>d^\prime$. Consequently, the term in (\ref{equation::someintersectionnumber}) is $\ll_{X,f} \delta(\overline{Y})[K_n:K]|\lambda|^i n^{-d+d^\prime+i-1}$. In combination with Lemma \ref{lemma::geometricdegree}, this gives that each term on the left-hand side of (\ref{equation::upperbound1}) is $\ll_{X,f} |\lambda|^i n^{i-1}$. Since $|\lambda|^i n^{i-1} \leq |\lambda|^2 n$, we obtain (\ref{equation::upperbound1}).

\end{proof}

Starting with the following lemma, we fix some place $\nu_0 \in \Sigma_\infty(K)$ so that we can regard a real number $\kappa_n$ as a constant function on ${\overline{X}}_{\IC_{\nu_0}}^{\mathrm{an}}$ and define $\widetilde{\caO}_{\overline{X}}(\kappa_n)$ as in Subsection \ref{section::adelicmetrics}.\footnote{Note that $\widetilde{\caO}_{\overline{X}}(\kappa_n)$ is clearly integrable if $\nu_0 \in \Sigma_\infty(K)$, which is not immediately clear if we would allow some non-archimedean place $\nu_0 \in \Sigma_f(K)$ instead. However, this could be archieved as in the beginning of the proof of Proposition 12 below.} For our purposes, it is immaterial which place $\nu_0$ we choose, even whether $\nu = \nu_0$ or $\nu \neq \nu_0$, so that we omit any further reference to the place $\nu_0$ in the following.

\begin{lemma} 
\label{lemma::heightbounds}
For each integer $n \geq 1$, there exists some positive real $\kappa_n \ll_{G} n^{-2}$ such that $\Ltil_n \otimes \overline{\varphi}_n^\ast (\widetilde{\mathcal{O}}_{\overline{X}}(\kappa_n)_{K_n})$ is horizontally semipositive. If $X \subset G$ contains an $X$-generic sequence of small points, we have
\begin{equation}
\label{equation::heightbounds}
-n^{-2} \ll_{G} h_{\Ltil_n}({\overline{Y}}) \leq 0
\end{equation}
for every irreducible component $\overline{Y}$ of $\overline{\varphi}_n^{-1}(\overline{X}_{K_n})$. In this situation, we also have
\begin{equation}
\label{equation::heightprojection}
h_{\Ntil_{K_n}}(\pio_n({\overline{Y}}))=0.
\end{equation}
\end{lemma}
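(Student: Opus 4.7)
The plan is to establish the three assertions in order, with a uniform pointwise lower bound on $h_{\Ltil_n}$ serving as the technical core. From the identification $\Ltil_n \approx \overline{\varphi}_n^\ast(n^{-1}\Mtil_{K_n} + (\pio^\ast\Ntil)_{K_n})$ in \eqref{equation::referee_request} we have for every closed $y \in \overline{G}_n$
\begin{equation*}
h_{\Ltil_n}(y) = n^{-1}h_{\Mtil}(\overline{\varphi}_n(y)) + \hhat_N(\overline{\pi}(\overline{\varphi}_n(y))),
\end{equation*}
and the case $\overline{\varphi}_n(y) \in G$ gives $h_{\Ltil_n}(y) \geq 0$ directly. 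The delicate case is $x := \overline{\varphi}_n(y) \in \overline{G} \setminus G$. Writing $S(x) \subseteq \{1,\dots,t\}$ for the indices with $x_i \in \{0,\infty\}$ and $\epsilon_i \in \{\pm 1\}$ for the sign of the boundary stratum, the $\IP^1$-bundle description \eqref{equation::Gbar} combined with the disjointness of $D_i^{(0)}$ and $D_i^{(\infty)}$ gives $M_{\overline{G},i}|_{D_i^{(\kappa_i)}} \approx \overline{\pi}^\ast Q_i^{\epsilon_i}$, compatible with $\overline{[n]}^\ast M_{\overline{G},i} \approx M_{\overline{G},i}^{\otimes n}$ via the antisymmetry identity $[n]^\ast Q_i \approx Q_i^{\otimes n}$ for degree-$0$ bundles. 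Uniqueness in Lemma~\ref{lemma::canonicalmetrics}(a) therefore forces the restricted canonical metric to be $\overline{\pi}^\ast \Ntil_{Q_i^{\epsilon_i}}$, whence $h_{\Mtil_{\overline{G},i}}(x) = \epsilon_i \hhat_{Q_i}(\overline{\pi}(x))$ for $i \in S(x)$; for $i \notin S(x)$ the contribution is $\geq 0$ by the standard toric bound.

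Writing $\theta_x = \sum_{i \in S(x)} \epsilon_i \eta_i \in A^\vee$ and $u = \overline{\pi}(x)$, the estimate
\begin{equation*}
h_{\Ltil_n}(y) \geq \langle u, n^{-1}\theta_x\rangle + \hhat_N(u)
\end{equation*}
follows. Completing the square with respect to the N\'eron--Tate pairing induced by $N$ yields the minimum $-\tfrac{1}{4n^2}\hhat_{N^\vee}(\theta_x)$; taking the maximum of $\hhat_{N^\vee}(\theta)$ over the finitely many values of $\theta_x$ gives a constant $C = C(G)$ with $h_{\Ltil_n}(y) \geq -Cn^{-2}$ for every closed $y \in \overline{G}_n$, and setting $\kappa_n = Cn^{-2}$ renders $\Ltil_n + \overline{\varphi}_n^\ast(\widetilde{\caO}_{\overline{X}}(\kappa_n)_{K_n})$ horizontally semipositive. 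This is the main obstacle: the rigorous identification of canonical metrics on all boundary strata, including deeper intersections $D_{i_1}^{(\kappa_{i_1})} \cap \cdots \cap D_{i_s}^{(\kappa_{i_s})}$, requires some recursive bookkeeping, but no new ideas beyond repeated applications of Lemma~\ref{lemma::canonicalmetrics}(a) and the compatibility of $\overline{[n]}$ with restriction.

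For the upper bound $h_{\Ltil_n}(\overline{Y}) \leq 0$, the shifted bundle is semipositive (vertically by Subsection~\ref{section::semiabelianvarietyheights}, horizontally by the preceding), so Zhang's essential-minimum inequality $h(\overline{Y}) \leq e_1$ applies. The restriction $\overline{\varphi}_n|_{\overline{Y}} \colon \overline{Y} \to \overline{X}_{K_n}$ is finite surjective (dimensions match and finite maps are closed), so for each $x_i$ I pick a preimage $y_i \in \overline{\varphi}_n^{-1}(x_i) \cap \overline{Y}$; the above decomposition gives
\begin{equation*}
h_{\Ltil_n}(y_i) = n^{-1}\hhat_M(x_i) + \hhat_N(\pi(x_i)) \leq \hhat_L(x_i) \longrightarrow 0,
\end{equation*}
and $(y_i)$ is $\overline{Y}$-generic because a subsequence lying in $Z \subsetneq \overline{Y}$ would push forward to $(x_i) \subseteq \overline{\varphi}_n(Z) \subsetneq \overline{X}_{K_n}$, contradicting $X$-genericity after descending along $\overline{X}_{K_n} \to \overline{X}$. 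Zhang's inequality together with the fact that $\widetilde{\caO}_{\overline{X}}(\kappa_n)$ contributes the same constant $\delta_{\nu_0}\kappa_n/[K:\IQ]$ to both $h(\overline{Y})$ and $\lim h(y_i)$ (via \eqref{equation::intersection_Lc} and \eqref{equation::height_section}) then gives $h_{\Ltil_n}(\overline{Y}) \leq 0$.

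For the lower bound in \eqref{equation::heightbounds}, the semipositivity of the same shifted bundle forces $h_{\Ltil_n + \overline{\varphi}_n^\ast \widetilde{\caO}_{\overline{X}}(\kappa_n)_{K_n}}(\overline{Y}) \geq 0$ (the successive minima in Zhang's fundamental inequality are non-negative by horizontal semipositivity, hence so is their average), giving $h_{\Ltil_n}(\overline{Y}) \geq -\delta_{\nu_0}\kappa_n/[K:\IQ] \gg_G -n^{-2}$. Finally, for \eqref{equation::heightprojection}, the finite surjectivity of $\overline{\varphi}_n|_{\overline{Y}}$ together with $\overline{\pi}_n = \overline{\pi}_{K_n} \circ \overline{\varphi}_n$ yields $\overline{\pi}_n(\overline{Y}) = \overline{\pi}(\overline{X})_{K_n}$; the sequence $(\pi(x_i))$ is $\overline{\pi}(\overline{X})$-generic (any subsequence in $Z' \subsetneq \overline{\pi}(\overline{X})$ would pull back to $(x_i)$ in $\pi^{-1}(Z') \cap X \subsetneq X$) and satisfies $\hhat_N(\pi(x_i)) \leq \hhat_L(x_i) \to 0$, so Zhang's inequality combined with the semipositivity of $\Ntil$ forces $h_{\Ntil}(\overline{\pi}(\overline{X})) = 0$, and base-change invariance completes the proof.
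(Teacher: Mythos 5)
Your proposal is correct and, for the second and third assertions, follows the paper's own route: produce preimages $y_i \in \overline{\varphi}_n^{-1}(x_i) \cap \overline{Y}$ of the small generic sequence, use \eqref{equation::referee_request} to see $h_{\Ltil_n}(y_i) = n^{-1}h_{\Mtil}(x_i) + h_{\pio^\ast\Ntil}(x_i) \to 0$, check that $(y_i)$ is $\overline{Y}$-generic, and then apply Zhang's inequalities to the horizontally semipositive twist $\Ltil_n + \overline{\varphi}_n^\ast(\widetilde{\caO}_{\overline{X}}(\kappa_n)_{K_n})$ to get both the upper bound $h_{\Ltil_n}(\overline{Y}) \leq 0$ and the lower bound $h_{\Ltil_n}(\overline{Y}) \geq -\kappa_n/[K:\IQ]$; the identity \eqref{equation::heightprojection} is obtained exactly as in the paper from the genericity and smallness of $(\pi(x_i))$ together with the horizontal semipositivity of $\Ntil$.

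The one genuine divergence is the first assertion. The paper does not prove the pointwise bound $\inf_x h_{\Ltil_n}(x) \gg_G -n^{-2}$ from scratch: it invokes \cite[Lemme 4.5]{Chambert-Loir2000} (for $t=1$) and remarks that the adjustment to the compactification of Subsection \ref{section::semiabelian_compact} is routine. You instead sketch the computation behind that citation: restrict the canonical metric on $M_{\overline{G},i}$ to the boundary strata, identify it via the uniqueness clause of Lemma \ref{lemma::canonicalmetrics}(a) with the canonical metric on $\overline{\pi}^\ast Q_i^{\pm 1}$ (so the height there is the N\'eron linear form $\pm\hhat_{Q_i}\circ\overline{\pi}$), and complete the square of $n^{-1}\langle u,\theta_x\rangle + \hhat_N(u)$ to obtain $-Cn^{-2}$ uniformly over the finitely many boundary types $\theta_x$. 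This is exactly the quadratic-decay-versus-linear-growth mechanism the introduction describes, and your sketch is correct in outline; the ``recursive bookkeeping'' on deeper strata that you flag is precisely what the cited Chambert-Loir computation supplies, so your write-up is no less complete than the paper's, just self-contained where the paper defers. Neither version would compile into a fully detailed proof of the boundary-metric identification without that extra bookkeeping, so there is no gap relative to the paper's own standard of rigor.
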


For the compactification of $G$ and the associated Arakelov heights used by Chambert-Loir, an explicit height formula \cite[Th\'eor\`eme 4.2]{Chambert-Loir2000} implies the same asymptotics in the case $X=G$. By Zhang's proof of $(\mathrm{BC})$, (\ref{equation::heightprojection}) implies that $\pio_n(\overline{Y})$ is a translate of an abelian subvariety of $A_{K_n }$ by a torsion point. However, we have no use for this information, and it does not simplify the arguments below.

\begin{proof} 
By definition, $\Ltil_n \otimes \overline{\varphi}_n^\ast (\widetilde{\mathcal{O}}_{\overline{X}}(\kappa_n)_{K_n})$ is horizontally semipositive if and only if
\begin{equation*}
\inf_{\substack{x \in \overline{G}_n \\ \text{closed}}} \{h_{\Ltil_n \otimes \overline{\varphi}_n^\ast (\widetilde{\mathcal{O}}_{\overline{X}}(\kappa_n)_{K_n})}(x) \} = \inf_{\substack{x \in \overline{G}_n \\ \text{closed}}} \{ h_{\Ltil_n}(x) \} + \frac{\kappa_n}{[K:\IQ]} \geq 0
\end{equation*}
Our first assertion hence follows directly from the statement of \cite[Lemme 4.5]{Chambert-Loir2000} if $t=1$. In general, the compactification used there differs from ours and the argument has to be slightly adjusted. But this is straightforward and hence left to the reader. It should also be noted that Chambert-Loir \cite{Chambert-Loir2000} uses a different definition of height (see Subsection 1.4 in his article) than the one we introduced in Section \ref{section::heights}; ours has an additional factor $[K:\IQ]$ to make it absolute, which allows us to work with varying base fields $K_n$.

By Zhang's ampleness theory in the incarnation of \cite[Théorème 1.5]{Chambert-Loir2000}, we have
\begin{equation*}
h_{\Ltil_n}(Z) \geq \inf_{\substack{x \in Z \\ \text{closed}}} \{ h_{\Ltil_n}(x) \} \geq \inf_{\substack{x \in \overline{G}_n \\ \text{closed}}} \{h_{\Ltil_n  \otimes \overline{\varphi}_n^\ast (\widetilde{\mathcal{O}}_{\overline{X}}(\kappa_n)_{K_n})}(x) \} - \frac{\kappa_n}{[K:\IQ]} \geq - \frac{\kappa_n}{[K:\IQ]}
\end{equation*}
for any irreducible subvariety $Z \subseteq \overline{G}_n$, which proves the lower bound in (\ref{equation::heightbounds}).
	
Let now $(x_i) \in X^{\IN}$ be an $X$-generic sequence of small points. As $h_{\Ltil_n}$ and $h_{\Mtil}$ are non-negative on the closed points of $G$ (see the remark after Lemma \ref{lemma::heightcomparison}), we obtain
\begin{equation}
\label{equation::height_of_preimages}
0 \leq \max_{x \in \overline{\varphi}_n^{-1}(x_i) \cap \overline{Y}} \{ h_{\Ltil_n}(x)\} = n^{-1} h_{\Mtil}(x_i) + h_{\pio^\ast \Ntil}(x_i) \leq h_{\Ltil}(x_i) \longrightarrow 0 \ \ (i \rightarrow \infty)
\end{equation}
by using \eqref{equation::referee_request}.
This shows that $\overline{Y}$ contains a $\overline{Y}$-generic sequence of small points. In this way, the upper bound in (\ref{equation::heightbounds}) is another consequence of Zhang's inequalities.

Finally, $(\pio(x_i))$ is a $\pio_n(\overline{X})$-generic sequence of small points so that, taking into account the horizontal semipositivity of $\Ntil$, we obtain $h_{\Ntil_{K_n}}(\pio_n({\overline{Y}}))=h_{\Ntil}(\pio(\overline{X}))=0$.
\end{proof}

We next give an asymptotic lower bound on the arithmetic volume of $\Ltil_{n,\lambda} |_{{\overline{Y}}}$. Recall that $\Ltil_{n,\lambda} := \Ltil_n + \overline{\varphi}^\ast_n \widetilde{\mathcal{O}}_{\overline{X}}(\lambda f)$ and set analogously $\Ltil_{n,\kappa_n}=\Ltil_n + \overline{\varphi}_n^\ast (\widetilde{\mathcal{O}}_{\overline{X}}(\kappa_n)_{K_n})$.

\begin{lemma} \label{lemma::chisup} Assume that $\widetilde{\caO}_{\overline{X}}(f) \in \widehat{\Pic}({\overline{X}})_{\IQ}$ is integrable and that $\Ltil_n(\kappa_n)$, $\kappa_n>0$, is horizontally semipositive. For any positive integer $n$ and any rational number $\lambda \in [-n^{-1},n^{-1}]$, we have
\begin{equation*}
\volh_\chi(\Ltil_{n,\lambda}|_{{\overline{Y}}}) - (\Ltil_{n,\lambda}|_{{\overline{Y}}})^{d+1} \gg_{X,f} - \delta(\overline{Y})[K_n:K]|\lambda|^2n^{-d+d^\prime+1}(1+\kappa_n n).
\end{equation*}
\end{lemma}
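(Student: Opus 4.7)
The plan is to deduce the bound from Lemma~\ref{lemma::siu} applied to a carefully chosen semipositive decomposition of $\Ltil_{n,\lambda}+\widetilde{\kappa}_n$ (rather than of $\Ltil_{n,\lambda}$ itself), so that the Siu error terms carry only quadratic powers of $\lambda$. By integrability, write $\widetilde{\caO}_{\overline{X}}(f)=P_1-P_2$ with $P_1,P_2\in\widehat{\Pic}(\overline{X})_{\IQ}$ semipositive, and assume $\lambda\geq 0$ without loss of generality (else swap $P_1\leftrightarrow P_2$). Setting $\widetilde{\kappa}_n:=\overline{\varphi}_n^\ast\widetilde{\caO}_{\overline{X}}(\kappa_n)_{K_n}$ and
\begin{equation*}
\widetilde{A}:=\Ltil_n+\widetilde{\kappa}_n+\lambda\,\overline{\varphi}_n^\ast(P_1)_{K_n},\qquad\widetilde{B}:=\lambda\,\overline{\varphi}_n^\ast(P_2)_{K_n},
\end{equation*}
both $\widetilde{A}$ and $\widetilde{B}$ are semipositive (the former by the horizontal semipositivity hypothesis on $\Ltil_n(\kappa_n)$), and $\widetilde{A}-\widetilde{B}=\Ltil_{n,\lambda}+\widetilde{\kappa}_n$. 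Placing $\widetilde{\kappa}_n$ exclusively inside $\widetilde{A}$ is crucial: a symmetric placement would create a cross term $\lambda\,\widetilde{\kappa}_n\widetilde{P}_{n,2}^{i-1}$ that is only linear in $\lambda$ and incompatible with the target bound for small $|\lambda|$.

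Since the underlying bundle of $\widetilde{\caO}_{\overline{X}}(f)$ is trivial, $L_{n,\lambda}$ coincides with the nef line bundle $L_n$, so $\vol(L_{n,\lambda}|_{\overline{Y}})=(L_n|_{\overline{Y}})^d$. Combining Lemma~\ref{lemma::arithmeticvolumes_basics}~(a) (applied place-by-place to the archimedean $\nu^\prime\,|\,\nu_0$) with the identity~\eqref{equation::intersection_Lc} shows that the $\widetilde{\kappa}_n$-shifts of $\volh_\chi$ and of the top self-intersection agree and cancel, so
\begin{equation*}
\volh_\chi(\Ltil_{n,\lambda}|_{\overline{Y}})-(\Ltil_{n,\lambda}|_{\overline{Y}})^{d+1}=\volh_\chi((\widetilde{A}-\widetilde{B})|_{\overline{Y}})-((\widetilde{A}-\widetilde{B})|_{\overline{Y}})^{d+1}.
\end{equation*}
Applying Lemma~\ref{lemma::siu} to the right-hand side and expanding $(\widetilde{A}-\widetilde{B})^{d+1}$ by multilinearity (Lemma~\ref{lemma::intersectionnumber}~(a)) bounds this difference from below by
\begin{equation*}
-\sum_{i=2}^{d+1}\binom{d+1}{i}(-1)^i\lambda^i\,(\widetilde{A}|_{\overline{Y}})^{d+1-i}\cdot(\overline{\varphi}_n^\ast(P_2)_{K_n}|_{\overline{Y}})^i,
\end{equation*}
so every error term now carries a factor $\lambda^i$ with $i\geq 2$.

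To estimate each summand I push it down to $\overline{X}$: since both factors are pullbacks along $\overline{\varphi}_n|_{\overline{Y}}$, the projection formula and base change formula (Lemma~\ref{lemma::intersectionnumber}~(c),(f)) rewrite the summand as $\lambda^i\delta(\overline{Y})[K_n:K]\cdot\widetilde{A}_0^{d+1-i}\cdot P_2^i|_{\overline{X}}$, with $\widetilde{A}_0:=n^{-1}\widetilde{M}+\overline{\pi}^\ast\widetilde{N}+\widetilde{\caO}_{\overline{X}}(\kappa_n)+\lambda P_1$ by~\eqref{equation::referee_request}. Expanding $\widetilde{A}_0^{d+1-i}$ multinomially into monomials $(n^{-1}\widetilde{M})^a(\overline{\pi}^\ast\widetilde{N})^b\widetilde{\caO}(\kappa_n)^c(\lambda P_1)^e$ with $a+b+c+e=d+1-i$, two vanishings cut down the surviving contributions: first,~\eqref{equation::intersection_integral} forces $c\leq 1$; second, Lemma~\ref{lemma::heightfibrations} identifies the arithmetic intersection $(\overline{\pi}^\ast\widetilde{N})^{d^\prime+1}\cdot(\text{stuff})$ with the fiber degree of the non-pullback factors times $\widetilde{N}^{d^\prime+1}|_{\overline{\pi}(\overline{X})}$, which vanishes thanks to the height equality $h_{\widetilde{N}}(\overline{\pi}(\overline{X}))=0$ from Lemma~\ref{lemma::heightbounds} (and larger $b$ vanishes for dimensional reasons). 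Thus $b\leq d^\prime$, forcing $a+e\geq d-d^\prime-i+1-c$.

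For $c=0$, a monomial is bounded by $n^{-a}|\lambda|^e\leq n^{-(a+e)}\leq n^{-d+d^\prime+i-1}$; multiplied by $|\lambda|^i$ and using $|\lambda|n\leq 1$, this gives at most $|\lambda|^2 n^{-d+d^\prime+1}$. For $c=1$, the induction formula~\eqref{equation::induction_formula} extracts $\delta_{\nu_0}\kappa_n$ and reduces the remainder to a geometric intersection on $\overline{X}$ bounded by $\ll_{X,f}\kappa_n n^{-d+d^\prime+i}$; multiplied by $|\lambda|^i$ this is at most $\kappa_n|\lambda|^2 n^{-d+d^\prime+2}=|\lambda|^2 n^{-d+d^\prime+1}(\kappa_n n)$. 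Summing the $O_{X,f}(1)$ monomials and the finite range $i\in\{2,\dots,d+1\}$, and reinserting the overall $\delta(\overline{Y})[K_n:K]$, yields the asserted inequality. The main obstacle throughout is the interplay between the perturbation $\kappa_n$ required to restore horizontal semipositivity and the small parameter $\lambda$: the asymmetric placement of $\widetilde{\kappa}_n$ together with the height vanishing $h_{\widetilde{N}}(\overline{\pi}(\overline{X}))=0$ is precisely what turns the naive $O(|\lambda|)$ Siu error into the sharper $O(|\lambda|^2(1+\kappa_n n))$ bound.
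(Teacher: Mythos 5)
Your proposal is correct and follows essentially the same route as the paper: the same asymmetric Siu decomposition with $\widetilde{\kappa}_n$ and $\lambda P_1$ absorbed into the semipositive part, the same push-down via the projection and base-change formulas, the same two vanishings ($c\leq 1$ from the constancy of $\kappa_n$ and $b\leq d^\prime$ from Lemma \ref{lemma::heightfibrations} together with \eqref{equation::heightprojection}), and the same cancellation of the $\kappa_n$-shifts via \eqref{equation::intersection_Lc} and Lemma \ref{lemma::arithmeticvolumes_basics}(a). The only differences are cosmetic (performing the cancellation at the outset and the explicit sign reduction $\lambda\geq 0$).
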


Note that the two terms on the left-hand side are equal if $\Ltil_{n,\lambda} |_{{\overline{Y}}}$ is vertically semipositive (see \cite[Theorem 3.5.1 and Remark 3.5.4]{Ikoma2013}). In our situation, there are however two obstructions to this line of reasoning. First, $\Ltil_{n,\lambda}|_{{\overline{Y}}}$ may not be vertically semipositive for small $\lambda$ even if $\Ltil_n$ is so. This problem has to be dealt with already in the almost split case (see \cite{Chambert-Loir2000}). The second problem is that $\Ltil_n$ is not horizontally semipositive, which is a new problem for general semiabelian varieties. To work around this, we follow the argument given in \cite[Subsection 3.2]{Yuan2008} but have to pay additional attention to the errors terms suppressed therein.

\begin{proof} 
By assumption, there exist semipositive $\Ptil_1, \Ptil_2 \in \widehat{\Pic}({\overline{X}})_\IQ$ such that $\cOt_{\overline{X}}(f) = \Ptil_1 - \Ptil_2$. Applying Lemma \ref{lemma::siu} to the decomposition
\begin{equation*}
\Ltil_{n,\lambda,\kappa_n}|_{{\overline{Y}}} := \left( \Ltil_{\kappa_n} + \lambda \overline{\varphi}^\ast_n \Ptil_{1,K_n}|_{{\overline{Y}}} \right) - \lambda \overline{\varphi}^\ast_n \Ptil_{2,K_n}|_{{\overline{Y}}},
\end{equation*}
we obtain that 
\begin{multline}
\label{equation::volumelowerbound1}
\volh_\chi(\Ltil_{n,\lambda,\kappa_n}|_{{\overline{Y}}}) 
\\
\geq ( \Ltil_{n,\kappa_n} |_{{\overline{Y}}} + \lambda \overline{\varphi}^\ast_n \Ptil_{1,K_n}|_{{\overline{Y}}})^{d+1} - (d+1) (\Ltil_{n,\kappa_n} |_{{\overline{Y}}} + \lambda \overline{\varphi}^\ast_n \Ptil_{1,K_n}|_{{\overline{Y}}})^d \cdot \lambda \overline{\varphi}^\ast_n \Ptil_{2,K_n}|_{{\overline{Y}}}.
\end{multline}
By Lemma \ref{lemma::intersectionnumber} (a), subtracting $(\Ltil_{n,\lambda,\kappa_n}|_{{\overline{Y}}})^{d+1}$ from the right-hand side of this inequality results in
\begin{equation}
-\sum_{i=2}^{d+1} \binom{d+1}{i} ( \Ltil_{n,\kappa_n} |_{{\overline{Y}}}+\lambda \overline{\varphi}_n^\ast \Ptil_{1,K_n}|_{{\overline{Y}}})^{d+1-i} \cdot (-\lambda \overline{\varphi}_n^\ast \Ptil_{2,K_n}|_{{\overline{Y}}})^i.
\end{equation}
We claim that the absolute value of this difference is $$\ll_{X,f} \delta(\overline{Y})[K_n:K]|\lambda|^2n^{-d+d^\prime+1}(1+\kappa n).$$ For this purpose, we expand the intersection number
\begin{equation*}
( \Ltil_{n,\kappa_n} |_{{\overline{Y}}}+\lambda \overline{\varphi}_n^\ast \Ptil_{1,K_n}|_{{\overline{Y}}})^{d+1-i} \cdot (-\lambda \overline{\varphi}_n^\ast \Ptil_{2,K_n}|_{{\overline{Y}}})^i
\end{equation*}
as
\begin{equation*}
(-1)^i \sum_{j=0}^{d+1-i} \lambda^{i+j}\binom{d+1-i}{j} (\Ltil_{n,\kappa_n}|_{{\overline{Y}}})^{d+1-i-j} \cdot (\overline{\varphi}_n^\ast \Ptil_{1,K_n}|_{{\overline{Y}}})^j
\cdot (\overline{\varphi}_n^\ast \Ptil_{2,K_n}|_{{\overline{Y}}})^i.
\end{equation*}
Expanding this sum even further, we obtain
\begin{equation}
\label{equation::difference}
\sum_{j=0}^{d+1-i} \sum_{k_1+k_2+k_3 = d+1-i-j} \lambda^{i+j} \alpha(k_1,k_2,k_3,i,j)  c(k_1,k_2,k_3,i,j)
\end{equation}
with
\begin{equation*}
\alpha(k_1,k_2,k_3,i,j) =  [K_n:K] \delta(\overline{Y}) n^{-k_1} (\Mtil|_{{\overline{X}}})^{k_1} \cdot (\overline{\pi}^\ast \Ntil|_{{\overline{X}}})^{k_2} \cdot \Otil_{{\overline{X}}}(\kappa_n)^{k_3} \cdot (\Ptil_1|_{{\overline{X}}})^j
\cdot (\Ptil_2|_{{\overline{X}}})^i
\end{equation*}
and
\begin{equation*}
c(k_1,k_2,k_3,i,j) = (-1)^i \binom{d+1-i}{j} \binom{d+1-i-j}{k_1,k_2,k_3}.
\end{equation*}
From \eqref{equation::heightprojection}, we know that $\Ntil^{d^\prime+1} \cdot [\pio_n({\overline{Y}})]=0$. Applying Lemma \ref{lemma::heightfibrations} to $\pio_n$ and ${\overline{X}} \subseteq \overline{G}$, it follows that $\alpha(k_1,k_2,k_3,i,j)$ is zero if $k_2>d^\prime$.  In addition, $\alpha(k_1,k_2,k_3,i,j)$ is zero if $k_3>1$ because of (\ref{equation::intersection_integral}) and Lemma \ref{lemma::chernforms} (d). If $k_3=0$ and $\alpha(k_1,k_2,k_3,i,j) \neq 0$, we have $k_1 \geq d+1-i-j-d^\prime$ and hence $|\alpha(k_1,k_2,k_3,i,j)| \ll_{X,f} \delta(\overline{Y})[K_n:K] n^{-d+d^\prime+i+j-1}$. If $k_3 = 1$, we have
\begin{equation*}
\alpha(k_1,k_2,k_3,i,j) = \kappa_n \delta(\overline{Y}) n^{-k_1} \delta_{\nu_0} \int_{{\overline{X}}^{\mathrm{an}}_{\IC_\nu}}
c_1(\Mo_{\nu_0}|_{\overline{X}})^{k_1} \wedge
c_1(\pio^\ast \No_{\nu_0}|_{\overline{X}})^{k_2} \wedge (\overline{P}_{1,{\nu_0}}|_{\overline{X}})^j \wedge (\overline{P}_{2,{\nu_0}}|_{\overline{X}})^i
\end{equation*}
by (\ref{equation::intersection_integral}). In combination with $k_1 \geq d-i-j-d^{\prime}$, this implies $$|\alpha(k_1,k_2,k_3,i,j)| \ll_{X,f} \kappa_n \delta(\overline{Y}) n^{-d+d^{\prime}+i+j}.$$ These estimates imply that the absolute value of each summand in (\ref{equation::difference}) is 
\begin{equation*}
\ll_{X,f} \delta(\overline{Y})|\lambda|^{i+j}n^{-d+d^\prime+i+j-1}(1+\kappa_n n) \leq \delta(\overline{Y}) |\lambda|^2 n^{-d+d^{\prime}+1}(1+\kappa_n n).
\end{equation*}
In summary, we conclude that
\begin{equation*}
\volh_\chi(\Ltil_{n,\lambda,\kappa_n}|_{{\overline{Y}}})  - (\Ltil_{n,\lambda,\kappa_n}|_{{\overline{Y}}})^{d+1} \gg_{X,f} - \delta(\overline{Y}) |\lambda|^2 n^{-d+d^{\prime}+1}(1+\kappa_n n).
\end{equation*}
By (\ref{equation::intersection_Lc}) and Lemma \ref{lemma::arithmeticvolumes_basics} (a), the left-hand side of the above inequality equals
\begin{equation*}
\volh_\chi(\Ltil_{n,\lambda}|_{{\overline{Y}}})  - (\Ltil_{n,\lambda}|_{{\overline{Y}}})^{d+1}.
\end{equation*}
\end{proof}

We can now reap the proceeds of the above lemmas.

\begin{proof}[Proof of Proposition \ref{proposition1}] 
	
It is well-known that $\cOt_{\overline{X}}(f)$ is integrable if $\nu \in \Sigma_\infty(K)$. Enlarging $K$, we can also assume that $K_\nu=\IC_\nu$ and hence that $f$ is trivially $\Gal(\IC_\nu/K_\nu)$-invariant. If $\nu \in \Sigma_f(K)$, we can use a Stone-Weierstrass approximation argument of Yuan \cite[p.\ 638, ``Equivalence'']{Yuan2008} to reduce the proof to the case where $f$ is $\Gal(\IC_\nu/K_\nu)$-invariant. A similar approximation argument \cite[Theorem 7.12]{Gubler1998} and \cite[Proposition 3.4]{Gubler2008} (see also \cite[Lemma 3.5]{Yuan2008} and \cite[Subsection 10.4]{Yuan2012}) allows us to assume that $\cOt_{\overline{X}}(f)$ is integrable. Both approximations potentially involve a replacement of $K$ by a finite extension.

From now on, let $n$ be a fixed integer. We choose also a non-zero rational $\lambda \in [-n^{-1},n^{-1}]$ and a real $\varepsilon>0$; an explicit choice of $\lambda$ is given below. By Lemma \ref{lemma::heightbounds}, we can find some positive real $\kappa_n \ll_{G} n^{-2}$ such that $\Ltil_{n,\kappa_n}=\Ltil_n + \overline{\varphi}_n^\ast (\widetilde{\mathcal{O}}_{\overline{X}}(\kappa_n)_{K_n})$ is horizontally semipositive.

Let $\nu \in \Sigma(K)$ and a real $\varepsilon>0$ be given. For the sequel, we keep fixed an arbitrary place $\nu^\prime \in \Sigma_n(\nu)$ above $\nu \in \Sigma(K)$. In addition, let us again write $\Ltil_{n,\lambda} = \Ltil_n + \overline{\varphi}^\ast_n \widetilde{\mathcal{O}}_{\overline{X}}(\lambda f) = (L_n, \{ \Vert \cdot \Vert_{\mu} \}_{\mu \in \Sigma(K_n)} )$. By Lemma \ref{lemma::minkowski}, there exists some positive integer $N_0$ and a non-zero section $\mathbf{s} \in H^0(\overline{Y}, (L_n|_{\overline{Y}})^{\otimes N_0})$ such that
\begin{equation*}
\frac{\delta_{\nu^\prime} \log \Vert \mathbf{s}(x) \Vert_{\nu^\prime}^{1/N_0}}{[K_n:\IQ]} \leq  - \frac{\volh_\chi(\Ltil_{n,\lambda}|_{{\overline{Y}}})}{[K_n:\IQ](d+1)(L_n|_{{\overline{Y}}})^d} + \varepsilon
\end{equation*}
for every point $x \in {\overline{Y}}_{\IC_{\nu^\prime}}^{\mathrm{an}}$ and $\log \Vert \mathbf{s}(x) \Vert_{\mu} \leq 0$ ($\mu \in \Sigma(K_n) \setminus \{ \nu^\prime \}$) for every point $x \in {\overline{Y}}_{\IC_\mu}^{\mathrm{an}}$. 
Using Lemmas \ref{lemma::geometricdegree} and \ref{lemma::chisup}, we infer that there exists some constant $c_1=c_1(X,f,[K:\IQ])>0$ such that
\begin{equation*}
\frac{\delta_{\nu^\prime} \log \Vert \mathbf{s}(x) \Vert_{\nu^\prime}^{1/N_0}}{[K_n:\IQ]} \leq  - h_{\Ltil_{n,\lambda}}({\overline{Y}}) + c_1|\lambda|^2n(1+\kappa_n n) + \varepsilon
\end{equation*}
for every point $x \in {\overline{Y}}_{\IC_{\nu^\prime}}^{\mathrm{an}}$. With Lemma \ref{lemma::lambdanestimate} and (\ref{equation::heightbounds}), we further deduce that
\begin{equation*}
\frac{\delta_{\nu^\prime} \log \Vert \mathbf{s}(x) \Vert_{\nu^\prime}^{1/N_0}}{[K_n:\IQ]} \leq
- \frac{\lambda \delta_\nu}{[K:\IQ]} \int_{\overline{Y}_{\IC_{\nu^\prime}}} f_{n,\nu^\prime} \mu_{n,\nu^\prime}
+ c_2 (n^{-2} + |\lambda|^2 n + \kappa_n |\lambda|^2n^2) + \varepsilon
\end{equation*}
for some constant $c_2=c_2(X,f,[K:\IQ])>0$ and all $x \in ({\overline{Y}}\setminus \left\vert\Div(\mathbf{s})\right\vert)_{\IC_{\nu^\prime}}^{\mathrm{an}}$. Through (\ref{equation::height_section}), we can use this to obtain the lower global bound
\begin{equation}
\label{equation::expansion_ref}
h_{\Ltil_{n,\lambda}}(x) \geq \frac{\lambda \delta_\nu}{[K:\IQ]} \int_{{\overline{Y}}_{\IC_{\nu^\prime}}^\mathrm{an}} f_{n,\nu^\prime} \mu_{n,\nu^\prime} - c_2 (n^{-2} + |\lambda|^2 n + \kappa_n |\lambda|^2n^2) - \varepsilon
\end{equation}
for all closed points $x \in \overline{Y} \setminus \left\vert\Div(\mathbf{s})\right\vert$.  Note that $\nu$ is not a real archimedean place by our above assumptions so that the second sum in \eqref{equation::Ltil_decomposition} is empty. For each closed point $x \in \overline{Y}$, we infer from \eqref{equation::height_section} and \eqref{equation::Ltil_decomposition} that
\begin{align*}
h_{\overline{\varphi}^\ast_n(\widetilde{\mathcal{O}}_{\overline{X}}(\lambda f))_{K_n}}(x) 
&=
\frac{\lambda}{[K_n(x):\IQ]} \sum_{\nu^\dprime \in \Sigma_n(\nu)} \sum_{y \in \mathbf{O}_\nu(x)} \delta_{\nu^\dprime} [K_{\nu^\dprime}:K_\nu] f_{n,\nu^\dprime}(y) \\
&=
\frac{\lambda\delta_\nu[K_n:K]}{[K_n(x):\IQ]} \sum_{y \in \mathbf{O}_{\nu^\prime}(x)} f_{n,\nu^\prime}(y) \\
&=
\frac{\lambda\delta_{\nu}}{[K:\IQ] \cdot (\# \mathbf{O}_{\nu^\prime}(x))} \sum_{y \in \mathbf{O}_{\nu^\prime}(x)} f_{n,\nu^\prime}(y),
\end{align*}
where we also used the $\Gal(\IC_\nu/K_\nu)$-invariance of $f$ in the second equality. Expanding the left-hand side of \eqref{equation::expansion_ref}, we thus obtain
\begin{multline}
\label{equation::mainestimate}
h_{\Ltil_n}(x) + \frac{\lambda \delta_{\nu}}{[K:\IQ]}\left( \frac{1}{\# \mathbf{O}_{\nu^\prime}(x) }\sum_{y \in \mathbf{O}_{\nu^\prime}(x)} f_{n,\nu^\prime}(y) - \int_{{\overline{Y}}_{\IC_{\nu^\prime}}^\mathrm{an}} f_{n,\nu^\prime} \mu_{n,\nu^\prime} \right)
\\
\geq - c_2 (n^{-2} + |\lambda|^2 n + \kappa_n |\lambda|^2n^2) - \varepsilon
\end{multline}
for all closed points $x \in {\overline{Y}} \setminus \left\vert\Div(\mathbf{s})\right\vert$. Since $(x_i) \in X^{\IN}$ is a generic sequence, there exists some integer $i_0=i_0(X,f,n)$ such that $x \notin \left\vert\Div(\mathbf{s})\right\vert$ for all $x \in \overline{\varphi}_n^{-1}(x_i) \cap \overline{Y}$, $i \geq i_0$. We recall that
\begin{equation*}
0 \leq \max_{x \in \overline{\varphi}_n^{-1}(x_i) \cap \overline{Y}} \{ h_{\Ltil_n}(x)\} \longrightarrow 0 \ \ (i \rightarrow \infty)
\end{equation*}
by \eqref{equation::height_of_preimages}. Combining this with \eqref{equation::mainestimate} and averaging over all closed points $x \in \overline{\varphi}_n^{-1}(x_i)$, it follows that
\begin{multline}
\label{equation::lambdacondition}
\liminf_{i \rightarrow \infty} \left( \frac{\lambda}{\# \mathbf{O}_\nu(x_i)}\sum_{y \in \mathbf{O}_\nu(x_i)} f(y) -  \lambda \int_{{\overline{Y}}_{\IC_\nu}^\mathrm{an}} f_{n,\nu^\prime} \mu_{n,\nu^\prime} \right)
\\
\geq
-  c_3(n^{-2} + |\lambda|^2 n + \kappa_n |\lambda|^2n^2) - \delta_\nu^{-1}[K:\IQ]\varepsilon
\end{multline}
for some constant $c_3 = c_3(X,f,[K:\IQ])>0$. Working with $-\lambda$ instead of $\lambda$ in our above reasoning, we obtain similarly
\begin{multline}
\label{equation::lambdacondition}
\limsup_{i \rightarrow \infty} \left( \frac{\lambda}{\# \mathbf{O}_\nu(x_i)}\sum_{y \in \mathbf{O}_\nu(x_i)} f(y) -  \lambda \int_{{\overline{Y}}_{\IC_\nu}^\mathrm{an}} f_{n,\nu^\prime} \mu_{n,\nu^\prime} \right)
\\
\leq
c_3(n^{-2} + |\lambda|^2 n + \kappa_n |\lambda|^2n^2) + \delta_\nu^{-1}[K:\IQ]\varepsilon.
\end{multline}
Combining these two inequalities, we can deduce that
\begin{equation*}
\limsup_{i \rightarrow \infty} \left| \frac{1}{\# \mathbf{O}_\nu(x_i)}\sum_{y \in \mathbf{O}_\nu(x_i)} f(y) - \int_{{\overline{Y}}_{\IC_\nu}^\mathrm{an}} f_{n,\nu^\prime} \mu_{n,\nu^\prime} \right|
\end{equation*}
is bounded from above by
\begin{equation}
\label{equation::equidistribution3}
c_3(n^{-2} |\lambda|^{-1} + |\lambda| n + \kappa_n |\lambda| n^2)+ \delta_\nu^{-1}[K:\IQ] |\lambda|^{-1}\varepsilon.
\end{equation}
Assume now that $n$ is a square integer. We further choose $\lambda = n^{-3/2}$ and $\varepsilon= n^{-1/2}|\lambda|$. In this way, we obtain
\begin{equation*}
\lim_{i \rightarrow \infty} \left| \frac{1}{\# \mathbf{O}_\nu(x_i)}\sum_{y \in \mathbf{O}_\nu(x_i)} f(y) - \int_{{\overline{Y}}_{\IC_\nu}^\mathrm{an}} f_{n,\nu^\prime} \mu_{n,\nu^\prime} \right| \ll_{X,f} n^{-1/2}.
\end{equation*}
Combining this with Lemma \ref{lemma::integral}, the proposition follows with $n \rightarrow \infty$.
\end{proof}

\section{Equilibrium Measures}
\label{section::equilibrium}

In preparation for the proof of ($\mathrm{BC}$) in Section \ref{section::bogomolov}, we investigate here the measures from Proposition \ref{proposition1} in more detail. We continue with the notation of Section \ref{section::toricrank1} but restrict to an archimedean place $\nu \in \Sigma_\infty(K)$ throughout this section. Choose a local trivialization $\{ (U_j, \psi_j) \}_{j \in J}$, $J$ finite, of $\overline{G}^{\mathrm{an}}_{\IC_\nu}$ as in Lemma \ref{lemma::standardcoordinates} and write $\psi_j= (\psi_1^{(j)},\dots,\psi_t^{(j)})$. Furthermore, we use the line bundles $M_{\overline{G},i}^{(0)}$ and $M_{\overline{G},i}^{(\infty)}$ ($1 \leq i \leq t$) as defined in Subsection \ref{section::semiabelian_compact} and set $M_{\overline{G},i}=M_{\overline{G},i}^{(0)} \otimes M_{\overline{G},i}^{(\infty)}$.

Let $X \subseteq G$ be a \textit{geometrically irreducible} algebraic subvariety of positive dimension and denote by $\overline{X}$ its Zariski closure in $\overline{G}$. Set $d=\dim(X)$, $d^\prime = \dim(\pio(X))$, and $t^\prime = d - d^\prime$. We let $I_X$ be the set of all $t^\prime$-tuples $(i_1, i_2, \dots, i_{t^\prime})$ such that 
\begin{equation}
\label{equation::fiberwiseintersection}
M_{\overline{G},i_1}|_{\eta_{\pio(X)}} \cdot M_{\overline{G},i_2}|_{\eta_{\pio(X)}} \cdots M_{\overline{G},i_{t^\prime}}|_{\eta_{\pio(X)}} > 0
\end{equation}
where $\eta_{\pio(X)}$ is the generic point of $\pio(X)$. As $M_{\overline{G}} = \otimes_{i=1}^t M_{\overline{G},i}$ is relatively ample with respect to $\overline{\pi}: \overline{G} \rightarrow A$, the set $I_X$ is non-empty. For each $\underline{i}=(i_1, i_2, \dots, i_{t^\prime}) \in I_X$, we define the subset 
\begin{equation*}
X_{\underline{i}} = \bigcup_{j \in J} {\{ y \in (\overline{\pi}_{\IC_\nu}^{\mathrm{an}})^{-1}(U_j) \cap \overline{X}_{\IC_\nu}^{\mathrm{an}} \ \vline \ | \psi^{(j)}_{i_1}(y)| = | \psi^{(j)}_{i_2}(y)| = \cdots = | \psi^{(j)}_{i_{t^\prime}}(y)| = 1 \}}
\end{equation*}
and the map
\begin{equation*}
\psi_{\underline{i}}^{(j)} = (\psi_{i_1}^{(j)},\dots,\psi_{i_{t^\prime}}^{(j)}) :\ \overline{G}^{\mathrm{an}}_{\IC_\nu}|_{U_j} \longrightarrow ((\IP^1_{\IC_\nu})^{t^\prime})^{\mathrm{an}}.
\end{equation*}
Using (\ref{equation::absolutevalue}), we see that $X_{\underline{i}}$ is a closed and hence compact \textit{real-analytic} subset of $\overline{X}_{\IC_\nu}^{\mathrm{an}}$. We next define \textit{complex-analytic} subsets $E_{\underline{i}} \subset {\overline{X}}_{\IC_\nu}^{\mathrm{an}}$, $\underline{i} \in I_X$, such that each $X_{\underline{i}}$ has a simple structure away from $E_{\underline{i}}$. For this, we first set
\begin{equation*}
E_{\underline{i}}^{(j)} = \left\{ y \in (\overline{\pi}_{\IC_\nu}^{\mathrm{an}})^{-1}(U_j) \cap \overline{X}_{\IC_\nu}^{\mathrm{an}} \ \vline \ \ker(d\psi_{\underline{i}}^{(j)}) \cap \ker(d\pio)^{\mathrm{an}}_{\IC_\nu} \cap T_{\IC,y}^{1,0} (X_{\IC_\nu}^{\mathrm{an}}) \neq \{ 0_y \} \right\}, \ j \in J,
\end{equation*}
and note that by (\ref{equation::tangentrank}) we have $E_{\underline{i}}^{(j)} \cap (\overline{\pi}_{\IC_\nu}^{\mathrm{an}})^{-1}(U_{j^\prime}) = E_{\underline{i}}^{(j^\prime)} \cap (\overline{\pi}_{\IC_\nu}^{\mathrm{an}})^{-1}(U_{j})$ for all $j,j^\prime \in J$. Consequently, their union $\bigcup_{j\in J}E_{\underline{i}}^{(j)}$ is a closed complex-analytic subset of $\overline{X}_{\IC_\nu}^{\mathrm{an}}$. We set
\begin{equation*}
E_{\underline{i}} 
= \bigcup_{j\in J}E_{\underline{i}}^{(j)}
\cup (\overline{X} \setminus \overline{X}^{\mathrm{sm}})^{\mathrm{an}}_{\IC_\nu} 
\cup (\pio^{-1}(\pio(\overline{X})\setminus \pio(\overline{X})^{\mathrm{sm}}))^{\mathrm{an}}_{\IC_\nu} \cup (\overline{X} \cap (\overline{G} \setminus G))^{\mathrm{an}}_{\IC_\nu}.
\end{equation*}
We collect the main properties of $X_{\underline{i}}$ and $E_{\underline{i}}$ in the following lemma.

\begin{lemma}\label{lemma::structuralinformation}
Let $j \in J$ and $\underline{i} \in I_X$. Then,
\begin{enumerate}
\item[(a)] $E_{\underline{i}}$ is a closed complex-analytic subset of $\overline{X}^{\mathrm{an}}_{\IC_\nu}$ having dimension $<d$,
\item[(b)] on $(\overline{X}^{\mathrm{an}}_{\IC_\nu} \cap (\overline{\pi}_{\IC_\nu}^{\mathrm{an}})^{-1}(U_j)) \setminus E_{\underline{i}}$, the map $\pio^{\mathrm{an}}_{\IC_\nu} \times \psi_{\underline{i}}^{(j)}$ restricts to a local biholomorphism with codomain $(\pio(\overline{X})^{\mathrm{sm}} \times \Gm^{t^\prime})^{\mathrm{an}}_{\IC_\nu}$,
\item[(c)] $X_{\underline{i}} \setminus E_{\underline{i}}$ is a union of finitely many (embedded) real-analytic submanifolds, each having dimension $d+d^\prime$, and 
\item[(d)] on $\left(X_{\underline{i}} \cap (\overline{\pi}_{\IC_\nu}^{\mathrm{an}})^{-1}(U_j)\right) \setminus E_{\underline{i}}$, the map  $\pio^{\mathrm{an}}_{\IC_\nu} \times \psi_{\underline{i}}^{(j)}$ restricts to a real-analytic local isomorphism with codomain $(\pio(\overline{X})^{\mathrm{sm}})^{\mathrm{an}}_{\IC_\nu} \times (S^1)^{t^\prime}$.
\end{enumerate}
\end{lemma}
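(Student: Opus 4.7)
The plan is to verify the four assertions sequentially, with (a) being the main work and (b)--(d) following from (a) via the (complex or real) implicit function theorem.

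First I would dispose of the easy pieces of (a). The set $E_{\underline{i}}$ is a finite union of four subsets, three of which are manifestly closed complex-analytic of dimension strictly less than $d$: the singular loci $(\overline{X} \setminus \overline{X}^{\mathrm{sm}})^{\mathrm{an}}_{\IC_\nu}$ and $(\pio^{-1}(\pio(\overline{X}) \setminus \pio(\overline{X})^{\mathrm{sm}}))^{\mathrm{an}}_{\IC_\nu}$ are of smaller dimension by irreducibility of $\overline{X}$ and $\pio(\overline{X})$, and $(\overline{X} \cap (\overline{G} \setminus G))^{\mathrm{an}}_{\IC_\nu}$ is a proper subvariety of $\overline{X}$ because $X \subseteq G$ is dense in $\overline{X}$. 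Each $E_{\underline{i}}^{(j)}$ is cut out by the vanishing of suitable minors of the Jacobian of $\psi_{\underline{i}}^{(j)}$ restricted to the relative tangent space of $\pio$; hence it is closed complex-analytic in $(\overline{\pi}_{\IC_\nu}^{\mathrm{an}})^{-1}(U_j) \cap \overline{X}_{\IC_\nu}^{\mathrm{an}}$, and by (\ref{equation::tangentrank}) these local loci glue to a closed complex-analytic subset of $\overline{X}^{\mathrm{an}}_{\IC_\nu}$.

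The key point in (a), and the main obstacle, is showing that $\bigcup_{j \in J} E_{\underline{i}}^{(j)}$ has dimension strictly less than $d$. For this I would use intersection theory on the generic fibre. Let $x$ be a generic point of $\pio(\overline{X})^{\mathrm{sm}}$, so that $F := (\pio|_{\overline{X}})^{-1}(x)$ is a reduced projective variety of pure dimension $t^\prime = d - d^\prime$. The proof of Lemma \ref{lemma::standardcoordinates} identifies each $\psi^{(j)}_i|_F$ with a rational function whose zero and pole divisors represent $M_{\overline{G},i}^{(0)}|_F$ and $M_{\overline{G},i}^{(\infty)}|_F$. Consequently the mapping degree of $\psi_{\underline{i}}^{(j)}|_F \colon F \dashrightarrow (\IP^1)^{t^\prime}$ is controlled from below by the intersection number in (\ref{equation::fiberwiseintersection}), which is positive by the hypothesis $\underline{i} \in I_X$. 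Since $\dim F = t^\prime$, the map $\psi_{\underline{i}}^{(j)}|_F$ is therefore generically finite, so its differential has maximal rank on a Zariski-dense open subset of $F$. Because $\overline{X}$ is irreducible and the generic fibre of $\pio|_{\overline{X}}$ meets the complement of $E_{\underline{i}}^{(j)}$, we conclude $\dim E_{\underline{i}}^{(j)} < d$, which finishes (a).

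For (b), on the complement of $E_{\underline{i}}$ the variety $\overline{X}^{\mathrm{an}}_{\IC_\nu}$ is smooth of complex dimension $d$ and $\pio|_{\overline{X}}$ is smooth of relative dimension $t^\prime$. The defining condition for $E_{\underline{i}}^{(j)}$ then ensures that the differential of $\pio^{\mathrm{an}}_{\IC_\nu} \times \psi_{\underline{i}}^{(j)}$ is injective on each relative tangent space; together with the differential of $\pio^{\mathrm{an}}_{\IC_\nu}$, which is surjective onto $T \pio(\overline{X})^{\mathrm{sm}}$ there, this makes the total differential an isomorphism of tangent spaces of equal dimension $d = d^\prime + t^\prime$, so the inverse function theorem gives a local biholomorphism. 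The codomain assertion uses that $\psi_i^{(j)} \neq 0, \infty$ holds on $G^{\mathrm{an}}_{\IC_\nu}$ and that $(\overline{X} \cap (\overline{G} \setminus G))^{\mathrm{an}}_{\IC_\nu}$ was removed from $E_{\underline{i}}$.

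Assertions (c) and (d) are then immediate from (b). The set $X_{\underline{i}} \setminus E_{\underline{i}}$ is cut out in the complex manifold of (b) by the real-analytic equations $|\psi^{(j)}_{i_k}|^2 = 1$, $k = 1, \ldots, t^\prime$; by (b) these functions pull back to the moduli of the coordinates on the $\Gm^{t^\prime}$-factor of the target $\pio(\overline{X})^{\mathrm{sm}} \times \Gm^{t^\prime}$, hence their differentials are linearly independent at every point. The real implicit function theorem then yields a real-analytic embedded submanifold of codimension $t^\prime$, i.e. of real dimension $2d - t^\prime = d + d^\prime$, and the compactness of $X_{\underline{i}}$ forces the number of connected components to be finite. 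Finally, (d) is just the restriction of the local biholomorphism in (b) to this level set, whose image is $\pio(\overline{X})^{\mathrm{sm}} \times (S^1)^{t^\prime}$, a real-analytic submanifold of matching real dimension $2d^\prime + t^\prime = d + d^\prime$.
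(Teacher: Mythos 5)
Your proof is correct and, for the substantive parts (a) and (b), essentially identical to the paper's: the paper also deduces from the positivity of the intersection number \eqref{equation::fiberwiseintersection} that $\psi_{\underline{i}}^{(j)}$ restricted to a general fibre of $\pio|_{\overline{X}}$ is dominant onto $(\IP^1)^{t^\prime}$ (via the projection formula), extracts a point where the relative differential has rank $t^\prime$, and concludes $\dim E_{\underline{i}}^{(j)}<d$ by irreducibility of $\overline{X}^{\mathrm{an}}_{\IC_\nu}$; part (b) is the same kernel computation plus a dimension count.

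For (c)--(d) your route differs slightly: the paper invokes \L{}ojasiewicz's structure theorem for real-analytic sets together with compactness of $X_{\underline{i}}$ to get the finite decomposition, and then only checks the local dimension; you instead observe directly that $X_{\underline{i}}\setminus E_{\underline{i}}$ is cut out by $t^\prime$ independent real-analytic equations and apply the implicit function theorem, which is cleaner and already exhibits the whole set as a single embedded submanifold of dimension $d+d^\prime$ (so the statement holds with one piece). The only imprecision is your claim that compactness of $X_{\underline{i}}$ \emph{forces} $X_{\underline{i}}\setminus E_{\underline{i}}$ to have finitely many components: that set is not compact, and a non-compact subset of a compact set can have infinitely many components in general; finiteness here comes from semianalyticity, not bare compactness. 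This does not affect the lemma, since one embedded submanifold suffices, but as written that sentence is not a proof of what it asserts.
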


\begin{proof} (a): It is enough to show that the closed complex-analytic subset $\bigcup_{j\in J}E_{\underline{i}}^{(j)}$ has dimension $<d$. Since $\overline{X}^{\mathrm{an}}_{\IC_\nu}$ is irreducible as a complex-analytic set, we only have to find a point $y \in \overline{X}^{\mathrm{an}}_{\IC_\nu}$ not contained in $\bigcup_{j\in J}E_{\underline{i}}^{(j)}$.

By assumption (\ref{equation::fiberwiseintersection}), there exists a closed point $z \in \pio(\overline{X})^{\mathrm{sm}}_{\IC_\nu}$ such that the fiber $\overline{X}|_z$ is of dimension $t^\prime$ and
\begin{equation}
\label{equation::yfiberintersection}
M_{\overline{G},i_1} \cdot M_{\overline{G},i_2} \cdots M_{\overline{G},i_{t^\prime}}\cdot [\overline{X}|_z]>0.
\end{equation}
Let $U_j$, $j \in J$, be such that $z^{\mathrm{an}} \in U_j$. Note that $\psi_{\underline{i}}^{(j)}|_{(\overline{X}|_{z})^{\mathrm{an}}}$ is the analytification of an algebraic map $f: \overline{X}|_z \rightarrow (\IP^1)^{t^\prime}$ (either by Chow's theorem \cite[Theorem M.3]{Gunning1990a} or by inspecting the proof of Lemma \ref{lemma::standardcoordinates}) such that $M_{\overline{G},i}|_{z} \approx f^\ast \mathrm{pr}_i^\ast M_{\IP^1}$ where $\mathrm{pr}_i: (\IP^1)^{t^\prime} \rightarrow \IP^1$ is the projection to the $i$-th factor. It is easy to see that $\dim(f(\overline{X}|_z)) = t^\prime$; for the projection formula (\cite[Proposition 2.5 (c)] {Fulton1998}) would else imply that the intersection number in (\ref{equation::yfiberintersection}) is zero. By \cite[Lemma L.6 and Theorem N.1]{Gunning1990a}, this implies that there exists some smooth point $y \in (\overline{X}|_z)^{\mathrm{an}}$ such that the rank of $(d\psi_{\underline{i}}^{(j)}|_{(\overline{X}|_{z})^{\mathrm{an}}})_y$  is $t^\prime=\dim(\overline{X}|_y)$. This means nothing else but $\ker(d\psi_{\underline{i}}^{(j)}) \cap \ker(d\pio)_{\IC_\nu}^{\mathrm{an}} \cap T_{\IC,y}^{1,0}(\overline{X}_{\IC_\nu}^{\mathrm{an}}) = \{ 0_y \}$, which shows that $y\notin E_{\underline{i}}^{(j)}$.

(b): We have
\begin{equation*}
\ker(d(\pio^{\mathrm{an}}_{\IC_\nu} \times \psi_{\underline{i}}^{(j)})) \cap T_{\IC,y}^{1,0}(\overline{X}_{\IC_\nu}^{\mathrm{an}}) = \ker (d\psi_{\underline{i}}^{(j)}) \cap \ker(d\pio)^{\mathrm{an}}_{\IC_\nu} \cap T_{\IC,y}^{1,0}(\overline{X}_{\IC_\nu}^{\mathrm{an}}) = \{ 0_y \} 
\end{equation*}
for all $y \in (\overline{X}^{\mathrm{an}}_{\IC_\nu} \cap (\overline{\pi}_{\IC_\nu}^{\mathrm{an}})^{-1}(U_j)) \setminus E_{\underline{i}}$.

(c), (d): Any real-analytic set is locally a union of finitely many real-analytic manifolds (e.g., by \cite[Theorem 2]{Lojasiewicz1964}). By compactness, we can hence write $X_{\underline{i}}$ as a union of finitely many real-analytic manifolds. It only remains to show that $X_{\underline{i}} \setminus E_{\underline{i}}$ has local dimension $d+d^\prime$ everywhere. Using
\begin{equation*}
X_{\underline{i}} \setminus E_{\underline{i}} = (\psi_{\underline{i}}^{(j)})^{-1}((S^1)^{t^\prime}) \cap (\overline{X}_{\IC_\nu}^{\mathrm{an}} \setminus E_{\underline{i}})
\end{equation*}
and (b), the two assertions follows from the standard fact that $(S^1)^{t^\prime} \subset (\Gm^{t^\prime})_{\IC_\nu}^{\mathrm{an}}$ is a real-analytic submanifold of dimension $t^\prime$.

\end{proof}

With the information of Lemma \ref{lemma::structuralinformation} at our disposal, we can conclude this section with an explicit description of the measures introduced in Proposition \ref{proposition1}. Let $\omega_{t^\prime}$ be the unique $(S^1)^{t^\prime}$-invariant $t^\prime$-form on the compact real Lie group $(S^1)^{t^\prime}$ such that $\int_{(S^1)^{t^\prime}} \omega_{t^\prime} = 2^{t^\prime}$. For each $j \in J$, the pullback $(\psi_{\underline{i}}^{(j)})^\ast \omega_{t^\prime}$ is a $t^\prime$-form on $(X_{\underline{i}}\cap \pi^{-1}(U_j) ) \setminus E_{\underline{i}}$. By \eqref{equation::absolutevalue} and $(S^1)^{t^\prime}$-invariance, these forms glue together to a $t^\prime$-form $\omega_{\underline{i}}$ on $X_{\underline{i}} \setminus E_{\underline{i}}$.

\begin{lemma} 
\label{lemma::measure_realanalytic}
For each $\underline{i} \in I_X$, the $(d+d^\prime)$-form $\omega_{\underline{i}} \wedge (c_1(\pio^\ast \No_\nu)|_{X_{\underline{i}} \setminus E_{\underline{i}}})^{\wedge d^\prime}$ is a positive $\mathscr{C}^\infty$-volume form on $X_{\underline{i}} \setminus E_{\underline{i}}$. For all $f \in \mathscr{C}^0(\overline{X}^{\mathrm{an}}_{\IC_\nu})$, we have 
\begin{equation}
\label{equation::measure_functional}
\int_{\overline{X}^{\mathrm{an}}_{\IC_\nu}} f c_1(\Mo_{\overline{G},\nu}|_{\overline{X}})^{\wedge t^\prime} \wedge c_1(\pio^\ast \No_\nu|_{\overline{X}})^{\wedge d^\prime} = t^\prime! \sum_{\underline{i} \in I_X } \int_{X_{\underline{i}} \setminus E_{\underline{i}}} f \omega_{\underline{i}} \wedge c_1(\pio^\ast \No_\nu)^{\wedge d^\prime}.
\end{equation}
\end{lemma}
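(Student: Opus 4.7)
The plan is to expand $c_1(\Mo_{\overline{G},\nu}|_{\overline{X}})^{\wedge t'}$ by multilinearity of Chern currents, then use the local trivialization of Lemma \ref{lemma::standardcoordinates} to identify each summand with a pullback of an explicit Haar form, and finally match the two sides combinatorially. I first compute the Chern current $c_1(\Mo_{\IP^1,\nu})$ on $(\IP^1_{\IC_\nu})^{\mathrm{an}}$: from the end of the proof of Lemma \ref{lemma::standardcoordinates}, the canonical $\nu$-metric on $M_{\IP^1}=\caO([0]+[\infty])$ takes the form $\Vert\cdot\Vert=e^{-|\log|z||}|\cdot|$, and a direct computation of $dd^c|\log|z||$ (splitting into $|z|<1$ and $|z|>1$ and analyzing the jump across $S^1$) identifies $c_1(\Mo_{\IP^1,\nu})$ with the rotation-invariant current of integration of $\omega_1=d\theta/\pi$ along $S^1\subset\IP^1$; the total mass $2$ is consistent with $\int_{S^1}\omega_1=2=M_{\IP^1}^1$ via Lemma \ref{lemma::chernforms}(d).

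Using $M_{\overline{G}}=\bigotimes_{i=1}^t M_{\overline{G},i}$ and multilinearity of Chern currents, I decompose
\begin{equation*}
c_1(\Mo_{\overline{G},\nu}|_{\overline{X}})^{\wedge t'} = \sum_{\underline{i}\in\{1,\dots,t\}^{t'}}\bigwedge_{k=1}^{t'} c_1(\Mo_{\overline{G},i_k,\nu}|_{\overline{X}}).
\end{equation*}
On a chart $(U_j,\psi_j)$ from Lemma \ref{lemma::standardcoordinates}, the isometry $\Mo_{\overline{G},i,\nu}|_{U_j}\approx(\psi_i^{(j)})^\ast\Mo_{\IP^1,\nu}$ and the functoriality of Chern currents under holomorphic pullbacks (valid via smooth regularization and Lemma \ref{lemma::uniformconvergence}) give
\begin{equation*}
\bigwedge_{k=1}^{t'} c_1(\Mo_{\overline{G},i_k,\nu}|_{\overline{X}})\Big|_{\pio^{-1}(U_j)\cap\overline{X}^{\mathrm{an}}_{\IC_\nu}} = \bigwedge_{k=1}^{t'} (\psi_{i_k}^{(j)})^\ast c_1(\Mo_{\IP^1,\nu}).
\end{equation*}
If two indices coincide, the expression contains a factor $(\psi_i^{(j)})^\ast(c_1(\Mo_{\IP^1,\nu})^{\wedge 2})$ which vanishes for dimensional reasons on $\IP^1$, so only distinct-index tuples contribute. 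For such a tuple, the wedge equals $(\psi_{\underline{i}}^{(j)})^\ast\omega_{t'}$, supported on $X_{\underline{i}}\cap\pio^{-1}(U_j)$ and coinciding with $\omega_{\underline{i}}$ there by definition.

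For a distinct-index tuple $\underline{i}\notin I_X$, the vanishing of the fiberwise intersection $M_{\overline{G},i_1}|_{\eta_{\pio(X)}}\cdots M_{\overline{G},i_{t'}}|_{\eta_{\pio(X)}}$ combined with the projection formula for $\pio|_{\overline{X}}$ (Lemma \ref{lemma::heightfibrations}, passed through regularization and Lemma \ref{lemma::uniformconvergence}) implies that the associated summand wedged with $c_1(\pio^\ast\No_\nu)^{\wedge d'}$ integrates to zero against any $f$. Each element of $I_X$ (viewed as an unordered size-$t'$ subset of $\{1,\dots,t\}$) appears $t'!$ times in the ordered sum, which produces the prefactor $t'!$ on the right-hand side. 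The non-charging of the analytic set $E_{\underline{i}}$ by the Bedford-Taylor measure on the left follows from the pluripolar-null property cited in Subsection \ref{section::borelmeasure} combined with Lemma \ref{lemma::structuralinformation}(a).

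Finally, the positivity assertion follows from Lemma \ref{lemma::structuralinformation}(b),(d): locally, the map $(\pio,\psi_{\underline{i}}^{(j)})$ is a real-analytic isomorphism of $(X_{\underline{i}}\setminus E_{\underline{i}})\cap\pio^{-1}(U_j)$ onto an open subset of $\pio(\overline{X})^{\mathrm{sm}}\times(S^1)^{t'}$, under which $\omega_{\underline{i}}\wedge c_1(\pio^\ast\No_\nu)^{\wedge d'}$ corresponds to the product of the positive Haar form $\omega_{t'}$ with the pullback of the positive $(d',d')$-form $c_1(\No_\nu|_{\pio(\overline{X})^{\mathrm{sm}}})^{\wedge d'}$; positivity of the latter reflects that the canonical metric on the ample symmetric line bundle $N$ on $A$ has strictly positive translation-invariant curvature (the Riemann form of the polarization). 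The main technical obstacle will be the Bedford-Taylor justifications for commuting pullback and wedge products of the continuous semipositive Chern currents, and for the vanishing of non-$I_X$ contributions; both require the approximation by smooth metrics afforded by Lemma \ref{lemma::uniformconvergence}.
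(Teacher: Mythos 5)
Your proposal is correct and follows essentially the same route as the paper's proof: expand $c_1(\Mo_{\overline{G},\nu}|_{\overline{X}})^{\wedge t^\prime}$ by multilinearity, use the trivializations of Lemma \ref{lemma::standardcoordinates} to identify each surviving summand locally with the pullback of the Haar form on $(S^1)^{t^\prime}$ times $c_1(\pio^\ast\No_\nu)^{\wedge d^\prime}$ (justified by smoothing and Lemma \ref{lemma::uniformconvergence}, with the explicit computation of $dd^c|\log|z||$ relegated to an appendix), discard $E_{\underline{i}}$ as a pluripolar set, and account for orderings via the factor $t^\prime!$. You are in fact slightly more explicit than the paper on two points it leaves implicit, namely that tuples outside $I_X$ contribute a positive measure of total mass zero (by Lemma \ref{lemma::chernforms}(d) and the geometric projection formula, rather than Lemma \ref{lemma::heightfibrations}, which concerns arithmetic intersection numbers) and hence vanish, and that repeated indices die for dimension reasons on $\IP^1$.
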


\begin{proof} By linearity, it suffices to prove that
\begin{multline}
\label{equation::measure_description}
\int_{\overline{X}^{\mathrm{an}}_{\IC_\nu}} f 
c_1(\Mo_{\overline{G},i_1,\nu}|_{\overline{X}}) \wedge c_1(\Mo_{\overline{G},i_2,\nu}|_{\overline{X}}) \wedge \cdots \wedge
c_1(\Mo_{\overline{G},i_{t^\prime},\nu}|_{\overline{X}}) \wedge c_1(\pio^\ast \No_\nu)^{\wedge d^\prime} 
\\
= \int_{X_{\underline{i}} \setminus E_{\underline{i}}} f \omega_{\underline{i}} \wedge c_1(\pio^\ast \No_\nu)^{\wedge d^\prime}
\end{multline}
for each $\underline{i} \in I_X$ and all $f \in \mathscr{C}^{0}_c({\overline{X}}_{\IC_\nu}^{\mathrm{an}})$. Since the subsets $E_{\underline{i}} \subset {\overline{X}}_{\IC_\nu}^{\mathrm{an}}$ are locally pluripolar by Lemma \ref{lemma::structuralinformation} (a), we can further restrict to $f \in \mathscr{C}^{0}_c({\overline{X}}_{\IC_\nu}^{\mathrm{an}} \setminus E_{\underline{i}})$. Using a partition of unity and Lemma \ref{lemma::structuralinformation} (b), we can even restrict to the case where $f \in \mathscr{C}^{0}_c(U)$ with $U \subseteq {\overline{X}}_{\IC_\nu}^{\mathrm{an}} \setminus E_{\underline{i}}$ a relatively compact, open subset such that, for some $j \in J$, the map $\overline{\pi}_{\IC_\nu}^{\mathrm{an}} \times \psi_{\underline{i}}^{(j)}$ sends $U$ biholomorphically to some $U^\prime_0 \times \prod_{i=1}^{t^\prime} U_i^{\prime}$ where $U^\prime_0 \subset A_{\IC_\nu}^{\mathrm{an}}$ is an open subset and
\begin{equation*}
U_i^{\prime}=
\{ r e^{i\phi} \in \IC_\nu^\times \ | \ r \in (r_i,s_i), \phi \in (\alpha_i, \beta_i) \}, \ 0<r_i<s_i, \ |\alpha_i-\beta_i|<2\pi.
\end{equation*}
Let $\mathrm{pr}_0: U_0^\prime \times \prod_{i=1}^{t^\prime} U_i^\prime \rightarrow U_0^\prime$ be the standard projection. Using that $\Mo_{\overline{G},i,\nu}|_{U_j} \approx (\psi_i^{(j)})^\ast \Mo_{\IP^1,\nu}$ by \eqref{equation::pullback_logmetric} and that $c_1(\Mo_{\IP^1,\nu})=dd^c\left\vert \log |z| \right\vert$, a substitution along $\overline{\pi}_{\IC_\nu}^{\mathrm{an}} \times \psi_{\underline{i}}^{(j)}$ yields that the left-hand side of \eqref{equation::measure_description} equals
\begin{equation}
\label{equation::integral2}
\int_{U^\prime_0 \times \prod_{i=1}^{t^\prime} U_i^{\prime}} g dd^c \left\vert \log |z_1| \right\vert \wedge dd^c \left\vert \log |z_2| \right\vert \wedge \cdots \wedge dd^c \left\vert \log |z_{t^\prime}| \right\vert \wedge c_1(\pr_0^\ast \No_\nu|_{\overline{X}})^{\wedge d^\prime}
\end{equation}
with $g = f \circ (\overline{\pi}_{\IC_\nu}^{\mathrm{an}} \times \psi_{\underline{i}}^{(j)})|_U^{-1}$ and $z_1,\dots,z_{t^\prime}$ the standard coordinates on $\prod_{i=1}^{t^\prime} U_i^{\prime} \subseteq (\IC_\nu^\times)^{t^\prime}$. The Borel measure
\begin{equation*}
dd^c \left\vert \log |z_{1}| \right\vert \wedge dd^c \left\vert \log |z_{2}| \right\vert \wedge \cdots \wedge dd^c \left\vert \log |z_{t^\prime}| \right\vert \wedge c_1(\pr_0^\ast \No_\nu|_{U_0^\prime})^{\wedge d^\prime}
\end{equation*}
is the product of the measures induced by $dd^c \left\vert \log |z_{i}| \right\vert$, $i \in \{ 1,\dots, t^\prime \}$, on $U_i^\prime \subset \IC_\nu^\times$ and the measure $c_1(\pr_0^\ast \No_\nu|_{U_0^\prime})^{\wedge d^\prime}$ on $U_0^\prime$; indeed, this follows from the corresponding fact for $\mathscr{C}^\infty$-forms by plurisubharmonic smoothings (combine \cite[Proposition 1.42]{Guedj2017} and \cite[Corollary 1.6]{Demailly1993}). By Fubini's theorem \cite[Theorem 8.8]{Rudin1987}, the integral (\ref{equation::integral2}) hence equals
\begin{equation}
\label{equation::integral3}
\int_{U_0^\prime} \left( \int_{U_{t^\prime}^\prime} \left( \cdots \int_{U_2^\prime} \left( \int_{U_1^\prime} g dd^c \left\vert \log|z_1| \right\vert \right) dd^c \left\vert \log |z_2| \right\vert \right) \cdots \ dd^c \left\vert \log |z_{t^\prime}| \right\vert \right) c_1(\pr_0^\ast \No_\nu|_{\overline{X}})^{\wedge d^\prime}.
\end{equation}
It is an elementary exercise (see Appendix \ref{appendixB}) to compute that
\begin{equation*}
\int_{\IC_\nu^\times} h(z) dd^c \left\vert \log |z| \right\vert = \int_{[0,2\pi]} h(e^{i\phi})\frac{d\phi}{\pi}
\end{equation*}
for every $h \in \mathscr{C}^0_c(\IC_\nu^\times)$. Using Fubini's Theorem once again, we see that (\ref{equation::integral3}) equals
\begin{equation*}
\int_{U^\prime_0} \left( \int_{(S^1)^{t^\prime}} g \omega_{t^\prime} \right) c_1(\No_\nu|_{\overline{X}})^{\wedge d^\prime} = \int_{U^\prime_0 \times (S^1)^{t^\prime}} g \omega_{t^\prime} \wedge c_1(\mathrm{pr}_0^\ast \No_\nu|_{\overline{X}})^{\wedge d^\prime}.
\end{equation*}
By the substitution formula and Lemma \ref{lemma::structuralinformation} (d), this equals $\int_{U} f \omega_{\underline{i}}$ as claimed. 
\end{proof}

The $(d+d^\prime)$-form $\omega_{\underline{i}} \wedge c_1(\overline{\pi}^\ast \overline{N}_\nu)^{\wedge d^\prime}$ restricts to an everywhere non-zero volume form on $X_{\underline{i}} \setminus E_{\underline{i}}$ for each $\underline{i} \in I_X$. It hence prescribes an orientation on each $X_{\underline{i}} \setminus E_{\underline{i}}$. We tacitly mean this orientation in the following.

\begin{lemma} 
\label{lemma::riemannian_metric}
For each $\underline{i} \in I_X$, let $\mu_{\underline{i}}$ be the Borel measure on $\overline{G}$ associated with the functional
\begin{equation*}
\mathscr{C}^0(\overline{X}^{\mathrm{an}}_{\IC_\nu}) \longrightarrow \IR, \ f \longmapsto \int_{X_{\underline{i}} \setminus E_{\underline{i}}} f \omega_{\underline{i}} \wedge c_1(\pio^\ast \No_\nu)^{\wedge d^\prime}.
\end{equation*}
There exists a Riemannian metric $g_{\underline{i}}$ on $\overline{G}$ such that
\begin{equation}
\label{equation::boundmeasure}
\mu_{\underline{i}}(U) \leq \int_{U} \vol(g_{\underline{i}}|_{X_{\underline{i}}\setminus E_{\underline{i}}})
\end{equation}
for each open subset $U \subseteq X_{\underline{i}}\setminus E_{\underline{i}}$. Furthermore, $g_{\underline{i}}$ can be chosen independently of $\underline{i} \in I_X$.
\end{lemma}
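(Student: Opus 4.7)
The plan is to start from any background smooth Riemannian metric and argue that, after rescaling by a single constant, it dominates $\mu_{\underline{i}}$ for every $\underline{i} \in I_X$ simultaneously. Concretely, fix any $\Cinf$ Hermitian metric on the holomorphic tangent bundle $T^{1,0}_\IC \overline{G}^{\mathrm{an}}_{\IC_\nu}$ (possible since $\overline{G}$ is smooth projective) and let $g_0$ be the associated Riemannian metric on the compact real-analytic manifold $\overline{G}^{\mathrm{an}}_{\IC_\nu}$. For each $\underline{i} \in I_X$ I would establish a pointwise inequality
\begin{equation*}
\omega_{\underline{i}} \wedge c_1(\pio^\ast \No_\nu)^{\wedge d^\prime} \,\leq\, C_{\underline{i}} \cdot \vol(g_0|_{X_{\underline{i}} \setminus E_{\underline{i}}})
\end{equation*}
on $X_{\underline{i}} \setminus E_{\underline{i}}$ for some constant $C_{\underline{i}} > 0$. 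Since $I_X$ is finite, setting $C = \max_{\underline{i} \in I_X} C_{\underline{i}}$ and $g_{\underline{i}} = C^{2/(d+d^\prime)} g_0$ (independent of $\underline{i}$) then yields \eqref{equation::boundmeasure} by integration against the characteristic function of $U$.

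To prove the pointwise bound I would work chart-by-chart using Lemma~\ref{lemma::standardcoordinates}. On $\pio^{-1}(U_j) \cap \overline{X}^{\mathrm{an}}_{\IC_\nu}$ one has $\omega_{\underline{i}} = (\psi_{\underline{i}}^{(j)})^\ast \omega_{t^\prime}$ by construction, and $c_1(\pio^\ast \No_\nu) = (\pio \times \psi_{\underline{i}}^{(j)})^\ast \pr_0^\ast c_1(\No_\nu)$, where $\pr_0 : U_j \times ((\IP^1_{\IC_\nu})^{t^\prime})^{\mathrm{an}} \to U_j$ is the first projection. Thus $\omega_{\underline{i}} \wedge c_1(\pio^\ast \No_\nu)^{\wedge d^\prime}$ is the pullback along the real-analytic local isomorphism $\pio \times \psi_{\underline{i}}^{(j)}|_{X_{\underline{i}} \setminus E_{\underline{i}}}$ of Lemma~\ref{lemma::structuralinformation}(d) of the smooth bounded top form $\omega_{t^\prime} \wedge \pr_0^\ast c_1(\No_\nu)^{\wedge d^\prime}$ on the compact manifold $U_j \times (S^1)^{t^\prime}$; note that $c_1(\No_\nu)$ is genuinely a $\Cinf$-form on $A^{\mathrm{an}}_{\IC_\nu}$, since the canonical metric on an ample symmetric line bundle on an abelian variety is smooth (being the exponential of a Hermitian form on the universal cover). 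A change of variables then writes the sought density with respect to $\vol(g_0|_{X_{\underline{i}} \setminus E_{\underline{i}}})$ as the product of the bounded smooth density of $\omega_{t^\prime} \wedge \pr_0^\ast c_1(\No_\nu)^{\wedge d^\prime}$ and the absolute Jacobian of $\pio \times \psi_{\underline{i}}^{(j)}|_{X_{\underline{i}}\setminus E_{\underline{i}}}$.

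The only point demanding care is bounding this Jacobian near $E_{\underline{i}}$. But the full chart map $\pio \times \psi_j$ is a biholomorphism of $\overline{G}^{\mathrm{an}}_{\IC_\nu}|_{U_j}$, so its differential is continuous on all of $\overline{G}^{\mathrm{an}}_{\IC_\nu}|_{U_j}$; refining $\{U_j\}_{j\in J}$ to a finite cover by relatively compact open subsets (possible by compactness of $A^{\mathrm{an}}_{\IC_\nu}$), the differential, and hence also the Jacobian of the coordinate-projected map $\pio \times \psi_{\underline{i}}^{(j)}$ restricted to $X_{\underline{i}}\setminus E_{\underline{i}}$, is uniformly bounded above. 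Note that this Jacobian may well tend to zero as one approaches $E_{\underline{i}}$—that is precisely where $\pio \times \psi_{\underline{i}}^{(j)}$ fails to be a local isomorphism on $X_{\underline{i}}$—but such degeneration only makes the density of our top form smaller, not larger, which is harmless for the upper bound we seek. The main technical obstacle is thus not conceptual but the bookkeeping of densities and Jacobians across the finite cover and the finite set $I_X$.
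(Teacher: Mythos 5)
Your argument is correct, but it is genuinely different from the one in the paper. The paper constructs $g_{\underline{i}}$ by hand as a sum $g_{\underline{i},\mathrm{tor}}+g_{\mathrm{ab}}$, where $g_{\underline{i},\mathrm{tor}}$ is the pullback under $\psi_{\underline{i}}^{(j)}$ of an $(S^1)^{t^\prime}$-invariant metric normalized so that $\vol(g|_{(S^1)^{t^\prime}})=\omega_{t^\prime}$, and $g_{\mathrm{ab}}$ is a rescaled pullback of the K\"ahler metric attached to $c_1(\No_\nu)$; using the splitting $T_xX_{\underline{i}}=\ker(d\psi_{\underline{i}}^{(j)}|_{X_{\underline{i}}})_x\oplus\ker(d\pio|_{X_{\underline{i}}})_x$ from Lemma \ref{lemma::structuralinformation} (b), the inequality \eqref{equation::boundmeasure} is then obtained as an \emph{equality} for each fixed $\underline{i}$, and independence of $\underline{i}$ is recovered by summing the $g_{\underline{i}}$ and invoking Minkowski's determinant inequality. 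You instead take an arbitrary background metric $g_0$ and prove a uniform pointwise bound on the density of $\omega_{\underline{i}}\wedge c_1(\pio^\ast\No_\nu)^{\wedge d^\prime}$ against $\vol(g_0|_{X_{\underline{i}}\setminus E_{\underline{i}}})$, using that this form is the pullback of a bounded form under a map whose differential is uniformly bounded on a relatively compact refinement of the cover; your observation that degeneration of the Jacobian near $E_{\underline{i}}$ only helps the upper bound is exactly the right point, and the boundedness argument is sound since an upper bound is all that the application in Proposition \ref{proposition2} requires. Your route is softer and avoids both the normalization computations and Minkowski's inequality; the paper's route yields the sharper identification of $\mu_{\underline{i}}$ with an honest Riemannian volume before summation. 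The only detail you should make explicit is how $(\psi_{\underline{i}}^{(j)})^\ast\omega_{t^\prime}$ is realized as the pullback of a bounded smooth form defined on a neighborhood: either extend $\omega_{t^\prime}$ to a neighborhood of $(S^1)^{t^\prime}$ in $(\Gm^{t^\prime})^{\mathrm{an}}_{\IC_\nu}$ via $\omega_{t^\prime}=\prod_i\pi^{-1}\,\mathrm{Im}(dz_i/z_i)$, or compose $\psi_{\underline{i}}^{(j)}$ with the smooth retraction $z\mapsto z/|z|$; since $\psi_{\underline{i}}^{(j)}(X_{\underline{i}})\subseteq(S^1)^{t^\prime}$, either device gives the bounded differential you need, so this is bookkeeping rather than a gap.
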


\begin{proof} Consider $((\IP^1_{\IC_\nu})^{t^\prime})^{\mathrm{an}}$ with its standard $(S^1)^{t^\prime}$-action $l: (S^{1})^{t^\prime} \times ((\IP^1_{\IC_\nu})^{t^\prime})^{\mathrm{an}} \rightarrow ((\IP^1_{\IC_\nu})^{t^\prime})^{\mathrm{an}}$. For each $s \in (S^1)^{t^\prime}$, we have then a left-multiplication map
\begin{equation*}
l_s: ((\IP^1_{\IC_\nu})^{t^\prime})^{\mathrm{an}} \longrightarrow ((\IP^1_{\IC_\nu})^{t^\prime})^{\mathrm{an}}, \ x \longmapsto l(s,x).
\end{equation*}	
Let $g^\prime$ be an arbitrary non-degenerate Riemann metric on $((\IP^1_{\IC_\nu})^{t^\prime})^{\mathrm{an}}$. Setting 
\begin{equation*}
g_x(t,t^\prime)= \int_{s \in (S^1)^{t^\prime}} g^\prime_{l_s(x)}((dl_s)_x t, (dl_s)_{x}( t^\prime)) \omega_{t^\prime}
\end{equation*}	
for all $x \in ((\IP^1_{\IC_\nu})^{t^\prime})^{\mathrm{an}}$ and all $t,t^\prime \in T_{\IR,x} ((\IP^1_{\IC_\nu})^{t^\prime})^{\mathrm{an}}$, we obtain a $(S^1)^{t^\prime}$-invariant Riemann metric on $((\IP^1_{\IC_\nu})^{t^\prime})^{\mathrm{an}}$. Its restriction $g|_{(S^1)^{t^\prime}}$ is a Riemann metric on the compact manifold $(S^1)^{t^\prime}$. Because of its $(S^1)^{t^\prime}$-invariance, the associated volume form $\vol(g|_{(S^1)^{t^\prime}})$ is a Haar measure on $(S^1)^{t^\prime}$. We conclude that $\vol(g|_{(S^1)^{t^\prime}})$ is a positive multiple of the $t^\prime$-form $\omega_{t^\prime}$. By rescaling, we can even assure that
\begin{equation}
\label{equation::estimate1}
\vol(g|_{(S^1)^{t^\prime}})=\omega_{t^\prime}
\end{equation}
The $(S^1)^{t^\prime}$-invariance of $g$ allows us to define a (in general degenerate) Riemannian metric $g_{\underline{i},\mathrm{tor}}$ on $\overline{G}$ by stipulating locally that $g_{\underline{i},\mathrm{tor}}|_{U_j} = (\psi_{\underline{i}}^{(j)})^\ast g$. 
	
Furthermore, let $g^{\dprime}$ be the non-degenerate Riemann metric associated with the positive definite $(1,1)$-form $c_1(\overline{N}_\nu)$ on $A_{\IC_\nu}^{\mathrm{an}}$. By \cite[Lemma 3.8]{Voisin2007}, we know that 
\begin{equation}
\label{equation::estimate2}
\int_U c_1(\overline{N}_\nu)^{\wedge d^\prime} = d^\prime! \int_U \vol(g^{\dprime}|_{\pio(X)}) = \int_U \vol((d^\prime!)^{2/d^\prime} g^{\prime \prime})
\end{equation}
for every open $U \subseteq \pio(X)^{\mathrm{sm}}$. We set $g_{\mathrm{ab}} = (d^\prime!)^{2/d^\prime} (d\pio)^\ast g^{\prime \prime}$.

We claim that $g_{\underline{i}}=g_{\underline{i},\mathrm{tor}} + g_{\mathrm{ab}}$ is the seeked (possibly degenerate) Riemann metric on $\overline{G}$. Indeed, for each $x \in X_{\underline{i}} \setminus E_{\underline{i}}$ the tangent space $T_x X_{\underline{i}}$ decomposes as $\ker(d\psi_{\underline{i}}^{(j)}|_{X_{\underline{i}}})_x \oplus \ker(d\pio|_{X_{\underline{i}}})_x$ by Lemma \ref{lemma::structuralinformation} (b). By construction, $g_{\underline{i},\mathrm{tor}}$ (resp.\ $g_{\mathrm{ab}}$) is zero on $\ker(d\psi_{\underline{i}}^{(j)})_x$ (resp.\ $\ker(d\pio)_x$). We deduce
\begin{align*}
\vol(g_{\underline{i}}|_{T_x X_{\underline{i}}})
&= \vol(g_{\underline{i},\mathrm{tor}}|_{T_x X_{\underline{i}} \cap \ker(d\pio)}) \wedge 
\vol(g_{\mathrm{ab}}|_{T_x X_{\underline{i}} \cap \ker(d\psi_{\underline{i}}^{(j)})}) \\
&= d^\prime! \cdot (d\psi_{i}^{(j)}|_{T_xX_{\underline{i}}})^\ast \vol(g|_{(S^{1})^{t^\prime}}) \wedge  (d\pio_{T_xX_{\underline{i}}})^\ast \vol(g^\dprime|_{\pio(X)}) \\
&= \omega_{\underline{i},x} \wedge (c_1(\pio^\ast \No_\nu)|_{T_xX_{\underline{i}}})^{\wedge d^\prime},
\end{align*}
where we used \eqref{equation::estimate1} and \eqref{equation::estimate2} in the third equality. The estimate \eqref{equation::boundmeasure} follows and is in fact an equality.

For the last assertion, we can just take the sum $g = \sum_{\underline{i} \in I_X} g_{\underline{i}}$ of the already constructed Riemannian metrics $g_{\underline{i}}$, $\underline{i} \in I_X$. Inequality \eqref{equation::boundmeasure} with $g_{\underline{i}}$ replaced by $g$ is then a straightforward consequence of Minkowski's determinant inequality (see e.g.\ \cite[Corollary II.3.21]{Bhatia1997}). 
\end{proof}

\section{The Bogomolov Conjecture}
\label{section::bogomolov}

Our argument for deducing (BC) from the archimedean case of Proposition \ref{proposition1} follows Zhang's argument \cite{Zhang1998}, which itself is a generalization of an argument due to Ullmo \cite{Ullmo1998}. The new difficulty is that the measures $\mu_\nu$, $\nu \in \Sigma_\infty(K)$, from Proposition \ref{proposition1} are not described by smooth differential forms, which is why we need Lemma \ref{lemma::measure_realanalytic}. 

\begin{proposition} \label{proposition2} $\mathrm{(BC)}$ is true for every semiabelian variety $G$ over $\IQbar$.
\end{proposition}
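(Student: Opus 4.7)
The plan is to adapt the Ullmo--Zhang argument \cite{Ullmo1998, Zhang1998} for the Bogomolov Conjecture, using the structural description of equilibrium measures developed in Section \ref{section::equilibrium} as a substitute for the smoothness of Chern forms available in the abelian case. First, a standard reduction: let $H = \mathrm{Stab}(X)^0$ and, after base-changing $K$ so that $H$ and the relevant compactifications are defined over $K$, pass to the quotient $G/H$. The image of $X$ in $G/H$ is again geometrically irreducible and still not a translate of a connected subgroup by a torsion point, and small points on $X$ descend to small points on this image. We therefore reduce to the case where $\mathrm{Stab}(X)$ is finite; in particular $d=\dim(X)\geq 1$. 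Assume for contradiction that there exists an $X$-generic sequence $(x_i)\in X^{\IN}$ of small points.

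For an integer $m\geq 2$, consider the difference morphism
\begin{equation*}
\alpha_m\colon X^m \longrightarrow G^{m-1}, \qquad (y_1,\ldots,y_m) \longmapsto (y_2-y_1,\,\ldots,\,y_m-y_{m-1}),
\end{equation*}
which, since $\mathrm{Stab}(X)$ is finite, is generically finite onto its image $Y_m \subseteq G^{m-1}$ (of dimension $md$) for $m$ sufficiently large. Via a standard diagonalization applied to $(x_i)$, construct an $X^m$-generic sequence $(\mathbf{x}_i)$ of small points on $X^m$, formed with respect to the natural product of the line bundle $L$ and the induced canonical height; by construction, the image sequence $(\alpha_m(\mathbf{x}_i))$ is $Y_m$-generic and still consists of small points. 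Fix an archimedean place $\nu$ of $K$ and apply Proposition \ref{proposition1} at $\nu$ on both sides: the Galois orbits of $(\mathbf{x}_i)$ and $(\alpha_m(\mathbf{x}_i))$ equidistribute to measures $\mu^{X^m}$ and $\mu^{Y_m}$ respectively, which by functoriality of pushforward are related by an identity
\begin{equation*}
(\alpha_m)_{*}\mu^{X^m} \;=\; \kappa_m \cdot \mu^{Y_m}
\end{equation*}
for an explicit positive constant $\kappa_m$ determined by the degrees involved.

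The main obstacle is extracting a contradiction from this identity, since the two measures are not smooth: by Lemma \ref{lemma::measure_realanalytic}, they are finite sums of measures concentrated on the real-analytic submanifolds $(X^m)_{\underline{i}}$ and $(Y_m)_{\underline{j}}$. To circumvent this, the plan is to invoke Lemma \ref{lemma::riemannian_metric} with a single auxiliary Riemannian metric on a fixed ambient containing $G^m \times G^{m-1}$, yielding uniform Riemannian volume bounds on every piece of $\mu^{X^m}$ and $\mu^{Y_m}$ simultaneously. Combined with the pushforward identity above, this replaces the pointwise density comparison of the abelian case with an inequality between $\kappa_m$ and a ratio of Riemannian volumes of real-analytic supports. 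A growth comparison, showing that the geometric content encoded in the degrees on the left-hand side scales with $m$ faster than the ratio of Riemannian volumes on the right-hand side can sustain, then produces the desired contradiction once $m$ is large enough. This forces small points not to be Zariski-dense in $X$, completing the proof.
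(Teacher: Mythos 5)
Your overall strategy coincides with the paper's: reduce to small stabilizer, map $X^m$ into $G^{m-1}$ by consecutive differences, equidistribute on both sides via Proposition \ref{proposition1}, and use Lemmas \ref{lemma::measure_realanalytic} and \ref{lemma::riemannian_metric} to compensate for the non-smoothness of the limit measures. However, the step where the contradiction is actually produced is not correct as described. You propose a ``growth comparison'' in $m$ between degrees and ratios of Riemannian volumes, with the contradiction appearing ``once $m$ is large enough.'' No such comparison can work: for every fixed $m$ the two sides of the pushforward identity are measures of equal total mass (indeed the normalized orbit measures push forward exactly, so your constant $\kappa_m$ is $1$), so there is nothing global that diverges with $m$. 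In the Ullmo--Zhang scheme, $m$ is fixed once and for all (just large enough that $\alpha_m$ is generically finite of degree $1$), and the contradiction is \emph{local at the diagonal}: the differential of $\alpha_m$ annihilates the tangent space of the diagonal $\Delta_m \subset X^m$, so the image under $\alpha_m$ of an $\varepsilon$-box around a diagonal point has Riemannian volume $\ll \varepsilon^{d+d^\prime}\vol(B_\varepsilon)$, whereas the product measure $\mu^{X^m}$ assigns mass $\gg \vol(B_\varepsilon)$ to that box (because the factor measures are positive volume forms on the real-analytic pieces). Since Lemma \ref{lemma::riemannian_metric} bounds $\mu^{Y_m}$ from above by a Riemannian volume, letting $\varepsilon \rightarrow 0$ contradicts the pushforward identity. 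Your write-up never mentions the diagonal or the degeneracy of $d\alpha_m$ along it, which is the entire engine of the argument; without it there is no contradiction.

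A secondary issue: you quotient only by $\mathrm{Stab}(X)^0$, leaving a possibly nontrivial finite stabilizer. The paper quotients by the full stabilizer, obtaining trivial stabilizer, which is what Lemma \ref{lemma::genericembedding} needs to make $\alpha_m$ generically of degree $1$; this degree-$1$ property is then used to transport the equilibrium measure of $\alpha_m(X^m)$ back to $X^m$ by the substitution formula on an open set where $\alpha_m$ is an isomorphism. With a finite but nontrivial stabilizer you would have to track multiplicities throughout the measure comparison, which you do not address. This is fixable, but as written it is a gap.
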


Before proving the proposition, we start with a lemma.

\begin{lemma} \label{lemma::genericembedding} Let $X$ be a geometrically irreducible subvariety of $G$. Assume that the stabilizer group
\begin{equation*}
\mathrm{Stab}_{G_{\IQbar}}(X_{\IQbar}) = \{ g \in G_{\IQbar} \ | \ g + X_{\IQbar} = X_{\IQbar} \}
\end{equation*}
is trivial (i.e., equal to $\{ e_{G_{\IQbar}} \}$). For all integers $m \gg_X 1$, the algebraic map
\begin{equation*}
\alpha_m: X^m \longrightarrow G^{m-1}, \ (x_1,x_2,\dots,x_m) \longmapsto (x_1-x_2,x_2-x_3,\dots, x_{m-1}-x_m),
\end{equation*}
is then generically finite of degree $1$.
\end{lemma}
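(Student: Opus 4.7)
The plan is to identify the fiber of $\alpha_m$ with an intersection of translates of $X$ inside $G$, and then to exploit the triviality of the stabilizer through two successive dimension reductions.

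First, I observe that if $\alpha_m(x_1,\dots,x_m)=\alpha_m(x_1',\dots,x_m')$, then consecutive differences agree, so $x_i'-x_i$ is independent of $i$, and hence $x_i'=x_i+g$ for a single $g \in G_{\IQbar}$. The fiber of $\alpha_m$ through $(x_1,\dots,x_m)$ is therefore identified with
\begin{equation*}
S(x_1,\dots,x_m) \;:=\; \bigcap_{i=1}^m (X_{\IQbar}-x_i) \;=\; \bigl\{g \in G_{\IQbar} : x_i+g \in X_{\IQbar} \text{ for all } i\bigr\},
\end{equation*}
a closed subvariety of $G_{\IQbar}$ that always contains $0$. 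Thus the lemma reduces to showing that $S(x_1,\dots,x_m)=\{0\}$ for generic $(x_1,\dots,x_m) \in X^m$ and $m \gg_X 1$.

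Second, I would show that the generic (minimal) dimension $s(m)$ of $S(x_1,\dots,x_m)$ tends to $0$ as $m \to \infty$. Upper semicontinuity of fiber dimension applied to the universal family $W_m := \{((x_i),g) \in X^m \times G : g \in S(x_1,\dots,x_m)\} \to X^m$ ensures that $s(m)$ is well-defined; it is non-increasing and non-negative, so it stabilizes at some $s^\infty \geq 0$. If $s^\infty \geq 1$, choose $m$ with $s(m)=s(m+1)=s^\infty$ and a generic $(x_1,\dots,x_m) \in X^m$, and let $F_1,\dots,F_k$ be the finitely many $s^\infty$-dimensional irreducible components of $S(x_1,\dots,x_m)$. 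For generic $x_{m+1}\in X$ the identity $S(x_1,\dots,x_{m+1})=S(x_1,\dots,x_m)\cap(X_{\IQbar}-x_{m+1})$ together with $\dim S(x_1,\dots,x_{m+1})=s^\infty$ and the irreducibility of each $F_j$ force $F_j \subseteq X_{\IQbar}-x_{m+1}$ for some $j$. The closed sets $V_j := \{x \in X_{\IQbar} : F_j \subseteq X_{\IQbar}-x\}$ therefore cover a dense open subset of $X_{\IQbar}$, and by irreducibility of $X_{\IQbar}$ one of them, $V_{j_0}$, equals $X_{\IQbar}$ entirely. But this means $X_{\IQbar}+y \subseteq X_{\IQbar}$ for every $y \in F_{j_0}$, hence $F_{j_0} \subseteq \mathrm{Stab}_{G_{\IQbar}}(X_{\IQbar})=\{0\}$, contradicting $\dim F_{j_0}\geq 1$. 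Thus $s^\infty=0$.

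Third, fix $m$ so that $s(m-1)=0$, which means that $S(y_1,\dots,y_{m-1})$ is finite for generic $(y_1,\dots,y_{m-1}) \in X^{m-1}$. If the generic fiber of $\alpha_m$ had cardinality $\geq 2$, then, taking any geometric generic point of an irreducible component of $W_m$ distinct from the zero section $Z_m := X^m \times\{0\}$, we would obtain $g^* \in G(\overline{k(X^m)}) \setminus \{0\}$ satisfying $x_i+g^* \in X_{\IQbar}$ for each generic coordinate $x_i$. For each $i$, specializing the remaining coordinates $(x_j)_{j\neq i}$ to a generic tuple $(x_j^0)_{j\neq i}\in X^{m-1}$ constrains $g^*$ to take values in the finite set $S((x_j^0)_{j\neq i})$; being an algebraic function of $x_i$ valued in a finite set as $x_i$ varies in the irreducible variety $X$, it must be constant in $x_i$. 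Iterating over $i=1,\dots,m$ shows that $g^*$ is a single constant element $g_0 \in G(\IQbar)\setminus\{0\}$. Then $x+g_0 \in X_{\IQbar}$ for $x$ in a dense open of $X_{\IQbar}$, so by closure $X_{\IQbar}+g_0 \subseteq X_{\IQbar}$ and $g_0 \in \mathrm{Stab}_{G_{\IQbar}}(X_{\IQbar})=\{0\}$, contradicting $g_0\neq 0$. Hence $\alpha_m$ is generically finite of degree $1$ for $m$ large enough.

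I anticipate that the main obstacle will be the careful handling of irreducible components in the family $W_m$: specifically, the finite-union argument in the second step that produces a single offending component $F_{j_0}$ working uniformly in $x_{m+1}$, and the passage to a geometric generic point of a non-identity component in the third step. Both are standard dimension-theoretic manipulations in characteristic zero and should not introduce genuine new difficulties.
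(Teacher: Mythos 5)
Your proof is correct and follows essentially the same route as the paper, which simply cites Zhang's Lemma 3.1 in \cite{Zhang1998}: identify the fiber of $\alpha_m$ with $\bigcap_i(X_{\IQbar}-x_i)$, show the generic fiber dimension stabilizes at $0$ via the stabilizer hypothesis, and then rule out extra components of the incidence variety by producing a nonzero element of $\mathrm{Stab}_{G_{\IQbar}}(X_{\IQbar})$. The dimension-theoretic details you flag as potential obstacles are indeed the only things to check, and they go through as you describe.
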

\begin{proof}
This can be proven in the same way as \cite[Lemma 3.1]{Zhang1998}.
\end{proof}

With this lemma, we can start the main proof of this section.

\begin{proof}[Proof of Proposition \ref{proposition2}] 
We may and do assume that $X$ is of positive dimension. Let $\Gm^t$ be the toric part of $G$. We first reduce to the case $\mathrm{Stab}_{G_{\IQbar}}(X_{\IQbar}) = \{ e_{G_{\IQbar}} \}$. By enlarging $K$, we can assume that $\mathrm{Stab}_{G_{\IQbar}}(X_{\IQbar})$ is $H_{\IQbar}$ for some algebraic subgroup $H \subseteq G$. Consider the quotient $\varphi: G \twoheadrightarrow G/H =: G^\prime$. It is well-known that $G^\prime$ is a semiabelian variety (\cite[Corollary 5.4.6]{Brion2017}). Denote its toric part by $\Gm^{t^\prime}$ and its abelian quotient by $\pi^\prime: G^\prime \rightarrow A^\prime$. The map $\varphi$ is a homomorphism (\cite[Theorem 2]{Iitaka1976}), and the image $X^\prime = \varphi(X)$ is an irreducible subvariety of $G^\prime$ satisfying $\mathrm{Stab}_{G^\prime_{\IQbar}}(X^\prime_{\IQbar})= \{ e_{G^\prime_{\IQbar}} \}$. Evidently, $X^\prime$ is not the translate by a torsion point of a connected subgroup of $G^\prime$ unless $X$ is so.

We can then reduce $(\mathrm{BC})$ for $X$ to $(\mathrm{BC})$ for $X^\prime$. Let $\overline{G} = G \times^{\Gm^t} (\IP^1)^t$ and $\overline{G}^\prime = G^\prime \times^{\Gm^{t^\prime}}  (\IP^1)^{t^\prime}$ be the standard compactifications from Subsection \ref{section::semiabelian_compact} and let $\overline{\pi}:\overline{G} \rightarrow A$ and $\overline{\pi}^\prime: \overline{G}^\prime \rightarrow A^\prime$ be the associated projections. We also use the nef line bundles $M_{\overline{G}}$ and $M_{\overline{G}^\prime}$ as defined there, and we fix an ample symmetric line bundle $N$ (resp.\ $N^\prime$) on $A$ (resp.\ $A^\prime$). Set $L = M_{\overline{G}} \otimes N$ (resp.\ $L^\prime = M_{\overline{G}^\prime} \otimes N^\prime$) and endow all line bundles with the adelic metrics from Subsection \ref{section::semiabelianvarietyheights}. By Lemma \ref{lemma::heightcomparison} applied to $\varphi$, we have
\begin{equation}
\label{equation::heights}
h_{\widetilde{L}^\prime}(\varphi(x)) \ll_{N,N^\prime,\varphi} h_{\widetilde{L}}(x)
\end{equation}
for every closed point $x \in G$. If there exists some $\varepsilon > 0$ such that
\begin{equation*}
X_\varepsilon^\prime = \{ \text{closed point }x^\prime \in X^\prime \ | \ h_{\widetilde{L}^\prime}(x^\prime) \leq \varepsilon \}
\end{equation*}
is not Zariski-dense in $X^\prime$, then its preimage $\varphi^{-1}(X_\varepsilon^\prime)$ is likewise not Zariski-dense in $X$. By (\ref{equation::heights}), $\varphi^{-1}(X_\varepsilon^\prime)$ contains 
\begin{equation*}
X_{\varepsilon^\prime} = \{ \text{closed point } x \in X\ | \ h_{\widetilde{L}}(x) \leq \varepsilon^\prime \}
\end{equation*}
for some sufficiently small $\varepsilon^\prime > 0$. We may hence assume that $\mathrm{Stab}_{G_{\IQbar}}(X_{\IQbar}) = \{ e_{G_{\IQbar}}\}$ in the following.

We argue by contradiction and assume that $(\mathrm{BC})$ is wrong for $X$. This means that there exists a Zariski-dense sequence $(x_i) \in X^\IN$ of small points. By Lemma \ref{lemma::genericembedding}, we can fix some integer $m$ such that $\alpha_m$ is generically finite of degree $1$. Pick a bijection 
\begin{equation*}
\IN \longrightarrow \IN^m, \ i \longmapsto (\phi_1(i),\dots, \phi_m(i)),
\end{equation*}
and define the new sequence
\begin{equation*}
y_i = (x_{\phi_1(i)}, \dots, x_{\phi_m(i)}), \ i \in \IN,
\end{equation*} 
which is clearly Zariski-dense in $X^m$. Using \cite[Lemma 4.1]{Zhang1998}, we can even assume that $(y_i)$ is $X^m$-generic by passing to a subsequence. From their construction, both $(y_i) \in X^m(\IQbar)$ and $(\alpha_m(y_i)) \in G^{m-1}(\IQbar)$ are sequences of small points in $G^m$ and $G^{m-1}$, respectively. The sequence $(\alpha_m(y_i))$ is also $\alpha_m(X^m)$-generic. Let $U \subseteq X^m$ be a dense open subset such that $\alpha_m|_U: U \rightarrow \alpha_m(U)$ is an isomorphism. 
For sufficiently large $i$, we have $y_i \in U$ and $\alpha_m(y_i) \in \alpha_m(U)$. 

For the sequel, fix an arbitrary archimedean place $\nu \in \Sigma_\infty(K)$. Proposition \ref{proposition1} yields Borel measures $\mu_1$ and $\mu_2$ on $(\overline{X}^m)_{\IC_\nu}^{\mathrm{an}}$ and $\alpha_m(\overline{X}^m)_{\IC_\nu}^{\mathrm{an}}$, respectively, such that the following two assertions are true:
\begin{enumerate}
\item[(a)] For every $f \in \mathscr{C}^0_c((X^m)_{\IC_\nu}^{\mathrm{an}})$, we have
\begin{equation*}
\label{equation::limit2}
\frac{1}{\# \mathbf{O}_\nu(y_i)}\sum_{y \in \mathbf{O}_\nu(y_i)} f(y) \longrightarrow \int_{(X^m)_{\IC_\nu}^{\mathrm{an}}} f \mu_1 \quad (i \rightarrow \infty).
\end{equation*}
\item[(b)] For every $f_0 \in \mathscr{C}^0_c(\alpha_m(X^m)_{\IC_\nu}^{\mathrm{an}})$, we have
\begin{equation*}
\label{equation::limit1}
\frac{1}{\# \mathbf{O}_\nu(\alpha_m(y_i))}\sum_{y \in \mathbf{O}_\nu(\alpha_m(y_i))} f_0(y) \longrightarrow \int_{\alpha_m(X^m)_{\IC_\nu}^\mathrm{an}}f_0 \mu_2 \quad (i \rightarrow \infty).
\end{equation*}
\end{enumerate}
Setting $f=f_0 \circ \alpha_m$, we infer that
\begin{equation}
\label{equation::bogomolov_integrals}
\int_{(X^m)_{\IC_\nu}^{\mathrm{an}}} (f_0 \circ \alpha_m) \mu_1 =  \int_{\alpha_m(X^m)_{\IC_\nu}^\mathrm{an}}f_0 \mu_2
\end{equation}
for every $f_0 \in \mathscr{C}^0_c(\alpha_m(X^m)_{\IC_\nu}^{\mathrm{an}})$. We derive a contradiction from this equality through a closer look at the measures $\mu_1$ and $\mu_2$.

Applying Lemma \ref{lemma::measure_realanalytic} with $X \subseteq G$ replaced by $\alpha_m(X^m) \subseteq G^{m-1}$, we obtain that there exist finitely many (embedded) real-analytic submanifolds $\{\mathcal{M}_k\}_{1 \leq k \leq K}$ of $\alpha_m(X^m)^{\mathrm{an}}_{\IC_\nu}$, each endowed with a positive $\mathscr{C}^\infty$-volume form $\Omega_k$, such that
\begin{equation}
\label{equation::integral4}
\int_{\alpha_m(X^m)^{\mathrm{an}}_{\IC_\nu}} f_0 \mu_2 = \sum_{k=1}^K \int_{\mathcal{M}_k} f_0 \Omega_k.
\end{equation}
In addition, Lemma \ref{lemma::riemannian_metric} provides us with a Riemannian metric $g$ on $\overline{G}^{m-1}$ such that
\begin{equation}
\label{equation::integral_by_volume}
\int_{U} \Omega_k  \leq \int_{U} \vol(g|_{\mathcal{M}_k})
\end{equation}
for each $k \in \{ 1, \dots, K\}$ and every open subset $U \subseteq \mathcal{M}_k$. As $(\alpha_m(X^m) \setminus \alpha_m(U))^{\mathrm{an}}_{\IC_\nu}$ is a locally pluripolar subset of $\alpha_m(X^m)^{\mathrm{an}}_{\IC_\nu}$, the measure $\mu_2$ does not attach any mass to it (see Subsection \ref{section::borelmeasure}). We can hence assume that 
\begin{equation*}
\mathcal{M}_k \cap \left( \alpha_m(X^{m}) \setminus \alpha_m(U) \right)^{\mathrm{an}}_{\IC_\nu} = \emptyset
\end{equation*}
for all $k\in \{ 1, \dots, K \}$. Each set $\alpha_m^{-1}(\mathcal{M}_k)$ is an (embedded) real-analytic manifold contained in $U$. 
By the substitution formula, the right-hand side of \eqref{equation::integral4} equals
\begin{equation*}
\sum_{k=1}^{K}
\int_{\alpha_m^{-1}(\mathcal{M}_k)} (f_0 \circ \alpha_m) \alpha_m^\ast \Omega_k.
\end{equation*}

A closer inspection of Proposition \ref{proposition1} shows that the measure $\mu_1$ on $X^m$ equals the $m$-fold product measure
\begin{equation*}
\left(c_1(\Mo_{G,\nu}|_X)^{d-d^\prime} \wedge c_1(\pio^\ast \No_\nu|_X)^{d^\prime}\right)^{\times m}.
\end{equation*}
By Lemma \ref{lemma::measure_realanalytic} applied to $X \subseteq G$, there exist (embedded) real-analytic submanifolds $\{\mathcal{M}_k^\prime\}_{1 \leq k \leq K^\prime}$ of $X^{\mathrm{an}}_{\IC_\nu}$ and a positive volume form $\Omega_k^\prime$ on each $\mathcal{M}_k^\prime$ such that
\begin{equation*}
\int_{(X^m)_{\IC_\nu}^{\mathrm{an}}} f \mu_1 = \sum_{1 \leq k_1,\dots,k_{m} \leq K^\prime} \int_{\mathcal{M}_{k_1}^\prime \times \cdots \times \mathcal{M}_{k_m}^\prime} f (\Omega_{k_1}^\prime \wedge \cdots \wedge \Omega_{k_{m}}^\prime)
\end{equation*}
for all $f \in \mathscr{C}^0_c((X^m)_{\IC_\nu}^{\mathrm{an}})$.

Combining this with \eqref{equation::bogomolov_integrals}, we obtain the identity
\begin{equation}
\label{equation::integral5}
\sum_{1 \leq k_1,\dots,k_{m} \leq K^\prime} \left( \int_{\mathcal{M}_{k_1}^\prime \times \cdots \times \mathcal{M}_{k_m}^\prime} f (\Omega_{k_1}^\prime \wedge \cdots \wedge \Omega_{k_{m}}^\prime) \right)
=
\sum_{k=1}^{K}
\int_{\alpha_m^{-1}(\mathcal{M}_k)} f \alpha_m^\ast \Omega_k
\end{equation}
for all $f \in \mathscr{C}^0_c(U^{\mathrm{an}}_{\IC_\nu})$. For every $\varepsilon > 0$, there exists an open set $V \supset (X^m \setminus U)$ such that both sides of \eqref{equation::integral5} are less than $\varepsilon$ for the indicator function $f = \mathbf{1}_{V}$. By approximation, this implies that \eqref{equation::integral5} is also valid for general functions $f \in \mathscr{C}^0_c((X^m)^{\mathrm{an}}_{\IC_\nu})$. Comparing supports, we obtain that 
\begin{equation*}
\bigcup_{1 \leq k_1,\dots,k_{m} \leq K^\prime} \overline{(\mathcal{M}_{k_1}^\prime \times \cdots \times \mathcal{M}_{k_m}^\prime)} = \bigcup_{k=1}^K \overline{\alpha_m^{-1}(\mathcal{M}_k)}.
\end{equation*}
Since $\mu_1$ associates a zero measure to any closed real analytic submanifold of dimension $< m(d+d^\prime)$, the dimension of each $\alpha_m^{-1}(\mathcal{M}_k)$ is $m(d+d^\prime)$. In other words, each $\alpha_m^{-1}(\mathcal{M}_k)$ is an open subset of 
\begin{equation*}
\bigcup_{1 \leq k_1,\dots,k_{m} \leq K^\prime} \overline{(\mathcal{M}_{k_1}^\prime \times \cdots \times \mathcal{M}_{k_m}^\prime)}.
\end{equation*}
Pick some $x \in \mathcal{M}_k^\prime$, $k \in \{1,\dots, K^\prime \}$, such that there exists an open neighborhood $x \in V \subset X$ disjoint from $\mathcal{M}_1^\prime \cup \cdots \cup \mathcal{M}_{k-1}^\prime \cup \mathcal{M}_{k+1}^\prime \cup \cdots \cup \mathcal{M}_{K^\prime}^\prime$. Shrinking $V$ if necessary, there exists a real-analytic isomorphism $\psi: (-1,1)^{d+d^\prime} \rightarrow \mathcal{M}_k^\prime \cap V$ with $\psi(0,\dots,0)=x$. For convenience, we write 
\begin{equation*}
\psi_m=\psi \times \cdots \times \psi: (-1,1)^{(d+d^\prime)m} \rightarrow G^m
\end{equation*}
for its $m$-fold product and $\Delta_m \subset (-1,1)^{(d+d^\prime)m}$ for the diagonally embedded copy of $(-1,1)^{d+d^\prime}$.

Let $f\in \mathscr{C}^0_c(V^m)$ be non-negative with $f(x,\dots,x) > 0$. Defining $B_\varepsilon = ((-\varepsilon,\varepsilon)^{(d+d^\prime)})^m \subset \IR^{(d+d^\prime)m}$ and writing $\vol(B_\varepsilon)$ for its volume with respect to the standard Euclidean metric, we note that
\begin{align*}
\label{equation::integral_estimate1}
\int_{\psi_m(B_\varepsilon) \cap \alpha_m^{-1}(\mathcal{M}_k)} f\alpha_m^\ast \Omega_k 
= 
\int_{\alpha_m \circ \psi_m(B_\varepsilon) \cap \mathcal{M}_k} f \Omega_k
\leq 
|f|_{\mathrm{sup}} \int_{\alpha_m \circ \psi_m(B_\varepsilon) \cap \mathcal{M}_k} \vol(g|_{\mathcal{M}_k})
\end{align*}
for each $k \in \{1,\dots,K\}$ by \eqref{equation::integral_by_volume}. Since the differential $d(\alpha_m \circ \psi_m)_{(0,\dots,0)}$ annihilates the $(d+d^\prime)$-dimensional $\IR$-subspace $T_{(0,\dots,0)}\Delta_m$, we furthermore obtain
\begin{align*}
\int_{\alpha_m \circ \psi_m(B_\varepsilon) \cap \mathcal{M}_k} \vol(g|_{\mathcal{M}_k}) = \int_{B_\varepsilon} \vol((\alpha_m\circ\psi_m)^\ast g)
\ll_{\psi_m, \alpha_m, g, \Omega_1,\dots, \Omega_K} \varepsilon^{d+d^\prime} \vol(B_\varepsilon) \nonumber
\end{align*}
for sufficiently small $\varepsilon>0$. Using \eqref{equation::integral5}, we obtain however
\begin{equation*}
\sum_{i=1}^K \int_{\psi_m(B_\varepsilon) \cap \alpha_m^{-1}(\mathcal{M}_k)} f\alpha_m^\ast \Omega_k =\int_{\mathcal{M}_{k}^\prime \times \cdots \times \mathcal{M}_{k}^\prime} f (\Omega_{k}^\prime \wedge \cdots \wedge \Omega_{k}^\prime)
\gg_{f, \psi_m, \Omega_k^\prime} \vol(B_\varepsilon)
\end{equation*}
since $\Omega_k^\prime$ is (strictly) positive at $x$ by construction. We obtain a contradiction by combining this with the other estimates above and letting $\varepsilon \rightarrow 0$.
\end{proof}

\section{The Strong Equidistribution Conjecture}
\label{section::mainthm}

For completeness, we give the well-known argument for $\mathrm{(EC)} \wedge \mathrm{(BC)} \Rightarrow \mathrm{(SEC)}$ (see \cite[p.\ 165]{Zhang1998}).

\begin{proof}[Proof of Theorem \ref{theorem::main}] 
Let $(x_i) \in G^\IN$ be a strict sequence of small height. If $(x_i)$ were not $G$-generic, there would exist a proper algebraic subvariety $X$ and a Zariski-dense subsequence $(x_{n_i}) \in X^{\IN}$ of small points. Proposition \ref{proposition2} implies that $X$ is contained in a finite union of proper algebraic subgroups of $G$. This contradicts the strictness of $(x_i)$ and hence $(x_i)$ must be $G$-generic. This allows us to apply Proposition \ref{proposition1}, concluding the proof of $(\mathrm{SEC})$.
\end{proof}

\begin{appendix}
\section{Global regularization and archimedean local heights} 

\label{appendixA}

In this appendix, we indicate how to extend the archimedean local heights defined by Gubler \cite{Gubler2003} to semipositive $\mathscr{C}^0$-metrics through global regularization. This allows us to use the facts on archimedean local heights that are provided in \cite{Gubler2003} also in this more general setting.

We work with a projective $K$-variety $X$ and an archimedean place $\nu \in \Sigma_\infty(K)$. Let $\Lo_i = (L_i, \Vert \cdot \Vert_i)$, $1 \leq i \leq d^\prime +1$, be semipositive $\nu$-metrized line bundles on $X$ and let $\widehat{D}_i=(\Lo_i, Y_i, \mathbf{s}_i)$, $1 \leq i \leq d^\prime +1$, denote $\nu$-metrized pseudo-divisors on $X$. If each $\nu$-metric $\Vert \cdot \Vert_i$ ($1 \leq i \leq d^\prime +1$) is $\mathscr{C}^\infty$, Gubler \cite[Definition 3.3]{Gubler2003} defines a local height $\lambda_{\widehat{D}_1,\widehat{D}_2,\dots,\widehat{D}_{d^\prime+1}}(\mathfrak{Z})$ for each $d^\prime$-cycle $\mathfrak{Z}$ on $X$ that satisfies the condition
\begin{equation}
\label{equation::standarddisjointnesscondition}
Y_1 \cap Y_2 \cap \cdots \cap Y_{d^\prime+1} \cap |\mathfrak{Z}| = \emptyset.
\end{equation}
To reduce to this case, we choose an ample line bundle $M$ on $X$ and a smooth $\nu$-metric $\Vert \cdot \Vert_0$ on $M$ such that $\Mo=(M, \Vert \cdot \Vert_0)$ is a strictly positive $\nu$-metrized line bundle. For each cycle $\mathfrak{Z}$ satisfying \eqref{equation::standarddisjointnesscondition}, we can choose global sections $\mathbf{t}_i: X \rightarrow M$ ($1 \leq i \leq d^\prime +1$) such that
\begin{equation}
\label{equation::standarddisjointnesscondition2}
(Y_1 \cup |\Div(\mathbf{t}_1)|) \cap (Y_2 \cup |\Div(\mathbf{t}_2)|) \cap \cdots \cap (Y_{d^\prime+1} \cup |\Div(\mathbf{t}_{d^\prime+1})|) \cap |\mathfrak{Z}| = \emptyset.
\end{equation}

As the $\nu$-metrized line bundle $\Lo_i^{\otimes n} \otimes \Mo$ is strictly positive, we can approximate its continuous metric by $\mathscr{C}^\infty$-metrics. 

\begin{lemma} 
\label{lemma::smoothening}
	For each $i \in \{ 1, \dots, d^\prime + 1\}$ and each integer $k\geq 1$, there exists a smooth $\nu$-metric $\Vert \cdot \Vert_i^{(n,k)}$ on $L_i^{\otimes n} \otimes M$ such that, writing $\Lo_i^{(n,k)}=(L_i^{\otimes n} \otimes M, \Vert \cdot \Vert_i^{(n,k)})$, we have
\begin{enumerate}
	\item[(a)] $c_1(\Lo_i^{(n,k)}) \geq (1-1/k) (n \cdot c_1(\Lo_i) + c_1(\Mo)) \geq 0$, and 
	\item[(b)] 
	$e^{-1/2k} \leq \Vert \cdot \Vert^{(n,k)}_i/(\Vert \cdot \Vert^{\otimes n}_i \otimes \Vert \cdot \Vert_0) \leq 1$ everywhere on $X^{\mathrm{an}}_{\IC_\nu}$.
\end{enumerate}
\end{lemma}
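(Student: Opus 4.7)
The idea is to obtain $\Vert \cdot \Vert_i^{(n,k)}$ by a global regularization of the continuous semipositive $\nu$-metric $\Vert \cdot \Vert_i^{\otimes n} \otimes \Vert \cdot \Vert_0$ on $L_i^{\otimes n} \otimes M$, whose Chern current
\[
T_{n,i} \;=\; n \cdot c_1(\Lo_i) + c_1(\Mo)
\]
is bounded below by the smooth strictly positive $(1,1)$-form $c_1(\Mo)$. This strict positivity is the decisive input: standard regularization procedures always lose a small amount of positivity, and only a ``buffer'' of smooth positive form can absorb this loss in the multiplicatively sharp manner required by (a).

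Concretely, I would invoke Maillot's global regularization lemma \cite[Th\'eor\`eme 4.6.1]{Maillot2000}, applied to the strictly positive metric $\Vert \cdot \Vert_i^{\otimes n} \otimes \Vert \cdot \Vert_0$. This produces a sequence of smooth $\nu$-metrics $\Vert \cdot \Vert^{(j)}$ on $L_i^{\otimes n} \otimes M$ such that (i) $\Vert \cdot \Vert^{(j)} \leq \Vert \cdot \Vert_i^{\otimes n} \otimes \Vert \cdot \Vert_0$ pointwise, (ii) the ratio $\Vert \cdot \Vert^{(j)} / (\Vert \cdot \Vert_i^{\otimes n} \otimes \Vert \cdot \Vert_0)$ tends to $1$ uniformly on the compact space $X^{\mathrm{an}}_{\IC_\nu}$, and (iii) the smooth Chern forms satisfy $c_1^{(j)} \geq (1-\varepsilon_j)\, T_{n,i}$ as currents, with $\varepsilon_j \to 0$. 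Choosing $j = j(n,k)$ large enough that $\varepsilon_j \leq 1/k$ and that the uniform estimate from (ii) also gives $\Vert \cdot \Vert^{(j)} \geq e^{-1/(2k)} \cdot \Vert \cdot \Vert_i^{\otimes n} \otimes \Vert \cdot \Vert_0$, one sets $\Vert \cdot \Vert_i^{(n,k)} := \Vert \cdot \Vert^{(j(n,k))}$; properties (a) and (b) then follow immediately from (i)--(iii).

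The main obstacle is extracting the multiplicative form of the lower bound (iii) from the available regularization toolbox. A purely local convolution argument yields only an additive estimate of the shape $c_1^{(j)} \geq -\varepsilon_j\, \omega$ against a reference K\"ahler form $\omega$; to upgrade this to the multiplicative bound $c_1^{(j)} \geq (1-\varepsilon_j)\, T_{n,i}$ one uses that $T_{n,i} \geq c_1(\Mo)$ and that $c_1(\Mo)$ itself dominates a (rescaled) $\omega$, so that the error $\varepsilon_j \omega$ can be reabsorbed into a multiple of $c_1(\Mo) \leq T_{n,i}$. Carefully tracking Maillot's globalization of Demailly-type regularization, carried out via a fixed projective embedding and convolution on affine charts patched together by a partition of unity, so as to produce precisely this sharpened form of the estimate, is the technical step requiring most care.
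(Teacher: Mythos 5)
Your proposal is correct and follows essentially the same route as the paper, which simply invokes Maillot's \cite[Th\'eor\`eme 4.6.1]{Maillot2000} with $\lambda = 1/k$ applied to the strictly positive $\nu$-metrized line bundle $(L_i^{\otimes n}\otimes M,\ \Vert\cdot\Vert_i^{\otimes n}\otimes\Vert\cdot\Vert_0)$. The ``multiplicative'' lower bound on the curvature that you flag as the delicate step is exactly what Maillot's theorem (and its proof) already provides, so no further upgrading from an additive estimate is needed.
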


\begin{proof} This follows from (the proof of) \cite[Theorem 4.6.1]{Maillot2000} (with $\lambda=1/k$) for the strictly positive $\nu$-line bundle $\Lo_i^{(n,k)}$.
\end{proof}

We define the $\nu$-metrized pseudo-divisor $\widehat{D}^{(n,k)}_i = (\Lo_i^{(n,k)}, Y_i \cup |\Div(\mathbf{t}_i)|, \mathbf{s}_i^{\otimes n} \otimes \mathbf{t}_i)$ for each $i \in \{ 1,\dots, d^\prime +1 \}$. Using \cite[Definition 3.3]{Gubler2003}, we obtain a local height $$\lambda_{\widehat{D}_1^{(n,k)}, \widehat{D}_2^{(n,k)},\dots, \widehat{D}_{d^\prime+1}^{(n,k)}}(\mathfrak{Z})$$ for all integers $n,k \geq 1$.

Our main goal in this appendix is to establish the following lemma.

\begin{lemma} For each $d^\prime$-cycle $\mathfrak{Z}$ on $X$ satisfying (\ref{equation::standarddisjointnesscondition}), the double limit
\begin{equation}
\label{equation::newdefinition}
\lambda_{\widehat{D}_1,\widehat{D}_2,\cdots,\widehat{D}_{d^\prime+1}}(\mathfrak{Z}) :=
\lim_{n \rightarrow \infty} n^{-(d^\prime+1)} \left( \lim_{k \rightarrow \infty} \lambda_{\widehat{D}_1^{(n,k)}, \widehat{D}_2^{(n,k)},\dots, \widehat{D}_{d^\prime+1}^{(n,k)}}(\mathfrak{Z})\right)
\end{equation}
exists and depends only on $\widehat{D}_i$ and $X$ (in particular, not on $\overline{M}$, the sections $\mathbf{t}_i$ nor on the $\nu$-metrics $\Vert \cdot \Vert^{(n,k)}_i$). Setting \eqref{equation::newdefinition} extends the definition of \cite[Definition 3.3]{Gubler2003} such that the induction formula
\begin{multline}
\label{equation::induction_formula_2}
\lambda_{\widehat{D}_1,\widehat{D}_2,\cdots,\widehat{D}_{d^\prime+1}}([Z]) = \\
\lambda_{\widehat{D}_1,\widehat{D}_2,\cdots,\widehat{D}_{d^\prime}}( [\Div(\mathbf{s}_{d^\prime+1}|_{Z})])
- \int_{Z^{\mathrm{an}}_{\IC_\nu}} 
\log \Vert \mathbf{s}_{d^\prime+1} \Vert_{d^\prime+1} c_1(\Lo_{1}) \wedge \cdots \wedge c_1(\Lo_{d^\prime})
\end{multline}
holds for each irreducible subvariety $Z \subseteq X$ of dimension $d$. (If $d= 0$, this means
\begin{equation*}
\label{equation::point_height}
\lambda_{\widehat{D}_1}([Z]) = - \sum_{x \in Z_{\IC_\nu}^{\mathrm{an}}}\log\Vert \mathbf{s}_{1}(x)\Vert_{1}.)
\end{equation*} 
\end{lemma}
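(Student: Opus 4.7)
The plan is to proceed via a two-level regularization. For each fixed $n$, I would first establish existence of the inner limit as $k\to\infty$ by induction on $d^\prime$, relying throughout on Gubler's induction formula \cite[Proposition 3.5]{Gubler2003} in the $\mathscr{C}^\infty$ setting. The base case $d^\prime=0$ is the uniformly convergent finite sum $-\sum_{x\in Z_{\IC_\nu}^{\mathrm{an}}}\log\Vert(\mathbf{s}_1^{\otimes n}\otimes\mathbf{t}_1)(x)\Vert_1^{(n,k)}$. For the inductive step, Gubler's induction formula rewrites $\lambda_{\widehat{D}_1^{(n,k)},\dots,\widehat{D}_{d^\prime+1}^{(n,k)}}([Z])$ as a $(d^\prime-1)$-dimensional local height (convergent by the inductive hypothesis applied to $\Div(\mathbf{s}_{d^\prime+1}^{\otimes n}\otimes\mathbf{t}_{d^\prime+1}|_Z)$) minus an integral of $\log\Vert\mathbf{s}_{d^\prime+1}^{\otimes n}\otimes\mathbf{t}_{d^\prime+1}\Vert_{d^\prime+1}^{(n,k)}$ against $c_1(\Lo_1^{(n,k)})\wedge\cdots\wedge c_1(\Lo_{d^\prime}^{(n,k)})$. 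This integral converges as $k\to\infty$ because of Lemma \ref{lemma::smoothening}(b) (uniform convergence of the integrand), Lemma \ref{lemma::uniformconvergence} (weak convergence of the Bedford--Taylor/Chambert-Loir measures to the one attached to the continuous tensor product metric $\Vert\cdot\Vert_i^{\otimes n}\otimes\Vert\cdot\Vert_0$), and the finiteness of the limiting integral granted by \cite[Th\'eor\`eme 4.1]{Chambert-Loir2009}.

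Denoting by $\lambda^{(n)}:=\lim_{k\to\infty}\lambda_{\widehat{D}_1^{(n,k)},\dots,\widehat{D}_{d^\prime+1}^{(n,k)}}(\mathfrak{Z})$ the inner limit, I would next analyze $\lambda^{(n)}$ as a function of $n$ by iteratively unfolding the induction formula and passing to the limit $k\to\infty$ term-by-term. After this unfolding, $\lambda^{(n)}$ is exhibited as a polynomial in $n$ of degree at most $d^\prime+1$: the relevant Chern currents split additively via $c_1(\Lo_i^{\otimes n}\otimes\Mo)=nc_1(\Lo_i)+c_1(\Mo)$ so their wedge products expand as $\sum_{I}n^{|I|}\bigwedge_{i\in I}c_1(\Lo_i)\wedge\bigwedge_{j\notin I}c_1(\Mo)$ by Lemma \ref{lemma::chernforms}(a); simultaneously the log integrand decomposes as $\log\Vert\mathbf{s}^{\otimes n}\otimes\mathbf{t}\Vert=n\log\Vert\mathbf{s}\Vert+\log\Vert\mathbf{t}\Vert$ and the recursive terms arising from $\Div(\mathbf{s}^{\otimes n}\otimes\mathbf{t})=n\Div(\mathbf{s})+\Div(\mathbf{t})$ are polynomial in $n$ by induction. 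Dividing by $n^{d^\prime+1}$ and letting $n\to\infty$ singles out the unique leading coefficient, which collects only contributions from the $\widehat{D}_i$, never from $\Mo$ or the $\mathbf{t}_i$. This simultaneously proves existence of the outer limit and its independence of the auxiliary data $(\Mo,\Vert\cdot\Vert_0,\mathbf{t}_i)$.

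Compatibility with Gubler's definition when all $\Vert\cdot\Vert_i$ are $\mathscr{C}^\infty$ follows by applying the same polynomial expansion directly to Gubler's local heights, with no smoothing needed, and identifying the leading coefficient as Gubler's $\lambda_{\widehat{D}_1,\dots,\widehat{D}_{d^\prime+1}}(\mathfrak{Z})$ via the multilinearity of the $\mathscr{C}^\infty$ local height. The induction formula \eqref{equation::induction_formula_2} for the extended definition is then obtained by taking the limit $k\to\infty$ and then the rescaled limit $n\to\infty$ in Gubler's $\mathscr{C}^\infty$ version of the induction formula, the integral on the right-hand side being finite by \cite[Th\'eor\`eme 4.1]{Chambert-Loir2009}.

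The main obstacle will be justifying the convergence of the integrals $\int_{Z_{\IC_\nu}^{\mathrm{an}}}\log\Vert\mathbf{s}_{d^\prime+1}^{\otimes n}\otimes\mathbf{t}_{d^\prime+1}\Vert_{d^\prime+1}^{(n,k)}\, c_1(\Lo_1^{(n,k)})\wedge\cdots\wedge c_1(\Lo_{d^\prime}^{(n,k)})$ as $k\to\infty$. Although the integrand converges uniformly away from the polar locus and the measures converge weakly, the integrand is unbounded along the polar set and the measures may attach mass to neighborhoods of it. A genuine convergence statement requires a Chern--Levine--Nirenberg type uniform integrability bound; the inequality in Lemma \ref{lemma::smoothening}(a) will be essential as it dominates every approximating Chern current by a common semipositive reference, yielding uniform-in-$k$ control on the mass near the polar locus and reducing the problem to the already-finite limit integral of \cite[Th\'eor\`eme 4.1]{Chambert-Loir2009}.
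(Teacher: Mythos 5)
Your proposal follows essentially the same route as the paper: induction on the dimension, Gubler's induction formula to peel off one metric at a time, polynomiality in $n$ of the inner limit to extract the leading coefficient, and independence of the auxiliary data read off from that coefficient. The obstacle you flag at the end — that uniform convergence of the unbounded integrand plus weak convergence of the measures does not by itself give convergence of the integrals — is exactly the crux, and the fix you name (Chern--Levine--Nirenberg estimates combined with the current domination of Lemma \ref{lemma::smoothening}(a)) is precisely what the paper implements, via a telescoping decomposition of $c_1(\Lo_1^{(n,k)})\wedge\cdots\wedge c_1(\Lo_d^{(n,k)})$ against the limit measure and local-potential bounds on a finite chart cover (after first reducing to smooth $Z$ by Hironaka, a step worth making explicit since the CLN inequalities are invoked on local charts of a manifold).
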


\begin{proof} By linearity, we can restrict to the case where $\mathfrak{Z} = [Z]$ for an irreducible subvariety $Z \subseteq X$ of dimension $d^\prime$. We use an induction on the dimension $d^\prime$ of $X$. The case $d^\prime = 0$ is straightforward as
\begin{equation*}
n^{-1}\lambda_{\widehat{D}_1^{(n,k)}}([Z]) \longrightarrow - \sum_{x \in Z_{\IC_\nu}^{\mathrm{an}}} \left( \log \Vert \mathbf{s}_1(x) \Vert_{1} + n^{-1} \log \Vert \mathbf{t}_1(x) \Vert_{0} \right)
\ \ (k \rightarrow \infty)
\end{equation*}
is a direct consequence of Lemma \ref{lemma::smoothening} (b). Let now $d^\prime \geq 1$ and assume that the lemma is already proven for all dimensions $<d^\prime$.

As the local height does not depend on the ambient variety $X$ (apply \cite[Proposition 3.6]{Gubler2003} to the inclusion $Z \hookrightarrow X$), we can assume $X=Z$ and hence $d^\prime = d$. By Hironaka's resolution theorem \cite{Hironaka1964} (see also \cite{Kollar2007}), there always exists a smooth variety $\widetilde{X}$ and a birational, projective morphism $f: \widetilde{X} \rightarrow X$. Again by \cite[Proposition 3.6]{Gubler2003}, we have
\begin{equation*}
\lambda_{\widehat{D}_1^{(n,k)}, \widehat{D}_2^{(n,k)},\dots, \widehat{D}_{d+1}^{(n,k)}}([X]) = \lambda_{f^\ast \widehat{D}_1^{(n,k)}, f^\ast \widehat{D}_2^{(n,k)},\dots, f^\ast\widehat{D}_{d+1}^{(n,k)}}([\widetilde{X}])
\end{equation*}
where $f^\ast \widehat{D}_i^{(n,k)} = (f^\ast \Lo_i^{(n,k)}, f^{-1}(Y_i) \cup |\Div(f^\ast \mathbf{t}_i)|, (f^\ast \mathbf{s}_i)^{\otimes n} \otimes f^\ast \mathbf{t}_i)$. This and the compatibility of Monge-Ampère measures with pullback (see footnote \ref{footnote} on page \pageref{footnote} above) allow us to reduce the proof of the lemma to the case where $X=Z$ is smooth. 

By the induction formula \cite[Proposition 3.5]{Gubler2003}, the local height $\lambda_{\widehat{D}_1^{(n,k)}, \widehat{D}_2^{(n,k)},\dots, \widehat{D}_{d+1}^{(n,k)}}([X])$ equals
\begin{multline*}
\lambda_{\widehat{D}_1^{(n,k)}, \widehat{D}_2^{(n,k)},\dots, \widehat{D}_{d}^{(n,k)}}(\Div(\mathbf{s}_{d + 1}^{\otimes n} \otimes \mathbf{t}_{d + 1})) 
- \int_{X^{\mathrm{an}}_{\IC_\nu}} \log \Vert \mathbf{s}^{\otimes n}_{d+1} \otimes \mathbf{t}_{d + 1} \Vert_{d+1}^{(n,k)} \left(\bigwedge_{i=1}^d c_1(\Lo_{i}^{(n,k)})\right).
\end{multline*}
(Note that the integral is finite by \cite[Théorème 4.1]{Chambert-Loir2009}.)
By our inductive assumption, the double limit
\begin{equation}
\lim_{n \rightarrow \infty} n^{-d} \left(  \lim_{k \rightarrow \infty}
\lambda_{\widehat{D}_1^{(n,k)}, \widehat{D}_2^{(n,k)},\dots, \widehat{D}_{d}^{(n,k)}}(\Div(\mathbf{s}_{d + 1})) \right)
\end{equation}
exists, and
\begin{equation}
\lim_{n \rightarrow \infty} n^{-d+1} \left(  \lim_{k \rightarrow \infty}
\lambda_{\widehat{D}_1^{(n,k)}, \widehat{D}_2^{(n,k)},\dots, \widehat{D}_{d}^{(n,k)}}(\Div(\mathbf{s}^\prime)) \right) = 0.
\end{equation}
To show convergence of \eqref{equation::newdefinition}, it hence suffices to prove that the double limit
\begin{equation}
\label{equation::horrible_doublelimit}
\lim_{n \rightarrow \infty} n^{-(d+1)} \left( \lim_{k \rightarrow \infty} \int_{X^{\mathrm{an}}_{\IC_\nu}} \log \Vert \mathbf{s}^{\otimes n}_{d+1} \otimes \mathbf{t}_{d + 1} \Vert_{d+1}^{(n,k)} \left(\bigwedge_{i=1}^d c_1(\Lo_{i}^{(n,k)})\right) \right)
\end{equation}
exists. For ease of notation, let us write
\begin{align*}
V^{(n,k)} = \ \log \Vert \mathbf{s}_{d+1}^{\otimes n} \otimes \mathbf{t}_{d+1} \Vert_{d+1}^{(n,k)} \ \text{ and } \
V^{(n)} = \ n \cdot \log \Vert \mathbf{s}_{d+1} \Vert_{d+1} + \log \Vert \mathbf{t}_{d+1} \Vert_0.
\end{align*}
Furthermore, we set
\begin{align*}
\mu_l^{(n,k)} = \ &\left(\bigwedge_{i=1}^{d-l} c_1(\Lo_i^{(n,k)}) \right) \wedge \left( \bigwedge_{i=d-l+1}^d (n \cdot c_1(\Lo_i) + c_1(\Mo)) \right)
\end{align*}
for each $l \in \{ 0, \dots, d \}$. To indicate its independence of $k$, we also write $\mu_d^{(n)}$ instead of $\mu_d^{(n,k)}$. We claim that
\begin{equation}
\label{equation::horrible_claim}
\int_{X^{\mathrm{an}}_{\IC_\nu}} V^{(n,k)} \mu^{(n,k)}_{0} - \int_{X^{\mathrm{an}}_{\IC_\nu}} V^{(n)} \mu_d^{(n)} \longrightarrow 0 \ \  (k \rightarrow \infty).
\end{equation}
In fact, this difference equals
\begin{multline*}
\sum_{l=0}^{d-1} \left( \left(\frac{k-1}{k}\right)^{l} \int_{X^{\mathrm{an}}_{\IC_\nu}} V^{(n,k)} \mu_l^{(n,k)} - \left(\frac{k-1}{k}\right)^{l+1}\int_{X^\mathrm{an}_{\IC_\nu}} V^{(n,k)} \mu_{l+1}^{(n,k)} \right) 
\\
+ \left( \left(\frac{k-1}{k}\right)^{d} \int_{X^{\mathrm{an}}_{\IC_\nu}} V^{(n,k)} \mu_d^{(n)} - \int_{X^{\mathrm{an}}_{\IC_\nu}} V^{(n)} \mu_d^{(n)} \right).
\end{multline*}
By Lemma \ref{lemma::smoothening}, we have $|V^{(n,k)} - V^{(n)}| \rightarrow 0$ ($k \rightarrow \infty$) uniformly on $X_{\IC_\nu}^{\mathrm{an}}$. Using Lemma \ref{lemma::chernforms} (d), we deduce that
\begin{equation*}
\left\vert \int_{X^{\mathrm{an}}_{\IC_\nu}} V^{(n,k)} \mu_d^{(n)} - \int_{X^{\mathrm{an}}_{\IC_\nu}} V^{(n)} \mu_d^{(n)} \right\vert \leq
\int_{X^{\mathrm{an}}_{\IC_\nu}} |V^{(n,k)} - V^{(n)}| \mu_d^{(n)} \longrightarrow 0 \ \ (k \rightarrow \infty).
\end{equation*}
This implies that
\begin{equation*}
\left(\frac{k-1}{k}\right)^{d} \int_{X^{\mathrm{an}}_{\IC_\nu}} V^{(n,k)} \mu_d^{(n)} \longrightarrow \int_{X^{\mathrm{an}}_{\IC_\nu}} V^{(n)} \mu_d^{(n)} \ \ (k \rightarrow \infty).
\end{equation*}
For each $l \in \{ 0, \dots, d-1 \}$, the difference
\begin{equation*}
\int_{X^{\mathrm{an}}_{\IC_\nu}} V^{(n,k)} \mu_l^{(n,k)} - \left(\frac{k-1}{k}\right) \int_{X^\mathrm{an}_{\IC_\nu}} V^{(n,k)} \mu_{l+1}^{(n,k)}
\end{equation*}
equals
\begin{equation}
\label{equation::mongeampere_withT}
\int_{X^{\mathrm{an}}_{\IC_\nu}} V \left(\bigwedge_{i=1}^{d-l-1} c_1(\Lo_i^{(n,k)}) \right) \wedge T_i \wedge \left( \bigwedge_{i=d-l+1}^d (n \cdot c_1(\Lo_i) + c_1(\Mo)) \right)
\end{equation}
where 
\begin{equation*}
T_l = c_1(\Lo_{d-l}^{(n,k)}) -  \frac{k-1}{k} (n \cdot c_1(\Lo_{d-l})+c_1(\Mo)).
\end{equation*}
Note that $T_l$ is a closed positive $(1,1)$-current by Lemma \ref{lemma::smoothening} (a). Hence the wedge product of currents in \eqref{equation::mongeampere_withT} is well-defined in the sense of Monge-Ampère measures (see \cite[Subsection 3.1.1]{Guedj2017}), yielding a positive measure on $X^{\mathrm{an}}_{\IC_\nu}$. 

To bound \eqref{equation::mongeampere_withT}, we choose a finite covering $\{ U_{\alpha} \}_{\alpha \in A}$ of $X^{\mathrm{an}}_{\IC_\nu}$ by open sets $U_\alpha$ with the property that, for each $\alpha \in A$, there exist open sets $V_\alpha, W_\alpha$ such that
\begin{enumerate}
	\item[(a)] the topological closure $\overline{U}_\alpha \subset X_{\IC_\nu}^{\mathrm{an}}$ (resp.\ $\overline{V}_\alpha \subset X_{\IC_\nu}^{\mathrm{an}}$) is a compact subset of $V_\alpha$ (resp.\ $W_\alpha$), and
	\item[(b)] there exist non-vanishing sections $\mathbf{s}_{i,\alpha}: W_\alpha \rightarrow (L_i)^{\mathrm{an}}_{\IC_\nu}$, $i \in \{1,\dots,d+1\}$, and $\mathbf{t}_{\alpha}: W_\alpha \rightarrow M^{\mathrm{an}}_{\IC_\nu}$.
\end{enumerate}
On the open subset $V_\alpha \subset W_\alpha$, the functions
\begin{align*}
u_{i,\alpha} = \ &\log \Vert \mathbf{s}_{i,\alpha}^{\otimes n} \otimes \mathbf{t}_{\alpha} \Vert^{(n,k)}_i, \\
v_{i,\alpha} = \ &\log \Vert \mathbf{s}_{i,\alpha}^{\otimes n} \otimes \mathbf{t}_{\alpha} \Vert^{(n,k)}_i - \frac{k-1}{k} (n \log \Vert \mathbf{s}_{i,\alpha} \Vert_i + \log\Vert \mathbf{t}_\alpha \Vert_0), \text{ and} \\
w_{i,\alpha} = \ &n \log \Vert \mathbf{s}_{i,\alpha} \Vert_i + \log\Vert \mathbf{t}_\alpha \Vert_0
\end{align*}
are locally bounded potentials for $c_1(\Lo^{(n,k)}_i)$, $T_i$, and $nc_1(\Lo_i) + c_1(\Mo)$, respectively. They are hence all locally bounded plurisubharmonic functions on each $U_\alpha$. We can bound \eqref{equation::mongeampere_withT} by
\begin{equation}
\label{equation::horrible_integral}
\sum_{\alpha \in A} \int_{\overline{U}_\alpha} |V| dd^c u_{1,\alpha} \wedge \cdots \wedge dd^c u_{d-l-1,\alpha} \wedge dd^c v_{d-l,\alpha} \wedge dd^c w_{d-l+1,\alpha} \wedge \cdots dd^c w_{d,\alpha}.
\end{equation}
Applying the Chern-Levine-Nirenberg inequalities for each $U_\alpha \subset W_\alpha$ (namely, \cite[Theorem 3.14]{Guedj2017}), each integral in the above sum is
\begin{align*}
\ll_{U_\alpha,V_\alpha} \Vert V \Vert_{L^1(V_\alpha)} \left( \prod_{i=1}^{d-l-1} \Vert u_{i,\alpha} \Vert_{L^{\infty}(V_\alpha)} \right) \Vert v_{d-l,\alpha} \Vert_{L^\infty(V_\alpha)} \Vert \left( \prod_{i=d-l+1}^d \Vert w_{i,\alpha}\Vert_{L^\infty(V_\alpha)} \right)
\end{align*}
where the norms $\Vert \cdot \Vert_{L^1(V_\alpha)}$ and $\Vert \cdot \Vert_{L^\infty(V_\alpha)}$ are with respect to $c_1(\Mo)^{\wedge d}$ (or, equivalently, the measure induced by any other Kähler form on $X_{\IC_\nu}^{\mathrm{an}}$). (Note that $\Vert V \Vert_{L^1(V_\alpha)}$ is finite as it is locally the difference of two plurisubharmonic functions, which are locally integrable by \cite[Proposition 1.34]{Guedj2017}.)

Using Lemma \ref{lemma::smoothening} (b), we obtain an upper bound
\begin{equation*}
\Vert u_{i,\alpha} \Vert_{L^\infty(V_\alpha)} \leq \Vert w_{i,\alpha} \Vert_{L^\infty(V_\alpha)} \ll_{\mathbf{s}_{i,\alpha},\mathbf{t}_{\alpha},U_\alpha,V_\alpha,n} 1 
\end{equation*}
for each $i \in \{ 1, \dots, d\}$ and each $\alpha \in A$. In addition, we have 
\begin{equation*}
|v_{i,\alpha}| = \log \left( \frac{\Vert \mathbf{s}_{i,\alpha}^{\otimes n} \otimes \mathbf{t}_\alpha \Vert_{i}^{(n,k)}} {\Vert \mathbf{s}_{i,\alpha}\Vert_i^n \Vert \mathbf{t}_\alpha \Vert_0} \right) + \frac{w_{i,\alpha}}{k} \longrightarrow 0 \ \ (k \rightarrow \infty)
\end{equation*}
uniformly on $V_\alpha$ because of Lemma \ref{lemma::smoothening} (b) and the boundedness of $w_{i,\alpha}$ on $\overline{V}_\alpha \subset W_\alpha$. Combining all these estimates, we obtain that \eqref{equation::horrible_integral} converges to $0$ as $k \rightarrow \infty$. This confirms our claim \eqref{equation::horrible_claim}. Unraveling notations, we conclude that the inner limit ($k\rightarrow \infty$) in \eqref{equation::horrible_doublelimit} exists and equals
\begin{equation*}
\int_{X^{\mathrm{an}}_{\IC_\nu}} V \mu_d=\int_{X^{\mathrm{an}}_{\IC_\nu}} (n \log \Vert \mathbf{s}_{d + 1} \Vert_{d + 1} + \log \Vert \mathbf{t}_{d + 1} \Vert_{0}) \left(\bigwedge_{i=1}^d n \cdot c_1(\Lo_{i}) + c_1(\Mo)\right).
\end{equation*}
Rearranging, we see that this integral equals
\begin{equation*}
n^{d+1}\int_{X_{\IC_\nu}^{\mathrm{an}}} \log \Vert \mathbf{s}_{d + 1} \Vert_{d + 1} c_1(\Lo_1) \wedge c_1(\Lo_2) \wedge \cdots \wedge c_1(\Lo_d) + O(n^{d})
\end{equation*}
and the convergence of the double limit \eqref{equation::newdefinition} follows immediately. The induction formula \eqref{equation::induction_formula_2} is also a consequence of our proof. Uniqueness (i.e., independence of $\overline{M}$, its sections $\mathbf{t}_i$ and the $\nu$-metrics $\Vert \cdot \Vert^{(n,k)}_i$) follows inductively.
\end{proof}

Finally, let us note that our definition of $\lambda_{\widehat{D}_1,\widehat{D}_2,\cdots,\widehat{D}_{d^\prime+1}}(\mathfrak{Z})$ by a limit process as in \eqref{equation::newdefinition} also allows us to observe that \cite[Propositions 3.4, 3.5, 3.6, 3.7, 3.8 and Theorem 10.6]{Gubler2003} remain true in our more general setting. In particular, we realize retrospectively that the induction formula of \cite[Proposition 3.5]{Gubler2003} can be also used as a straightforward definition of the local heights $\lambda_{\widehat{D}_1,\widehat{D}_2,\cdots,\widehat{D}_{d^\prime+1}}(\mathfrak{Z})$ for general semipositive $\nu$-metrized line bundles on $X$. 

\section{An elementary computation}
\label{appendixB}

For convenience, we give the details of the elementary computation used in the proof of Lemma \ref{lemma::measure_realanalytic}.

\begin{lemma} For every $h \in \mathscr{C}^0_c(\IC^\times)$, we have
\begin{equation*}
\int_{\IC^\times} h(z) dd^c \left\vert \log |z| \right\vert = \int_{[0,2\pi]} h(e^{i\phi})\frac{d\phi}{\pi}.
\end{equation*}
\end{lemma}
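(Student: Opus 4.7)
The plan is to interpret the left-hand side as the action of the current $dd^c|\log|z||$ against $h$, and to compute that current explicitly by a Stokes-type argument. Since both sides of the identity are continuous linear functionals on $\mathscr{C}^0_c(\IC^\times)$ for the topology of uniform convergence on compact subsets (finiteness of $\int_{\IC^\times} h \, dd^c |\log|z||$ is already provided by \cite[Th\'eor\`eme 4.1]{Chambert-Loir2009}, applied to $|\log|z||$ which is continuous and subharmonic on $\IC^\times$), a standard mollification argument reduces the claim to $h \in \mathscr{C}^\infty_c(\IC^\times)$. For such $h$, by definition of the current, we have $\int_{\IC^\times} h \, dd^c |\log|z|| = \int_{\IC^\times} |\log|z|| \, dd^c h$.

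First, I would split this integral as
\begin{equation*}
\int_{\IC^\times} |\log|z|| \, dd^c h = \int_{0 < |z| < 1} (-\log |z|)\, dd^c h + \int_{|z| > 1} \log |z| \, dd^c h.
\end{equation*}
On either region the function $\log|z|$ is harmonic, so $dd^c \log|z| = 0$ there. Applying Green's identity
\begin{equation*}
\int_\Omega (u \, dd^c v - v \, dd^c u) = \int_{\partial \Omega} (u \, d^c v - v \, d^c u)
\end{equation*}
with $u = \mp \log |z|$, $v = h$, on $\Omega = \{\epsilon < |z| < 1\}$ and $\Omega = \{1 < |z| < R\}$ for $\epsilon$ small and $R$ large (chosen so that $h$ is supported in $\{\epsilon < |z| < R\}$), the interior terms $v \, dd^c u$ vanish. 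The boundary contributions at $|z| = \epsilon$ and $|z| = R$ vanish since $h$ vanishes there; the terms at $|z| = 1$ involving $(\pm \log|z|) d^c h$ vanish because $\log|z| = 0$ on the unit circle. Keeping track of orientations (the unit circle is the outer boundary of the inner disk but the inner boundary of the outer region), both regions contribute $\int_{|z|=1} h \, d^c \log|z|$ with the same sign, giving
\begin{equation*}
\int_{\IC^\times} |\log|z|| \, dd^c h = 2 \int_{|z|=1} h \, d^c \log|z|.
\end{equation*}

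Finally, I would compute $d^c \log|z|$ on the unit circle. Using $\partial \log|z| = \tfrac{1}{2} \, dz/z$ and $\bar\partial \log |z| = \tfrac{1}{2} \, d\bar z/\bar z$, one obtains $d^c \log|z| = \tfrac{i}{4\pi}(d\bar z/\bar z - dz/z)$. Parametrising $|z|=1$ by $z = e^{i\phi}$ gives $dz/z = i \, d\phi$ and $d\bar z/\bar z = -i \, d\phi$, whence $d^c \log|z||_{|z|=1} = d\phi/(2\pi)$. Substituting yields
\begin{equation*}
\int_{\IC^\times} h(z) \, dd^c |\log|z|| = 2 \int_0^{2\pi} h(e^{i\phi}) \frac{d\phi}{2\pi} = \int_{[0,2\pi]} h(e^{i\phi}) \frac{d\phi}{\pi},
\end{equation*}
as desired.

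The only delicate point is the Stokes argument: one must verify that $|\log|z||$, while merely Lipschitz across $|z|=1$, still allows Green's identity to apply on each side separately (which it does, since on each side $|\log|z||$ is smooth up to the boundary), and that the signs in the two boundary contributions combine rather than cancel. Everything else is routine.
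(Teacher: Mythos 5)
Your proof is correct, but it takes a genuinely different route from the paper's. The paper (Appendix B) works entirely in polar coordinates: it expands $dd^c h$ as $\frac{1}{2\pi}\bigl(r\,\del^2h/\del r^2+\del h/\del r+r^{-1}\del^2h/\del\phi^2\bigr)dr\wedge d\phi$, splits the radial integral as $\int_1^\infty+\int_1^0$ against $\pm\log r$, and integrates by parts twice in $r$; the boundary terms at $r=1$ produce $2h(e^{i\phi})$ per angular slice while the leftover $\int g\,dr/r$ terms cancel and the angular second-derivative term integrates to zero. Your argument instead exploits the harmonicity of $\log|z|$ off the unit circle and applies Green's identity on the two annuli, so that the whole measure is read off as twice the jump of $d^c\log|z|$ across $|z|=1$; the sign bookkeeping (outer boundary of the inner annulus versus inner boundary of the outer one, combined with the sign flip of $|\log|z||$) is exactly right, and the final evaluation $d^c\log|z|\big|_{|z|=1}=d\phi/2\pi$ matches the paper's normalization $d^c=\tfrac{i}{2\pi}(\delbar-\del)$. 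Your approach is arguably more conceptual — it is the Lelong–Poincaré-style computation identifying $dd^c\log^+|z|$ with the Haar probability measure on $S^1$ — and it makes the total mass $2$ transparent, whereas the paper's computation is more elementary and self-contained. Your opening reduction to smooth $h$ via continuity of both functionals is slightly more than needed: the identity for $h\in\mathscr{C}^\infty_c$ already determines the Bedford–Taylor measure by definition, which is all the paper invokes.
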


Note that the measure $d\phi/\pi$ on the right-hand side assigns volume $2$ (and not $1$) to $[0,2\pi]$.

\begin{proof}
By the definition of the Bedford-Taylor measure $dd^c \left\vert \log |z| \right\vert$, it suffices to prove that
\begin{equation}
\label{equation::integralappB}
\int_{\IC^\times} \left\vert \log |z| \right\vert dd^c h = \int_{[0,2\pi]} h(e^{i\phi})\frac{d\phi}{\pi}
\end{equation}
for every test function $h \in \mathscr{C}^\infty_c(\IC^\times)$. Using polar coordinates $z = r e^{i\phi}$, we have
\begin{equation*}
dd^c h = \frac{1}{2\pi}\left( r \frac{\del^2 h}{\del r^2} + \frac{\del h}{\del r} + \frac{1}{r} \frac{\del^2 h}{\del \phi^2} \right) dr \wedge d\phi,
\end{equation*}
and the left-hand side of \eqref{equation::integralappB} equals
\begin{equation}
\label{equation::integralappB2}
\frac{1}{2\pi} \int_0^{2\pi} \left( \int_{1}^\infty + \int_{1}^0 \right) \left( \log(r)r \frac{\del^2 h}{\del r^2} + \log(r) \frac{\del h}{\del r}+ \frac{\log(r)}{r}\frac{\del^2 h}{\del \phi^2}\right) dr \wedge d\phi.
\end{equation}
Integration by parts yields
\begin{equation*}
\left( \int_{1}^\infty + \int_{1}^0 \right) \log(r)r \frac{\del^2 g}{\del r^2}dr = 2 g(1) + \left( \int_{1}^\infty + \int_{1}^0 \right) \frac{gdr}{r}
\end{equation*}
and
\begin{equation*}
\left( \int_{1}^\infty + \int_{1}^0 \right) \log(r) \frac{\del g}{\del r}dr = - \left( \int_{1}^\infty + \int_{1}^0 \right) \frac{gdr}{r}
\end{equation*}
for any smooth function $g \in \mathscr{C}^\infty_c(\IR^{>0})$.  As $\int_{0}^{2\pi} \del^2 h/ \del \phi^2 = [\del h/\del \phi]^{2\pi}_0 = 0$, it follows that \eqref{equation::integralappB2} equals $\int_{0}^{2\pi}h(e^{i\phi})d\phi/\pi$.
\end{proof}
\end{appendix}

\textbf{Acknowledgements:} The author thanks Yuri Bilu, Antoine Chambert-Loir, Walter Gubler, Philipp Habegger, Klaus Künnemann, Umberto Zannier and Shou-Wu Zhang for discussions and encouragement. In addition, he thanks the participants of the Oberseminar on Arakelov Theory at the University of Regensburg for pointing out countless inaccuracies and typos in a previous version of this text. This research project started while the author was enjoying the hospitality of the Max Planck Institute for Mathematics and continued at the Fields Institute during its ``Thematic Program on Unlikely Intersections, Heights, and Efficient Congruencing''. The author thanks both institutions for their support. Finally, he thanks the referees for their ample suggestions, which helped to improve the expository quality of this article.

\bibliographystyle{plain}
\bibliography{../Bibliography/references}

\end{document}